\newtheorem{theorem}{Theorem}[section]
\newtheorem{lemma}[theorem]{Lemma}
\newtheorem{proposition}[theorem]{Proposition}
\newtheorem{corollary}[theorem]{Corollary}
\theoremstyle{definition}
\newtheorem{definition}[theorem]{Definition}
\newtheorem{example}[theorem]{Example}
\newtheorem{remark}[theorem]{Remark}
\theoremstyle{remark}
\numberwithin{equation}{section}
\newcommand{\Lebesgue}{\Lambda}
\let\epsilon\varepsilon
\newcommand{\att}{\big \vert}
\newcommand{\C}			{{\operatorname{\mathbb{C}}}}
\newcommand{\R}			{{\operatorname{\mathbb{R}}}}
\newcommand{\frakg}			{{\operatorname{\mathfrak{g}}}}
\newcommand{\frakh}			{{\operatorname{\mathfrak{h}}}}
\newcommand{\Vol}			{{\operatorname{\mathsf{Vol}}}}
\newcommand{\KKS}		{{\operatorname{\mathrm{KKS}}}}
\newcommand{\reg}		{{\operatorname{\mathrm{reg}}}}
\newcommand{\shl}	{{\operatorname{\mathfrak{sl}}}}
\newcommand{\cL}{\mathcal{L}}
\newcommand{\SAIN}{{\rm [SAIN]}}
\newcommand{\SIN}{{\rm [SIN]}}
\DeclareMathOperator{\ad}{ad}
\DeclareMathOperator{\supp}{supp}
\DeclareMathOperator{\id}{id}
\DeclareMathOperator{\End}{End}
\DeclareMathOperator{\Ad}{Ad}
\DeclareMathOperator{\rk}{rk}
\newcommand{\fg}{\mathfrak{g}}
\newcommand{\fh}{\mathfrak{h}}
\newcommand{\fk}{\mathfrak{k}}
\newcommand{\fa}{\mathfrak{a}}
\newcommand{\fp}{\mathfrak{p}}
\newcommand{\fz}{\mathfrak{z}}
\newcommand{\one}{\mathbf{1}}
\newcommand{\Det}{\mathrm{det}}
\newcommand{\cN}{\mathcal{N}}
\newcommand{\cO}{O}
\newcommand{\allone}{\mathbb{I}}
\newcommand{\ADS}{{\rm ADS}}
\newcommand{\SL}{\textrm{SL}}
\newcommand{\cS}{\mathcal{S}}
\title[Local and multilinear noncommutative de Leeuw theorems]{Local and multilinear noncommutative de Leeuw theorems}
\date{\noindent \today.  MSC2010 keywords: 22D25, 22E30, 46L51.  MC and AK are supported by the NWO Vidi grant VI.Vidi.192.018 `Non-commutative harmonic analysis and rigidity of operator algebras'.
BJ and LM are supported by the NWO Vidi grant 639.032.734 `Cohomology and representation theory of infinite dimensional Lie groups'. }
\author[Caspers]{Martijn Caspers}
\author[Janssens]{Bas Janssens}
\author[Krishnaswamy-Usha]{Amudhan Krishnaswamy-Usha}
\author[Miaskiwskyi]{Lukas Miaskiwskyi}
\address{TU Delft, EWI/DIAM,
	P.O.Box 5031,
	2600 GA Delft,
	The Netherlands}
\email{M.P.T.Caspers@tudelft.nl}
\email{B.Janssens@tudelft.nl}
\email{A.KrishnaswamyUsha@tudelft.nl}
\email{L.T.Miaskiwskyi@tudelft.nl}
\begin{document}

\begin{abstract}
Let $\Gamma < G$ be a discrete subgroup of a locally compact unimodular group $G$. Let
$m\in C_b(G)$ be a $p$-multiplier on $G$ with $1 \leq p < \infty$ and let
$T_{m}: L_p(\widehat{G}) \rightarrow  L_p(\widehat{G})$ be the corresponding Fourier multiplier. Similarly, let $T_{m \vert_\Gamma}: L_p(\widehat{\Gamma}) \rightarrow  L_p(\widehat{\Gamma})$ be the Fourier multiplier associated to the
restriction $m|_{\Gamma}$ of $m$ to $\Gamma$. We show that
\[
 c( \supp( m|_{\Gamma} ) )  \Vert T_{m \vert_\Gamma}: L_p(\widehat{\Gamma})   \rightarrow L_p(\widehat{\Gamma}) \Vert \leq
\Vert T_{m  }: L_p(\widehat{G}) \rightarrow L_p(\widehat{G}) \Vert,
\]
for a specific constant $0 \leq c(U) \leq 1$ that is   defined for every $U \subseteq \Gamma$.

 The function $c$ quantifies the failure of $G$ to admit small almost $\Gamma$-invariant neighbourhoods and can be  determined explicitly  in concrete cases.
In particular, $c(\Gamma) =1$ when $G$ has small almost $\Gamma$-invariant neighbourhoods. Our result thus extends the De Leeuw restriction theorem from \cite{CPPR} as well as De Leeuw's classical theorem \cite{DeLeeuw}.

For real reductive Lie groups $G$ we provide an explicit lower bound for
$c$ in terms of the maximal dimension $d$ of a nilpotent orbit in the adjoint representation. We show that $c(B_\rho^G) \geq \rho^{-d/4}$  where  $B_\rho^G$ is the ball of $g\in G$ with $\Vert \Ad_g \Vert < \rho$.

We further prove several results for multilinear Fourier multipliers. Most significantly, we prove a multilinear De Leeuw restriction theorem for pairs $\Gamma<G$ with $c(\Gamma) = 1$.  We also obtain  multilinear versions of the lattice approximation theorem, the compactification theorem and the periodization theorem. Consequently, we are able to provide the first examples of bilinear multipliers on nonabelian groups.
\end{abstract}

\maketitle

\section{Introduction}

In his seminal paper Karel de Leeuw \cite{DeLeeuw} proved several fundamental theorems about Fourier multipliers on the Euclidean space $\mathbb{R}^n$. Recall that   $m \in L_\infty(\mathbb{R}^n)$ is called a {\it $p$-multiplier} if the map $T_m$ on  $L_2(\mathbb{R}^n)$  determined by $\mathcal{F}_2(T_m f) =  m \mathcal{F}_2 (f)$ with $\mathcal{F}_2$ the unitary Fourier transform extends to a bounded map on $L_p(\mathbb{R}^n)$. Using Pontrjagin duality the definition of $p$-multipliers may be extended to any locally compact abelian group $G$ and we call $m \in L_\infty(G)$ a $p$-multiplier if the map $T_m$ on  $L_2(\widehat{G})$  determined by $\mathcal{F}_2(T_m f) =  m \mathcal{F}_2 (f)$ extends to a bounded map on $L_p(\widehat{G})$. Recall here that in case of $\mathbb{R}^n$ we have that  $ L_p(\widehat{\mathbb{R}^n})$ is isomorphic to $L_p(\mathbb{R}^n)$ canonically by conjugating with $\mathcal{F}_2$.

 One of De Leeuw's most important contributions is the {\it restriction theorem}. Let $H < \mathbb{R}^n$ be any subgroup equipped with the discrete topology. If $m \in C_b(\mathbb{R}^n)$ is a $p$-multiplier then $m\vert_H \in C_b(H)$ is a $p$-multiplier. Moreover,
 \begin{equation}\label{Eqn=RestrictIntro}
   \Vert  T_{m\vert_\Gamma}: L_p(\widehat{H}) \rightarrow L_p(\widehat{H}) \Vert \leq    \Vert  T_{m}: L_p(\widehat{\mathbb{R}^n}) \rightarrow L_p(\widehat{\mathbb{R}^n}) \Vert.
 \end{equation}
  A particular case occurs when $H = \mathbb{R}^n_{\mathrm{disc}}$ is $\mathbb{R}^n$ equipped with the discrete topology. Then $\widehat{\mathbb{R}^n_{\mathrm{disc}}}$ is the Bohr compactification of $\mathbb{R}^n$. This special case is usually referred to as the {\it compactification theorem} and is essentially the strongest theorem proved in \cite{DeLeeuw}. Its proof consists of first showing \eqref{Eqn=RestrictIntro} when $H$ is discrete in the Euclidean topology inherited from $\mathbb{R}^n$ combined with a {\it lattice approximation theorem}. These theorems became standard tools in harmonic analysis and are the first instances of so-called transference results.

 \vspace{0.3cm}

 The emergence of noncommutative integration has created a realm in which Fourier multipliers can be interpreted naturally for any locally compact group. Here function algebras on the Pontrjagin dual group are replaced by group  algebras and their C$^\ast$- and von Neumann closures.  The theory of noncommutative $L_p$-spaces \cite{Nelson}, \cite{PisierXu} and realization of Fourier transforms \cite{Kunze}, \cite{Cooney},  \cite{CaspersLpf} leads to a natural notion of $p$-multipliers. In this interpretation $G$ plays the role of the frequency side and for $m \in L_\infty(G)$ the Fourier multiplier $T_m$ acts on a noncommutative space, namely the noncommutative $L_p$-space of the group von Neumann algebra of $G$.

 For $p=\infty$ the relevance of Fourier multipliers on group von Neumann algebras was already recognized in the fundamental results of Haagerup on approximation properties \cite{HaagerupExample}, as well as the works by De Canni\`ere-Haagerup \cite{DeCanniereHaagerup} and Cowling-Haagerup \cite{CowlingHaagerup} on approximation properties of real rank one simple Lie groups.


There is a significant interest in understanding the class of $p$-multipliers beyond $p = \infty$. Already in commutative analysis this has triggered an enormous machinery of harmonic analysis including the study of singular integrals, Calder\'on-Zygmund theory, H\"ormander-Mikhlin multipliers, et cetera,  see e.g. \cite{GrafakosBook1},\cite{GrafakosBook2}, \cite{Stein}. The study of multipliers is further motivated by their applications to convergence properties of Fourier series, the structure of Banach spaces and the analysis of pseudo-differential operators.

In the noncommutative situation several efforts have been made in the understanding of $p$-multipliers. In \cite{JMPGafa} multipliers associated with cocycles on locally compact groups were constructed; such multipliers are compositions $m = \widetilde{m} \circ b$ with $b: G \rightarrow \mathcal{H}$ a cocycle into a finite dimensional orthogonal representation $(\mathcal{H}, \pi)$ with additional H\"ormander-Mikhlin conditions on $\widetilde{m}$. Further results in this direction have been obtained in \cite{JMPJEMS}, \cite{JungeGonzalezParcet}. Note that if $G$ has property (T) then a well-known theorem by Delorme-Guichardet \cite{Delorme}, \cite{Guichardet} (see \cite[Sections 2.2, 2.12]{TBook}) shows that every cocycle $b$ is bounded and therefore the multiplier $m = \widetilde{m} \circ b$ has a certain oscillatory behaviour. Note that if $G = \SL(n, \mathbb{R})$ the lack of non-trivial orthogonal representations limits this method.
However, concrete multipliers on $\SL(n, \mathbb{R})$ have been constructed in \cite{PRS} where a version of the H\"ormander-Mikhlin multiplier theorem was obtained for the Lie group $\SL(n, \mathbb{R})$, the real $n\times n$-matrices with determinant 1. Here the H\"ormander-Mikhlin conditions are formulated in the natural differentiable structure of the Lie group. In the discrete setting Cotlar's identity has successfully been applied \cite{MeiRicard}, \cite{MeiRicardXu} to obtain Hilbert transforms and H\"ormander-Mikhlin multipliers on  free groups.

\vspace{0.3cm}

It is natural to ask  which relations there are between $p$-multipliers on $G$ and $p$-multipliers on the discrete subgroups of $G$.  A cornerstone theorem was obtained in \cite{CPPR} where the De Leeuw restriction theorem was obtained for any pair $\Gamma < G$ with $\Gamma$ a discrete  subgroup  of a (for simplicity unimodular) locally compact group $G$ such that $G$ has small almost $\Gamma$-invariant neighbourhoods (see Definition \ref{Dfn=SAIN}). If $\Gamma$ is amenable then $G$ always has small almost $\Gamma$-invariant neighbourhoods \cite[Theorem 8.7]{CPPR}.   In particular the theorem is applicable to nilpotent locally compact groups.

There are several deep open problems that concern the relation of multipliers to their restrictions on subgroups beyond the cases covered by \cite{CPPR}.
For instance, the existence of explicit $p$-multipliers on $\SL(n, \mathbb{Z}), n \geq 3$ remains an open problem. In particular, there does not seem to be a genuine example of a $p$-multiplier,  $1 < p < \infty$, with compact support or with sufficient decay.  With genuine we mean that: (1)  the multiplier is not obtained from an interpolation between  $p=\infty$ and $p=2$; (2) the multiplier does not come from a subgroup, i.e. the support of the multiplier should generate  $\SL(n, \mathbb{Z})$, (3) the bound of the multiplier is sharper than the trivial estimate $\Vert T_{m}: L_p(\widehat{\Gamma})   \rightarrow L_p(\widehat{\Gamma}) \Vert \leq \Vert m \Vert_{L_1(\Gamma)}$ coming from the triangle inequality; (4) a combination of these. 
 This paper does not directly solve this problem but it provides a potential entry   as it would be sufficient to construct multipliers with sufficient bounds on $\SL(n, \mathbb{R})$; this also closely relates to the Calder\'on-Torchinsky problem mentioned in \cite[Section 5.C]{PRS}.
 Other questions are raised by the approximation and rigidity properties of noncommutative $L_p$-spaces obtained in \cite{JungeRuan}, \cite{LafforgueDeLaSalle}, \cite{DeLaat}, \cite{DeLaatDeLaSalle}, \cite[Theorem B]{PRS}.

\vspace{0.3cm}

Another direction that is initiated in this paper is the theory of multilinear Fourier multipliers on noncommutative $L_p$-spaces. In commutative harmonic analysis this became a large topic including milestone results by Lacey-Thiele on the bilinear Hilbert transform \cite{LaceyThiele} and the development of multilinear Calder\'on-Zygmund theory \cite{GrafakosTorres}. Also very recently, several results in the semi-commutative/vector-valued setting have been obtained \cite{Amenta}, \cite{DiPlinio1}, \cite{DiPlinio2}. In the noncommutative setting multilinear Schur multipliers and operator integrals have already provided very deep applications such as the resolution of Koplienko's conjecture  \cite{PSSInventiones}. The theory of Fourier multipliers -- which by transference is intimately related to Schur multipliers \cite{NeuwirthRicard}, \cite{CaspersDeLaSalle} -- seems to be undeveloped beyond abelian groups in the multilinear setting. We shall prove the multilinear versions of De Leeuw's restriction, lattice approximation and compactification theorem. These were proved in the abelian (bilinear) setting in \cite{Blasco}, \cite{rodriguez2013homomorphism}.

\vspace{0.3cm}

We now summarize the main results of this paper. We shall introduce a quantified version of having small almost invariant neighbourhoods. More precisely, we consider the following definition.

\begin{definition}\label{Dfn=DeltaNew}
Let $G$ be a locally compact group with (left) Haar measure $\mu$. Let $F \subseteq G$ be arbitrary and let $V \subseteq G$ be relatively compact of non-zero measure. We introduce the quantity
\[
\delta_F(V)  = \frac{\mu( \cap_{s \in F} \Ad_s(V) )}{ \mu(V)}.
\]
For a neighbourhood basis $\mathcal{V}$ of the identity of $G$ we set $\delta_F(\mathcal{V}) = \liminf_{V \in \mathcal{V}}  \delta_F(V)$. Then set $\delta_F$ to be the   supremum of    $\delta_F(\mathcal{V})$ over all {\it symmetric} neighbourhood bases  $\mathcal{V}$ of the identity.
\end{definition}

We  prove the following local version of the noncommutative De Leeuw restriction theorem  \cite[Theorem A]{CPPR}. We call this theorem `local' since we obtain an estimate that is controlled by a subset $U$ of $\Gamma$ and which holds for all symbols $m \in C_b(G)$ such that the restriction $m \vert_\Gamma$ is supported on $U$.

From this point let   $\Gamma$ be a discrete subgroup of any locally compact unimodular group $G$.

\vspace{0.3cm}

\noindent {\bf Theorem A.}
  Let $m \in C_b( G)$. Then for every $1 \leq  p < \infty$ we have that
\begin{equation}
c(\supp(m\vert_\Gamma) )
\cdot \Vert T_{m \vert_\Gamma}: L_p(\widehat{\Gamma})   \rightarrow L_p(\widehat{\Gamma}) \Vert \leq
\Vert T_{m  }: L_p(\widehat{G}) \rightarrow L_p(\widehat{G}) \Vert,
\end{equation}
where $c( U ) = \inf \{ \delta_F^{\frac{1}{2}} \mid F \subseteq U,  F \: {\rm finite} \} $ is defined for every $U \subseteq \Gamma$.

\vspace{0.3cm}

Theorem A holds as well if the bounds of both multipliers are replaced by their complete bounds (see proof of Proposition \ref{Prop=AlmostIsometrySymmetric}).
The strength of Theorem A is that the constant $c$ can be determined explicitly in many interesting situatons.   We have $c(\Gamma) = 1$ if and only if $G$ has small almost $\Gamma$-invariant neighbourhoods. Hence, our theorem recovers \cite[Theorem A]{CPPR}.   In Sections \ref{Sect=LowerBound}, \ref{Sect=Measures} and \ref{Sect=KeyLemma} we find very natural lower estimates on $\delta_F$ in the full generality of real reductive Lie groups.  For $\rho \geq 1$ let  $B_\rho^G = \{ g \in G \mid \Vert \Ad_g \Vert \leq \rho \}$ where $\Ad$ is the adjoint representation.

\vspace{0.3cm}

\noindent {\bf Theorem B.}
Let $G$ be a real reductive Lie group.
For $\rho > 1$ we have $\delta_{B_\rho^G}  \geq \rho^{-d/2}$ where $d$ is the maximal dimension of a nilpotent adjoint orbit.

\vspace{0.3cm}

In order to prove Theorem B we first reduce it to the connected adjoint group of $G$, which is semisimple. We then construct a symmetric neighbourhood basis $V^{\fg}_{\varepsilon, R}, \epsilon >0, R >0$ of 0 in the Lie algebra $\mathfrak{g}$ such that
\[
\lim_{R\searrow 0}\lim_{\varepsilon\searrow 0}
\frac{ \mu( \exp(V^{\fg}_{\varepsilon,  R/\rho}) )    }{  \mu( \exp(V^{\fg}_{\varepsilon, R}) )     }
=
\rho^{-d/2}.
\]
This implies  Theorem B.
The neighbourhood basis is constructed in such a way that $\bigcap_{\varepsilon > 0} V^{\fg}_{\varepsilon, R}$ is the intersection of the nilpotent cone $\cN \subseteq \fg$ with the ball of radius $R$ around the origin.
Note that the union of the nilpotent orbits $\cO_{X}$ of maximal dimension $d$ is dense in $\cN$.
Using results of Harisch-Chandra and Barbasch-Vogan on limiting orbit integrals, we show that the scaling behaviour of $\mu(  \exp(V^{\fg}_{\varepsilon, R}) )$ is governed by the scaling behaviour of the Liouville form of $\cO_{X}$. Since the KKS symplectic form $\omega^{\KKS}$ scales as $\rho$ under dilation, the Liouville form $\frac{1}{(d/2)!}\wedge^{d/2}\omega^{\KKS}$ scales as $\rho^{d/2}$.

\vspace{0.3cm}

In Example  \ref{Example=SLNR}  we consider the concrete case $\Gamma = \SL(n, \mathbb{Z}), G = \SL(n, \mathbb{R})$. We discuss how Theorems A and B give an ansatz to construct multipliers on  $\Gamma = \SL(n, \mathbb{Z})$.

\vspace{0.3cm}

Next we start the analysis of multilinear multipliers. Most efforts   are required for the restriction theorem.

\vspace{0.3cm}

\noindent {\bf Theorem C.}
 Let $G$ be a locally compact unimodular group. Let $\Gamma < G$ be a discrete subgroup such that $c(\Gamma)= 1$.  Let $m \in C_b( G^{\times n})$. Then for every $1 \leq p, p_1, \ldots, p_n < \infty$ with $p^{-1} = \sum_{j=1}^n p_j^{-1}$ we have that
\[
\Vert T_{m \vert_{\Gamma^{\times n}} }: L_{p_1}(\widehat{\Gamma}) \times \ldots \times L_{p_n}(\widehat{\Gamma})   \rightarrow L_p(\widehat{\Gamma}) \Vert \leq
\Vert T_{m  }: L_{p_1}(\widehat{G}) \times \ldots \times L_{p_n}(\widehat{G})  \rightarrow L_p(\widehat{G}) \Vert.
\]

\vspace{0.3cm}

  Theorem C hinges on the intertwining property of Lemma \ref{Lem=AuxConverge} whose proof is very delicate and which requires several new ideas compared to its linear counterpart. The global idea is to approximate $m$ with multipliers $m_k$ that  can be expressed in terms of nested compositions of linear $\infty$-multipliers. We then construct asymptotically isometric maps that  intertwine the Fourier multipliers associated with  $m_k\vert_{\Gamma^{\times n}}$ and $m_k$.
 The condition $c(\Gamma) = 1$ is used at several places in the proof, whereas in the linear case it is only needed to construct asymptotically isometric maps. This also explains why we do not obtain a `local' version of Theorem C as well.

We further obtain multilinear versions of the lattice approximation theorem (Theorem \ref{thm:LatticeApproximation}) as well as the multilinear compactification theorem and the periodization theorem  (Section \ref{Sect=OtherTheorems}).  We use these theorems to construct examples of multilinear Fourier multipliers on the Heisenberg group. These are the first genuine examples of multilinear Fourier multipliers on a nonabelian group.

\vspace{0.3cm}

\noindent {\it Structure of the paper.}  Section \ref{Sect=NCLp} contains preliminaries on noncommutative $L_p$-spaces of group von Neumann algebras. Section \ref{Sect=LocalDeLeeuw} proves Theorem A. We also establish approximate embeddings of the non-commutative $L_p$-spaces assocatied with an inclusion of groups $\Gamma < G$. Section \ref{Sect=MultilinearRestriction} defines multilinear Fourier multipliers and proves Theorem C. The proofs of the remaing De Leeuw theorems are contained in Sections \ref{Sect=Lattice} and \ref{Sect=OtherTheorems}; this concerns the lattice approximation, compactification and periodization. Section \ref{Sect=Heisenberg} contains examples on the Heisenberg group.  In Sections \ref{Sect=LowerBound}, \ref{Sect=Measures}, \ref{Sect=KeyLemma} we prove Theorem B; we have postponed the proof of Theorem B to the end of the paper so that all Lie theoretic  arguments are presented separately.

\vspace{0.3cm}

\noindent {\it Acknowledgement.}  BJ and LM are grateful to Tobias Diez for interesting discussions and for providing crucial references. MC and AK wish to thank Adri\'an Gonz\'alez-P\'erez, Javier Parcet and \'Eric Ricard for useful discussions and communication.  The authors thank Gerrit Vos and the anonymous referee for suggesting a number of corrections to an earlier version of this manuscript.

\section{\texorpdfstring{Non-commutative $L_p$-spaces of group von Neumann algebras}{Non-commutative Lp-spaces of group von Neumann algebras}}\label{Sect=NCLp}
$\mathbb{N}$ denotes the natural numbers starting from 1. We denote $\mathbb{N}_{\geq 0} = \mathbb{N} \cup \{ 0 \}$.

\subsection{Assumptions on groups} All groups $G$ in this paper are assumed to be  locally compact, second countable and unimodular. Though our results can be stated without the second countability assumption, it significantly simplifies the exposition of our proofs as we can work with neighbourhood bases of the identity instead of shrinking nets of neighbourhoods. When we say that $\Gamma$ is a discrete subgroup of $G$ we mean that it is discrete in the topology of $G$. Any discrete subgroup $\Gamma < G$ is then countable.

For $s,t \in G$ we write $\Ad_s(t) = s t s^{-1}$. We denote $\mu$ for the Haar measure on $G$.    We use the shorthand notation $ds = d\mu(s)$ for integrals with respect to $\mu$. A set $V \subseteq G$ is called symmetric if $V = V^{-1}$. A neighbourhood basis of the identity is called symmetric if it consists of symmetric sets.

\subsection{Von Neumann algebras}  A von Neumann algebra $M$ is a unital $\ast$-subalgebra of bounded operators on a Hilbert space that is closed in the strong topology. $M$ is called semi-finite if it admits a faithful normal trace $\tau: M^+ \rightarrow [0, \infty]$. $M$ is finite in case there exists such $\tau$ with $\tau(1) = 1$, i.e. $\tau$ extends to a state on $M$.For general von Neumann algebra theory and non-commutative integration we refer to \cite{TakesakiI}, \cite{StratilaZsido}.

\subsection{Group von Neumann algebras} Let $C_b(G)$ be the continuous bounded functions $G \rightarrow \mathbb{C}$ and let $C_c(G)$ be the subspace of compactly supported functions.  Let $L_p(G), 1 \leq p < \infty$ be the Banach space of functions $G \rightarrow \mathbb{C}$ that are $p$-integrable, meaning $\Vert f \Vert_p := (\int_G \vert f(s) \vert^p ds)^{1/p} < \infty$.    For $f \in L_p(G), h \in L_1(G)$ we have a convolution product $f \ast h \in L_p(G)$ determined by
\[
(f \ast h)(s) = \int_G f(t) h(t^{-1} s) dt, \qquad s \in G.
\]
We further set
\[
f^\ast(s) = \overline{ f(s^{-1})}, \qquad f^\vee(s) = f(s^{-1}), \qquad s \in G.
\]
Let
\[
s \mapsto \lambda(s), \qquad (\lambda(s)\xi)(t) = \xi(s^{-1} t), \qquad s,t \in G,  \xi \in L_2(G),
\]
be the left regular representation. Set $\lambda(f) = \int_G f(s) \lambda(s) d(s)$ where the integral is convergent in the strong topology. We have
\begin{equation}\label{Eqn=ConvolutionLambda}
\lambda(f \ast h) = \lambda(f) \lambda(h), \qquad f,h \in L_1(G).
\end{equation}
We set the group von Neumann algebra
\[
\cL(G) = \overline{{\rm span}} \{ \lambda(s) \mid s \in G \}  =  \overline{{\rm span}} \{ \lambda(f) \mid f \in L_1(G) \},
\]
where $\overline{{\rm span}}$ denotes the strong closure of the linear span.
There exists a unique normal semi-finite faithful weight $\varphi_G$ on $\cL(G)$ that is defined as follows. For $x \in \cL(G)$ we have
\begin{equation}\label{Eqn=PlancherelWeight}
\varphi_G(x^\ast x)  =
\left\{
\begin{array}{ll}
\Vert f \Vert_2^2, & \textrm{ if } \exists f \in L_2(G) \textrm{ s.t. } \forall \xi \in C_c(G):  x \xi = f \ast \xi, \\
\infty, & \textrm{ otherwise}.
\end{array}
\right.
\end{equation}
 $\varphi_G$ is tracial, meaning that for all $x\in \cL(G), \varphi_G(x^\ast x) = \varphi_G(x x^\ast)$, iff $G$ is unimodular, which we will always assume.    $\varphi_G$ is a state if and only if $G$ is discrete.

\subsection{Crossed products}
 Let $M$ be a semi-finite von Neumann algebra with a trace $\tau_M$ and let $\Gamma$ be a discrete group acting on $M\subseteq B(H)$ via a trace preserving action $\theta:\Gamma \to \mathrm{Aut}(M)$. For $x \in M$, define the operator $\iota (x) \in B(H\otimes L_2(\Gamma))$ by
 \[
 (\iota(x)\xi)(g) = (\theta(g^{-1}) x) ( \xi(g) ), \qquad \xi \in L_2(\Gamma;H), g \in \Gamma
 \]
 The crossed product $M \rtimes_\theta \Gamma \subseteq B(H\otimes L_2(\Gamma))$ is the von Neumann algebra generated by $\{ 1 \otimes \lambda(g) \mid  g\in \Gamma \}$ and $\{ \iota(x) \mid x \in M \}$. This has a natural tracial weight $\tau$ which extends $\tau_M $ on $\{\iota(x) \mid x\in M\}$ and $\tau_\Gamma$ on ${1\otimes \lambda(g), g \in \Gamma}$ (c.f. \cite{TakesakiI}).

\subsection{\texorpdfstring{Non-commutative $L_p$-spaces}{Non-commutative Lp-spaces}} The results in this section can be found in \cite{Kunze},  \cite{Nelson}, \cite{TerpThesis}, \cite{PisierXu},  \cite{CaspersLpf}. For an exponent $p \in [1, \infty]$ we will write $p' \in [1,\infty]$ for the conjugate exponent set by $\frac{1}{p} + \frac{1}{p'} = 1$.
 Let $M$ be a von Neumann algebra equipped with a normal, semi-finite, faithful trace $\tau$. For $1 \leq p < \infty$, set
\[
\Vert x \Vert_p := \tau( \vert x \vert^p  )^{\frac{1}{p}}, \qquad x \in M.
\]
We define $L_p(M, \tau)$ as the completion of $\{ x \in M \mid \Vert x \Vert_p < \infty \}$ with respect to $\Vert \: \cdot \: \Vert_p$. Alternatively,
 $L_p(M, \tau)$ may be described as the space of all closed, densely defined operators $x$ that are affiliated with $M$ and for which $\Vert x \Vert_p^p := \tau(\vert x \vert^p )  < \infty$. The latter description is more concrete but requires the introduction of affiliated operators and extension of the trace thereon; this shall not be used further in this paper. We define $L_\infty(M, \tau)$ to be  $M$ with the operator norm. Then $L_p(M, \tau), 1 \leq p \leq \infty$ is a Banach space and we have a H\"older inequality
 \[
 \Vert x y \Vert_r \leq \Vert x \Vert_p \Vert y \Vert_q, \qquad \frac{1}{r} = \frac{1}{p} + \frac{1}{q}, \qquad 1 \leq p,q,r \leq \infty.
 \]
The H\"older inequality is moreover sharp in the sense that
\begin{equation}\label{Eqn=MaxHolder}
    \Vert x \Vert_p = \sup_{y \in L_q(M, \tau), y \not = 0}   \frac{\Vert x y \Vert_r}{\Vert y \Vert_q};
\end{equation}
indeed if $x = u \vert x \vert$ is the polar decomposition of $x$ then the supremum is attained at $y = \vert x \vert^{\frac{p}{q}}$.
 The trace $\tau$ may be extended from $M \cap L_1(M, \tau)$ to $L_1(M, \tau)$ linearly and continuously in $\Vert \: \cdot \:  \Vert_1$.
 For $x \in L_p(M, \tau)$ and $y \in L_{p'}(M, \tau)$ we have  in particular that $yx \in L_1(M, \tau)$ and we have a pairing
 \begin{equation}\label{Eqn=DualPairing}
 \langle y, x \rangle_{p', p} = \tau( y x).
 \end{equation}
We have $\Vert x \Vert_p = \Vert x^\ast \Vert_p$ for any $x \in L_p(M, \tau)$.

In case $(M, \tau)$ is $(\cL(G), \varphi_G)$ with $G$ a unimodular locally compact group - so that $\varphi_G$ is tracial - we simply write $L_p(\widehat{G})$ for $L_p( \cL(G), \varphi_G)$. If $G$ is abelian the latter space is isomorphic to the usual $L_p$-space of the Pontrjagin dual group $\widehat{G}$.
For $2 \leq p \leq \infty$ and  $f \in C_c(G)$ we have $\lambda(f) \in L_{p}(\widehat{G})$ with $\Vert \lambda(f) \Vert_{p} \leq \Vert f \Vert_{p'}$ (see \cite{Kunze}, \cite{Cooney},  \cite{CaspersLpf}) and such elements are dense. We denote $C_c(G)^{\ast 2} = C_c(G) \ast C_c(G)$ for the second convolution power of $C_c(G)$.
  The elements $\lambda(f),  f  \in C_c(G)^{\ast 2}$  are dense in $L_{p}(\widehat{G}), 1 \leq p <\infty$. The same densities hold for $p = \infty$ but then in the $\sigma$-weak topology of $L_\infty(\widehat{G}) = \cL(G)$. In each case the {\it frequency support} of $\lambda(f)$ is by definition the support of $f$. For $\phi, f \in \lambda(  C_c(G)^{\ast 2} )$  we have
\begin{equation}\label{Eqn=PairingConcrete}
\langle \lambda(\phi^\vee), \lambda(f) \rangle_{p', p} = \int_G \phi(s) f(s) ds.
\end{equation}
It follows directly from \eqref{Eqn=PlancherelWeight} that the {\it Plancherel identity} holds:
\[
  \Vert \lambda(f) \Vert_2 = \Vert f \Vert_2, \qquad f \in L_1(G) \cap L_2(G).
\]
 In case $G = \Gamma$ is discrete the elements with finite frequency support are given by the group algebra
\[
\mathbb{C}[\Gamma] := \{ \lambda(f) \mid f \in c_{00}(\Gamma) \},
\]
which is dense in $L_p(\widehat{\Gamma})$ for any $1 \leq p < \infty$. Here $c_{00}(\Gamma)$ denotes the finitely supported functions $\Gamma \rightarrow \mathbb{C}$.

 \begin{remark}
 There are notions of non-commutative $L_p$ associated with an arbitrary von Neumann algebra due to Haagerup \cite{TerpThesis} and Connes-Hilsum \cite{Hilsum}. This allows for the study of de Leeuw theorems for non-unimodular groups, see for instance \cite[Section 8]{CPPR}. However, to keep the current paper broadly accessible we will work within the realm of unimodular groups and tracial von Neumann algebras.
 \end{remark}

\subsection{Fourier multipliers}  We say that a  function $m \in C_b( G)$ is a $p \rightarrow q$-multiplier with $1 \leq p,q < \infty$ if there exists a bounded linear map $T_m: L_p(\widehat{G}) \rightarrow L_q(\widehat{G})$ that is determined by
\[
T_m(\lambda(f)) = \lambda(mf), \qquad f \in C_c(G)^{\ast 2}.
\]
In particular this encompasses that $\lambda(mf) \in L_q(\widehat{G})$.
We briefly say $p$-multiplier  in case $p=q$. By the Plancherel identity the space of $2$-multipliers with continuous symbol is $C_b(G)$.

\begin{remark}\label{Rmk=Dual}
Let $2 \leq p < \infty$.
Suppose that $m \in C_b(G)$ is a $p$-multiplier. Then under the pairing  \eqref{Eqn=PairingConcrete} the dual of $T_m: L_p(\widehat{G}) \rightarrow L_p(\widehat{G})$ is the $p'$-multiplier given by $T_{m^\vee}: L_{p'}(\widehat{G}) \rightarrow L_{p'}(\widehat{G})$.
\end{remark}

\section{Theorem A: The local linear De Leeuw restriction theorem}\label{Sect=LocalDeLeeuw}
The aim of this section is to prove Theorem A. We also introduce the quantified version of local almost invariant neighbourhoods in Definition \ref{Dfn=DeltaNew} which we shall determine for real reductive Lie groups in Section \ref{Sect=LowerBound}.
We fix again a locally compact unimodular group $G$ which we assume to be second countable.   Let $\Gamma <  G$ be a discrete subgroup. Recall that $\mu$ is the Haar measure of $G$.

At this stage recall that $\delta_F$ as was defined in Definition \ref{Dfn=DeltaNew}.

\begin{definition}[See \cite{CPPR}]\label{Dfn=SAIN}
For a closed subgroup $H < G$ we say that $G$ has {\it small almost invariant neighbourhoods}  with respect to $H$  (notation $G \in \SAIN_H$) if for every $F \subseteq H$ finite we have $\delta_F = 1$. Equivalently, $c(H) = 1$ where $c$ is defined in Definition \ref{Dfn=Cfunction} or the introduction of this paper.
\end{definition}

\begin{remark}\label{Rmk=SAIN-Delta}
By \cite[Theorem 8.7]{CPPR}  if the discrete group $\Gamma$ is  amenable   then $G \in \SAIN_\Gamma$.
\end{remark}

\begin{remark}
  $G$ has small invariant neighbourhoods (notation $G \in \SIN_\Gamma$) with respect to $\Gamma$ if  there exists a neighbourhood basis $\mathcal{V}$ of the identity of $G$ such that $\Ad_s(V) = V, s \in \Gamma$. Clearly then $G \in \SAIN_\Gamma$. Indeed, we may replace $V \in \mathcal{V}$ by $V \cap V^{-1}$ to obtain a symmetric neighbourhood basis with the desired properties.
\end{remark}

\begin{remark}\label{Rmk=Shrink}
Suppose that $G \in \SAIN_\Gamma$. Since $G$ is first countable pick any neighbourhood basis $(U_i)_{i \in \mathbb{N}}$ of the identity of $G$. Since $G$ is second countable,  $\Gamma$ is countable.  Let $F_i \subseteq \Gamma, i \in \mathbb{N}$ be an increasing sequence of finite subsets whose union is $\Gamma$. For every $i \in \mathbb{N}$ we may inductively pick a relatively compact open neighbourhood $V_i$ of the identity in $G$ such that $1 - \frac{1}{i} \leq \delta_{F_i}(V_i) \leq 1$,  $V_{i} \subseteq V_{i-1}$ and $V_{i} \subseteq U_i$. Then $\mathcal{V} = (V_i)_{i \in \mathbb{N}}$ is a neighbourhood basis of the identity such that for any $F \subseteq \Gamma$ finite we have
\[
\lim_{V \in \mathcal{V}} \delta_F(V) = 1.
\]
   This construction has the advantage that we may fix a single neighbourhood basis $\mathcal{V}$ independent of the finite set $F$.
\end{remark}

\subsection{\texorpdfstring{Asymptotic embeddings for $2 \leq p < \infty$}{Asymptotic embeddings for p larger than two}}\label{Sect=Embeddings}
The aim of this section is to construct contractive maps $L_p(\widehat{\Gamma}) \rightarrow L_p(\widehat{G})$ that are asymptotic embeddings. In case $G \in \SAIN_\Gamma$ these maps are asymptotically isometric. Our results therefore generalize \cite[Claim A]{CPPR}.

\begin{lemma}\label{Lem=PositiveDef}
Let $V \subseteq G$ be a measurable set with $0 \not =  \mu(V)  < \infty$. Let $F \subseteq G$ be  finite. The matrix $A = (A_{s,t})_{s,t \in F}$ given by
\[
A_{s,t} = \frac{\mu( \Ad_{s}( V)  \cap \Ad_{t}( V)   )}{ \mu( V ) },
\]
is positive definite. Moreover $A \geq \delta_F(V) \allone$ where $\allone = (1)_{s,t \in F}$ is the matrix with all entries equal to 1.
\end{lemma}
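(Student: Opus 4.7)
The matrix $A$ is (up to the normalization $\mu(V)^{-1}$) a Gram matrix, which gives positive semidefiniteness for free; the sharper bound $A \geq \delta_F(V)\,\allone$ comes from separating off the common intersection $W_F := \bigcap_{r \in F} \Ad_r(V)$, which contributes exactly the rank-one term $\delta_F(V)\,\allone$.

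More precisely, for each $s \in F$ set $\chi_s := \mathbf{1}_{\Ad_s(V)} \in L_2(G)$. Since $G$ is unimodular and $\Ad_s$ preserves Haar measure, $\|\chi_s\|_2^2 = \mu(V)$, and
\[
\langle \chi_s, \chi_t\rangle_{L_2(G)} = \mu\bigl(\Ad_s(V)\cap \Ad_t(V)\bigr) = \mu(V)\, A_{s,t}.
\]
Hence $A = \mu(V)^{-1}(\langle \chi_s,\chi_t\rangle)_{s,t\in F}$ is a Gram matrix, and therefore positive semidefinite.

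For the refinement, note that $W_F \subseteq \Ad_s(V)$ for every $s \in F$, so we may decompose
\[
\chi_s = \mathbf{1}_{W_F} + \chi'_s, \qquad \chi'_s := \mathbf{1}_{\Ad_s(V)\setminus W_F},
\]
where $\mathbf{1}_{W_F}$ and $\chi'_s$ have disjoint support. Expanding the inner product yields
\[
\langle \chi_s, \chi_t\rangle = \|\mathbf{1}_{W_F}\|_2^2 + \langle \chi'_s, \chi'_t\rangle = \mu(W_F) + \langle \chi'_s, \chi'_t\rangle,
\]
since the two cross terms vanish by disjointness of supports. Dividing by $\mu(V)$ gives
\[
A_{s,t} = \delta_F(V) + \mu(V)^{-1}\langle \chi'_s, \chi'_t\rangle,
\]
so that $A = \delta_F(V)\,\allone + B$, where $B := \mu(V)^{-1}(\langle \chi'_s,\chi'_t\rangle)_{s,t\in F}$ is again a Gram matrix, hence positive semidefinite. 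This establishes $A \geq \delta_F(V)\,\allone$.

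There is no real obstacle here: the argument is essentially an elementary Gram-matrix/indicator-function computation, and the only thing to be careful about is the disjointness of supports that kills the cross terms. The lemma is a clean algebraic identity used later in conjunction with the operator-algebraic positivity arguments leading to Theorem A.
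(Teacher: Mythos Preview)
Your proof is correct and follows essentially the same approach as the paper: both arguments subtract off the indicator of the common intersection $W_F=\bigcap_{r\in F}\Ad_r(V)$ and observe that the remaining matrix $\bigl(\langle \chi'_s,\chi'_t\rangle\bigr)_{s,t}$ is a Gram matrix, hence positive semidefinite. The paper's proof is just a terser version of yours, writing $\xi(s)=\mu(V)^{-1/2}(\mathbf{1}_{\Ad_s(V)}-\mathbf{1}_{W_F})$ directly.
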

\begin{proof}
Let $V_0 := \bigcap_{s\in F} \Ad_{s}(V)$ and define $\xi:  F \rightarrow L_2(G)$ by
\[
\xi(s) := \mu(V)^{-1/2}(1_{\Ad_s(V)} - 1_{V_0}).
\]
 Then $\langle \xi(s), \xi(t) \rangle = A_{s,t} - \frac{ \mu(V_0) }{\mu(V) }$ is a positive definite kernel on $F \times F$. Since $\delta_{F}(V) = \frac{\mu(V_0)}{\mu(V)}$ the result follows.
 \end{proof}

As usual we view $\mathbb{C}[\Gamma] \subseteq \cL(\Gamma) \subseteq \cL(G)$ as subalgebras of each other naturally.

\begin{proposition}\label{Prop=AlmostIsometry1}
  Let $x \in \mathbb{C}[\Gamma]$ with finite frequency support $F \subseteq \Gamma$. Let  $V$ be a relatively compact symmetric neighbourhood of the identity of $G$ and assume that   $s V,  s \in F$ are disjoint.
Set
$k_V =  \mu(V)^{-\frac{1}{2}}   \lambda( 1_{V} ) \in L_2(\widehat{G})$ with polar decomposition $k_V = u_V h_V$. Then set
\[
\Phi_{p,V}(x) = x  h_V^{\frac{2}{p}}, \qquad 2 \leq p \leq \infty,
\]
where we use the notation $\Phi_{\infty,V}(x) = x$.
Then for every $2 \leq p \leq \infty$ we have
\begin{equation}\label{Eqn=LocalLeeuwUp}
 \Vert \Phi_{p,V}(x) \Vert_{L_p(\widehat{G})} \leq  \Vert x \Vert_{L_p(\widehat{\Gamma})}.
\end{equation}
\end{proposition}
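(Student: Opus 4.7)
I will handle the endpoints $p = \infty$ and $p = 2$ directly, then interpolate. At $p = \infty$ the map $\Phi_{\infty,V}$ is the identity, and the inequality reduces to the well-known fact that $\cL(\Gamma) \hookrightarrow \cL(G)$ is isometric; fixing a right fundamental domain $D \subseteq G$ for $\Gamma$ gives a unitary identification $L_2(G) \cong \ell_2(\Gamma) \otimes L_2(D)$ under which the restriction $\lambda^G|_{\Gamma}$ becomes $\lambda^\Gamma \otimes 1$, and hence $\|x\|_{\cL(G)} = \|x\|_{\cL(\Gamma)}$ for every $x \in \mathbb{C}[\Gamma]$.

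At $p = 2$ I compute $\|x h_V\|_2^2 = \tau(x^\ast x h_V^2)$ exactly. Writing $x = \sum_{s \in F} c_s \lambda(s)$ and using symmetry of $V$ (so that $1_V^\ast = 1_V$, whence $h_V^2 = k_V^\ast k_V = \mu(V)^{-1} \lambda(1_V \ast 1_V)$), the Plancherel identity $\tau(\lambda(\phi)) = \phi(e)$ applied to $\delta_g \ast 1_V \ast 1_V$ yields
\[
\tau(\lambda(g)\, h_V^2) = \mu(V)^{-1} (1_V \ast 1_V)(g^{-1}) = \mu(V)^{-1}\, \mu(V \cap g^{-1} V).
\]
For $g = s^{-1} t$ left-invariance of Haar measure gives $\mu(V \cap t^{-1} s V) = \mu(tV \cap sV)$, and the disjointness hypothesis forces this to vanish for $s \neq t$ and equal $\mu(V)$ for $s = t$. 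Therefore
\[
\|x h_V\|_2^2 = \sum_{s,t \in F} \overline{c_s} c_t\, \tau(\lambda(s^{-1} t)\, h_V^2) = \sum_{s \in F} |c_s|^2 = \|x\|_{L_2(\widehat \Gamma)}^2.
\]

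For intermediate $2 < p < \infty$ I apply Stein's complex interpolation theorem for noncommutative $L_p$-spaces to the analytic family
\[
T_z(x) := x\, h_V^z, \qquad 0 \leq \operatorname{Re}(z) \leq 1,
\]
defined on the finite-dimensional subspace $\mathbb{C}[F]$ and taking values in the $\tau$-measurable operators affiliated to $\cL(G)$. Since $h_V \geq 0$, the operator $h_V^{it}$ is unitary, so right multiplication by it preserves both the $\cL(G)$-operator norm and the $L_2(\widehat G)$-norm. The two endpoint calculations above therefore upgrade, uniformly in $t \in \mathbb{R}$, to $\|T_{it}\|_{\cL(\Gamma) \to \cL(G)} \leq 1$ and $\|T_{1+it}\|_{L_2(\widehat \Gamma) \to L_2(\widehat G)} \leq 1$. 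Interpolating at $\theta = 2/p$ between the compatible couples $(\cL(\Gamma), L_2(\widehat \Gamma))$ and $(\cL(G), L_2(\widehat G))$, and invoking the standard identification $[\cL(H), L_2(\widehat H)]_{2/p} = L_p(\widehat H)$ for tracial von Neumann algebras, I conclude $\|\Phi_{p,V}\|_{L_p(\widehat \Gamma) \to L_p(\widehat G)} = \|T_{2/p}\|_{L_p(\widehat \Gamma) \to L_p(\widehat G)} \leq 1$.

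The delicate step is the $p = 2$ computation, where the polar decomposition naturally produces ``right-translate'' intersections $V \cap V g$, while the disjointness hypothesis is phrased in terms of ``left-translate'' intersections $sV \cap tV$. Bridging the two requires the combined use of the symmetry $V = V^{-1}$ and the unimodularity of $G$, after which the off-diagonal terms vanish cleanly.
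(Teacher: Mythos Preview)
Your proof is correct and follows essentially the same route as the paper: verify the endpoint $p=\infty$ via the isometric inclusion $\cL(\Gamma)\subseteq\cL(G)$, compute the $p=2$ case exactly using Plancherel and the disjointness of the translates $sV$, and interpolate via the three lines lemma applied to the analytic family $z\mapsto x\,h_V^{z}$. The only cosmetic difference is that the paper reaches the $p=2$ identity by passing through $\|x h_V\|_2 = \|x k_V\|_2 = \mu(V)^{-1/2}\|\sum_s x_s 1_{sV}\|_{L_2(G)}$ rather than expanding $h_V^2=\mu(V)^{-1}\lambda(1_V\ast 1_V)$ and tracing, but the two computations are equivalent.

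One small inaccuracy: $h_V^{it}$ need not be unitary, since $h_V$ may have nontrivial kernel; it is merely a partial isometry whose range and source are the support projection of $h_V$. This does not affect your argument, as right multiplication by a partial isometry is still contractive on every $L_q(\widehat G)$, which is all the endpoint bounds require.
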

\begin{proof}
Write $x = \sum_{s \in F} x_s \lambda(s)$. By  Remark \ref{Rmk=kVhV} below and the Plancherel identity
\[
\Vert \Phi_{2,V}(x) \Vert_{L_2(\widehat{G})} =   \Vert  x h_V \Vert_{L_2(\widehat{G})} = \Vert  x k_V \Vert_{L_2(\widehat{G})} = \mu(V)^{-\frac{1}{2}} \Vert \sum_{s \in F}  x_s 1_{s V} \Vert_{L_2(G)}.
\]
Since $sV, s \in F$ are disjoint and using once more the  Plancherel identity,
\begin{equation}\label{Eqn=PhiPlancherel}
\Vert \Phi_{2,V}(x) \Vert_{L_2(\widehat{G})} = ( \sum_{s \in F} \vert x_s \vert ^2 )^{\frac{1}{2}} = \Vert x \Vert_{L_2(\widehat{\Gamma})}.
\end{equation}
This gives the upper estimate \eqref{Eqn=LocalLeeuwUp} for $p =2$. For $p = \infty$ the   estimate \eqref{Eqn=LocalLeeuwUp} is trivial. From the three lines lemma (similar to Stein's interpolation theorem for analytic families of maps \cite{SteinInterpolation}) we then have  \eqref{Eqn=LocalLeeuwUp} for all $2 \leq p \leq \infty$.
\end{proof}

\begin{remark}\label{Rmk=kVhV}
In Proposition \ref{Prop=AlmostIsometry1} since $V = V^{-1}$ it follows that $k_V$ is self-adjoint. Hence $h_V$ commutes with $u_V$ and $u_V$ is a self-adjoint partial isometry with $u_V^2$ being the support projection of $h_V$. We shall repeatedly make use of these observations without further reference.
\end{remark}

\begin{proposition}\label{Prop=AlmostIsometry2}
  Let $x \in \mathbb{C}[\Gamma]$ with finite frequency support $F \subseteq \Gamma$.
For $\mathcal{V}$ a symmetric neighbourhood basis of the identity of $G$  we have  for every $2 < p <\infty$
\begin{equation}\label{Eqn=LocalLeeuwDown}
\liminf_{V \in \mathcal{V}} \delta_F^{\frac{1}{2}}(V)
\Vert x \Vert_{L_p(\widehat{\Gamma} ) } \leq  \liminf_{V \in \mathcal{V}}  \Vert \Phi_{p,V}(x) \Vert_{L_p(\widehat{G})}.
\end{equation}
\end{proposition}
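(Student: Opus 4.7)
The plan is to establish the lower bound by a duality argument paired with the upper bound from Proposition~\ref{Prop=AlmostIsometry1}. Writing $p' = p/(p-1)$, by \eqref{Eqn=MaxHolder} together with the density of $\mathbb{C}[\Gamma]$ in $L_{p'}(\widehat{\Gamma})$,
\[
\|x\|_{L_p(\widehat{\Gamma})} = \sup\bigl\{|\tau_\Gamma(xy)| : y \in \mathbb{C}[\Gamma],\ \|y\|_{L_{p'}(\widehat{\Gamma})} \leq 1\bigr\}.
\]
For any test $y$ with finite frequency support, the identity $\tau_G(\lambda(r)h_V^2) = \mu(V)^{-1}\mu(V \cap r^{-1}V)$ combined with discreteness of $\Gamma$ gives $\tau_G(xyh_V^2) \to \tau_\Gamma(xy)$ as $V \to \{e\}$ through $\mathcal{V}$. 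Factoring $xyh_V^2 = \Phi_{p,V}(x)\cdot h_V^{2/p'}y$ and applying Hölder in $L_p(\cL(G))$ gives
\[
|\tau_G(xyh_V^2)| \leq \|\Phi_{p,V}(x)\|_p \cdot \|h_V^{2/p'}y\|_{p'}.
\]

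The crux of the argument is the asymptotic dual upper bound
\[
\limsup_{V\in\mathcal{V}}\|h_V^{2/p'}y\|_{p'} \leq \bigl(\liminf_{V\in\mathcal{V}}\delta_F(V)\bigr)^{-1/2}\|y\|_{L_{p'}(\widehat{\Gamma})}
\]
for the test elements $y$ that approach the duality supremum. I would obtain it via Stein's three lines lemma on the analytic family $\Psi_z(y) = h_V^{2z}y$ for $\operatorname{Re}(z)\in[0,\tfrac{1}{2}]$: at $\operatorname{Re}(z)=0$ the multiplier $h_V^{2it}$ is a partial isometry, giving a contraction on $L_\infty$; at $\operatorname{Re}(z)=\tfrac12$, using the intertwining $\lambda(t)h_V = h_{\Ad_tV}\lambda(t)$ together with Plancherel, one reduces $\|h_Vy\|_2^2$ to a quadratic form $\sum_{s,t}\bar{y}_sy_t B_{s,t}(V)$ whose Gram matrix $B_{s,t}=\mu(V)^{-1}\mu(V\cap t^{-1}sV)$ can be compared to the matrix $A$ of Lemma~\ref{Lem=PositiveDef}, in which the bound $A \geq \delta_F(V)\mathbb{I}$ produces exactly the $\delta_F^{-1/2}$ factor on the dual side. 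Interpolating via Stein then propagates this control to the full range $p'\in(1,2)$.

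Combining Hölder's inequality with this dual bound and taking the supremum over $y$ yields \eqref{Eqn=LocalLeeuwDown}. The main obstacle is arranging the interpolation so that the $\delta_F(V)$ that appears is indexed by $F = \operatorname{supp}(x)$ and not by $\operatorname{supp}(y)$. This requires restricting the duality supremum to test elements $y$ whose frequency support lies in $F^{-1}$, which is legitimate because only $y|_{F^{-1}}$ contributes to $\tau_\Gamma(xy)$, and then approximating within the finite-dimensional space $\mathbb{C}[F^{-1}]$ while controlling the $L_{p'}(\widehat{\Gamma})$-norm; the delicate point is that truncation of Fourier support is not a priori bounded on $L_{p'}(\widehat{\Gamma})$, so the reduction must be performed intrinsically through the quadratic form at the $L_2$-endpoint.
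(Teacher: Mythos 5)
Your duality setup is the $L_1$-pairing $\tau(\cdot\,)$ with conjugate exponent $p' = p/(p-1) \in (1,2)$, whereas the paper uses the $L_2$-pairing with the $L_2$-conjugate exponent $q$ determined by $p^{-1} + q^{-1} = 2^{-1}$, so that both $p$ and $q$ lie in $[2,\infty)$. This choice in the paper is not cosmetic: it puts the dual test element $y$ in a range of exponents where Proposition~\ref{Prop=AlmostIsometry1} already gives the contraction $\|\Psi_{q,V}(y)\|_{q}\leq\|y\|_{q}$, and it makes the pairing $\|\Phi_{p,V}(x)\Psi_{q,V}(y)\|_2 = \|x k_V y\|_2$ a Plancherel computation in which $x$ sits to the left and $y$ to the right of $k_V$, producing the \emph{conjugated} sets $s_1 V t_1 \cap s_2 V t_2$ (with $s_1 t_1 = s_2 t_2$) whose measures are exactly the entries $\mu(\Ad_{s_1}V\cap\Ad_{s_2}V)$ of Lemma~\ref{Lem=PositiveDef}. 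That is where, and only where, $\delta_F(V)$ enters.

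Your argument has a genuine gap at the step you identify as the crux. First, the interpolation range is wrong: the analytic family $\Psi_z(y) = h_V^{2z}y$ with $\operatorname{Re}(z)\in[0,\tfrac12]$ hits the exponents $h_V^{2/q}y$ with $q\geq 2$, not $h_V^{2/p'}y$ with $p'\in(1,2)$. To reach $p'\in(1,2)$ you would need $\operatorname{Re}(z)\in(\tfrac12,1)$, so the $L_\infty$-endpoint has to be replaced by an $L_1$-endpoint, and it is precisely the $L_1$ behaviour of the one-sided map $y\mapsto h_V^{2}y$ that is not a contraction and is not established. Second, the $L_2$-endpoint does not produce the factor $\delta_F^{-1/2}$ you want: the Gram matrix of $\|h_V y\|_2^2 = \mu(V)^{-1}\sum_{s,t}\bar{y}_s y_t\,\mu(Vs\cap Vt)$ involves \emph{right-translated} sets $Vs\cap Vt$, which become disjoint for $V$ small, so this form tends to $\|y\|_{L_2(\widehat\Gamma)}^2$ exactly; it is not the conjugation matrix $\mu(V)^{-1}\mu(\Ad_s V\cap\Ad_t V)$ of Lemma~\ref{Lem=PositiveDef}, and it carries no $\delta_F$. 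Third, and most telling: if your claimed asymptotic bound $\limsup_V\|h_V^{2/p'}y\|_{p'}\leq\|y\|_{L_{p'}(\widehat\Gamma)}$ were available (which is what your endpoints would actually yield, were the range of $z$ correct), combined with your trace-limit you would conclude $\|x\|_{p}\leq\liminf_V\|\Phi_{p,V}(x)\|_{p}$ with no $\delta_F$-loss at all. That is strictly stronger than the proposition and would eliminate the SAIN-type hypothesis from Theorem~A entirely, which is not the case. So the putative dual estimate must be false without the $\delta_F$-correction, and your proposal provides no mechanism to produce that correction. The fix is exactly the paper's: pair in $L_2$ against a two-sided object $x k_V y$ so that the conjugation structure and Lemma~\ref{Lem=PositiveDef} can operate.
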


\begin{proof}
For $p = 2$ the proof is \eqref{Eqn=PhiPlancherel} and for $p=\infty$ the statement is obvious as $\cL(\Gamma)$ is a von Neumann subalgebra of $\cL(G)$. So we assume that $2 < p < \infty$.
Let $2 < q < \infty$ be the $L_2$-conjugate of $p$ determined by $p^{-1} +  q^{-1} = 2^{-1}$.
Let $\varepsilon >0$ and let  $y \in \mathbb{C}[\Gamma]$  be such that $\Vert y \Vert_{L_q(\widehat{\Gamma})} \leq 1$ and $\Vert x y \Vert_{L_2(\widehat{\Gamma})} > \Vert x \Vert_{L_p(\widehat{\Gamma})} - \varepsilon$, see \eqref{Eqn=MaxHolder}. Let $F_y$ be the inverse set of the frequency support of $y$ and write
\[
x = \sum_{s \in F} x_s \lambda(s), \qquad y = \sum_{s \in F_y} y_{s^{-1}} \lambda(s), \qquad x_s, y_s \in \mathbb{C}.
\]
We may consider a tail of the symmetric basis $\mathcal{V}$ so that each $V \in \mathcal{V}$ is relatively compact and so that each of the following families are disjoint: (1) $sV, s \in F$; (2) $s V, s \in F_y$. Moreover, we may assume that (3) $s_1Vt_1$ and $s_2Vt_2$ are disjoint whenever $s_1, s_2 \in F$, $t_1, t_2 \in F_y$ with  $s_1 t_1 \neq s_2 t_2$.

 For $V \in \mathcal{V}$ set $\Psi_{q,V}(y) = u_V h_V^{\frac{2}{q}} y$.
It follows that
$\Psi_{q,V}(y) =  u_V (y^\ast h_V^{\frac{2}{q}})^\ast   =  u_V \Phi_{q,V}(  y^\ast  )^\ast$.
 The frequency support of $y^\ast$ is $F_y$.  Since we assumed that $s V, s \in F_y$ are disjoint it follows from \eqref{Eqn=LocalLeeuwUp} -- but then applied to $y$ -- that,
\begin{equation}\label{Eqn=Contraction}
\Vert \Psi_{q,V}(y) \Vert_{L_q(\widehat{G})} = \Vert \Phi_{q,V}(  y^\ast  )  \Vert_{L_q(\widehat{G})}   \leq \Vert y^\ast \Vert_{L_q(\widehat{\Gamma})} =  \Vert y \Vert_{L_q(\widehat{\Gamma})}.
\end{equation}
Using \eqref{Eqn=MaxHolder} and \eqref{Eqn=Contraction} we find for $V \in \mathcal{V}$,
\begin{equation}\label{Eqn=EstimateByConjugate}
\Vert \Phi_{p,V}(x) \Vert_{L_p(\widehat{G})} = \sup_{z \in L_q(\widehat{G}), \Vert z \Vert_{L_q(\widehat{G})} \leq 1 }  \Vert \Phi_{p,V}(x) z \Vert_{L_2(\widehat{G})} \geq  \Vert \Phi_{p,V}(x) \Psi_{q,V}(y) \Vert_{L_2(\widehat{G})}.
\end{equation}
 We have
\[
\begin{split}
\Vert \Phi_{p,V}(x)  \Psi_{q,V}(y) \Vert_{L_2(\widehat{G})} = &  \Vert  x  h_V^{\frac{2}{p} + \frac{2}{q}} u_V y  \Vert_{L_2(\widehat{G})}
= \Vert x k_V y\Vert_2 = \mu(V)^{-\frac{1}{2}}
\Vert  \sum_{s\in F, t \in F_y}  x_s y_t   \lambda( 1_{ s V t   } )  \Vert_{L_2(\widehat{G})}.
\end{split}
\]
By the Plancherel identity and disjointness assumption (3) in the first paragraph of this proof,
\begin{equation}\label{Eqn=IsometryProof1}
\begin{split}
\Vert \Phi_{p,V}(x)  \Psi_{q,V}(y) \Vert_{L_2(\widehat{G})}^2 = & \mu(V)^{-1}
\int_G \sum_{s_1, s_2 \in F, t_1, t_2 \in F_y}  x_{s_1} y_{t_1}  \overline{  x_{s_2}  y_{t_2 }}    1_{ s_1 V t_1  }(g)    1_{ s_2  V t_2 } (g) dg \\
= & \sum_{r \in \Gamma} \sum_{\substack{ s_1, s_2 \in F, t_1, t_2 \in F_y \\ s_1 t_1 = r = s_2 t_2}}  x_{s_1} y_{t_1}  \overline{  x_{s_2}  y_{t_2 }}  \frac{  \mu(   s_1 V s_1^{-1} \cap s_2   V s_2^{-1}  )  }{ \mu( V) }\\
= & \sum_{r \in \Gamma} \sum_{  s_1 ,   s_2 \in F \cap r F_y^{-1} }  x_{s_1} y_{s_1^{-1} r}  \overline{  x_{s_2}  y_{s_2^{-1} r } }  \frac{  \mu( s_1 V s_1^{-1} \cap s_2   V s_2^{-1} )  }{ \mu(V)  }. \\
\end{split}
\end{equation}
By Lemma \ref{Lem=PositiveDef} we find for each summand $r \in \Gamma$ that
\begin{equation}\label{Eqn=IsometryProof2}
 \sum_{r \in \Gamma}   \sum_{  s_1 ,   s_2 \in F \cap r F_y^{-1}  }  x_{s_1} y_{s_1^{-1} r}  \overline{  x_{s_2}  y_{s_2^{-1} r } }  \frac{  \mu( s_1 V s_1^{-1} \cap s_2   V s_2^{-1} )  }{ \mu(V)  }
 \geq
\sum_{r \in \Gamma}    \sum_{s_1 ,   s_2 \in F \cap r F_y^{-1}}  x_{s_1} y_{s_1^{-1} r}  \overline{  x_{s_2}  y_{s_2^{-1} r } }    \delta_F(V).
 \end{equation}
Combining \eqref{Eqn=IsometryProof1} and  \eqref{Eqn=IsometryProof2} we get
\[
\begin{split}
\Vert \Phi_{p,V}(x)  \Psi_{q,V}(y) \Vert_{L_2(\widehat{G})}^2 \geq & \delta_F(V) \sum_{r \in \Gamma}   \sum_{   s_1 ,   s_2 \in F \cap r F_y^{-1}  }  x_{s_1} y_{s_1^{-1} r}  \overline{  x_{s_2}  y_{s_2^{-1} r } }  \\
= & \delta_F(V) \Vert \sum_{r \in \Gamma} \sum_{\substack{ s_1  \in F, t_1 \in F_y \\ s_1 t_1 = r}}  x_{s_1} y_{t_1} \lambda(r) \Vert_{L_2(\widehat{\Gamma} )}^2 \\
=& \delta_F(V)  \Vert  xy \Vert_{L_2(\widehat{\Gamma})}^2 \geq
\delta_F(V)  (\Vert x \Vert_{L_p(\widehat{\Gamma})} - \varepsilon )^2.
\end{split}
\]
  Hence by \eqref{Eqn=EstimateByConjugate} we get that
\[
\Vert \Phi_{p,V}(x) \Vert_{L_p(\widehat{G})} \geq \delta_F(V)^{\frac{1}{2}} (\Vert x \Vert_{L_p(\widehat{\Gamma})} - \varepsilon ).
\]
So certainly
$\liminf_{V \in \mathcal{V}} \Vert \Phi_{p,V}(x) \Vert_{L_p(\widehat{G})} \geq \liminf_{V \in \mathcal{V}} \delta_F(V)^{\frac{1}{2}} (\Vert x \Vert_{L_p(\widehat{\Gamma})} - \varepsilon )$. Since this holds for every $\varepsilon >0$ we get that
\[
\liminf_{V \in \mathcal{V}} \Vert \Phi_{p,V}(x) \Vert_{L_p(\widehat{G})} \geq \liminf_{V \in \mathcal{V}} \delta_F(V)^{\frac{1}{2}} \Vert x \Vert_{L_p(\widehat{\Gamma})}.
\]
\end{proof}

\subsection{Asymptotic intertwiners}

 We will need to make use of the following proposition to prove our De Leeuw restriction theorems: both in the local linear and in the multilinear setting.

\begin{proposition}[Claim B on p. 24 of \cite{CPPR}]\label{Prop=Intertwiner}
Let $\mathcal{V}$ be symmetric neighbourhood basis of the identity of $G$.
Let $2 \leq q < p < \infty$ or $1 \leq p < q \leq 2$.
  Let $m \in C_b(G)$ be a  $p$-multiplier. Then for $x \in \mathbb{C}[\Gamma]$ we have
\[
\lim_{V \in \mathcal{V}} \Vert T_{m}(  x h_V^{\frac{2}{q}} ) -    T_{m \vert_\Gamma}(x) h_V^{\frac{2}{q}} \Vert_{L_q(\widehat{G})} = 0 .
\]
\end{proposition}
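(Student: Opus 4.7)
The plan is to treat the case $q=2$ first by a direct Plancherel calculation, and then extend to all $q$ in the stated range by Stein complex interpolation; the sub-case $1 \leq p < q \leq 2$ is reduced to $2 \leq q < p < \infty$ using the duality of Remark \ref{Rmk=Dual}.

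For the base case $q=2$, I fix $x = \sum_{s \in F} x_s \lambda(s) \in \mathbb{C}[\Gamma]$ with $F$ finite. Since $\Gamma$ is discrete in $G$, by shrinking along $\mathcal{V}$ I may assume the translates $\{sV : s \in F\}$ are pairwise disjoint. The direct calculation would first be done for $k_V$ in place of $h_V$: the identity $\lambda(s)\lambda(1_V) = \lambda(1_{sV})$ gives
\[
T_m(xk_V) - T_{m\vert_\Gamma}(x)k_V = \mu(V)^{-1/2}\sum_{s\in F}x_s\,\lambda((m-m(s))1_{sV}),
\]
and Plancherel together with disjointness bounds the $L_2$-norm by $\omega_V\|x\|_{L_2(\widehat\Gamma)}$, where $\omega_V := \max_{s\in F,\, g\in sV}|m(g)-m(s)| \to 0$ by continuity of $m$. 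To transfer from $k_V$ to $h_V = |k_V|$, I would extend the argument to every monomial $k_V^j$ (whose associated function $1_V^{*j}$ has support $V^j$ still shrinking to $\{e\}$) and then approximate $|t|$ by polynomials $P_n$ with $P_n(0)=0$ on the shrinking spectrum $\sigma(k_V)\subseteq[-\mu(V)^{1/2},\mu(V)^{1/2}]$, bounding the residual $\|x(h_V - P_n(k_V))\|_2$ via the spectral measure $\mu_k$ of $k_V$, whose second moment is $\|k_V\|_2^2 = 1$.

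For the general case, I apply Stein--Hirschmann interpolation to the analytic family
\[
z\mapsto F_V(z) := T_m(xh_V^{\alpha(z)}) - T_{m\vert_\Gamma}(x)h_V^{\alpha(z)}
\]
on the strip $\{0\leq\operatorname{Re}z\leq 1\}$, with $\alpha$ affine satisfying $\alpha(0) = 2/p$ and $\alpha(1) = 1$. On $\operatorname{Re}z = 0$, the $L_p$-norm of $F_V(z)$ is uniformly bounded using the $p$-multiplier hypothesis together with Proposition \ref{Prop=AlmostIsometry1} (which gives $\|xh_V^{2/p}\|_p\leq\|x\|_{L_p(\widehat\Gamma)}$ and similarly for $T_{m\vert_\Gamma}(x)h_V^{2/p}$); on $\operatorname{Re}z = 1$, $\|F_V(z)\|_2 \to 0$ by the base case. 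The three-lines lemma then yields $\|F_V(\theta)\|_q \to 0$ at the interior point $\theta$ with $\alpha(\theta) = 2/q$, which is the desired conclusion.

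The main obstacle is the passage from $k_V$ to $h_V$ at $q=2$. Because $T_m$ is not a right $\cL(G)$-module map, one cannot algebraically pull the partial isometry $u_V = \operatorname{sgn}(k_V)$ in the polar decomposition $k_V = u_V h_V$ through $T_m$; moreover $u_V$ genuinely oscillates (already for $G = \mathbb{R}$, where it is the sign of a sinc function) and does not converge to the identity. The polynomial functional-calculus strategy sidesteps this by working entirely with expressions polynomial in $k_V$, but requires a careful spectral estimate in the non-finite trace $\varphi_G$, where uniform-norm approximation alone does not control $L_2$-norms; this is the technical heart of the argument.
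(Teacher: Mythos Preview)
There is a real gap in the interpolation step. Stein interpolation requires control of $\|F_V(1+it)\|_{2}$ for all $t\in\mathbb{R}$, not just at $t=0$; here $F_V(1+it) = T_m(xh_V^{1+i\beta t}) - T_{m\vert_\Gamma}(x)h_V^{1+i\beta t}$ with $\beta = 1-2/p$. Since $T_m$ is not a right $\cL(G)$-module map, the partial isometry $h_V^{i\beta t}$ cannot be pulled through $T_m$, so this does not reduce to your $q=2$ base case for $h_V$. You would therefore need the $q=2$ estimate for $h_V^{1+is}$ uniformly in $s$, and polynomial approximation of $|t|^{1+is}=|t|e^{is\log|t|}$ near $t=0$ gets worse, not better, as $|s|$ grows. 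The $q=2$ case for $h_V$ itself is also not established: you correctly flag that sup-norm approximation does not control $L_2$-norms when $\varphi_G$ is infinite, but supply no substitute estimate. The duality reduction for $1\leq p<q\leq 2$ is likewise not obvious, since the statement concerns convergence of a specific net rather than mere boundedness of $T_m$, and Remark~\ref{Rmk=Dual} does not transfer such convergence.

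The proof in \cite{CPPR} (which the paper simply invokes) avoids all of this via the Ando/Mazur-map inequality \cite[Corollary~1.4]{CPPR}, with the extension to $1<q<2$ supplied by \cite[Lemma~3.1]{RicardIsrael}: for a trace-preserving unital completely positive map $T$ one has $\|T(|y|^{2/q}) - |y|^{2/q}\|_q \leq C\,\|T(y)-y\|_2^{\theta}\|y\|_2^{1-\theta}$. Applied with $y=k_V$, this transfers the elementary Plancherel estimate on $k_V$ directly to the fractional power $h_V^{2/q}$, bypassing both the intermediate $h_V$ step and any complex interpolation.
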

\begin{proof}
For $2 \leq q < p < \infty$ the proposition is exactly Claim B of \cite{CPPR}. The case $1 \leq p < q \leq 2$ the same proof holds with trivial modifications with only one remark taken into account. Namely, at the place where \cite[Corollary 1.4]{CPPR} was used (which is stated only for $2 \leq q < \infty$) we need to use \cite[Lemma 3.1]{RicardIsrael} instead (which extends this corollary for $1 < q < 2$).  Note that  \cite[Lemma 3.1]{RicardIsrael} was only stated for finite von Neumann algebras, but its proof remains valid in the semi-finite setting exactly as it is stated.
\end{proof}

\subsection{Proof of Theorem A}
We are now ready to prove a localized  version of the De Leeuw restriction theorem generalizing \cite[Theorem A]{CPPR}. This theorem removes the $\SAIN$ condition from \cite[Theorem A]{CPPR} by taking into account the support of a multiplier.

\begin{definition}\label{Dfn=Cfunction} For $U \subseteq \Gamma$ we define
\[
	c(U):= \inf \{ \delta_{F}^{1/2} \mid  F \subseteq U \: \mathrm{finite} \}.
\]
In particular if $U$ is finite we have $c(U) = \delta_{U}^{1/2}$.
\end{definition}

\begin{theorem}[Theorem A]\label{Thm=LocalLeeuw}
  Let $m \in C_b( G)$. Then for every $1 \leq p < \infty$ we have that
\begin{equation}\label{Eqn=LocalRestriction}
c( \supp(m\vert_\Gamma)  )  \Vert T_{m \vert_\Gamma}: L_p(\widehat{\Gamma})   \rightarrow L_p(\widehat{\Gamma}) \Vert \leq
\Vert T_{m  }: L_p(\widehat{G}) \rightarrow L_p(\widehat{G}) \Vert.
\end{equation}
\end{theorem}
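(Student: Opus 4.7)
The case $p = 2$ is immediate, since $\Vert T_m\Vert_{L_2 \to L_2} = \Vert m\Vert_\infty \geq \Vert m|_\Gamma\Vert_\infty = \Vert T_{m|_\Gamma}\Vert_{L_2 \to L_2}$ and $c(\cdot) \leq 1$.  For $1 \leq p < 2$, Remark \ref{Rmk=Dual} reduces the inequality to the case $p' \in (2,\infty)$ applied to $m^\vee$; one checks $c(U^{-1}) = c(U)$ from the identity $(\bigcap_{s \in F}\Ad_s(V))^{-1} = \bigcap_{s \in F^{-1}}\Ad_s(V)$ for symmetric $V$, together with inverse-invariance of $\mu$ under unimodularity.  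I therefore assume $2 < p < \infty$ and that $T_m$ is bounded on $L_p(\widehat{G})$, since otherwise the claim is trivial.

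Fix $x \in \mathbb{C}[\Gamma]$ and let $F' \subseteq \supp(m|_\Gamma)$ be the (finite) frequency support of $T_{m|_\Gamma}(x)$.  Given $\eta > 0$, pick a symmetric neighbourhood basis $\mathcal{V}$ of the identity with $\liminf_V \delta_{F'}(V) \geq \delta_{F'} - \eta$, which is possible by the definition of $\delta_{F'}$ as a supremum.  My strategy is to assemble Propositions \ref{Prop=AlmostIsometry1}, \ref{Prop=AlmostIsometry2} and \ref{Prop=Intertwiner} at an auxiliary level $q \in (2,p)$ and then pass to the limit $q \to p^-$.  This detour is forced on us by the strict inequality $q < p$ in Proposition \ref{Prop=Intertwiner}.

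For each $q \in (2,p)$, Proposition \ref{Prop=AlmostIsometry2} applied at level $q$ to $T_{m|_\Gamma}(x) \in \mathbb{C}[\Gamma]$ gives
\[
(\delta_{F'} - \eta)^{1/2} \Vert T_{m|_\Gamma}(x)\Vert_{L_q(\widehat{\Gamma})} \leq \liminf_V \Vert T_{m|_\Gamma}(x)\, h_V^{2/q}\Vert_{L_q(\widehat{G})}.
\]
Proposition \ref{Prop=Intertwiner}, which is applicable because $m$ is a $p$-multiplier and $2 \leq q < p$, lets one replace this right-hand side by $\liminf_V \Vert T_m(x\, h_V^{2/q})\Vert_{L_q(\widehat{G})}$.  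Combining with Proposition \ref{Prop=AlmostIsometry1} at level $q$ and the $L_q$-boundedness of $T_m$ gives the upper bound $\Vert T_m\Vert_{L_q \to L_q} \Vert x\Vert_{L_q(\widehat{\Gamma})}$.  Non-commutative Riesz--Thorin interpolation \cite{PisierXu} between the automatic bound $\Vert T_m\Vert_{L_2 \to L_2} = \Vert m\Vert_\infty$ and the hypothesis $\Vert T_m\Vert_{L_p \to L_p}$ supplies $\Vert T_m\Vert_{L_q \to L_q} \leq \Vert m\Vert_\infty^{1 - \theta(q)} \Vert T_m\Vert_{L_p \to L_p}^{\theta(q)}$ with $\theta(q) = p(q-2)/(q(p-2)) \to 1$ as $q \to p^-$.

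Sending $q \to p^-$, the $L_q$-norms of $x$ and $T_{m|_\Gamma}(x)$ converge to their $L_p$-counterparts (by continuity of $\Vert \cdot\Vert_{L_q}$ in $q$ on elements of finite frequency support), while the interpolation constant tends to $\Vert T_m\Vert_{L_p \to L_p}$.  Letting $\eta \to 0$ and invoking $\delta_{F'}^{1/2} \geq c(\supp(m|_\Gamma))$ (since $F' \subseteq \supp(m|_\Gamma)$), then taking the supremum over $x \in \mathbb{C}[\Gamma]$, yields the theorem.  The main obstacle is precisely this interpolation-and-limit step: since Proposition \ref{Prop=Intertwiner} stubbornly requires $q \neq p$, one must verify that the lower bound from Proposition \ref{Prop=AlmostIsometry2}, the intertwining, and the $L_q \to L_p$ convergence of the multiplier norm all behave compatibly as $q \to p^-$.
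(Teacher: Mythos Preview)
Your argument follows the same route as the paper's: reduce to $2<p<\infty$, work at an auxiliary level $q\in(2,p)$ using Propositions \ref{Prop=AlmostIsometry1}, \ref{Prop=AlmostIsometry2} and \ref{Prop=Intertwiner}, then let $q\nearrow p$ (the paper also invokes continuity of $q\mapsto\|T_m\|_{L_q\to L_q}$, which you obtain via interpolation). Two small points deserve correction. First, your duality reduction does not cover $p=1$: the conjugate exponent is $p'=\infty$, outside the range $(2,\infty)$ you claim; the paper instead uses that $p\mapsto\log\|T_m\|_{L_p\to L_p}$ is convex (hence continuous) to deduce the $p=1$ case from $1<p<2$. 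Second, the displayed identity $(\bigcap_{s\in F}\Ad_s(V))^{-1}=\bigcap_{s\in F^{-1}}\Ad_s(V)$ is false for symmetric $V$: since $(sVs^{-1})^{-1}=sV^{-1}s^{-1}=sVs^{-1}$, the left side equals $\bigcap_{s\in F}\Ad_s(V)$, not $\bigcap_{s\in F^{-1}}\Ad_s(V)$, so your justification of $c(U^{-1})=c(U)$ does not go through as written (the paper itself passes over this point without comment).
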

\begin{proof}
For $p=2$ the statement is obvious. By complex interpolation the logarithm of the norms on both sides of \eqref{Eqn=LocalRestriction} is a convex function in $p$ and hence certainly continuous. So the case $p=1$ follows from the case $1 < p < 2$ by approximation. The case $1< p < 2$ follows by duality from the case $2 < p < \infty$, see Remark \ref{Rmk=Dual}.

Now assume $2 < p < \infty$.
  Let $x \in \mathbb{C}[\Gamma]$. The frequency support $F$ of $T_{m \vert_\Gamma}(x)$ is finite and contained in $\supp(m \vert_\Gamma)$.
   Let $\epsilon > 0$.
 Let $\mathcal{V}$ be a   symmetric neighbourhood basis  shrinking to the identity of $G$ such that
 \[
 \limsup_{V \in \mathcal{V}} \delta_{F}(V)^{-\frac{1}{2}} < \delta_{F}^{-\frac{1}{2}} + \epsilon.
 \]
  By Proposition \ref{Prop=AlmostIsometry2} we find
 \[
 \begin{split}
& \Vert  T_{m \vert_\Gamma}( x) \Vert_{L_p(\widehat{\Gamma})} =   \lim_{q \nearrow p} \:  \Vert   T_{m \vert_\Gamma}( x)\Vert_{L_q(\widehat{\Gamma})} \\
  \leq &  \lim_{q \nearrow p} \: \limsup_{V \in \mathcal{V}}  \: \delta_{F}(V)^{-\frac{1}{2}}  \Vert \Phi_{q,V} ( T_{m \vert_\Gamma}( x)) \Vert_{L_q(\widehat{G})}  \\
  \leq &  \lim_{q \nearrow p} \: \limsup_{V \in \mathcal{V}}  \: (\delta_{F}^{-\frac{1}{2}} + \epsilon)   ( \Vert T_m ( \Phi_{q,V}( x ) )  \Vert_{L_q(\widehat{G})}  +  \Vert T_m ( \Phi_{q,V}( x ) ) - \Phi_{q,V} (  T_{m \vert_\Gamma}( x))  \Vert_{L_q(\widehat{G})})  \\
  \end{split}
  \]
  By Proposition \ref{Prop=Intertwiner}  the last summand goes to 0. Hence using again Proposition \ref{Prop=AlmostIsometry2},
  \[
  \begin{split}
 \Vert  T_{m \vert_\Gamma}( x) \Vert_{L_p(\widehat{\Gamma})}
  \leq & \lim_{q \nearrow p}  (\delta_{F}^{-\frac{1}{2}} + \epsilon) \Vert T_m: L_q(\widehat{G} ) \rightarrow L_q( \widehat{G}  )   \Vert  \limsup_{V \in \mathcal{V}} \Vert  \Phi_{q,V}( x )  \Vert_{L_q(\widehat{G})} \\
  \leq & \lim_{q \nearrow p}   (\delta_{F}^{-\frac{1}{2}} + \epsilon) \Vert T_m: L_q(\widehat{G}) \rightarrow L_q(\widehat{G}) \Vert \Vert  x  \Vert_{L_q(\widehat{\Gamma})} \\
  = &     (\delta_{F}^{-\frac{1}{2}} + \epsilon) \Vert T_m: L_p(\widehat{G} ) \rightarrow L_p( \widehat{G} ) \Vert \Vert  x  \Vert_{L_p(\widehat{\Gamma})}.
 \end{split}
 \]
 Letting $\epsilon \searrow 0$ concludes the proof as $\mathbb{C}[\Gamma]$ is dense in $L_p(\widehat{\Gamma})$.
\end{proof}

\begin{remark}\label{Rmk=SAINcase}
In Theorem  \ref{Thm=LocalLeeuw} if moreover $G \in \SAIN_\Gamma$ then $\delta_F = 1$ for any $F \subseteq \Gamma$ finite and we recover \cite[Theorem A]{CPPR}. In particular our theorem also generalizes the restriction theorem from \cite{DeLeeuw}.
\end{remark}

\begin{remark}\label{Rmk=CBCaseThmA}
Use the notation of Theorem \ref{Thm=LocalLeeuw}. Now let $N$ be any (semi-finite) von Neumann algebra and consider the tensor product $L_{p}(N) \otimes L_{p}(\widehat{G})$, which is equipped with the $L_p$ norm induced by the tensor product of the weights on $N$ and $\mathcal{L}(G)$. We write $L_{p}(\widehat{G}; L_p(N))$ for $L_{p}(N) \otimes L_{p}(\widehat{G})$. Consider the vector-valued extension $T_m^{N}$ of $T_m$ given by $L_{p}(\widehat{G}; L_{p}(N))  \rightarrow  L_{p}(\widehat{G}; L_{p}(N))$,
\[
  T_m^N( x \otimes  \lambda(f) )
   = \int_{G} m(s)  f(s) \: x \otimes \lambda(s) ds.
\]
Let $\mathcal{R}$ be the hyperfinite II$_1$-factor.
We will say that $T_m$ is completely bounded if $T_m^{\mathcal{R}}$ is bounded and write $\Vert T_m \Vert_{\mathrm{cb}} = \Vert T_m^{\mathcal{R}} \Vert$. We still have \eqref{Eqn=LocalRestriction} in the cb-norm since we may apply the theorem to $\Gamma \times S_\infty < G \times S_\infty$ and $\mathcal{L}(S_\infty) = \mathcal{R}$ where $S_\infty$ is the group of finite permutations of $\mathbb{N}$.
\end{remark}

\begin{example}\label{Example=SLNR}
Theorem A provides an ansatz for finding genuine examples of $p$-multipliers on $\SL(n, \mathbb{Z})$ for $1<p<\infty$.
Consider $G = \SL(n, \mathbb{R})$ with discrete subgroup $\Gamma = \SL(n, \mathbb{Z})$. Suppose we can find
a class of symbols $m \in C_b(G)$ supported on the adjoint ball $B_\rho^G := \{g\in G \mid \|\Ad_{g}\| \leq \rho \}$ with radius $\rho \rightarrow \infty$
that satisfy
\begin{equation} \label{Eqn=Discussion}
\Vert T_{m}: L_p(\widehat{G}) \rightarrow L_p(\widehat{G}) \Vert \leq   \rho^{-\frac{1}{4} n (n-1)} k(\rho) \Vert m \vert_\Gamma \Vert_1
\end{equation}
for some $k(\rho)> 0$.
As a particular case of Theorem \ref{thm:DeltaForReductive} proved below (see Remark \ref{Rmk:ConeInterpretation}), we obtain that
\[
\delta_{B_\rho^G}^{-\frac{1}{2}} \leq \rho^{\frac{1}{4} n (n-1)},
\]
so Theorem A would yield
 \[
 \Vert T_{m\vert_\Gamma}: L_p(\widehat{\Gamma}) \rightarrow L_p(\widehat{\Gamma}) \Vert \leq k(\rho)  \Vert m \vert_\Gamma \Vert_1.
 \]
 A minimal requirement for this to yield genuine multipliers on $\Gamma = \SL(n, \mathbb{Z})$ is that $k(\rho) < 1$ and it is natural to require then that $k(\rho) \rightarrow 0$ as $\rho \rightarrow \infty$. 
As a sanity check, we verify that the lower bound on $k(\rho)$ obtained from estimates of the cardinality of $B_\rho^G \cap \Gamma$ do not exclude
the possibility that $k(\rho) \rightarrow 0$.

Since $\Vert m \Vert_\infty  \leq \Vert T_{m}: L_p(\widehat{G}) \rightarrow L_p(\widehat{G}) \Vert$ and $\Vert m \vert_\Gamma \Vert_1 \leq  \#(B_\rho^G \cap \Gamma )  \Vert m \Vert_\infty$, a necessary condition for \eqref{Eqn=Discussion} to hold is that
\[
\frac{\rho^{\frac{1}{4}n(n-1)}}{\#(B_\rho^G \cap \Gamma)} \leq k(\rho).
\]
 In \cite[Corollary 1.1 and Example 4.2]{Maucourant} it is shown that
 \begin{equation}\label{Eqn=CountingPoints}
 \mu( B_\rho^G ) \approx \#(B_\rho^G \cap \Gamma )  \approx \rho^{ \lfloor \frac{1}{4} n^2  \rfloor} \log(\rho)^{ \lceil \frac{1}{2} n \rceil},
 \end{equation}
so that
 $k(\rho) \gtrsim \rho^{-\frac{1}{4}n} \log(\rho)^{- \lceil \frac{1}{2} n \rceil} \Big(\rho^{\frac{1}{4} n^2 - \lfloor \frac{1}{4} n^2  \rfloor}\Big)$.
This shows that genuine multipliers of this form are not a priori excluded by the counting estimates.

 In detail, let $\mathfrak{a}$ be a maximally noncompact Cartan subalgebra of the Lie algebra $\mathfrak{g}$ of $G$ and let $\lambda_1, \ldots, \lambda_n \in \mathfrak{a}^\ast$ be the natural weights described in  \cite[Example 4.1, 4.2]{Maucourant}. The weights occurring in the adjoint representation are $\lambda_i - \lambda_j, 1 \leq i,j \leq n$ and $\mathcal{C}$ is defined as the convex hull of these.
 For $\beta$ the sum of all positive roots (see \cite{Maucourant}; so not the usual half-sum) we find,
 \[
 \beta = \sum_{i,j=1, i<j}^n (\lambda_i - \lambda_j)
  = \sum_{i=1}^n (n+1 - 2i) \lambda_i
 =  \sum_{i=1}^{ \lfloor n/2 \rfloor}   (n+1-2i) (\lambda_i - \lambda_{n+1-i}).
 \]
  Now set $D := \sum_{i=1}^{ \lfloor n/2 \rfloor}   (n+1-2i)$ which equals $\frac{1}{4} n^2$ if $n$ is even and  $\frac{1}{4} (n^2 - 1)$ if $n$ is odd. Briefly, $D = \lfloor \frac{1}{4} n^2  \rfloor$.  If $n$ is even $\beta/D$ is on the face of $\mathcal{C}$ given by the convex hull of the weights $\lambda_i - \lambda_{n+1-i}, 1 \leq i \leq \frac{n}{2}$. If $n$ is odd then $\beta/D$ is on the face of $\mathcal{C}$ given by the convex hull of the weights $\lambda_i - \lambda_{n+1-i}, 1 \leq i \leq \frac{n-1}{2}$. Further, we have $E := \dim(\mathfrak{a}) - (\frac{n}{2} -1) = \frac{n}{2}$ if $n$ is even and  $E := \dim(\mathfrak{a}) - (\frac{n-1}{2} -1) = \frac{n+1}{2}$ if $n$ is odd. Briefly, $E =  \lceil \frac{1}{2} n \rceil$. So we have determined $D$ and $E$ as in \cite{Maucourant} and \cite[Corollary 1.1]{Maucourant} yields \eqref{Eqn=CountingPoints}. Note that $D$ and $E$ in this example are written as $d$ and $e$ in \cite{Maucourant} which differs from $d$ as in Theorem B.
\end{example}

\subsection{\texorpdfstring{Asymptotic isometries for $1 \leq p < 2$ under the $\SAIN$ condition}{Asymptotic isometries for p between 1 and 2 under the SAIN condition}}
In order to prove the multilinear theorems we prove two lemmas that in particular extend the results of Section \ref{Sect=Embeddings} under the $\SAIN$ condition. Recall that the group C$^\ast$-algebra $C_r^\ast(\Gamma)$ is defined as the norm closure of $\mathbb{C}[\Gamma]$ in $B(\ell_2(\Gamma))$.

 \begin{lemma}\label{Lem=RicardCorollary}
 Let $1 \leq p < \infty$.  Assume $G \in \SAIN_\Gamma$ and let $\mathcal{V}$ be as in Remark \ref{Rmk=Shrink}. Then for every $x \in  C_r^\ast(\Gamma)$,
 \begin{equation}\label{Lem=GoesToZero}
\begin{split}
 & \lim_{V \in \mathcal{V} } \Vert  x  h_{V}^{\frac{2}{p} }   -  h_{V}^{\frac{2}{p}} x \Vert_{L_p(\widehat{G})} = 0.
\end{split}
\end{equation}
 \end{lemma}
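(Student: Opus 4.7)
The plan is to reduce to the case $x = \lambda(s)$, $s \in \Gamma$, then exploit the fact that symmetry of $V$ makes both $k_V$ and its conjugate $\lambda(s) k_V \lambda(s)^{-1}$ self-adjoint, use \SAIN{} to get $L_2$-smallness of their difference, and finally apply the Ricard almost-multiplicativity lemma to promote this to $L_p$-smallness of the commutator with $h_V^{2/p}$.

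First, I would check the uniform bound $\Vert h_V^{2/p} \Vert_p = \tau(h_V^2)^{1/p} = \Vert k_V \Vert_2^{2/p} = 1$, so by H\"older $\Vert x h_V^{2/p} - h_V^{2/p} x \Vert_p \leq 2 \Vert x \Vert_{C_r^\ast(\Gamma)}$ independently of $V$. Since $\mathbb{C}[\Gamma]$ is norm-dense in $C_r^\ast(\Gamma)$ and $x \mapsto [x,h_V^{2/p}]$ is linear, it suffices to prove the vanishing for a single $x = \lambda(s)$.

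Next, since $V = V^{-1}$, the element $k_V = \mu(V)^{-1/2}\lambda(1_V)$ is self-adjoint with $h_V = |k_V|$; and since $\Ad_s(V)$ is also symmetric, the conjugate $\lambda(s) k_V \lambda(s)^{-1} = \mu(V)^{-1/2}\lambda(1_{\Ad_s(V)})$ is self-adjoint as well. Consequently $\lambda(s) h_V \lambda(s)^{-1} = |\lambda(s) k_V \lambda(s)^{-1}|$, and by the Borel functional calculus $\lambda(s) h_V^{2/p} \lambda(s)^{-1} = (\lambda(s) h_V \lambda(s)^{-1})^{2/p}$. Unitary invariance of $\Vert \cdot \Vert_p$ then reduces the target to
\[
\Vert \lambda(s) h_V^{2/p} - h_V^{2/p}\lambda(s) \Vert_p = \Vert \lambda(s) h_V^{2/p} \lambda(s)^{-1} - h_V^{2/p} \Vert_p \to 0.
\]
On the $L_2$-side, the Plancherel identity together with unimodularity yields
\[
\Vert \lambda(s) k_V \lambda(s)^{-1} - k_V \Vert_2^2 = \mu(V)^{-1}\mu(\Ad_s(V)\triangle V) = 2\bigl(1 - \delta_{\{e,s\}}(V)\bigr),
\]
which tends to $0$ along the basis $\mathcal{V}$ from Remark \ref{Rmk=Shrink} by \SAIN.

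Finally, applying the Ricard almost-multiplicativity lemma (\cite[Lemma 3.1]{RicardIsrael}, whose validity in the semifinite setting for self-adjoint inputs is exactly as used in Proposition \ref{Prop=Intertwiner}) to the trace-preserving $\ast$-automorphism $T = \Ad_{\lambda(s)}$ and the self-adjoint $y = k_V$ (with $\Vert k_V \Vert_2 = 1$) gives a universal $\theta = \tfrac{1}{4}\min\{p/2, 2/p\} > 0$ and $C>0$ such that
\[
\Vert T(h_V^{2/p}) - h_V^{2/p}\Vert_p \leq C\,\Vert T(k_V)-k_V\Vert_2^\theta,
\]
which vanishes along $\mathcal{V}$ by the previous step, completing the proof. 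The main delicacy is ensuring that conjugation by $\lambda(s)$ commutes with the functional calculus $t \mapsto t^{2/p}$ on $h_V$; this is precisely what the symmetry of $V$ (hence of $\Ad_s(V)$) secures, so that a single application of Ricard's lemma converts the \SAIN-based $L_2$-bound into the required $L_p$-bound.
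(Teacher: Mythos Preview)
Your proof is correct and shares the paper's essential strategy: reduce to $x = \lambda(s)$, use \SAIN{} to obtain the $L_2$-estimate $\Vert \lambda(s) k_V \lambda(s)^{-1} - k_V \Vert_2 \to 0$, and then upgrade to $L_p$ via a Ricard-type almost-multiplicativity bound. The difference is that you invoke the self-adjoint variant of \cite[Lemma~3.1]{RicardIsrael} directly for all $1 \leq p < \infty$ (using $|k_V|^{2/p} = h_V^{2/p}$), whereas the paper first proves the case $2 \leq p < \infty$ via the polar-decomposition version (controlling $u_V h_V^{2/p}$ through \cite[Corollary~1.4]{CPPR}) and then handles general $1 \leq p < \infty$ by writing $h_V^{2/p} = h_V^{1/p} u_V \cdot u_V h_V^{1/p}$ and applying H\"older together with the $L_{2p}$-bound already obtained. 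Your route is more economical; the only caveat is that it requires Ricard's lemma at $p = 1$, which the paper only explicitly cites for $1 < p < \infty$ (cf.\ the proof of Proposition~\ref{Prop=Intertwiner}), while the paper's splitting trick sidesteps this edge case by always landing in $L_{2p}$ with $2p \geq 2$.
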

 \begin{proof}
By approximation and taking linear combinations it suffices to prove the lemma in case $x = \lambda(r)$ with $r \in \Gamma$.  Then  \cite[Corollary 1.4]{CPPR} shows that for $2 \leq p < \infty$,
 \begin{equation}\label{Eqn=InvarianceEstimate}
\begin{split}
 \Vert  \lambda(r)  u_V h_V^{\frac{2}{p} } \lambda(r)^\ast  -  u_V h_V^{\frac{2}{p}} \Vert_{L_p(\widehat{G})} \leq & C
 \Vert  \lambda(r) u_V h_V  \lambda(r)^\ast  -  u_V h_V  \Vert_{L_2(\widehat{G})}^{\frac{1}{2p}} \\
 = & C
 \left( \frac{  2 - 2  \mu(  \Ad_r( V )  \cap V   )}{ \mu(V) } \right)^{\frac{1}{4p}}.
 \end{split}
 \end{equation}
 Taking the limit over $V \in \mathcal{V}$ this converges to 0  by the $\SAIN$ condition, c.f. Remark \ref{Rmk=Shrink}. Now for any $1 \leq p < \infty$,
 \[
 \begin{split}
 \Vert  \lambda(r)   h_V^{\frac{2}{p} } \lambda(r)^\ast  -   h_V^{\frac{2}{p}} \Vert_{L_p(\widehat{G})} \leq &
  \Vert  \lambda(r)   h_V^{\frac{1}{p} } u_V u_V h_V^{\frac{1}{p} }  \lambda(r)^\ast  -    h_V^{\frac{1}{p} } u_V  \lambda(r)   u_V h_V^{\frac{1}{p} }  \lambda(r)^\ast  \Vert_{L_p(\widehat{G})} \\
  & +
    \Vert    h_V^{\frac{1}{p} } u_V  \lambda(r)   u_V h_V^{\frac{1}{p} }  \lambda(r)^\ast -     h_V^{\frac{1}{p} } u_V u_V  h_V^{\frac{1}{p} }  \Vert_{L_p(\widehat{G})} \\
    \leq & \Vert  \lambda(r)   h_V^{\frac{1}{p} } u_V    -    h_V^{\frac{1}{p} } u_V  \lambda(r) \Vert_{L_{2p}(\widehat{G})} \Vert  u_V h_V^{\frac{1}{p} }  \lambda(r)^\ast \Vert_{L_{2p}(\widehat{G})}    \\
  & +
    \Vert    h_V^{\frac{1}{p} } u_V \Vert_{L_{2p}(\widehat{G})}  \Vert \lambda(r)   u_V h_V^{\frac{1}{p} }  \lambda(r)^\ast -     u_V  h_V^{\frac{1}{p} }  \Vert_{L_{2p}(\widehat{G})}. \\
 \end{split}
 \]
 By Proposition \ref{Prop=AlmostIsometry1} and \eqref{Eqn=InvarianceEstimate}  this expression converges to 0.
\end{proof}

For completeness we note that the crucial inequality behind \eqref{Eqn=InvarianceEstimate} is Ando's inequality \cite[Lemma 2.2]{CPPR} from which it could also be proved directly using the methods in \cite[Section 2]{CPPR}.

\begin{proposition}\label{Prop=AlmostIsometrySymmetric}
 Let $1 \leq p < \infty$.  Assume $G \in \SAIN_\Gamma$ and let $\mathcal{V}$  be as in Remark \ref{Rmk=Shrink}. For every $x \in C_r^\ast( \Gamma )$ we have  $\lim_{V \in \mathcal{V}} \Vert h_{V}^{\frac{1}{p}}   x  h_{V}^{\frac{1}{p}}  \Vert_{L_p(\widehat{G})} = \Vert x \Vert_{L_p(\widehat{\Gamma})}$.
\end{proposition}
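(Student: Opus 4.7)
The plan is to reduce the symmetric form $h_V^{1/p} x h_V^{1/p}$ to the one-sided form $\Phi_{p,V}(x) = x h_V^{2/p}$ of Section \ref{Sect=Embeddings} via an asymptotic commutator estimate, and then invoke Propositions \ref{Prop=AlmostIsometry1}, \ref{Prop=AlmostIsometry2} together with a duality trick for the small-$p$ regime. First I would apply Lemma \ref{Lem=RicardCorollary} with $p$ replaced by $2p$, which gives
\[
\lim_{V \in \mathcal{V}} \Vert x h_V^{1/p} - h_V^{1/p} x \Vert_{L_{2p}(\widehat{G})} = 0
\]
for $x \in C_r^\ast(\Gamma)$. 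Since $\Vert h_V^{1/p} \Vert_{L_{2p}(\widehat{G})}^{2p} = \tau(h_V^2) = \Vert k_V \Vert_2^2 = 1$, H\"older yields
\[
\Vert h_V^{1/p} x h_V^{1/p} - x h_V^{2/p} \Vert_{L_p(\widehat{G})} \leq \Vert h_V^{1/p} x - x h_V^{1/p} \Vert_{L_{2p}(\widehat{G})} \cdot \Vert h_V^{1/p} \Vert_{L_{2p}(\widehat{G})} \longrightarrow 0,
\]
so it suffices to show $\lim_V \Vert x h_V^{2/p} \Vert_{L_p(\widehat{G})} = \Vert x \Vert_{L_p(\widehat{\Gamma})}$.

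For $x \in \mathbb{C}[\Gamma]$ and $2 \leq p < \infty$, this is an immediate consequence of Proposition \ref{Prop=AlmostIsometry1} (upper bound, valid once $V$ is small enough that the translates $sV$, $s \in \supp(x)$, are disjoint) and Proposition \ref{Prop=AlmostIsometry2} (lower bound with factor $\delta_F(V)^{1/2}$), combined with $\lim_V \delta_F(V) = 1$, which holds under $\SAIN$ with $\mathcal{V}$ as in Remark \ref{Rmk=Shrink}. For $1 \leq p < 2$ I would argue by duality: for a test element $w \in \mathbb{C}[\Gamma]$ with $\Vert w \Vert_{L_{p'}(\widehat{\Gamma})} \leq 1$, the dual exponent $p'$ exceeds $2$, so applying Proposition \ref{Prop=AlmostIsometry1} to $w^\ast$ and taking adjoints yields $\Vert h_V^{2/p'} w \Vert_{L_{p'}(\widehat{G})} \leq 1$ once $V$ is small. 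Using $2/p + 2/p' = 2$ and the trace property,
\[
\tau_G(x h_V^{2/p} \cdot h_V^{2/p'} w) = \tau_G(h_V^2 w x),
\]
and a direct Plancherel computation via $\varphi_G(\lambda(f)) = f(e)$ applied to $f = 1_V^\ast \ast 1_V$, together with the identity $(1_V^\ast \ast 1_V)(g) = \mu(V \cap gV)$ and the fact that $\mu(V)^{-1} \mu(V \cap rV) \to \mathbf{1}_{r = e}$ for $r \in \Gamma$, gives $\tau_G(h_V^2 w x) \to \tau_\Gamma(w x)$. Supremising over $w$ yields the lower bound $\liminf_V \Vert x h_V^{2/p} \Vert_p \geq \Vert x \Vert_{L_p(\widehat{\Gamma})}$, and the matching upper bound follows from an Araki--Lieb--Thirring inequality $\tau_G((h_V^{1/p} x h_V^{1/p})^p) \leq \tau_G(h_V^2 \vert x \vert^p)$ (valid for positive $x$ and $p \geq 1$), combined with the same Plancherel limit applied to $\vert x \vert^p \in \mathbb{C}[\Gamma]$, the reduction to positive $x$ being handled through polar decomposition and the commutator estimate of Step 1.

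Density of $\mathbb{C}[\Gamma]$ in $C_r^\ast(\Gamma)$ for the $L_p(\widehat{\Gamma})$-norm, combined with the fact that both sides of the claimed identity are $\Vert \cdot \Vert_{L_p(\widehat{\Gamma})}$-continuous in $x$ (the left-hand side via the upper bound just established), extends the result to all $x \in C_r^\ast(\Gamma)$. The principal obstacle I anticipate is the case $1 \leq p < 2$: Propositions \ref{Prop=AlmostIsometry1} and \ref{Prop=AlmostIsometry2} are intrinsically restricted to $p \geq 2$ via the three-lines argument, so the small-$p$ regime genuinely requires the dual reformulation above together with the Araki--Lieb--Thirring detour and polar decomposition to treat non-positive $x$.
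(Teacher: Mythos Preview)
Your argument is correct and takes a genuinely different route from the paper. The paper avoids your case split on $p$ entirely: for $x = y^*y$ with $y \in \mathbb{C}[\Gamma]$, Lemma~\ref{Lem=RicardCorollary} gives $\|h_V^{1/p} y^*y h_V^{1/p}\|_{L_p(\widehat{G})} \approx \|y h_V^{1/p}\|_{L_{2p}(\widehat{G})}^2$, and since $2p \geq 2$ this falls directly under Propositions~\ref{Prop=AlmostIsometry1}--\ref{Prop=AlmostIsometry2}; the extension to general $x \in C_r^*(\Gamma)$ then proceeds positive $\to$ self-adjoint (via $x = x_+ - x_-$ with orthogonal supports) $\to$ general (via a $2\times 2$ matrix trick, embedding into $S_\infty \times \Gamma < S_\infty \times G$). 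Your approach instead reduces to the one-sided form $x h_V^{2/p}$ and treats $p<2$ by duality plus Araki--Lieb--Thirring. The paper's exponent-doubling trick is more self-contained (no external ALT, no separate duality computation); your route is more direct about the small-$p$ mechanism but imports an inequality not otherwise used in the paper. Two small repairs are needed: first, $|x|^p$ is generally not in $\mathbb{C}[\Gamma]$ for non-integer $p$, only in $C_r^*(\Gamma)$, so your Plancherel limit $\tau_G(h_V^2 z) \to \tau_\Gamma(z)$ must first be extended from $\mathbb{C}[\Gamma]$ to $C_r^*(\Gamma)$ by operator-norm approximation (using $\|h_V^2\|_1 = 1$); second, the final density step should invoke operator-norm rather than $L_p$-norm approximation, since the uniform-in-$V$ bound you actually have is $\|h_V^{1/p}(x_n - x)h_V^{1/p}\|_{L_p(\widehat{G})} \leq \|x_n - x\|_\infty$.
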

\begin{proof}
First, take $y \in \mathbb{C}[\Gamma]$ and assume $x = y^\ast y$. Then we have, by Lemma \ref{Lem=RicardCorollary}, Proposition \ref{Prop=AlmostIsometry2} as $G \in \SAIN_\Gamma$  that,
\[
\lim_{V \in \mathcal{V}} \Vert h_{V}^{\frac{1}{p}}   x  h_{V}^{\frac{1}{p}}  \Vert_{L_p(\widehat{G})}^p =
\lim_{V \in \mathcal{V}} \Vert   y  h_{V}^{\frac{1}{p}}  \Vert_{L_{2p}(\widehat{G})}^{2p} =   \Vert   y    \Vert_{L_{2p}(\widehat{\Gamma})}^{2p} = \Vert x \Vert_{L_p(\widehat{\Gamma})}^p.
\]
Now if   $x \in C_r^\ast(\Gamma)$ is positive and $\epsilon > 0$ then we may find $y \in \mathbb{C}[\Gamma]$ with $\Vert x - y^\ast y \Vert_{L_\infty(\widehat{\Gamma})} < \epsilon$. Then,
\[
\begin{split}
 \vert \Vert h_{V}^{\frac{1}{p}}   x  h_{V}^{\frac{1}{p}}  \Vert_{L_p(\widehat{G})} - \Vert x \Vert_{L_p(\widehat{\Gamma})} \vert \leq &
  \vert \Vert h_{V}^{\frac{1}{p}}   y^\ast y   h_{V}^{\frac{1}{p}}  \Vert_{L_p(\widehat{G})} - \Vert y^\ast y \Vert_{L_p(\widehat{\Gamma})} \vert + ( \Vert h_{V}^{\frac{1}{p}}  (x - y^\ast y)   h_{V}^{\frac{1}{p}}  \Vert_{L_p(\widehat{G})} +  \Vert x- y^\ast y \Vert_{L_p(\widehat{\Gamma})}) \\
  \leq &  \vert \Vert h_{V}^{\frac{1}{p}}   y^\ast y   h_{V}^{\frac{1}{p}}  \Vert_{L_p(\widehat{G})} - \Vert y^\ast y \Vert_{L_p(\widehat{\Gamma})}\vert + 2 \Vert x - y^\ast y \Vert_{L_\infty(\widehat{\Gamma})},
\end{split}
\]
which is smaller than $3 \epsilon$ for $V \in \mathcal{V}$ small.

Now assume that $x \in C_r^\ast(\Gamma)$ is self-adjoint and write $x = x_+ - x_-$ with $0 \leq x_\pm \in C_r^\ast(\Gamma)$ and orthogonal support.  Then $\Vert   x    \Vert_{L_p(\widehat{\Gamma})}^p =  \Vert   x_+    \Vert_{L_p(\widehat{\Gamma})}^p +  \Vert   x_-   \Vert_{L_p(\widehat{\Gamma})}^p$ and similarly, by Lemma \ref{Lem=RicardCorollary},
\[
\begin{split}
\lim_{V \in \mathcal{V}} \Vert h_V^{\frac{1}{p}}  x  h_V^{\frac{1}{p}}  \Vert_{L_p(\widehat{G})}^p =&
\lim_{V \in \mathcal{V}} \Vert x_+^{\frac{1}{2}}  h_V^{\frac{2}{p}} x_+^{\frac{1}{2}} -   x_-^{\frac{1}{2}}  h_V^{\frac{2}{p}} x_-^{\frac{1}{2}} \Vert_{L_p(\widehat{G})}^p =
\lim_{V \in \mathcal{V}} \Vert x_+^{\frac{1}{2}}  h_V^{\frac{2}{p}} x_+^{\frac{1}{2}} \Vert_{L_p(\widehat{G})}^p +  \Vert  x_-^{\frac{1}{2}}  h_V^{\frac{2}{p}} x_-^{\frac{1}{2}} \Vert_{L_p(\widehat{G})}^p\\
 = &
\lim_{V \in \mathcal{V}} \Vert h_V^{\frac{1}{p}}  x_+  h_V^{\frac{1}{p}}  \Vert_{L_p(\widehat{G})}^p + \Vert h_V^{\frac{1}{p}}  x_-  h_V^{\frac{1}{p}}  \Vert_{L_p(\widehat{G})}^p.
\end{split}
\]
So  we may conclude the proof from the self-adjoint case.

 Now for general $x \in C_r^\ast(\Gamma)$  we may replace $x$ and $h_V$ by respectively,
\[
\left(
\begin{array}{cc}
  0      &   x \\
  x^\ast &   0
\end{array}
\right), \qquad \textrm{ and } \qquad  \left(
\begin{array}{cc}
  h_V      &   0 \\
  0 &   h_V
\end{array}
\right)
\]
in $L_p(M_2(\mathbb{C}  ) \otimes L_\infty(\widehat{\Gamma}) )$ and $L_p(M_2(\mathbb{C})\otimes L_\infty(\widehat{G}))$ respectively and conclude the proof; it follows that the first part of the proof also holds in this $2 \times 2$-matrix amplification. One way to see this is using the fact that the group von Neumann algebra of the infinite permutation group $S_\infty$ is the hyperfinite $\text{II}_1$ factor, $ \mathcal{R}$.  We now replace the inclusion $\Gamma < G$ by $S_\infty \times \Gamma < S_\infty \times G$. The corresponding von Neumann algebras are then $\mathcal{R} \otimes \mathcal{L}(\Gamma)$ and $\mathcal{R} \otimes \mathcal{L}(G)$. Since the matrix algebras embed into $\mathcal{R}$, we may view the matrix amplification of $x$ as an element of $L_p(\mathcal{R}\otimes L_\infty(\widehat{\Gamma}))$.
\end{proof}

\section{Theorem C: The multilinear De Leeuw restriction theorem}\label{Sect=MultilinearRestriction}

In this section we define multilinear Fourier multipliers and prove the multilinear De Leeuw restriction theorem.

\subsection{Multilinear maps} Let $X_1, \ldots, X_n$ and $Y$ be Banach spaces. Let $T: X_1 \times \ldots \times X_n \rightarrow Y$ be a multilinear map, meaning that it is linear in each of its variables. The norm of $T$ is then defined as
\[
\Vert T \Vert = \sup_{x_i \in X_i, x_i \not = 0} \frac{ \Vert T(x_1, \ldots, x_n) \Vert_Y }{ \Vert x_1 \Vert_{X_1} \ldots \Vert x_n \Vert_{X_n} }.
\]

\subsection{Multilinear Fourier multipliers}
Let $G$ be a locally compact unimodular group. Let $m: G^{\times n} \rightarrow \mathbb{C}$ be a  bounded measurable function. Consider the tuple $(p_1, \ldots, p_n)$ with $1 \leq p_i < \infty$ and let also $1 \leq p < \infty$.     We say that $m$ is a $(p_1, \ldots, p_n) \rightarrow p$ {\it multiplier} if there exists a bounded $n$-linear map
\[
T_m: L_{p_1}(\widehat{G}) \times \ldots \times L_{p_n}(\widehat{G}) \rightarrow L_{p}(\widehat{G}),
 \]
 that is determined by
 \[
    T_m( \lambda(f_1), \ldots, \lambda(f_n) )  = \int_{G^{\times n}} m(s_1, \ldots, s_n)  f_1(s_1) \ldots f_n(s_n) \lambda(s_1 \ldots s_n) ds_1 \ldots ds_n,
 \]
 for all $f_1, \ldots, f_n \in C_c(G)^{\ast 2}$. By mild abuse of terminology we also refer to the map $T_m$ as the $(p_1, \ldots, p_n) \rightarrow p$  multiplier and $m$ is then explicitly referred to as the symbol of $T_m$. If $p$ is the conjugate determined by $p^{-1} = \sum_{j=1}^n p_j^{-1}$ then we simply speak about a $(p_1, \ldots ,p_n)$-multiplier in analogy with the linear case $n=1$. $n$ is also called the {\it order} of the multiplier.

\subsection{Reduction formulae}
We prove a number of lemmas that reduce the analysis of $n$-linear Fourier multipliers to Fourier multipliers of lower order.

 \begin{definition}
   For $1 \leq k \leq n$ we say that a tuple $(q_1, \ldots, q_k), 1 \leq q_j \leq \infty$ is a {\it consummation} of $(p_1, \ldots, p_n), 1 \leq p_j \leq \infty$ at indices  $1 = i_1  < \ldots < i_k \leq n$ if  $q_l^{-1} =   \sum_{j = i_l}^{i_{l+1} - 1}  p_{j}^{-1}$.
 \end{definition}

\begin{lemma}\label{Lem=Consumption}
    For $1 \leq k \leq n$ let  $(q_1, \ldots, q_k), 1 \leq q_j < \infty$  be a consummation of $(p_1, \ldots, p_n), 1 \leq p_j < \infty$ at indices $1 = i_1  < \ldots < i_k \leq n$.
  Suppose that $m:   G^{\times  k}   \rightarrow \mathbb{C}$ is bounded and measurable and set
    \[
  \widetilde{m}(s_1, \ldots, s_n) =  m(  s_1 \ldots s_{i_2-1}, s_{i_2} \ldots s_{i_3-1}, \ldots, s_{i_{k-1}} \ldots s_{i_k-1},  s_{i_k} \ldots s_n ).
  \]
   Then $m$ is a $(q_1, \ldots, q_{k})$-multiplier iff $\widetilde{m}$ is a $(p_1, \ldots, p_{n})$-multiplier and moreover, for $x_i \in L_{p_i}(\widehat{G})$ we have
  \[
  T_{\widetilde{m}}( x_1, \ldots, x_n  ) = T_m( x_1 \ldots x_{i_2-1}, x_{i_2} \ldots x_{i_{3}-1 }, \ldots, x_{i_{k-1}} \ldots x_{i_k-1},  x_{i_k} \ldots x_n ).
  \]
\end{lemma}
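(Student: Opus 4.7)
The plan is to first establish the identity on the dense subspace of simple elements by a direct change-of-variables, and then upgrade it to the equivalence of multiplier boundedness in both directions via the non-commutative H\"older inequality.

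For the core computation I take $x_j = \lambda(f_j)$ with $f_j \in C_c(G)^{\ast 2}$. By relation \eqref{Eqn=ConvolutionLambda}, $x_{i_l} x_{i_l+1} \cdots x_{i_{l+1}-1} = \lambda(h_l)$ with $h_l := f_{i_l} \ast \cdots \ast f_{i_{l+1}-1}$. Substituting into the defining integral for $T_m$ and expanding each $h_l(u_l)$ as an iterated convolution integral over the $(i_{l+1}-i_l-1)$ free variables $s_{i_l},\ldots,s_{i_{l+1}-2}$---with $s_{i_{l+1}-1}$ determined by $u_l = s_{i_l} s_{i_l+1} \cdots s_{i_{l+1}-1}$---Fubini's theorem (legitimate by compact support) lets me reorder the integrals, and changing variable from $u_l$ to $s_{i_{l+1}-1}$ in each block (legitimate because $G$ is unimodular) converts the expression into the defining integral of $T_{\widetilde{m}}$ evaluated at $(\lambda(f_1),\ldots,\lambda(f_n))$, using $u_1 \cdots u_k = s_1 \cdots s_n$ together with the very definition of $\widetilde{m}$.

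For the forward direction, suppose $m$ is a $(q_1,\ldots,q_k)$-multiplier. The consummation identity $q_l^{-1} = \sum_{j=i_l}^{i_{l+1}-1} p_j^{-1}$ and H\"older's inequality give $\Vert x_{i_l} \cdots x_{i_{l+1}-1} \Vert_{q_l} \leq \prod_{j=i_l}^{i_{l+1}-1} \Vert x_j \Vert_{p_j}$, so combined with the identity from the first step this yields $\Vert T_{\widetilde{m}} \Vert \leq \Vert T_m \Vert$ on simple tensors, and hence everywhere by the density of $\lambda(C_c(G)^{\ast 2})$ in each $L_{p_j}(\widehat{G})$.

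For the reverse direction, I will exploit the saturated H\"older factorization via polar decomposition: given $y_l \in L_{q_l}(\widehat{G})$ with $y_l = u_l \vert y_l \vert$, set
\[
x_{i_l} = u_l \vert y_l \vert^{q_l/p_{i_l}}, \qquad x_j = \vert y_l \vert^{q_l/p_j} \quad (i_l < j < i_{l+1}),
\]
so that $x_j \in L_{p_j}(\widehat{G})$, $\prod_j \Vert x_j \Vert_{p_j} = \prod_l \Vert y_l \Vert_{q_l}$, and $x_{i_l} \cdots x_{i_{l+1}-1} = y_l$. Declaring $T_m(y_1,\ldots,y_k) := T_{\widetilde{m}}(x_1,\ldots,x_n)$ then produces a bounded $k$-linear map with $\Vert T_m \Vert \leq \Vert T_{\widetilde{m}} \Vert$. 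The principal technical point---and where I expect the main obstacle---is to verify that the map so defined actually coincides with the integral-formula definition of a Fourier multiplier on the generators $(\lambda(g_1),\ldots,\lambda(g_k))$ with $g_l \in C_c(G)^{\ast 2}$; I plan to handle this by approximating each polar factor $x_j$ in $L_{p_j}$-norm by simple elements $\lambda(f_j^{(i)})$, noting that the grouped convolution products $\lambda(f_{i_l}^{(i)} \ast \cdots \ast f_{i_{l+1}-1}^{(i)})$ then converge in $L_{q_l}$ to $y_l$ by H\"older, and passing to the limit in the identity from the first step via the continuity of $T_{\widetilde{m}}$.
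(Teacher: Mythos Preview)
Your proposal is correct and follows essentially the same route as the paper. The integral identity via change of variables and the forward direction via H\"older are identical. For the converse, the paper phrases the argument slightly more directly: rather than defining $T_m$ on general $y_l$ via the polar factors and then checking consistency with the integral formula, it simply observes that the products $\lambda(f_{i_l})\cdots\lambda(f_{i_{l+1}-1})$ with each $\lambda(f_j)$ in the unit ball of $L_{p_j}(\widehat{G})$ already lie densely in the unit ball of $L_{q_l}(\widehat{G})$ (by the saturated H\"older inequality \eqref{Eqn=MaxHolder}, i.e.\ exactly the polar-decomposition fact you invoke), and that on such product tuples the integral-formula $T_m$ coincides with $T_{\widetilde{m}}$ of the factors by the first step. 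Taking the supremum over such tuples yields $\Vert T_m\Vert \leq \Vert T_{\widetilde{m}}\Vert$ directly. Your ``principal technical point'' is therefore the same density/approximation step the paper uses; your plan to resolve it via continuity of $T_{\widetilde{m}}$ is exactly right, and in fact is what underlies the paper's one-line appeal to density. One cosmetic caution: the declaration $T_m(y_1,\ldots,y_k):=T_{\widetilde{m}}(x_1,\ldots,x_n)$ via polar factors is not manifestly $k$-linear before you verify agreement with the integral formula, so it is cleaner (as the paper does) to keep the integral formula as the primary definition and use the polar factorization only to supply the dense set on which the bound is visible.
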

\begin{proof}
Let $f_1, \ldots f_n \in C_c(G)^{\ast 2}$.
 Then, using the invariance of the Haar integral for the second equality and \eqref{Eqn=ConvolutionLambda},
 \[
 \begin{split}
   &  T_{\widetilde{m}}(\lambda(f_1), \ldots, \lambda(f_n) ) \\
 = & \int_{G^{\times n}}  \widetilde{m}(s_1, \ldots, s_n) f_1(s_1) \ldots f_n(s_n) \lambda(s_1 \ldots s_n) ds_1 \ldots ds_n \\
 = & \int_{G^{\times k}} m(t_{1}, t_{2}, \ldots, t_{k} )
   \left(  \prod_{j=1}^k  (f_{i_j} \ast \ldots \ast  f_{i_{j+1}-1})( t_{j} ) \right)   \lambda(t_{1} \ldots t_{k}) dt_{1} \ldots t_{k} \\
    = & T_m(\lambda(f_1 \ast \ldots \ast f_{i_{2} -1}),  \ldots, \lambda(f_{i_{k}} \ast   \ldots \ast f_n) )
 =
 T_m(\lambda(f_1)  \ldots \lambda( f_{i_{2} -1}),  \ldots, \lambda(f_{i_{k}})   \ldots \lambda( f_n) ).
 \end{split}
 \]
 So if $T_m$ is bounded it follows by this expression and the H\"older inequality that also $T_{\widetilde{m}}$ is bounded.
  Conversely, assume $T_{\widetilde{m}}$ is bounded.  The products $\lambda(f_{i_j}) \ldots \lambda(f_{i_{j+1} - 1})$ with $\lambda(f_i)$ in the unit ball of $L_{p_j}(\widehat{G})$ and $f_i \in C_c(G)^{\ast 2}$ lie densely in the unit ball of $L_{q_{j}}(\widehat{G})$ by \eqref{Eqn=MaxHolder}. Since
 \[
\Vert  T_m(\lambda(f_1)  \ldots \lambda( f_{i_{2} -1}),  \ldots, \lambda(f_{i_{k}})   \ldots \lambda( f_n) ) \Vert_{L_p(\widehat{G})} =
\Vert  T_{\widetilde{m}}(\lambda(f_1), \ldots, \lambda(f_n) )  \Vert_{L_p(\widehat{G})}
\leq \Vert T_{\widetilde{m}} \Vert,
 \]
 and taking the supremum over all  such $f_i$ we conclude the proof.
\end{proof}

\begin{lemma}\label{Lemma=TranslationInvariance}
  Let $1 \leq p_i, p \leq \infty$.  Suppose that $m:   G^{\times n}   \rightarrow \mathbb{C}$ is bounded and measurable and set for $r,t,r' \in G$,
    \[
  \widetilde{m}_i( s_1, \ldots, s_n; r, t, r')     := m( r s_1, \ldots, s_i t, t^{-1}s_{i+1}, \ldots,  s_n r').
  \]
   Then $m$ is a $(p_1, \ldots, p_{k}) \rightarrow p$-multiplier if and only if so is $\widetilde{m}_i( \: \cdot \: ; r,t,r')$.  In that case for $x_j \in L_{p_j}(\widehat{G})$,
     \begin{equation}\label{Eqn=ConvolveRelation}
  T_{\widetilde{m}_i(\: \cdot \: ; r,t,r')}( x_1, \ldots,   x_n  ) = \lambda(r)^\ast T_{m}( \lambda(r) x_1, \ldots, x_i \lambda(t), \lambda(t)^\ast x_{i+1}, \ldots,  x_n \lambda(r')  )  \lambda(r')^\ast.
  \end{equation}
  Further, as maps $L_{p_1}(\widehat{G}) \times \ldots \times L_{p_n}(\widehat{G}) \rightarrow L_p(\widehat{G})$ we have $\Vert T_m \Vert = \Vert T_{\widetilde{m}_i( \: \cdot \: ; r,t,r') } \Vert$ and $(r,t,r') \mapsto T_{\widetilde{m}_i( \: \cdot \: ; r,t,r')}$  is strongly continuous.
\end{lemma}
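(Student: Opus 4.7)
My plan is to verify the intertwining identity \eqref{Eqn=ConvolveRelation} first on the dense subset of tuples $(x_1,\dots,x_n)$ with $x_j = \lambda(f_j)$ and $f_j \in C_c(G)^{\ast 2}$, then extend by isometric boundedness to arbitrary $x_j \in L_{p_j}(\widehat G)$. On this dense subset, I would use the elementary identities $\lambda(r)\lambda(f) = \lambda(L_r f)$ and $\lambda(f)\lambda(t) = \lambda(R_t f)$, where $(L_r f)(s) = f(r^{-1}s)$ and $(R_t f)(s) = f(st^{-1})$, to rewrite the right-hand side of \eqref{Eqn=ConvolveRelation} as a single integral against the definition of $T_m$.

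The combinatorial core is then a change of variables $u_1 = r^{-1}s_1$, $u_i = s_i t^{-1}$, $u_{i+1} = t s_{i+1}$, $u_n = s_n r'^{-1}$, and $u_j = s_j$ otherwise, which preserves the Haar measure by unimodularity of $G$. Under this substitution the product $s_1 s_2 \cdots s_n$ becomes $r u_1 u_2 \cdots u_n r'$ (the central factor $t \cdot t^{-1}$ cancels inside the product), and $m(s_1,\dots,s_n)$ becomes $\widetilde m_i(u_1,\dots,u_n;r,t,r')$. The outer $\lambda(r)^\ast$ on the left and $\lambda(r')^\ast$ on the right then absorb the $r$ and $r'$ in $\lambda(r u_1 \cdots u_n r')$, producing exactly $T_{\widetilde m_i(\cdot;r,t,r')}(\lambda(f_1),\dots,\lambda(f_n))$.

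Once \eqref{Eqn=ConvolveRelation} is established on a dense subset, the equality of norms is immediate: since $\lambda(r), \lambda(t), \lambda(r')$ are unitaries in $\mathcal L(G)$, left and right multiplication by them acts isometrically on each $L_{p_j}(\widehat G)$, so
\[
\|T_{\widetilde m_i(\cdot;r,t,r')}(x_1,\dots,x_n)\|_p \leq \|T_m\| \prod_{j=1}^n \|x_j\|_{p_j}.
\]
Applying the same construction to $\widetilde m_i$ with the inverse triple $(r^{-1},t^{-1},r'^{-1})$ returns $m$ and hence yields the reverse inequality. This simultaneously establishes the equivalence of boundedness, the equality of norms, and the extension of \eqref{Eqn=ConvolveRelation} by density to all of $L_{p_1}(\widehat G)\times\cdots\times L_{p_n}(\widehat G)$.

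Finally, for strong continuity of $(r,t,r')\mapsto T_{\widetilde m_i(\cdot;r,t,r')}$, I would fix $(x_1,\dots,x_n)$ and note that the right-hand side of \eqref{Eqn=ConvolveRelation} depends on $(r,t,r')$ only through the maps $r\mapsto \lambda(r)x_1$, $t \mapsto x_i\lambda(t)$, $t\mapsto \lambda(t)^\ast x_{i+1}$, and $r'\mapsto x_n\lambda(r')$. Each of these is norm-continuous from $G$ to the corresponding $L_{p_j}(\widehat G)$ for $p_j<\infty$, by the standard strong continuity of the left/right regular actions on noncommutative $L_p$-spaces (reducible to $p_j=2$ by a density argument using the uniform contractivity $\|\lambda(s)\|=1$). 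Since $T_m$ is bounded multilinear, hence jointly norm-continuous, the composition is continuous. The main point requiring care is the bookkeeping of indices in the substitution of Step~1; the remaining steps are routine consequences of unitarity and boundedness.
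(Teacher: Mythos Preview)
Your proposal is correct and follows essentially the same approach as the paper: verify \eqref{Eqn=ConvolveRelation} for $x_j=\lambda(f_j)$ with $f_j\in C_c(G)^{\ast 2}$ by direct computation, extend by density, and deduce the norm equality and strong continuity from the fact that multiplication by $\lambda(s)$ is an isometry on $L_{p_j}(\widehat G)$ that depends strongly continuously on $s$ (the paper cites \cite[Lemma 2.3]{JungeSherman} for this last point rather than sketching the $p=2$ reduction). Your write-up in fact supplies more detail on the change of variables than the paper does.
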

\begin{proof}
The proof of \eqref{Eqn=ConvolveRelation} is similar to Lemma \ref{Lem=Consumption}, namely one verifies the equality for   $x_j = \lambda(f_j), f_j \in C_c(G)^{\ast 2}$ by an elementary computation and then concludes by continuity. Since multiplication with $\lambda(s), s \in G$ determines an isometry on $L_{p_j}(\widehat{G})$ that is strongly continuous in $s$ by  \cite[Lemma 2.3]{JungeSherman} the final statements follow as well.
 \end{proof}

There are several formulae available for nested Fourier multipliers of which the following lemma is a particular case.

\begin{lemma}\label{Lem=Nest}
  Let $1 \leq p_1, \ldots, p_n < \infty$. Let $q_n = p_n$ and for $1 \leq j \leq n-1$ set  $q_j^{-1} = \sum_{i=j}^{n-1} p_i^{-1}$.
  Suppose that $m_j:   G   \rightarrow \mathbb{C}$ is  a $q_j$-multiplier for all $1 \leq j \leq n$.   Set
    \[
    \begin{split}
&  \widetilde{m}(s_1, \ldots, s_n)
=   m_1( s_1 \ldots s_{n-1} ) m_2(s_2 \ldots s_{n-1}) \ldots m_{n-1}( s_{n-1}) m_n(s_n).
  \end{split}
  \]
   Then  $\widetilde{m}$ is a $(p_1, \ldots, p_n)$-multiplier and   for $x_i \in L_{p_i}(\widehat{G})$ we have
  \begin{equation}\label{Eqn=Nested}
  \begin{split}
   &T_{\widetilde{m}}( x_1, \ldots, x_n  )
  =   T_{m_1}( x_1 T_{m_2}(  x_2 \ldots T_{m_{n-1}}(x_{n-1}) \ldots  )    ) T_{m_n}(x_n).
  \end{split}
  \end{equation}
\end{lemma}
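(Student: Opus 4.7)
The plan is to verify the identity \eqref{Eqn=Nested} on the dense class of elementary elements $x_j=\lambda(f_j)$, $f_j\in C_c(G)^{\ast 2}$, by a direct computation, and then to extend by density using iterated H\"older to bound the right-hand side.

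First I would exploit the factorisation of the symbol. Since $m_n$ depends only on $s_n$ and $\lambda(s_1\cdots s_n)=\lambda(s_1\cdots s_{n-1})\lambda(s_n)$, the integral defining $T_{\widetilde m}(\lambda(f_1),\dots,\lambda(f_n))$ separates as $I\cdot T_{m_n}(\lambda(f_n))$, where
\begin{equation*}
I=\int_{G^{n-1}}\prod_{j=1}^{n-1}m_j(s_j\cdots s_{n-1})\,f_1(s_1)\cdots f_{n-1}(s_{n-1})\,\lambda(s_1\cdots s_{n-1})\,ds_1\cdots ds_{n-1}.
\end{equation*}
I would then perform the triangular unimodular change of variables $r_j=s_js_{j+1}\cdots s_{n-1}$ (so $s_j=r_jr_{j+1}^{-1}$ for $j<n-1$ and $s_{n-1}=r_{n-1}$), which has Jacobian $1$, to rewrite
\begin{equation*}
I=\int_{G^{n-1}}\prod_{j=1}^{n-1}m_j(r_j)\,f_1(r_1r_2^{-1})\cdots f_{n-2}(r_{n-2}r_{n-1}^{-1})f_{n-1}(r_{n-1})\,\lambda(r_1)\,dr_1\cdots dr_{n-1}.
\end{equation*}

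Next I would match this integral with the right-hand side of \eqref{Eqn=Nested}. Define functions $g_j\in C_c(G)$ recursively by $g_{n-1}=m_{n-1}f_{n-1}$ and $g_j=m_j\cdot(f_j\ast g_{j+1})$ for $j=n-2,\dots,1$. Using $\lambda(f_j)\lambda(g_{j+1})=\lambda(f_j\ast g_{j+1})$ together with $T_{m_j}(\lambda(h))=\lambda(m_jh)$ and induction on $j$, the nested expression $T_{m_1}(\lambda(f_1)T_{m_2}(\lambda(f_2)\cdots T_{m_{n-1}}(\lambda(f_{n-1}))\cdots))$ equals $\lambda(g_1)$. Unfolding the recursion through the substitutions $r_{k+1}=u^{-1}r_k$ in each convolution yields
\begin{equation*}
g_1(r_1)=m_1(r_1)\int_{G^{n-2}}\prod_{j=2}^{n-1}m_j(r_j)\,f_1(r_1r_2^{-1})\cdots f_{n-1}(r_{n-1})\,dr_2\cdots dr_{n-1},
\end{equation*}
so that $\lambda(g_1)=I$. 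Combined with the first step this gives \eqref{Eqn=Nested} for $x_j=\lambda(f_j)$.

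Finally, the right-hand side of \eqref{Eqn=Nested} is a bounded $n$-linear map $L_{p_1}(\widehat G)\times\dots\times L_{p_n}(\widehat G)\to L_p(\widehat G)$: iterated H\"older and the assumption that each $T_{m_j}:L_{q_j}(\widehat G)\to L_{q_j}(\widehat G)$ is bounded show that $T_{m_{n-1}}(x_{n-1})\in L_{p_{n-1}}$, hence $x_{n-2}T_{m_{n-1}}(x_{n-1})\in L_{q_{n-2}}$, and inductively $T_{m_j}(x_j\,T_{m_{j+1}}(\cdots))\in L_{q_j}$, so that multiplication by $T_{m_n}(x_n)\in L_{p_n}$ lands in $L_p$ with $p^{-1}=q_1^{-1}+p_n^{-1}=\sum_{j=1}^np_j^{-1}$. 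Since $\lambda(C_c(G)^{\ast 2})$ is dense in each $L_{p_j}(\widehat G)$, the equality on the dense subspace extends to all of $L_{p_1}\times\cdots\times L_{p_n}$, proving both that $\widetilde m$ is a $(p_1,\dots,p_n)$-multiplier and the formula \eqref{Eqn=Nested}. The only point requiring care is the Fubini/change-of-variables step, which is legitimate thanks to the compact support of each $f_j$; beyond that there is no real obstacle.
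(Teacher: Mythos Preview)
Your proof is correct and follows essentially the same approach as the paper: verify \eqref{Eqn=Nested} on the dense elements $x_i=\lambda(f_i)$ by a direct computation, then deduce boundedness of $T_{\widetilde m}$ from the boundedness of each $T_{m_j}$ combined with iterated H\"older. The paper's proof is a two-line sketch of exactly this strategy, and your version simply fills in the change-of-variables and recursive-unfolding details that the paper leaves to the reader.
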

\begin{proof}
Again the identity \eqref{Eqn=Nested} can directly be verified for $x_i = \lambda(f_i), f_i \in  C_c(G)^{\ast 2}$. The boundedness of $T_{\widetilde{m}}$ then follows as all $T_{m_j}$ are bounded together with the H\"older inequality.
\end{proof}

 \subsection{Proof of Theorem C}  The idea behind the following proof is that we approximate a multiplier by multipliers that decompose as a nested iteration of linear multipliers. Then using the analysis of linear multipliers  we may prove the desired result. We first give the core of the proof of Theorem \ref{Thm=MultiDeLeeuwRestriction} and prove the intertwining property that is needed for it afterwards in Lemma \ref{Lem=AuxConverge}.

\begin{theorem}[Theorem C]\label{Thm=MultiDeLeeuwRestriction}
 Let $G$ be a locally compact unimodular group. Let $\Gamma < G$ be a discrete subgroup such that $G \in \SAIN_\Gamma$.  Let $m \in C_b( G^{\times n})$. Then for every $1 \leq p, p_1, \ldots, p_n < \infty$ with $p^{-1} = \sum_{j=1}^n p_j^{-1}$ we have that
\begin{equation}\label{Eqn=DeLeeuwMultilinear}
\Vert T_{m \vert_{\Gamma^{\times n}} }: L_{p_1}(\widehat{\Gamma}) \times \ldots \times L_{p_n}(\widehat{\Gamma})   \rightarrow L_p(\widehat{\Gamma}) \Vert \leq
\Vert T_{m  }: L_{p_1}(\widehat{G}) \times \ldots \times L_{p_n}(\widehat{G})  \rightarrow L_p(\widehat{G}) \Vert.
\end{equation}
\end{theorem}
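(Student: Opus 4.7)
My plan is to mirror the strategy of Theorem A while replacing the one-sided asymptotic embedding $\Phi_{p,V}(x)=xh_V^{2/p}$ by the symmetric version $\Psi_{p,V}(x):=h_V^{1/p}xh_V^{1/p}$. Fix a symmetric neighbourhood basis $\mathcal{V}$ as in Remark \ref{Rmk=Shrink}. Under the $\SAIN_\Gamma$ hypothesis, Proposition \ref{Prop=AlmostIsometrySymmetric} gives $\lim_{V\in\mathcal{V}}\|\Psi_{p,V}(x)\|_{L_p(\widehat{G})}=\|x\|_{L_p(\widehat{\Gamma})}$ for every $x\in C_r^\ast(\Gamma)$ and every $1\le p<\infty$. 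Granting the \emph{multilinear intertwining property}
\[
\lim_{V\in\mathcal{V}}\bigl\|T_m(\Psi_{p_1,V}(x_1),\ldots,\Psi_{p_n,V}(x_n))-\Psi_{p,V}\bigl(T_{m\vert_{\Gamma^{\times n}}}(x_1,\ldots,x_n)\bigr)\bigr\|_{L_p(\widehat{G})}=0
\]
for all $x_j\in\mathbb{C}[\Gamma]$, inequality \eqref{Eqn=DeLeeuwMultilinear} drops out: expand $\|T_{m\vert_{\Gamma^{\times n}}}(x_1,\ldots,x_n)\|_{L_p(\widehat{\Gamma})}$ as $\lim_V\|\Psi_{p,V}(\cdot)\|_p$, substitute the intertwining, apply multilinear boundedness of $T_m$, and use Proposition \ref{Prop=AlmostIsometrySymmetric} on each factor to recover $\|T_m\|\prod_j\|x_j\|_{p_j}$; density of $\mathbb{C}[\Gamma]$ in $L_{p_j}(\widehat{\Gamma})$ concludes the argument.

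The real content is establishing this intertwining, which is Lemma \ref{Lem=AuxConverge}. I would attack it by first reducing to a dense class of ``simple'' approximants $m_k\in C_b(G^{\times n})$ of the nested form handled by Lemma \ref{Lem=Nest}, namely finite linear combinations of products $m_{k,1}(s_1\cdots s_{n-1})m_{k,2}(s_2\cdots s_{n-1})\cdots m_{k,n-1}(s_{n-1})m_{k,n}(s_n)$ with single-variable $\infty$-multiplier factors. For such $m_k$, Lemma \ref{Lem=Nest} unwinds $T_{m_k}$ into an alternation of linear Fourier multipliers and ordinary products, so the intertwining can be verified by iterating two ingredients: (i) the linear intertwining of Proposition \ref{Prop=Intertwiner}, applied at each nesting level to move $h_V^{2/q}$ across a single $T_{m_{k,j}}$ at the cost of an error vanishing in $V$; and (ii) Lemma \ref{Lem=RicardCorollary}, which permits commuting $h_V^{\alpha}$ past elements of $C_r^\ast(\Gamma)$, thereby consolidating all the inserted $h_V$-factors into the single symmetric sandwich $h_V^{1/p}(\cdot)h_V^{1/p}$ on the outside. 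Ingredient (ii) is precisely where the SAIN hypothesis is used essentially, which explains why this route does not yield a ``local'' version parallel to Theorem A.

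Finally one passes from $m_k$ back to $m$ by uniform approximation on compacta: since the $x_j\in\mathbb{C}[\Gamma]$ have finite frequency support, the defining integral for $T_m(x_1,\ldots,x_n)$ is effectively supported on a bounded region of $G^{\times n}$, and any $m\in C_b(G^{\times n})$ may be uniformly approximated there by the nested combinations above (via a smooth cut-off followed by Stone--Weierstrass applied to the cut-off factor). Uniform boundedness of the $m_k$ and dominated convergence then transfer the intertwining identity for $m_k$ to $m$. The principal obstacle I anticipate is the bookkeeping in ingredient (ii): one must slide $n$ different $h_V^{1/p_j}$ factors through an $n$-fold nested composition of multipliers and control all the commutator errors simultaneously, balanced across the exponents $p_j$ via the H\"older inequality, which is substantially more delicate than the single commutator estimate sufficient in the linear case and requires new ideas beyond Proposition \ref{Prop=Intertwiner}.
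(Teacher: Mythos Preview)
Your overall architecture is right and matches the paper: reduce to the intertwining property (Lemma \ref{Lem=AuxConverge}), establish it for approximants $m_k$ with a nested structure via iteration of the linear intertwiner (Proposition \ref{Prop=Intertwiner}) and the commutator estimate (Lemma \ref{Lem=RicardCorollary}), and then conclude using Proposition \ref{Prop=AlmostIsometrySymmetric}. The telescoping through $R_{j,V}$ that the paper carries out is exactly your ingredient (ii).

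The gap is in your approximation step. Stone--Weierstrass with a cutoff does not work here, for two reasons. First, the operators you must control are $T_m(\Psi_{p_1,V}(x_1),\ldots,\Psi_{p_n,V}(x_n))$, and $\Psi_{p_j,V}(x_j)=h_V^{1/p_j}x_jh_V^{1/p_j}$ has \emph{no} compact frequency support (the functional calculus $h_V^{2/p_j}$ destroys it), so uniform approximation of $m$ on compacta says nothing about $T_{m-m_k}$ on these inputs. Second, even granting that, your $m_k$ carry no a priori bound $\|T_{m_k}\|\le\|T_m\|$, which is exactly what is needed after the intertwining to close the estimate. The paper solves both problems simultaneously by choosing a very specific $m_k$: a convolution-type average
\[
m_k(s_1,\ldots,s_n)=\int_{G^{\times n}} m_{t_1,\ldots,t_n}(s_1,\ldots,s_n)\,\varphi_k(t_1)\cdots\varphi_k(t_n)\,dt_1\cdots dt_n
\]
with positive-definite $\varphi_k\in A(G)$, $\|\varphi_k\|_1=1$, shrinking to $e$. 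By Lemma \ref{Lemma=TranslationInvariance} this is an average of translates, so $\|T_{m_k}\|\le\|T_m\|$ for free; and a change of variables rewrites $m_k$ as an integral, against the \emph{integrable} weight $m(\zeta_1,\ldots,\zeta_n)$ after an auxiliary localization, of symbols $\psi_k(\,\cdot\,;t_1,\ldots,t_n)=\prod_{j=1}^{n-1}\varphi_k(s_j\cdots s_{n-1}t_j)\,\varphi_k(t_ns_n)$ which are genuinely of the nested form in Lemma \ref{Lem=Nest} with each factor a \emph{contractive} linear multiplier on every $L_q$. This is the missing idea: the approximants must be built from the Fourier algebra, not from an abstract density argument, so that both the multiplier norm and the nested-contraction structure are inherited.
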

\begin{proof}
In case $n =1$ the theorem was proved in \cite[Theorem A]{CPPR} or Remark \ref{Rmk=SAINcase}. We therefore assume that $n \geq 2$. Consequently $1 < p_1, \ldots, p_n < \infty$. We note however that our proof remains valid for $n =1$ provided that $1 < p = p_1 < \infty$.

Let $(\varphi_k)_k$ be a net of  positive definite functions in the Fourier algebra $A(G) = L_2(G) \ast L_2(G)$ with $\Vert \varphi_k \Vert_1 = 1$ and compact support shrinking to the identity of $G$. Recall that elements of $A(G)$ are continuous and $p$-multipliers for all $1 \leq p \leq \infty$ (see \cite{DeCanniereHaagerup}).   For any function $M \in C_b( G^{\times n})$ we set for $t_i, s_i \in G$,
\begin{equation}\label{Eqn=SubT}
M_{t_1, \ldots, t_n}(s_1, \ldots, s_n) :=  M(t_1^{-1} s_1 t_2, t_2^{-1} s_2 t_3, \ldots  ,  t_{n-2}^{-1} s_{n-2} t_{n-1}   ,  t_{n-1}^{-1} s_{n-1},   s_n  t_{n}^{-1} ),
\end{equation}
and
\begin{equation}\label{Eqn=Convolution}
\begin{split}
M_k(s_1, \ldots, s_n)
 := & \int_{G^{\times n}}  M_{t_1, \ldots, t_n}(s_1, \ldots s_n)  \left( \prod_{j=1}^{n} \varphi_k(t_j) \right)   dt_1 \ldots t_n \\
    = & \int_{G^{\times n}}  M(t_1^{-1}  t_2, t_2^{-1}   t_3, \ldots,  t_{n-2}^{-1}t_{n-1},       t_{n-1}^{-1},  t_n^{-1} )
    \left( \prod_{j=1}^{n-1} \varphi_k(s_j \ldots s_{n-1} t_j) \right) \varphi_k(t_n s_n)  dt_1 \ldots t_n.
\end{split}
\end{equation}
Let $x_1, \ldots, x_n \in \mathbb{C}[\Gamma]$. Since $m_k \rightarrow m$ pointwise and all $x_j$ have finite frequency support we find that
\[
\begin{split}
 \Vert T_{m\vert_{\Gamma^{\times n}}}(x_1, \ldots, x_n) \Vert_{L_p(\widehat{\Gamma})} =  \lim_k \Vert  T_{m_k\vert_{\Gamma^{\times n}}}(x_1, \ldots, x_n) \Vert_{L_p(\widehat{\Gamma})}.
\end{split}
\]
 Let $\mathcal{V}$ be a symmetric neighbourhood basis of the identity of $G$ as in Remark \ref{Rmk=Shrink}. By Proposition \ref{Prop=AlmostIsometrySymmetric},
\[
\begin{split}
 & \Vert T_{m\vert_{\Gamma^{\times n}}}(x_1, \ldots, x_n) \Vert_{L_p(\widehat{\Gamma})}\\
  =  &    \lim_k \lim_{V \in \mathcal{V} } \Vert h_V^{\frac{1}{p}}  T_{m_k\vert_{\Gamma^{\times n}}}(x_1, \ldots, x_n)   h_V^{\frac{1}{p}} \Vert_{L_p(\widehat{G})}\\
 \leq &  \limsup_k \limsup_{V \in \mathcal{V} }  \Vert    T_{m_k}(  h_V^{\frac{1}{p_1}}   x_1  h_V^{\frac{1}{p_1}}, \ldots,  h_V^{\frac{1}{p_n}}   x_n   h_V^{\frac{1}{p_n}})   \Vert_{L_p(\widehat{G})} \\
   & + \limsup_k \limsup_{V \in \mathcal{V} }  \Vert h_V^{\frac{1}{p}}   T_{m_k\vert_{\Gamma^{\times n}}}(x_1, \ldots, x_n)  h_V^{\frac{1}{p}} - T_{m_k }(h_V^{\frac{1}{p_1}}   x_1   h_V^{\frac{1}{p_1}}, \ldots, h_V^{\frac{1}{p_n}} x_n h_V^{\frac{1}{p_n}})   \Vert_{L_p(\widehat{G})}.
\end{split}
\]
Now by Lemma \ref{Lem=AuxConverge} -- which we prove below -- the limit over $k$ and $V$ in the second summand exists and is 0. Therefore,
\[
\begin{split}
& \Vert T_{m\vert_{\Gamma^{\times n}}}(x_1, \ldots, x_n) \Vert_{L_p(\widehat{\Gamma})} \\
\leq  & \limsup_k \limsup_{V \in \mathcal{V} }  \Vert T_{m_k}( h_V^{\frac{1}{p_1}}  x_1   h_V^{\frac{1}{p_1}}, \ldots,  h_V^{\frac{1}{p_n}} x_n  h_V^{\frac{1}{p_n}})   \Vert_{L_p(\widehat{G})}   \\
 \leq &  \limsup_k \Vert T_{m_k}:  L_{p_1}(\widehat{G}) \times \ldots \times L_{p_n}(\widehat{G})  \rightarrow L_p(\widehat{G}) \Vert \cdot \limsup_V  \left( \prod_{j=1}^n \Vert h_V^{\frac{1}{p_j}}   x_j  h_V^{\frac{1}{p_j}} \Vert_{ L_{p_j}(\widehat{\Gamma})  } \right).
\end{split}
\]
By Lemma \ref{Lemma=TranslationInvariance}  we find that  $\Vert T_{m_k} \Vert \leq \Vert T_m \Vert$.
Hence, together with \eqref{Eqn=LocalLeeuwUp} of Proposition \ref{Prop=AlmostIsometry1}, we conclude that
\[
 \Vert T_{m\vert_{\Gamma^{\times n}}}(x_1, \ldots, x_n) \Vert_{ L_{p}(\widehat{\Gamma})  }  \leq \Vert T_{m}:  L_{p_1}(\widehat{G}) \times \ldots \times L_{p_n}(\widehat{G})  \rightarrow L_p(\widehat{G}) \Vert
\: \cdot \: \prod_{j=1}^n \Vert x_j  \Vert_{ L_{p_j}(\widehat{\Gamma})  }.
\]
Since the elements $x_j \in \mathbb{C}[\Gamma]$ are dense in $L_p(\widehat{\Gamma})$ we conclude the proof.
\end{proof}

\begin{lemma}\label{Lem=AuxConverge}
In the proof of Theorem \ref{Thm=MultiDeLeeuwRestriction}, in particular with $n \geq 2$,  we have that
\[
\lim_k   \limsup_{V \in \mathcal{V} }   \Vert  h_V^{\frac{1}{p}}   T_{m_k\vert_{\Gamma^{\times n}}}(x_1, \ldots, x_n)   h_V^{\frac{1}{p}} - T_{m_k }(h_V^{\frac{1}{p_1}}  x_1   h_V^{\frac{1}{p_1}}, \ldots,  h_V^{\frac{1}{p_n}}   x_n  h_V^{\frac{1}{p_n}})   \Vert_{ L_{p}(\widehat{G})  } = 0.
\]
  If $n = 1$ the statement also holds in case $1 < p = p_1 < \infty$.
\end{lemma}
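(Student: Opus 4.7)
The plan is to show that for every fixed $k$ already $\limsup_{V\in\mathcal V}$ of the displayed expression is zero, which renders the outer $\lim_k$ trivial. The structural input is that, for fixed $k$, the symbol $m_k$ from~\eqref{Eqn=Convolution} is a superposition (over $\vec t=(t_1,\dots,t_n)$) of product symbols built from Fourier-algebra translates of $\varphi_k$. Setting $\psi_j^{\vec t}(u):=\varphi_k(u t_j)$ for $j<n$ and $\psi_n^{\vec t}(u):=\varphi_k(t_n u)$, the integrand of~\eqref{Eqn=Convolution} factors precisely in the nested form of Lemma~\ref{Lem=Nest}, so Fubini yields
\[
T_{m_k}(x_1,\dots,x_n)=\int_{K} c(\vec t)\,T_{\psi_1^{\vec t}}\!\bigl(x_1 T_{\psi_2^{\vec t}}(x_2\cdots T_{\psi_{n-1}^{\vec t}}(x_{n-1})\cdots)\bigr)T_{\psi_n^{\vec t}}(x_n)\,d\vec t,
\]
with $c(\vec t):=m(t_1^{-1}t_2,\dots,t_n^{-1})$ bounded and with $K\subseteq G^{\times n}$ compact (from the compact support of $\varphi_k$ combined with the finite frequency supports of the $y_j$'s). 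The same identity holds for $T_{m_k\vert_{\Gamma^{\times n}}}$ on $\mathbb C[\Gamma]$ since each $\psi_j^{\vec t}\vert_\Gamma$ acts diagonally, and this identifies the two expressions to be compared as $\vec t$-integrals of pointwise integrands.

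\textbf{Sandwiched intertwining and nested induction.} The workhorse is a sandwiched version of Proposition~\ref{Prop=Intertwiner}: for $\psi\in A(G)$, $y\in\mathbb C[\Gamma]$, $q\in[1,\infty)$, and $a,b>0$ with $a+b=2/q$,
\[
\lim_{V\in\mathcal V}\bigl\|T_{\psi}(h_V^{a}y h_V^{b})-h_V^{a}T_{\psi\vert_\Gamma}(y)h_V^{b}\bigr\|_{L_q(\widehat G)}=0.
\]
This I would derive in three steps: first Lemma~\ref{Lem=RicardCorollary} and H\"older (using $\|h_V^{a}\|_{2/a}=1$, from Proposition~\ref{Prop=AlmostIsometry1}) to replace $h_V^{a}y h_V^{b}$ by $y h_V^{2/q}$ asymptotically in $L_q$; then Proposition~\ref{Prop=Intertwiner} (applied with some auxiliary $p\neq q$, feasible since $\psi\in A(G)$ is a $p$-multiplier for every $p$) to pass $T_\psi(y h_V^{2/q})$ to $T_{\psi\vert_\Gamma}(y)h_V^{2/q}$; and finally Lemma~\ref{Lem=RicardCorollary} again, this time applied to $T_{\psi\vert_\Gamma}(y)\in\mathbb C[\Gamma]\subseteq C_r^\ast(\Gamma)$, to reinstate the sandwich form. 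With this tool I prove by backward induction on the nesting depth $j$ that, for each $\vec t\in K$, the partial sandwiched expression starting at $y_j$ converges in $L_{q_j}(\widehat G)$ to a sandwich $h_V^{a_j}Z_j h_V^{b_j}$ with $a_j+b_j=2/q_j$ and $Z_j:=T_{\psi_j^{\vec t}\vert_\Gamma}\!\bigl(y_j T_{\psi_{j+1}^{\vec t}\vert_\Gamma}(y_{j+1}\cdots)\cdots\bigr)\in\mathbb C[\Gamma]$. The inductive step multiplies the depth-$(j+1)$ sandwich on the left by $h_V^{1/p_j}y_j h_V^{1/p_j}$, collapses the interior $h_V$-powers via Lemma~\ref{Lem=RicardCorollary} to the normal form $y_j Z_{j+1} h_V^{2/q_j}$ in $L_{q_j}$, and then invokes the sandwiched intertwining to $T_{\psi_j^{\vec t}}$. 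Multiplying finally on the right by $T_{\psi_n^{\vec t}}(h_V^{1/p_n}y_n h_V^{1/p_n})$ and collapsing once more produces, in $L_p(\widehat G)$, the asymptotic expression $Z_1 T_{\psi_n^{\vec t}\vert_\Gamma}(y_n) h_V^{2/p}$, which is exactly the asymptotic form of $h_V^{1/p}$-sandwiching the corresponding integrand of $T_{m_k\vert_{\Gamma^{\times n}}}(y_1,\dots,y_n)$ (by one last application of Lemma~\ref{Lem=RicardCorollary}).

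\textbf{Passing to the integral and main obstacle.} To upgrade this pointwise-in-$\vec t$ convergence to $L_p$-convergence of the integrals I dominate uniformly: $\|T_{\psi_j^{\vec t}}\|\leq\|\varphi_k\|_{A(G)}$ uniformly in $\vec t$, Proposition~\ref{Prop=AlmostIsometry1} gives $\|h_V^{1/p_j}y_j h_V^{1/p_j}\|_{p_j}\leq\|y_j\|_\infty$, and the effective $\vec t$-support is compact. Against the bounded factor $c(\vec t)$, these dominate the integrand in $L_p$, so Bochner dominated convergence allows $\limsup_V$ to pass inside the integral, giving the claim for each $k$. The case $n=1$, $1<p<\infty$, is the base step: the convolution representation $m_k=\int m(u)\, L_u\varphi_k\,du$ (cf.\ \eqref{Eqn=Convolution} for $n=1$) reduces the statement to a single application of the sandwiched intertwining followed by a dominated-convergence argument. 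The main technical obstacle is the inductive bookkeeping: verifying that at every level $Z_{j+1}\in\mathbb C[\Gamma]\subseteq C_r^\ast(\Gamma)$ so that Lemma~\ref{Lem=RicardCorollary} is available for the repeated commutations, and that the H\"older exponents $a_j+b_j=2/q_j$ combine consistently across levels to land in $L_p$ at the end. This is precisely the additional subtlety of the multilinear setting that is absent in the linear proof of Proposition~\ref{Prop=Intertwiner}, and which explains why the localized improvement of Theorem~\ref{Thm=LocalLeeuw} does not immediately extend to Theorem~\ref{Thm=MultiDeLeeuwRestriction}.
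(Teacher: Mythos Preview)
Your nested induction via a sandwiched version of Proposition~\ref{Prop=Intertwiner} is close to the paper's $R_{j,V}$-argument and is sound. The gap lies in the dominated-convergence step: you assert that the $\vec t$-integral is over a compact set $K$, justified by ``the compact support of $\varphi_k$ combined with the finite frequency supports of the $y_j$'s''. This is correct on the $\Gamma$-side, where the inputs $y_j\in\mathbb C[\Gamma]$ have finite frequency support. But on the $G$-side the inputs are $h_V^{1/p_j}y_jh_V^{1/p_j}$, and $h_V^{1/p_j}$ is a fractional power obtained by functional calculus; it has no reason to have compact frequency support. Consequently $T_{\psi_j^{\vec t}}$ applied to such elements need not vanish for large $t_j$, and the $L_p$-norm of the integrand is only dominated by $|c(\vec t)|\cdot C$, where $c(\vec t)=m(t_1^{-1}t_2,\ldots,t_n^{-1})$ is bounded but not integrable on $G^{\times n}$. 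Bochner dominated convergence is therefore not available, and your stronger claim that $\limsup_{V}$ already vanishes for every fixed $k$ is unjustified.

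The paper addresses precisely this point: it multiplies $m$ by compactly supported $A(G)$-cutoffs $\zeta_j(s)=\zeta(r_j^{-1}s)$ localized at the frequency supports $r_j$ of the $x_j$, so that $m(\zeta_1,\ldots,\zeta_n)$ becomes \emph{integrable} and Lebesgue domination in $\vec t$ goes through (this is the treatment of $B_{k,V}$). The price is two further error terms, $A_{k,V}$ (on the $\Gamma$-side) and $C_{k,V}$ (on the $G$-side), both of which measure the effect of replacing $m$ by $m(\zeta_1,\ldots,\zeta_n)$; these terms do \emph{not} go to zero in $V$ for fixed $k$, but only after the outer limit $k\to\infty$ (using $\zeta(e)=1$ and that $\varphi_k$ concentrates at the identity). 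So the double limit $\lim_k\limsup_V$ in the statement is genuinely needed, contrary to your opening sentence.
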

\begin{proof}
Recall that all $x_j$ have finite frequency support and therefore by the triangle inequality it suffices to consider the case where $x_j = \lambda(r_j), r_j \in \Gamma$.
Let $\zeta: G \rightarrow  \mathbb{R}_{\geq 0}$ be a compactly supported continuous, positive definite function in the Fourier algebra $A(G)$ with $\zeta(e) = 1$,  so that $T_\zeta$ is a contraction for all $1\leq p\leq \infty$. For $1 \leq j \leq n, s \in G$ let
\begin{equation}\label{Eqn=ZetaFunctions}
\zeta_j(s) = \zeta( r_j^{-1} s).
\end{equation}
Set the product of functions
\[
(   m  (\zeta_1, \ldots, \zeta_n)) (s_1, \ldots, s_n) = m(s_1, \ldots, s_n) \zeta_1(s_1)  \ldots \zeta_n(s_n), \qquad s_j \in G,
\]
and then let $(m  (\zeta_1, \ldots, \zeta_n))_k$ be defined by the formula \eqref{Eqn=Convolution}. Similarly consider  $(m - m   (\zeta_1, \ldots, \zeta_n))_k = m_k - (m (\zeta_1, \ldots, \zeta_n))_k$ whose value at the point $(r_1, \ldots, r_n)$ converges to 0 in $k$.

We estimate
\begin{equation}\label{Eqn=ThreeSummands}
\begin{split}
     &  \Vert   h_V^{\frac{1}{p}}  T_{ m_k\vert_{\Gamma^{\times n}}}(x_1, \ldots, x_n)   h_V^{\frac{1}{p}} - T_{m_k }( h_V^{\frac{1}{p_1}}  x_1   h_V^{\frac{1}{p_1}}, \ldots,  h_V^{\frac{1}{p_n}}  x_n   h_V^{\frac{1}{p_n}})   \Vert_{L_p(\widehat{G})}\\
\leq & \Vert  h_V^{\frac{1}{p}}  T_{m_k\vert_{\Gamma^{\times n}}}(x_1, \ldots, x_n)   h_V^{\frac{1}{p}} - h_V^{\frac{1}{p}}   T_{ (m   (\zeta_1, \ldots, \zeta_n))_k \vert_{\Gamma^{\times n}}}(x_1, \ldots, x_n)   h_V^{\frac{1}{p}}   \Vert_{L_p(\widehat{G})} \\
& +\Vert  h_V^{\frac{1}{p}}   T_{ (m  (\zeta_1, \ldots, \zeta_n))_k \vert_{\Gamma^{\times n}}}(x_1, \ldots, x_n)  h_V^{\frac{1}{p}} - T_{ (m  (\zeta_1, \ldots, \zeta_n))_k  }( h_V^{\frac{1}{p_1}}    x_1   h_V^{\frac{1}{p_1}}, \ldots, h_V^{\frac{1}{p_n}}   x_n  h_V^{\frac{1}{p_n}})  \Vert_{L_p(\widehat{G})} \\
& + \Vert T_{ (m  (\zeta_1, \ldots, \zeta_n))_k  }( h_V^{\frac{1}{p_1}}   x_1  h_V^{\frac{1}{p_1}}, \ldots,  h_V^{\frac{1}{p_n}}   x_n  h_V^{\frac{1}{p_n}}) - T_{m_k }( h_V^{\frac{1}{p_1}}   x_1   h_V^{\frac{1}{p_1}}, \ldots,  h_V^{\frac{1}{p_n}}   x_n   h_V^{\frac{1}{p_n}})  \Vert_{L_p(\widehat{G})} \\
=: & A_{k,V} + B_{k,V} + C_{k,V}.
\end{split}
\end{equation}
where in the last line we defined the three summands. We show that each of these summands converges to 0 separately. By Proposition \ref{Prop=AlmostIsometry1} we have for any $V$ that
\[
\begin{split}
A_{k,V} \leq & \Vert  T_{m_k\vert_{\Gamma^{\times n}}}(x_1, \ldots, x_n)   - T_{ (m  (\zeta_1, \ldots, \zeta_n))_k \vert_{\Gamma^{\times n}}}(x_1, \ldots, x_n)     \Vert_{L_p(\widehat{\Gamma})} \\
= &\Vert  T_{   ( m_k  -    (m  (\zeta_1, \ldots, \zeta_n))_k )\vert_{\Gamma^{\times n}}   }( \lambda(r_1), \ldots, \lambda(r_n))   \Vert_{L_p(\widehat{\Gamma})} \\
= & \vert (m_k  -   (m   (\zeta_1, \ldots, \zeta_n))_k)(r_1, \ldots, r_n) \vert.
\end{split}
\]
As observed in the first paragraph the latter expression converges to 0 in $k$,
so we find that $\lim_k \lim_{V \in \mathcal{V} }  A_{k,V} = 0$.

Next we treat the summand $C_{k,V}$. Define, using the notation \eqref{Eqn=SubT},
\[
   C_{k,V}( t_1, \ldots, t_n) := \Vert T_{ ( m - m  (\zeta_1, \ldots, \zeta_n))_{t_1, \ldots, t_n}  }(h_V^{\frac{1}{p_1}}  x_1  h_V^{\frac{1}{p_1}}, \ldots,  h_V^{\frac{1}{p_n}}   x_n   h_V^{\frac{1}{p_n}}) \Vert_{L_p(\widehat{G})},
\]
and for $j = 1, \ldots, n-2$,
\[
y_j :=  \lambda(t_j)^\ast h_V^{\frac{1}{p_{j}}}    x_{j}  h_V^{\frac{1}{p_{j}}}  \lambda(t_{j+1}),  \qquad y_{n-1} :=  \lambda(t_{n-1})^\ast h_V^{\frac{1}{p_{n-1}}}    x_{n-1}  h_V^{\frac{1}{p_{n-1}}}, \quad  y_n :=   h_V^{\frac{1}{p_{n}}}    x_{n}  h_V^{\frac{1}{p_{n}}}  \lambda(t_{n})^\ast.
\]
By Lemma \ref{Lemma=TranslationInvariance},
\[
\begin{split}
  C_{k,V}( t_1, \ldots, t_n) =  &  \Vert T_{  m - m  (\zeta_1, \ldots, \zeta_n)  }(  y_1, \ldots,   y_n ) \Vert_{L_p(\widehat{G})} \\
\leq & \sum_{j=1}^n     \Vert T_{    m (\id \times \ldots \times \id \times \zeta_{j} - \id \times \zeta_{j+1} \times \ldots \times \zeta_n)   }( y_1, \ldots, y_n  ) \Vert_{L_p(\widehat{G})}.
\end{split}
\]
Since by similar arguments to the ones used in Lemmas \ref{Lem=Consumption} and \ref{Lem=Nest},
\[
T_{    m  (\id \times \ldots \times \id \times (\zeta_{j} - \id) \times \zeta_{j+1} \times \ldots \times \zeta_n)} = T_{m} \circ ( \id, \ldots, \id, T_{\zeta_j} - \id, T_{\zeta_{j+1}}, \ldots ,T_{\zeta_n}  ).
\]
we find
\begin{equation}\label{Eqn=CEstimate}
\begin{split}
  C_{k,V}( t_1, \ldots, t_n)  \leq & \Vert T_m: L_{p_1}(\widehat{G}) \times \ldots \times L_{p_n}(\widehat{G}) \rightarrow L_p(\widehat{G}) \Vert \\
& \quad \times \quad  \sum_{j=1}^{n}  \left( \Vert (T_{\zeta_j }- \id)( y_j )  \Vert_{L_{p_j}(\widehat{G})}   \prod_{i=1, i \not = j}^{n} \Vert  y_j  \Vert_{L_{p_i}(\widehat{G})}\right).
\end{split}
\end{equation}
We have
\[
\Vert  y_i \Vert_{L_{ p_i}(\widehat{G})} =  \Vert  h_V^{\frac{1}{p_i}} x_i  h_V^{\frac{1}{p_i}} \Vert_{L_{ p_i}(\widehat{G})} \leq \Vert  h_V^{\frac{1}{p_i}}  \Vert_{L_{2p_i}(\widehat{G})}^2  \Vert x_i\Vert_{L_\infty(\widehat{\Gamma})} \leq 1,
\]
 see also Proposition  \ref{Prop=AlmostIsometry1}. Recall \eqref{Eqn=ZetaFunctions}  and the fact that $\zeta(e) = 1$ and  $x_j = \lambda(r_j)$. By Lemma \ref{Lem=RicardCorollary} and  using Proposition \ref{Prop=Intertwiner} we see that for every  $1 \leq j \leq n-2$,
 \[
 \begin{split}
 \lim_{V \in \mathcal{V} }  \Vert (T_{\zeta_j }- \id)( y_j ) \Vert_{L_{p_j}(\widehat{G})} = &
\lim_{V \in \mathcal{V} }  \Vert (T_{\zeta_j }- \id)(  \lambda(t_j)^\ast h_V^{\frac{1}{p_{j}}}  \lambda(r_{j})   h_V^{\frac{1}{p_{j}}}  \lambda(t_{j+1}) ) \Vert_{L_{p_j}(\widehat{G})} \\
= &\lim_{V \in \mathcal{V} }  \Vert (T_{\zeta_j }- \id)(  \lambda(t_j)^\ast     h_V^{\frac{2}{p_{j}}}  \lambda(r_j t_{j+1}) ) \Vert_{L_{p_j}(\widehat{G})} \\
= &
\lim_{V \in \mathcal{V} }  \Vert (T_{\zeta(  r_j^{-1 } t_j^{-1} \:  \cdot \:  r_j t_{j+1}  ) }- \id)(     h_V^{\frac{2}{p_{j}}}  ) \Vert_{L_{p_j}(\widehat{G})} \\
 = & \vert  \zeta(  r_j^{-1} t_j^{-1}    r_j t_{j+1}  )  - 1 \vert,
 \end{split}
 \]
and similarly,
\[
\begin{split}
 \lim_{V \in \mathcal{V}} \Vert (T_{\zeta_{n-1}} - \id)(y_{n-1}) \Vert_{L_{p_{n-1}}(\widehat{G})} = & \vert   \zeta(  r_{n-1}^{-1} t_{n-1}^{-1}    r_{n-1}    ) - 1 \vert, \\
  \lim_{V \in \mathcal{V}} \Vert (T_{\zeta_n} - \id)(y_n) \Vert_{L_{p_n}(\widehat{G})} = & \vert \zeta (   t_n^{-1}  ) - 1 \vert.
\end{split}
\]
 Hence from \eqref{Eqn=CEstimate},
 \begin{equation} \label{Eqn=CEstimateNew}
 \begin{split}
& \limsup_{V \in \mathcal{V}}   C_{k,V}( t_1, \ldots, t_n)\\
  \leq &
  \Vert T_m: L_{p_1}(\widehat{G}) \times \ldots \times L_{p_n}(\widehat{G}) \rightarrow L_p(\widehat{G}) \Vert \\
   & \quad \times \quad \left( \sum_{j=1}^{n-2} \vert \zeta(  r_j^{-1 } t_j^{-1}    r_j t_{j+1}  )  - 1 \vert +  \vert   \zeta(  r_{n-1}^{-1} t_{n-1}^{-1}    r_{n-1}    ) - 1 \vert  + \vert \zeta(  t_n^{-1}) -1 \vert \right).
\end{split}
 \end{equation}
Now going back to the original quantity we have to estimate, we find by Lemma \ref{Lemma=TranslationInvariance},
\[
\begin{split}
   C_{k,V}  = & \Vert  T_{ (m - m (\zeta_1, \ldots, \zeta_n))_k  }(h_V^{\frac{1}{p_1}}   x_1   h_V^{\frac{1}{p_1}}, \ldots, h_V^{\frac{1}{p_n}}   x_n   h_V^{\frac{1}{p_n}}) \Vert_{L_p(\widehat{G})}  \\
\leq &     \int_{G^{\times n}}  \!\!\!\!\!\!\!\!  C_{k,V}( t_1, \ldots, t_n)  \left( \prod_{j=1}^{n} \varphi_k(t_j) \right)  dt_1 \ldots t_n.
\end{split}
\]
Taking limits in \eqref{Eqn=CEstimateNew} yields,
\[
\begin{split}
& \limsup_{V \in \mathcal{V}}    C_{k,V} \leq    \Vert T_m: L_{p_1}(\widehat{G}) \times \ldots  \times L_{p_n}(\widehat{G}) \rightarrow L_p(\widehat{G})  \Vert \\
&  \:\:\times \:\: \int_{G^{\times n}}  \left( \sum_{j=1}^{n-2} \vert \zeta(  r_j^{-1} t_j^{-1}    r_j t_{j+1}  )  - 1 \vert +   \vert   \zeta(  r_{n-1}^{-1} t_{n-1}^{-1}    r_{n-1}    ) - 1 \vert  + \vert \zeta( t_n^{-1} ) -1 \vert \right)
 \left( \prod_{j=1}^{n} \varphi_k(t_j) \right)  dt_1 \ldots t_n.
\end{split}
\]
This expression goes to 0 when taking the limit in $k$. This concludes the estimate for $C_{k,V}$.

We now prove that for every $k$ we have  $\lim_V B_{k,V} = 0$. For what follows, we will assume, by scaling $\varphi_k$ if necessary, that $T_{\varphi_k}$ is a contraction on $L_p(\widehat{G})$; we may do this since at this point we need to prove convergence in $V$ for fixed $k$ and our arguments do not rely on the earlier assumption $\Vert \varphi_k \Vert_1 = 1$ anymore. Set the function
\[
\psi_k(s_1, \ldots, s_n; t_1, \ldots, t_n) =  \left( \prod_{j=1}^{n-1} \varphi_k(s_j \ldots s_{n-1} t_j) \right) \varphi_k(t_n s_n).
\]
When we see $\psi_k$ for fixed $t_1, \ldots, t_n$ as a function of the variables $s_1, \ldots, s_n$ we shall write $\psi_k( \: \cdot \: ; t_1, \ldots, t_n)$.
 By the last expression of \eqref{Eqn=Convolution} we see that
\begin{equation}\label{Eqn=Integrant}
\begin{split}
&B_{k,V} \leq
\int_{G^{\times n}} \vert (m  (\zeta_1, \ldots, \zeta_n) )(t_1^{-1}  t_2, \ldots,  t_{n-2}^{-1} t_{n-1}, t_n^{-1}  ) \vert  \\
& \quad \times    \Vert h_V^{\frac{1}{p}}   T_{ \psi_k(\: \cdot \: ; t_1, \ldots, t_n )\vert_{\Gamma^{\times n}  } }( x_1, \ldots, x_n  )   h_V^{\frac{1}{p}}  -  T_{ \psi_k(\: \cdot \: ; t_1, \ldots, t_n )  }( h_V^{\frac{1}{p_1}}  x_1  h_V^{\frac{1}{p_1}}, \ldots, h_V^{\frac{1}{p_n}}  x_n  h_V^{\frac{1}{p_n}}  )  \Vert_{L_p(\widehat{G})}   dt_1 \ldots t_n \\
\end{split}
\end{equation}
We justify that the integrand is dominated by an integrable function that does not depend on $V$. Indeed, note that $ T_{ \psi_k(\: \cdot \: ; t_1, \ldots, t_n )   }$ and $ T_{ \psi_k(\: \cdot \: ; t_1, \ldots, t_n )\vert_{\Gamma^{\times n}  } }$ are contractive (this follows from the expansions \eqref{Eqn=TExpansion} and \eqref{Eqn=SEmbedding} below) and in combination with Proposition \ref{Prop=AlmostIsometry1}   it follows that the integrand on the second line of \eqref{Eqn=Integrant} is at most 2 for any $t_1, \ldots, t_n \in G$. Since  $m(\zeta_1,\ldots,\zeta_n)$ is integrable, the integral \eqref{Eqn=Integrant} is majorized by
\[
\int_{G^{\times n}} 2 \vert (m (\zeta_1, \ldots, \zeta_n) )(t_1^{-1}  t_2,   \ldots,  t_{n-2}^{-1} t_{n-1}, t_n^{-1}  )\vert dt_1 \ldots t_n  = 2 \Vert m  (\zeta_1, \ldots, \zeta_n) \Vert_{L_1(G^{\times n})} < \infty.
\]
 So if we can show that the limit in $V$ of the integrand of \eqref{Eqn=Integrant} converges to 0  pointwise then by the Lebesgue dominated convergence theorem we  conclude that for all $k$ we have $\lim_V B_{k,V} = 0$.

Introduce the short hand notation for the multipliers $T_j = T_{\varphi_k( \: \cdot \: t_j )}$ and $S_j = T_{\varphi_k( \: \cdot \: t_j )\vert_\Gamma}$ for $1 \leq j \leq n-1$. We also set  $T_n = T_{\varphi_k( t_n \: \cdot \:  )}$ and $S_n = T_{\varphi_k(  t_n \: \cdot \: )\vert_\Gamma}$
For any $y_j \in L_{p_j}(\widehat{G})$ we have by Lemma \ref{Lem=Nest},
\begin{equation} \label{Eqn=TExpansion}
T_{ \psi_k(\: \cdot \: ; t_1, \ldots, t_n )  }(y_1, \ldots, y_n   )
= T_1(y_1(T_2(y_2 \ldots T_{n-1}(y_{n-1} ) \ldots ))) T_n(y_n).
\end{equation}
We also see that
\begin{equation}\label{Eqn=SEmbedding}
T_{ \psi_k(\: \cdot \: ; t_1, \ldots, t_n )\vert_{\Gamma^{\times n} }  }(x_1, \ldots, x_n   )
 = S_1(x_1(S_2(x_2 \ldots S_{n-1}(x_{n-1} ) \ldots ))) S_n(x_n).
\end{equation}
Now fix $y_{j} := y_{j,V} := h_V^{\frac{1}{p_j}}   x_j  h_V^{\frac{1}{p_j}}$; for a while $V$ shall be fixed and at the very end of the proof we will take a limit in $V$.
Set  $1 < q_j < \infty$ by
$q_j^{-1} = \sum_{i=j}^{n-1} p_i^{-1}$.
 Note that $q_1^{-1} + p_n^{-1} = p^{-1}$.  Then set for $0 \leq j \leq n-1$,
\[
R_{j, V} :=  T_1(y_{1}(T_2(y_{2} \ldots T_{j-1}(y_{j-1} T_j(y_{j}   h_V^{ \frac{1}{q_{j+1}} }  S_{j+1}( x_{j+1} S_{j+2}(x_{j+2} \ldots S_{n-1}(x_{n-1}) \ldots ) )     h_V^{ \frac{1}{q_{j+1}} }  )  ) \ldots ) )).
\]
By definition
\[
\begin{split}
R_{n-1,V} = & T_1(y_{1}(T_2(y_{2} \ldots T_{n-1}(y_{n-1} ) \ldots ))), \\
R_{0, V} = &  h_V^{\frac{1}{q_1}}  S_1(x_1(S_2(x_2 \ldots S_{n-1}(x_{n-1} ) \ldots ))) h_V^{\frac{1}{q_1}},
\end{split}
\]
and these expressions should be compared to \eqref{Eqn=TExpansion} and \eqref{Eqn=SEmbedding}.
Also let $R$ denote the term $
S_1(x_1(S_2(x_2\ldots S_{n-1}(x_{n-1})\ldots))) $.
$S_j$ and $T_j$ are contractions and therefore we record at this point that $ \Vert T_n(y_n) \Vert_{L_{p_n}(\widehat{G})},  \Vert h_V^{\frac{1}{p_n}} S_n(x_n) h_V^{\frac{1}{p_n}} \Vert_{L_{p_n}(\widehat{G})} \leq 1$.
Then -- in view of \eqref{Eqn=Integrant} -- we estimate using the triangle inequality
\begin{equation}\label{Eqn=LotsOfSummands}
\begin{split}
     & \Vert T_{ \psi_k(\: \cdot \: ; t_1, \ldots, t_n )  }(y_1, \ldots, y_n   ) -  h_V^{\frac{1}{p}}  T_{ \psi_k(\: \cdot \: ; t_1, \ldots, t_n )\vert_{\Gamma^{\times n} }  }(x_1, \ldots, x_n   )  h_V^{\frac{1}{p}} \Vert_{L_p(\widehat{G})} \\
  \leq & \Vert  R_{0,V} T_n(y_n) -  h_V^{\frac{1}{p}} R S_n(x_n) h_V^{\frac{1}{p}} \Vert_{L_p(\widehat{G})} +
  \sum_{j=1}^{n-1} \Vert (R_{j, V} -  R_{j-1, V})  T_n(y_n)  \Vert_{L_p(\widehat{G})}
  \\
  \leq & \Vert h_V^{\frac{1}{q_1}} [h_V^{1/p_n},R] S_n(x_n) h_V^{1/p_n} h_V^{1/q_1}\Vert_{L_p(\widehat{G})} + \Vert h_V^{1/q_1} R h_V^{1/p_n} [h_V^{1/q_1},S_n(x_n)]h_V^{1/p_n}\Vert_{L_p(\widehat{G})}
  \\
& + \Vert R_{0,V}  T_n(y_n) -  R_{0,V} h_V^{ \frac{1}{p_n}}  S_n(x_n) h_V^{\frac{1}{p_n}} \Vert_{L_p(\widehat{G})}
   +   \sum_{j=1}^{n-1} \Vert (R_{j, V} -  R_{j-1, V} )T_n(y_n) \Vert_{L_p(\widehat{G})}.
\end{split}
\end{equation}
Now, using H\"older's inequality, the first two summands are dominated by  $\Vert  [ h_V^{\frac{1}{p_n}},  R] \Vert_{L_{2p_n}(\widehat{G})}$ and $ \Vert  [ h_V^{\frac{1}{q_1}},  S_n(x_n)  ] \Vert_{L_{2q_1}(\widehat{G})}$ respectively, which go to $0$ by Lemma \ref{Lem=RicardCorollary}. For the third summand we recall that $y_n = h_V^{\frac{1}{p_n}} x_{n} h_V^{\frac{1}{p_n}}$ and that $T_n =  T_{\varphi_k( \: \cdot \: t_n )}$ is a multiplier both on $L_1$ and $L_\infty$. Therefore by the H\"older inequality and Proposition \ref{Prop=Intertwiner},
\[
     \Vert  R_{0,V}  T_n(y_n) -  R_{0,V}  h_V^{\frac{1}{p_n}}  S_n(x_n) h_V^{\frac{1}{p_n}} \Vert_{L_p(\widehat{G})}
\leq  \Vert   T_n(h_V^{\frac{1}{p_n}} x_{n}   h_V^{\frac{1}{p_n}}) -   h_V^{  \frac{1}{p_n}}  S_n(x_n) h_V^{\frac{1}{p_n}} \Vert_{L_{p_n}(\widehat{G})} \rightarrow 0.
\]
It remains to show that the final summand in \eqref{Eqn=LotsOfSummands} goes to 0 in $V$.
For $1 \leq j \leq n-1$, set
\[
\widetilde{S}_{j+1} =  S_{j+1}( x_{j+1} S_{j+2}(x_{j+2} \ldots S_{n-1}(x_{n-1}) \ldots ) \in \mathbb{C}[\Gamma].
\]
 We estimate,
\begin{equation}\label{Eqn=VeryFinalEstimate}
\begin{split}
\Vert (R_{j, V} - R_{j-1, V})T_n(y_n) \Vert_{L_p(\widehat{G})}
 \leq  &    \Vert   T_j(y_j h_V^{\frac{1}{q_{j+1}}}   \widetilde{S}_{j+1}     h_V^{\frac{1}{q_{j+1}}}  )     -  h_V^{\frac{1}{q_{j}}}   S_{j}( x_{j}  \widetilde{S}_{j+1}  )    h_V^{\frac{1}{q_{j}}}   \Vert_{L_{q_j}(\widehat{G})} \\
 \leq &  \Vert   T_j(  h_V^{\frac{1}{p_j}}   x_j   h_V^{\frac{1}{p_j}}   h_V^{\frac{1}{q_{j+1}}}   \widetilde{S}_{j+1}     h_V^{\frac{1}{q_{j+1}}}  )     -  T_j(  h_V^{\frac{1}{q_j}}   x_j     \widetilde{S}_{j+1}     h_V^{\frac{1}{q_{j}}}  )   \Vert_{L_{q_j}(\widehat{G})}
  \\
 & +\Vert   T_j(  h_V^{\frac{1}{q_j}}   x_j     \widetilde{S}_{j+1}     h_V^{\frac{1}{q_{j}}}  )   -  h_V^{\frac{1}{q_{j}}}   S_{j}( x_{j}  \widetilde{S}_{j+1}  )    h_V^{\frac{1}{q_{j}}}   \Vert_{L_{q_j}(\widehat{G})}.
\end{split}
\end{equation}
Since $T_j$ is a contraction the first summand goes to 0 in $V$ by Lemma \ref{Lem=RicardCorollary}. For the second summand we note that $T_j = T_{\varphi_k( \: \cdot \: t_j)}$ is a multiplier both on $L_1(\widehat{G})$ and $L_\infty(\widehat{G})$. Since $1 < q_j < \infty$  we get that also the second summand in \eqref{Eqn=VeryFinalEstimate} goes to 0 by Proposition \ref{Prop=Intertwiner}.
\end{proof}

\begin{remark}
If in Lemma \ref{Lem=AuxConverge} we have $x_1 = \ldots = x_n = 1$ then the proof holds for any symmetric neighbourhood basis $\mathcal{V}$ not necessarily witnessing the $\SAIN_\Gamma$ condition.
\end{remark}

\begin{remark}\label{Rmk=CBCaseThmC}
As in Remark \ref{Rmk=CBCaseThmA}, we note that Theorem C also has a completely bounded version.
Use the notation of Theorem \ref{Thm=MultiDeLeeuwRestriction} and let $N$ be any (semi-finite) von Neumann algebra. The vector-valued extension $T_m^{N}$ of $T_m$ is given by $L_{p_1}(\widehat{G}; L_{p_1}(N)) \times  L_{p_2}(\widehat{G}; ; L_{p_2}(N)) \times \ldots \times L_{p_n}(\widehat{G};L_{p_n}(N)) \rightarrow  L_{p}(\widehat{G}; L_{p}(N))$,
\[
\begin{split}
  & T_m( x_1 \otimes  \lambda(f_1) , \ldots, x_n \otimes   \lambda(f_n)  ) \\
   & \qquad = \int_{G^{\times n}} m(s_1, \ldots, s_n)  f_1(s_1) \ldots f_n(s_n) \: x_1 \ldots x_n \otimes \lambda(s_1 \ldots s_n) ds_1 \ldots ds_n.
\end{split}
\]
Let $\mathcal{R}$ be the hyperfinite II$_1$-factor.
As in the linear case, we will say that $T_m$ is completely bounded if $T_m^{\mathcal{R}}$ is bounded and write $\Vert T_m \Vert_{\mathrm{cb}} = \Vert T_m^{\mathcal{R}} \Vert$. Then we still have \eqref{Eqn=DeLeeuwMultilinear} in the cb-norm.
\end{remark}

\section{Multilinear lattice approximation}\label{Sect=Lattice}

The aim of this section is to prove a multilinear version of \cite[Theorem C]{CPPR}.

\begin{definition}\label{Dfn=ADS}
A locally compact group $G$ is approximable by discrete subgroups (notation $G \in \ADS$) if there exists a sequence of discrete lattices $(\Gamma_j)_{j \in \mathbb{N}}$ of $G$ with associated fundamental domains $X_j$ which form a neigbourhood basis of the identity.
\end{definition}

 By passing to a subsequence we may assume that all $X_j$ in Definition \ref{Dfn=ADS} are relatively compact. In other words $\Gamma_j$ is cocompact.  The following theorem shows that for $\ADS$ groups the norm of Fourier multipliers can be approximated by looking at the restriction of the symbol to appropriate lattices.

\begin{theorem}\label{thm:LatticeApproximation}
Let $G \in \ADS$  with approximating cocompact lattices $(\Gamma_j)_{j \in \mathbb{N}}$. Let $m \in C_b( G^{\times n})$  and let $1 \leq p, p_1, \ldots,  p_n < \infty$ be such that $p^{-1}= \sum_{i=1}^n p_i^{-1}$. Then,
\begin{equation} \label{Eqn=LatticeApprox}
\Vert T_m: L_{p_1}(\widehat{G}) \times \ldots \times  L_{p_n}( \widehat{G} )  \rightarrow   L_p(\widehat{G}) \Vert \leq \sup_{j \geq 1}  \Vert T_{  m \vert_{  \Gamma_j^{ \times n }} }: L_{p_1}(\widehat{\Gamma_j}) \times \ldots \times L_{p_n}(\widehat{\Gamma_j})  \rightarrow L_p(\widehat{\Gamma_j}) \Vert.
\end{equation}
\end{theorem}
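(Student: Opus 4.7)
The plan is to prove \eqref{Eqn=LatticeApprox} by combining H\"older duality with a Riemann-sum analysis of the multilinear trace pairing. By density of $\lambda(C_c(G)^{\ast 2})$ in each $L_{p_i}(\widehat{G})$ and by the H\"older characterisation \eqref{Eqn=MaxHolder}, it suffices to bound $|\tau_G(T_m(\lambda(f_1),\ldots,\lambda(f_n))\,\lambda(g))|$ against test functions $f_i,g \in C_c(G)^{\ast 2}$ with $\Vert \lambda(g) \Vert_{L_{p'}(\widehat{G})} \leq 1$. Unfolding the Plancherel weight \eqref{Eqn=PlancherelWeight} and the convolution rule \eqref{Eqn=ConvolutionLambda}, this pairing equals the ordinary integral
\[
\int_{G^n} m(s_1,\ldots,s_n)\,f_1(s_1)\cdots f_n(s_n)\,g\bigl((s_1\cdots s_n)^{-1}\bigr)\,ds_1\cdots ds_n,
\]
whose integrand is continuous with compact support in $G^n$.

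Because the fundamental domains $X_j$ form a neighbourhood basis of the identity and $\Gamma_j$ is cocompact, the translates $\{\gamma X_j\}_{\gamma\in\Gamma_j}$ tile $G$ up to a null set, and standard Riemann-sum convergence on $G^n$ identifies the integral above with
\[
\lim_{j\to\infty}\mu(X_j)^n\sum_{\gamma_1,\ldots,\gamma_n\in\Gamma_j} m(\gamma_1,\ldots,\gamma_n)\,f_1(\gamma_1)\cdots f_n(\gamma_n)\,g\bigl((\gamma_1\cdots\gamma_n)^{-1}\bigr).
\]
A direct computation using the discrete version of \eqref{Eqn=PairingConcrete} on $\widehat{\Gamma_j}$ recognises the inner sum as the discrete trace pairing $\tau_{\Gamma_j}\bigl(T_{m|_{\Gamma_j^{\times n}}}(\lambda_{\Gamma_j}(f_1|_{\Gamma_j}),\ldots,\lambda_{\Gamma_j}(f_n|_{\Gamma_j}))\,\lambda_{\Gamma_j}(g|_{\Gamma_j})\bigr)$. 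Invoking the hypothesised multiplier bound on each $\Gamma_j$ together with H\"older on $L_p(\widehat{\Gamma_j})$ then produces
\[
|\tau_G(T_m(\cdots)\lambda(g))| \leq c\,\limsup_{j\to\infty}\mu(X_j)^n\prod_{i=1}^n\Vert\lambda_{\Gamma_j}(f_i|_{\Gamma_j})\Vert_{L_{p_i}(\widehat{\Gamma_j})}\cdot\Vert\lambda_{\Gamma_j}(g|_{\Gamma_j})\Vert_{L_{p'}(\widehat{\Gamma_j})},
\]
where $c$ denotes the right-hand side of \eqref{Eqn=LatticeApprox}.

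The main technical obstacle is to show that this limit is bounded by $c\prod_i\Vert\lambda_G(f_i)\Vert_{L_{p_i}(\widehat{G})}\cdot\Vert\lambda_G(g)\Vert_{L_{p'}(\widehat{G})}$. The critical bookkeeping is the identity $\sum_i 1/p_i' + 1/p = n$, which follows immediately from $\sum_i 1/p_i = 1/p$, and which shows that the scaling factor $\mu(X_j)^n$ is exactly the right counter-weight for the $n+1$ discrete norms appearing on the right. At $p_i=p'=2$ this is transparent: Plancherel gives $\Vert\lambda_{\Gamma_j}(f|_{\Gamma_j})\Vert_{L_2(\widehat{\Gamma_j})} = \Vert f|_{\Gamma_j}\Vert_{\ell_2(\Gamma_j)}$, and Riemann sums yield $\Vert f|_{\Gamma_j}\Vert_{\ell_2(\Gamma_j)}^2 \sim \mu(X_j)^{-1}\Vert f\Vert_{L_2(G)}^2$, so that the $L_{p_i}$-norms of the approximating lattice elements converge to those of $\lambda_G(f)$. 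For general exponents one extends this by combining Hausdorff--Young on $\widehat{\Gamma_j}$, the discrete-to-continuous Riemann-sum asymptotic $\Vert f|_{\Gamma_j}\Vert_{\ell_{q'}(\Gamma_j)} \sim \mu(X_j)^{-1/q'}\Vert f\Vert_{L_{q'}(G)}$, and the fact that one can choose the test functions so that $\Vert\lambda_G(f_i)\Vert_{p_i}$ and $\Vert\lambda_G(g)\Vert_{p'}$ approximate their sharp values; the remaining ranges of exponents are then handled by duality (Remark \ref{Rmk=Dual}) and complex interpolation between $L_1$ and $L_\infty$. Passing to the limit $j\to\infty$ and taking the supremum over admissible $g$ then delivers \eqref{Eqn=LatticeApprox}.
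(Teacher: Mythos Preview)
Your Riemann-sum identification of the pairing is fine and is in fact close to the paper's Lemma~\ref{Lem=AuxLattice}. The genuine gap is in the last step, where you try to bound the discrete noncommutative $L_p$-norms of the sampled functions by the continuous ones. The quantity you need is
\[
\mu(X_j)^{1/p_i'}\,\bigl\Vert \lambda_{\Gamma_j}(f_i|_{\Gamma_j})\bigr\Vert_{L_{p_i}(\widehat{\Gamma_j})}
\;\lesssim\;
\bigl\Vert \lambda_G(f_i)\bigr\Vert_{L_{p_i}(\widehat{G})},
\]
and the argument you sketch does not give this. Hausdorff--Young on $\widehat{\Gamma_j}$ together with the Riemann-sum asymptotic only yields a bound by the \emph{classical} norm $\Vert f_i\Vert_{L_{p_i'}(G)}$, and for $p_i\neq 2$ there is no inequality $\Vert f_i\Vert_{L_{p_i'}(G)}\leq \Vert\lambda_G(f_i)\Vert_{L_{p_i}(\widehat{G})}$ in a noncommutative setting; the Hausdorff--Young inequality goes the other way. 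Choosing test functions that nearly attain $\Vert\lambda_G(f_i)\Vert_{p_i}$ does not help, since you still need an upper bound in terms of that norm, not a lower one. Likewise, the interpolation you invoke would have to be between noncommutative $L_p$-spaces, and the map $f\mapsto \lambda_{\Gamma_j}(f|_{\Gamma_j})$ is not defined on $L_p(\widehat{G})$ at all, so there is nothing to interpolate.

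The paper resolves exactly this point by \emph{not} using naive sampling. It introduces explicit transfer maps
\[
\Psi_j^{(p_i)}\colon L_{p_i}(\widehat{G})\to L_{p_i}(\widehat{\Gamma_j}),
\qquad
\Phi_j^{(p)}\colon L_p(\widehat{\Gamma_j})\to L_p(\widehat{G}),
\]
built from $h_j=\lambda(1_{X_j})$ (so an average over the fundamental domain rather than a point evaluation), and quotes from \cite[Theorem~C]{CPPR} that these are contractions for every $1\leq p\leq\infty$. One then sets $S_j=\Phi_j^{(p)}\circ T_{m|_{\Gamma_j^{\times n}}}\circ(\Psi_j^{(p_1)}\times\cdots\times\Psi_j^{(p_n)})$, proves the weak convergence $\langle y,S_j(x_1,\ldots,x_n)\rangle\to\langle y,T_m(x_1,\ldots,x_n)\rangle$ (this is where your Riemann-sum computation reappears), and concludes directly from contractivity of $\Phi_j^{(p)}$ and $\Psi_j^{(p_i)}$. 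The contractivity of $\Psi_j^{(p)}$ is precisely the missing ingredient that replaces your appeal to Hausdorff--Young.
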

Before proving the theorem we introduce the auxiliary maps from \cite[Theorem C]{CPPR}. Let $X_j$ be shrinking relatively compact fundamental domains for the inclusions of $\Gamma_j$ in $G$. For $s \in G$ we shall write $\gamma_j(s)$ for the unique element in $\Gamma_j$ such that $s \in \gamma_j(s) X_j$.
Let $h_j = \lambda(1_{X_j})$. For $x \in \mathbb{C}[  \Gamma_j  ]$   we set the elements in $L_\infty(\widehat{G})$ by
\[
\Phi_j(x) = h_j^\ast x h_j, \qquad \Phi_j^{(p)}(x) = \vert X_j \vert^{-2 + \frac{1}{p}} \Phi_j(x).
\]
The proof of \cite[Theorem C, p. 19-20]{CPPR} shows that $\Phi_j^{(p)}$ extends to a contraction $L_p(\widehat{\Gamma}) \rightarrow L_p(\widehat{G})$. For $x \in \lambda(C_c(G))$ we set
\begin{equation}\label{Eqn=PsiMap}
\Psi_j(x) = \sum_{\gamma \in \Gamma_j} \tau(  h_j^\ast \lambda( \gamma^{-1} )  h_j x ) \lambda(\gamma) =
 \sum_{\gamma \in \Gamma_j}  \langle  h_j x,  \lambda( \gamma )  h_j  \rangle   \lambda(\gamma), \qquad \Psi_j^{(p)}(x) = \vert X_j \vert^{-1-\frac{1}{p}} \Psi_j(x).
\end{equation}
Since $x$ has compact frequency support these summations are finite.    The proof of \cite[Theorem C, p. 19-20]{CPPR} shows that $\Psi_j^{(p)}$  extends to a contraction $L_p(\widehat{G})  \rightarrow L_p(\widehat{\Gamma})$.
Now set
\[
S_j = \Phi_j^{(p)} \circ T_{m\vert_{\Gamma_j^{\times n}  }  } \circ (\Psi_j^{(p_1)} \times \ldots \times \Psi_j^{(p_n)}) = \vert X_j \vert^{-2-n}   \Phi_j \circ T_{ m\vert_{\Gamma_j^{ \times n }} } \circ (\Psi_j \times \ldots \times \Psi_j).
 \]
Recall that we defined $\phi^\vee(s) = \phi(s^{-1})$ for $\phi$ a function on $G$. The proof of Theorem \ref{thm:LatticeApproximation} hinges on the following lemma.

\begin{lemma}\label{Lem=AuxLattice} Let $f_1, \ldots, f_n, \phi \in C_c(G)^{\ast 2}$ so that $x_i := \lambda(f_i) \in L_{p_i}(\widehat{G})$ and $y := \lambda(\phi^\vee) \in L_{p'}(\widehat{G})$. We have
\begin{equation}\label{Eqn=SConvergence}
\lim_j \langle y, S_j(x_1, \ldots, x_n) \rangle_{p', p} = \langle y,  T_m( x_1, \ldots, x_n)  \rangle_{p', p}.
\end{equation}
\end{lemma}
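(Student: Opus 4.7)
The plan is to evaluate both sides of \eqref{Eqn=SConvergence} explicitly as integrals/sums, identify the left-hand side as a Riemann-type approximation to the right-hand side, and then invoke uniform continuity of the (compactly supported) symbols to pass to the limit.  First, using the pairing formula \eqref{Eqn=PairingConcrete} and the definition of $T_m$, one immediately sees that
\[
\langle y, T_m(x_1,\ldots,x_n)\rangle_{p',p} \;=\; \int_{G^{\times n}} m(s_1,\ldots,s_n)\,\phi(s_1\cdots s_n)\prod_{i=1}^n f_i(s_i)\,ds_1\cdots ds_n.
\]

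Next, I would expand the left-hand side.  Unwinding $S_j$, one has
\[
\langle y, S_j(x_1,\ldots,x_n)\rangle_{p',p} \;=\; |X_j|^{-n-2}\!\!\sum_{\gamma_1,\ldots,\gamma_n\in\Gamma_j}\!\! m(\gamma_1,\ldots,\gamma_n)\Bigl(\prod_{i=1}^n c^{(i)}_{\gamma_i}\Bigr)\,\tau\bigl(y\,h_j^\ast \lambda(\gamma_1\cdots\gamma_n)h_j\bigr),
\]
where $c^{(i)}_{\gamma}=\tau\bigl(h_j^\ast\lambda(\gamma^{-1})h_j\lambda(f_i)\bigr)$.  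A direct computation, using $\lambda(1_{X_j})^\ast=\lambda(1_{X_j^{-1}})$, $\lambda(g)\lambda(h)=\lambda(g\ast h)$ and the Plancherel identity $\tau(\lambda(\psi))=\psi(e)$ for $\psi\in A(G)\cap L_1(G)$ (which applies here because all the functions are compactly supported and lie in $C_c(G)^{\ast 2}$), gives
\[
c^{(i)}_{\gamma} \;=\; (1_{X_j}\ast f_i\ast 1_{X_j^{-1}})(\gamma),
\qquad
\tau\bigl(y\,h_j^\ast\lambda(g)h_j\bigr) \;=\; (1_{X_j}\ast\phi^\vee\ast 1_{X_j^{-1}})(g^{-1}).
\]
Rewriting the second as $\int_{X_j}\!\int_{X_j^{-1}g}\phi(wu)\,dw\,du$ and likewise for $c^{(i)}_{\gamma}$, each of these quantities is a local average of $f_i$ (respectively $\phi$) over a region of size $\sim|X_j|^2$ shrinking toward the point $\gamma_i$ (respectively $\gamma_1\cdots\gamma_n$).

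The core step is then to exhibit the left-hand side as a Riemann-type sum.  Writing $s_i=\gamma_i x_i$ with $x_i\in X_j$ and summing over $\gamma_i\in\Gamma_j$ converts $|X_j|^n\sum_{\gamma_1,\ldots,\gamma_n}$ into the integral $\int_{G^{\times n}} ds_1\cdots ds_n$.  Combined with the $|X_j|^{2(n+1)}$ produced by the local averages, the normalising factor $|X_j|^{-n-2}$ cancels exactly, leaving
\[
\langle y, S_j(x_1,\ldots,x_n)\rangle_{p',p}\;=\;\int_{G^{\times n}}\!\! m(\gamma_j(s_1),\ldots,\gamma_j(s_n))\,\phi_j(s_1,\ldots,s_n)\prod_{i=1}^n f_i^{(j)}(s_i)\,ds_1\cdots ds_n + o(1),
\]
where $f_i^{(j)},\phi_j$ are continuous mollifications of $f_i,\phi$ (composed with small $X_j$-translations) and $\gamma_j(s)$ is the lattice representative of $s$.

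The final step is to pass to the limit in $j$.  All the functions $m,f_1,\ldots,f_n,\phi$ are continuous, and $f_i,\phi$ are compactly supported, hence uniformly continuous on the relevant supports; moreover the supports $\gamma_j(\supp f_i)\Gamma_j$-translates remain in a fixed compact set for $j$ large.  Thus $f_i^{(j)}\to f_i$, $\phi_j\to\phi$ uniformly on compacta, and $m(\gamma_j(s_1),\ldots,\gamma_j(s_n))\to m(s_1,\ldots,s_n)$ pointwise (and boundedly on the fixed compact support of the integrand).  The dominated convergence theorem then yields \eqref{Eqn=SConvergence}.  The main obstacle is verifying that the error terms $o(1)$ are uniformly controlled by an integrable function independent of $j$ --- this requires care in bounding the kernels $(1_{X_j}\ast f_i\ast 1_{X_j^{-1}})(\gamma)$ by $|X_j|^2\|f_i\|_\infty$ together with an envelope that captures their compact support.
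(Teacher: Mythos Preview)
Your approach is correct and essentially the same as the paper's: expand both pairings explicitly, recognise the left-hand side as a Riemann-type approximation of the right-hand side, and conclude by uniform continuity and dominated convergence. The paper organises the computation slightly differently by introducing an auxiliary map $\Psi_j'(x)=\sum_\gamma\langle x,\lambda(\gamma)h_j\rangle\lambda(\gamma)$ (with a single $h_j$) and the corresponding $S_j'=|X_j|^{-2}\Phi_j\circ T_{m|_{\Gamma_j^{\times n}}}\circ(\Psi_j'\times\cdots\times\Psi_j')$; it then observes that $S_j(x_1,\ldots,x_n)=S_j'(\lambda(g_{1,j}),\ldots,\lambda(g_{n,j}))$ for the mollifications $g_{i,j}=|X_j|^{-1}f_i\ast 1_{X_j}$. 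This buys an \emph{exact} integral formula for $\langle y,S_j'(\ldots)\rangle$ (no ``$+o(1)$'' term), after which the convergence splits cleanly into two easy pieces: (i) $S_j'(\lambda(g_1),\ldots)\to T_m(\lambda(g_1),\ldots)$ for fixed $g_i$, via a uniform-continuity estimate on $m$ and $\phi$; and (ii) $g_{i,j}\to f_i$ in $L_1(G)$. Your direct expansion with the double convolutions $1_{X_j}\ast f_i\ast 1_{X_j^{-1}}$ leads to the same place, but the bookkeeping behind your ``$+o(1)$'' and the final domination step is precisely what the paper's auxiliary map handles more transparently.
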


\begin{proof}[Proof of   Lemma \ref{Lem=AuxLattice}]
Set  for $x =\lambda(g)$ with $g \in C_c(G)$,
\[
\Psi_j'(x) =  \sum_{\gamma \in \Gamma_j}  \langle x, \lambda(\gamma)h_j  \rangle   \lambda(\gamma) =  \sum_{\gamma \in \Gamma_j}  \langle g, 1_{\gamma X_j}  \rangle   \lambda(\gamma).
\]
 Then set
\[
S_j'=  \vert X_j \vert^{-2}   \Phi_j \circ T_{m\vert_{\Gamma_j^{ \times  n }  }} \circ (\Psi_j' \times \ldots \times \Psi_j').
\]
We then have for $g_1, \ldots, g_n \in C_c(G)^{\ast 2}$,
\[
\begin{split}
S_j'(\lambda(g_1), \ldots, \lambda(g_n)) = &\vert X_j \vert^{-2}
 \sum_{\gamma_1, \ldots, \gamma_n \in \Gamma_j} m(\gamma_1, \ldots, \gamma_n )
   \left( \prod_{i=1}^n \langle g_i, 1_{\gamma_i X_j} \rangle  \right)\lambda(h_j)^\ast  \lambda(  \gamma_1 \ldots \gamma_n ) \lambda(h_j). \\
\end{split}
\]
Therefore we see by \eqref{Eqn=PairingConcrete} that
\[
\begin{split}
& \langle \lambda(\phi^\vee), S_j'(\lambda(g_1), \ldots, \lambda( g_n)) \rangle_{p', p} \\
= &
 \vert X_j \vert^{-2}  \sum_{\gamma_1, \ldots, \gamma_n \in \Gamma_j}   \int_{G} \int_G  m(\gamma_1, \ldots, \gamma_n ) \phi( t_0 \gamma_1 \ldots \gamma_n t_{1} )
   \left( \prod_{i=1}^n \langle g_i, 1_{\gamma_i X_j} \rangle  \right)    h_j^\ast(t_0)   h_j(t_{1})    dt_0 dt_1 \\
     = & \vert X_j \vert^{-2} \!\!\!\!   \int_{G^{\times n}}  \!\!  \int_{X_j} \int_{X_j^{-1}}   m(\gamma_j(s_1), \ldots, \gamma_j(s_n) ) \phi( t_0 \gamma_j(s_1) \ldots \gamma_j(s_n) t_{1} )
     g_1(s_1) \ldots g_n(s_n)      dt_0 dt_1 ds_1\ldots s_n. \\
\end{split}
\]
On the other hand we have
\[
 \langle \lambda(\phi^\vee), T_m(\lambda(g_1), \ldots, \lambda( g_n)) \rangle_{p', p} =
     \int_{G^{\times n}}  m( s_1, \ldots, s_n ) \phi( s_1 \ldots s_n  )
     g_1(s_1) \ldots g_n(s_n)       ds_1\ldots s_n.
\]
For $\epsilon > 0$ we may choose $j$ large such that for all $t_0 \in X_j^{-1}, t_1 \in X_j$ and $s_i$ in the support of $g_i$,
\[
\vert  m(\gamma_j(s_1), \ldots, \gamma_j(s_n) ) \phi( t_0 \gamma_j(s_1) \ldots \gamma_j(s_n) t_{1} ) -
 m( s_1, \ldots, s_n ) \phi( s_1 \ldots s_n  ) \vert < \epsilon.
\]
It follows then that
\begin{equation} \label{Eqn=PairingEstimate1}
\vert \langle \lambda(\phi^\vee), S_j'(\lambda(g_1), \ldots, \lambda( g_n))  - T_m(\lambda(g_1), \ldots, \lambda( g_n))   \rangle_{p', p} \vert \leq \epsilon \Vert g_1 \Vert_{L_1(G)} \ldots \Vert g_n \Vert_{L_1(G)}.
\end{equation}
As in the claim let $f_i \in C_c(G)^{\ast 2}, x_i = \lambda(f_i)$. Set $g_{i,j} = \vert X_j \vert^{-1} f_i \ast 1_{X_j}$.
 Since $C_c(G) \ast 1_{X_j} \subseteq C_c(G)$  we have still have  $g_{i,j} \in C_c(G)^{\ast 2}$. By construction,
\[
S_j( x_1, \ldots, x_n) =  S_j'(\lambda(g_{1,j}), \ldots, \lambda( g_{n,j})).
\]
As each $f_i$ is continuous we have
\begin{equation} \label{Eqn=ConvApprox}
\lim_j \Vert g_{i,j} - f_i \Vert_{L_1(G)} =   \Vert \vert X_j \vert^{-1} f_i \ast 1_{X_j} - f_i \Vert_{L_1(G)} = 0.
\end{equation}
 We crudely estimate for $1 \leq i \leq n$,
\begin{equation}
\begin{split}
    & \vert \langle \lambda(\phi^\vee), T_m(\lambda(f_1), \ldots, \lambda(f_{i-1}),    \lambda(f_{i} - g_{i,j}), \lambda(g_{i,j}),  \ldots , \lambda( g_{n,j} ) \rangle_{p', p} \vert \\
\leq  & \int_{ G^{\times n} } \!\!\!\!\! \vert  m( s_1, \ldots, s_n ) \phi( s_1 \ldots s_n  )
     f_1(s_1) \ldots f_{i-1}(s_{i-1})  \\
     & \qquad \times \qquad  (f_i-g_{i,j})(s_i) g_{i+1, j}(s_{i+1})  \ldots   g_{n,j}(s_n)      \vert  ds_1\ldots s_n  \\
\leq & \Vert m \phi \Vert_\infty \Vert f_1   \Vert_{L_1(G)} \ldots \Vert f_{i-1}   \Vert_{L_1(G)} \Vert f_i - g_{i,j} \Vert_{L_1(G)} \Vert g_{i+1, j} \Vert_{L_1(G)}  \ldots  \Vert  g_{n,j} \Vert_{L_1(G)}.
 \end{split}
\end{equation}
Therefore by the triangle inequality and \eqref{Eqn=ConvApprox},  for every $\epsilon >0$ there exists large $j$ such that,
\begin{equation}\label{Eqn=PairingEstimate2}
\begin{split}
      &  \vert  \langle \lambda(\phi^\vee), T_m(\lambda(f_1), \ldots, \lambda(f_{n} )    )  -    T_m(  \lambda(  g_{1,j})  , \ldots , \lambda(   g_{n,j}))  \rangle_{p', p} \vert \\
 \leq & \Vert m \phi \Vert_\infty  \sum_{i=1}^n  \Vert f_1   \Vert_{L_1(G)} \ldots \Vert f_{i-1}   \Vert_{L_1(G)}  \Vert f_i - g_{i,j} \Vert_{L_1(G)}   \Vert  g_{i+1,j} \Vert_{L_1(G)} \ldots  \Vert  g_{n,j} \Vert_{L_1(G)} < \epsilon.
\end{split}
\end{equation}
Therefore, combining  \eqref{Eqn=PairingEstimate1} and \eqref{Eqn=PairingEstimate2}, we have for $j$ large that
\[
\begin{split}
& \vert \langle \lambda(\phi^\vee), S_j(x_1, \ldots, x_n) -   T_m(x_1, \ldots,  x_n  )  \rangle_{p', p} \vert \\
\leq &
\vert  \langle \lambda(\phi^\vee), S_j'(\lambda(g_{1,j}), \ldots, \lambda( g_{n,j} )) -
T_m(\lambda(g_{1,j}), \ldots, \lambda(g_{n,j})  )   \rangle_{p', p} \vert \\
& + \vert   \langle \lambda(\phi^\vee), T_m(\lambda(g_{1,j} ), \ldots, \lambda( g_{n,j})  )
-   T_m(\lambda(f_1), \ldots, \lambda(f_{n})  )  \rangle_{p', p} \vert \\
\leq & \epsilon \Vert g_{1,j} \Vert_1 \ldots \Vert g_{n,j} \Vert_1  + \epsilon.
\end{split}
\]
Hence we see that this term goes to 0 proving \eqref{Eqn=SConvergence}.
\end{proof}

\begin{proof}[Proof of Theorem \ref{thm:LatticeApproximation}] Let $x_1, \ldots, x_n$ be as in Lemma \ref{Lem=AuxLattice} and assume $\Vert x_i \Vert_{L_{p_i}(\widehat{G})} \leq 1$.  Lemma \ref{Lem=AuxLattice} shows (for $p=1$ this requires Kaplansky's density theorem),
\[
\begin{split}
\Vert T_m(  x_1, \ldots, x_n) \Vert_{L_p(\widehat{G})} = & \sup_{ y \in \lambda(C_c(G)^{\ast 2}), \Vert y \Vert_{L_{p'}(\widehat{\Gamma})} \leq 1 } \vert \langle y , T_m(  x_1, \ldots, x_n) \rangle_{p', p} \vert
\\
= & \sup_{ y \in \lambda( C_c(G)^{\ast 2}), \Vert y \Vert_{L_{p'}(\widehat{\Gamma})} \leq 1 }  \limsup_j  \vert \langle y, S_j( x_1, \ldots, x_n) \rangle_{p', p} \vert.
\end{split}
\]
Then,  as $\Phi_j^{(p)}$ and $\Psi_j^{(p_i)}$ in the definition of $S_j$ are contractions,
\[
\begin{split}
\Vert T_m(  x_1, \ldots, x_n)  \Vert_{L_p(\widehat{G})} \leq &  \limsup_j  \Vert S_j(  x_1, \ldots, x_n)  \Vert_{L_p(\widehat{G})}  \\
\leq &   \sup_{j \geq 1}  \Vert T_{  m\vert_{\Gamma_j^{\times n}    }  }: L_{p_1}(\widehat{\Gamma_j }) \times \ldots \times L_{p_n}(\widehat{\Gamma_j }) \rightarrow  L_p(\widehat{\Gamma_j}) \Vert.
\end{split}
\]
This concludes the proof.
\end{proof}

We note that we may use the lattice approximation theorem to obtain a stronger version of the restriction theorem. The following follows directly by combining Theorem \ref{Thm=MultiDeLeeuwRestriction} and  Theorem \ref{thm:LatticeApproximation}.

\begin{theorem}
Let $H$ be a subgroup of a locally compact group $G$. Let $H \in \ADS$ and $G \in \SAIN_H$. Let $m:G^{\times n} \to \mathbb{C}$ be a bounded continuous symbol giving rise to a multilinear $(p_1,p_2,\ldots,p_n)$-multiplier on $G$ with $1\leq p,p_i< \infty$ and $p^{-1}=\sum_{i=1}^n p_i^{-1}$. Then

\[
\Vert T_{m \vert_{H^{\times n}} }: L_{p_1}(\widehat{H}) \times \ldots \times L_{p_n}(\widehat{H})   \rightarrow L_p(\widehat{H}) \Vert \leq
\Vert T_{m  }: L_{p_1}(\widehat{G}) \times \ldots \times L_{p_n}(\widehat{G})  \rightarrow L_p(\widehat{G}) \Vert.
\]
\end{theorem}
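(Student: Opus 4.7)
The plan is to combine the lattice approximation theorem (Theorem \ref{thm:LatticeApproximation}) applied to the group $H$ with the multilinear restriction theorem (Theorem \ref{Thm=MultiDeLeeuwRestriction}) applied pairwise to $\Gamma_j < G$ for each approximating lattice. First, since $H \in \ADS$, pick a sequence of cocompact discrete lattices $(\Gamma_j)_{j \in \mathbb{N}}$ in $H$ whose fundamental domains $X_j$ form a (relatively compact) neighbourhood basis of the identity of $H$. The symbol $m|_{H^{\times n}}$ lies in $C_b(H^{\times n})$ since restriction preserves continuity and boundedness, so Theorem \ref{thm:LatticeApproximation} applies to the group $H$ and yields
\[
\Vert T_{m\vert_{H^{\times n}}}: L_{p_1}(\widehat{H}) \times \ldots \times L_{p_n}(\widehat{H}) \rightarrow L_p(\widehat{H}) \Vert \leq \sup_{j \geq 1} \Vert T_{m\vert_{\Gamma_j^{\times n}}}: L_{p_1}(\widehat{\Gamma_j}) \times \ldots \times L_{p_n}(\widehat{\Gamma_j}) \rightarrow L_p(\widehat{\Gamma_j}) \Vert.
\]

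Next, I would observe that the hypothesis $G \in \SAIN_H$ passes to each $\Gamma_j$: indeed, by Definition \ref{Dfn=SAIN}, $G \in \SAIN_H$ means $\delta_F = 1$ for every finite $F \subseteq H$, and since $\Gamma_j \subseteq H$ every finite subset of $\Gamma_j$ is a finite subset of $H$, giving $G \in \SAIN_{\Gamma_j}$. Therefore for each $j$ the multilinear De Leeuw restriction theorem (Theorem \ref{Thm=MultiDeLeeuwRestriction}) can be applied to the discrete inclusion $\Gamma_j < G$ with symbol $m$, producing
\[
\Vert T_{m\vert_{\Gamma_j^{\times n}}}: L_{p_1}(\widehat{\Gamma_j}) \times \ldots \times L_{p_n}(\widehat{\Gamma_j}) \rightarrow L_p(\widehat{\Gamma_j}) \Vert \leq \Vert T_m: L_{p_1}(\widehat{G}) \times \ldots \times L_{p_n}(\widehat{G}) \rightarrow L_p(\widehat{G}) \Vert.
\]
The right-hand side is independent of $j$, so taking the supremum over $j$ and chaining with the previous inequality completes the proof.

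There is no real obstacle; the only subtleties are verifying the two hypotheses invoked, namely that $m|_{H^{\times n}}$ is a continuous bounded symbol and that $\SAIN_H$ descends to $\SAIN_{\Gamma_j}$, both of which are immediate. Under the standing assumptions of Section \ref{Sect=NCLp} both $H$ and $G$ are locally compact, second countable and unimodular, so all ambient hypotheses of Theorems \ref{thm:LatticeApproximation} and \ref{Thm=MultiDeLeeuwRestriction} are in force. Thus the statement is genuinely a direct corollary obtained by inserting the discrete approximants $\Gamma_j$ between $H$ and $G$.
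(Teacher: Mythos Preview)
Your proof is correct and follows exactly the approach the paper indicates: the result is obtained by applying the lattice approximation theorem (Theorem~\ref{thm:LatticeApproximation}) to $H$ and then the multilinear restriction theorem (Theorem~\ref{Thm=MultiDeLeeuwRestriction}) to each $\Gamma_j < G$, using that $G \in \SAIN_H$ implies $G \in \SAIN_{\Gamma_j}$. The paper's own proof is the single sentence ``follows directly by combining Theorem~\ref{Thm=MultiDeLeeuwRestriction} and Theorem~\ref{thm:LatticeApproximation}'', and your write-up is a faithful unpacking of that.
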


\section{Other multilinear de Leeuw theorems: periodization and compactification}\label{Sect=OtherTheorems}
In this section we prove multilinear versions of the periodization and compactification theorem. The proofs in this section are very similar to the linear case in \cite{CPPR} or to other proofs in this paper. Therefore, we only give a sketch of the proofs.

Given a locally compact group $G$, let $G_{\mathrm{disc}}$ denote the same group equipped with the discrete topology. The compactification theorem relates the $(p_1,p_2,\ldots,p_n)$-boundedness of a Fourier multiplier on $G$ with the $(p_1,p_2,\ldots,p_n)$-boundedness of a Fourier multiplier on $G_{\mathrm{disc}}$. We recall that if $G$ is abelian $\widehat{G_{\mathrm{disc}}}$ is known as the Bohr compactification of $\widehat{G}$.  We refer to  the discussion in \cite[Section 6]{CPPR} for why the hypotheses in the following theorem are the most natural ones.

\begin{theorem} Let $G$ be a locally compact group and let $m \in C_b(G^{\times n})$. Let $1<p,p_1, \ldots, p_n <\infty$, with $p^{-1}=\sum_{i=1}^n p_i^{-1}$.
\begin{itemize}
    \item[i)] If $G \in \ADS$, then
    \[
    \Vert T_m: L_{p_1}(\widehat{G}) \times \ldots \times  L_{p_n}( \widehat{G} )  \rightarrow   L_p(\widehat{G}) \Vert \leq \Vert T_m: L_{p_1}(\widehat{G_{\mathrm{disc}}}) \times \ldots \times  L_{p_n}( \widehat{G_{\mathrm{disc}}} )  \rightarrow   L_p(\widehat{G_{\mathrm{disc}}}) \Vert.
    \]
    \item[ii)] If $G_{\mathrm{disc}}$ is amenable, then
    \[
    \Vert T_m: L_{p_1}(\widehat{G_{\mathrm{disc}}}) \times \ldots \times  L_{p_n}( \widehat{G_{\mathrm{disc}}} )  \rightarrow   L_p(\widehat{G_{\mathrm{disc}}}) \Vert \leq \Vert T_m: L_{p_1}(\widehat{G}) \times \ldots \times  L_{p_n}( \widehat{G} )  \rightarrow   L_p(\widehat{G}) \Vert.
    \]
\end{itemize}

\begin{proof}
(i) Let $(\Gamma_j)_{j \in \mathbb{N}}$ be a sequence of approximating cocompact lattices as in the hypothesis of Theorem \ref{thm:LatticeApproximation}.
Then,
\[
\Vert T_m: L_{p_1}(\widehat{G}) \times \ldots \times  L_{p_n}( \widehat{G} )  \rightarrow   L_p(\widehat{G}) \Vert \leq \sup_{j \geq 1}  \Vert T_{  m \vert_{  \Gamma_j^{ \times n }} }: L_{p_1}(\widehat{\Gamma_j}) \times \ldots \times L_{p_n}(\widehat{\Gamma_j})  \rightarrow L_p(\widehat{\Gamma_j}) \Vert.
\]
Now $\Gamma_j < G_{\mathrm{disc}}$ is an inclusion of two discrete groups so that the natural inclusion $\mathbb{C}[\Gamma] \subseteq \mathbb{C}[G_{\mathrm{disc}}]$ extends to an isometric inclusion $L_p(\widehat{\Gamma}_j) \subseteq  L_p(\widehat{G_{\mathrm{disc}}})$.  Therefore, $T_{m \vert_{\Gamma_j^{\times n}} }$ is just a restriction of $T_m$ and hence
\[
\Vert T_{m \vert_{\Gamma_i^{\times n}} }: L_{p_1}(\widehat{\Gamma_i}) \times \ldots \times L_{p_n}(\widehat{\Gamma_i})   \rightarrow L_p(\widehat{\Gamma_i}) \Vert \leq
\Vert T_{m  }: L_{p_1}(\widehat{G_{\mathrm{disc}}}) \times \ldots \times L_{p_n}(\widehat{G_{\mathrm{disc}}})  \rightarrow L_p(\widehat{G_{\mathrm{disc}}}) \Vert.
\]

(ii)  This is essentially a restriction result for the subgroup $G_{\mathrm{disc}}$. As such, the proof is similar to Theorem \ref{Thm=MultiDeLeeuwRestriction} and we sketch its main difference here. From \cite[Theorem 8.7]{CPPR} we know that the amenability of $G_{\mathrm{disc}}$  implies that $G \in \SAIN_G$.
Now for $F \subset G$ finite let $\Gamma_F$ be the smallest (not necessarily closed) subgroup of $G$ containing $F$. Then $\Gamma_F$ is countable and we may still perform the construction in Remark \ref{Rmk=Shrink} with $\Gamma$ replaced by $\Gamma_F$. Hence we  obtain a local neighbourhood basis $\mathcal{V}$ of the identity of $G$ such that for every $F' \subseteq \Gamma_F$ finite we have $\lim_{V \in \mathcal{V}} \delta_{F'}(V) = 1$.
 Then Proposition \ref{Prop=AlmostIsometry1} still holds for any $x \in \mathbb{C}[\widehat{ \Gamma_F }]$ with $V \in \mathcal{V}$ sufficiently small.

  By \cite[Lemma 6.1]{CPPR} the amenability of $G_{\mathrm{disc}}$ implies that for any $x \in \mathbb{C}[ G_{\mathrm{disc}} ]$ we have $\Vert x \Vert_{ C_r^\ast( G_{\mathrm{disc}}  ) } = \Vert x \Vert_{ C_r^\ast(  G  ) }$. So naturally we have an isometric embedding $C_r^\ast(   G_{\mathrm{disc}}  ) \subseteq L_\infty( \widehat{ G  }  )$. Consequently, the proof of  Proposition \ref{Prop=AlmostIsometry1} still holds, where for the $p=\infty$ case the latter fact is used.
  Since $\Gamma_F$ is a discrete subgroup of $ G_{\mathrm{disc}}$ we have  $C_r^\ast(   \Gamma_F  )  \subseteq C_r^\ast( G_{\mathrm{disc}}  )$ which thus is included in $L_\infty( \widehat{ G  }  )$. Then  Lemma \ref{Lem=RicardCorollary} and Proposition \ref{Prop=AlmostIsometrySymmetric} hold for any $x \in C_r^\ast(  \Gamma_F  )$.

      This suffices then to run the proofs of Theorem \ref{Thm=MultiDeLeeuwRestriction} and Lemma \ref{Lem=AuxConverge} by putting $F$ as in the previous paragraph equal to the union of the frequency supports of $x_1, \ldots, x_n$ occurring in the proof; note that products of the $x_i$'s then have frequency support contained in $\Gamma_F$.
\end{proof}
\end{theorem}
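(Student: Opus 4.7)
For part (i), the plan is to chain the lattice approximation theorem (Theorem \ref{thm:LatticeApproximation}) with a trivial restriction step. Applying Theorem \ref{thm:LatticeApproximation} immediately bounds $\Vert T_m\Vert$ on $G$ by $\sup_j \Vert T_{m\vert_{\Gamma_j^{\times n}}}\Vert$ for approximating cocompact lattices $\Gamma_j$. Now each $\Gamma_j$ sits inside $G_{\mathrm{disc}}$ as an inclusion of \emph{discrete} groups, so $\mathbb{C}[\Gamma_j] \hookrightarrow \mathbb{C}[G_{\mathrm{disc}}]$ extends to an isometric embedding $L_p(\widehat{\Gamma_j}) \hookrightarrow L_p(\widehat{G_{\mathrm{disc}}})$ which intertwines $T_{m\vert_{\Gamma_j^{\times n}}}$ with the restriction of $T_m$ (on $G_{\mathrm{disc}}$) to the subspace generated by $\Gamma_j$. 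Thus each factor in the supremum is bounded by the $G_{\mathrm{disc}}$-multiplier norm.

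For part (ii), the strategy is to mimic the proof of the multilinear restriction theorem (Theorem \ref{Thm=MultiDeLeeuwRestriction}) applied in reverse direction: $G_{\mathrm{disc}}$ plays the role of the ``discrete subgroup'' inside $G$ (even though literally as a subgroup it is all of $G$). First, I would invoke \cite[Theorem 8.7]{CPPR} to deduce that amenability of $G_{\mathrm{disc}}$ yields $G\in \SAIN_{G_{\mathrm{disc}}}$, which morally delivers the approximate invariance needed for the proof of Theorem \ref{Thm=MultiDeLeeuwRestriction}.

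The obstacle is that $G_{\mathrm{disc}}$ is not second countable and Remark \ref{Rmk=Shrink} produced the neighbourhood basis only for countable $\Gamma$. I would circumvent this locally: given $x_1,\ldots,x_n \in \mathbb{C}[G_{\mathrm{disc}}]$ occurring in the proof, let $F$ be the union of their frequency supports, and let $\Gamma_F$ be the countable subgroup generated by $F$. Running Remark \ref{Rmk=Shrink} for $\Gamma_F$ produces a symmetric neighbourhood basis $\mathcal{V}$ of the identity of $G$ with $\lim_{V\in\mathcal{V}} \delta_{F'}(V)=1$ for every finite $F'\subseteq \Gamma_F$. Since products of the $x_i$'s have frequency support in $\Gamma_F$, Propositions \ref{Prop=AlmostIsometry1}, \ref{Prop=AlmostIsometrySymmetric} and Lemma \ref{Lem=RicardCorollary} apply verbatim with $\Gamma_F$ in place of $\Gamma$.

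The final ingredient I would need is that the $L_p$-norm of $x\in \mathbb{C}[G_{\mathrm{disc}}]$ inside $L_p(\widehat{G_{\mathrm{disc}}})$ matches its norm inside $L_p(\widehat{G})$, so that the asymptotic embeddings actually encode the right quantity. At $p=\infty$ this is \cite[Lemma 6.1]{CPPR}: amenability of $G_{\mathrm{disc}}$ forces $C_r^*(G_{\mathrm{disc}})\hookrightarrow C_r^*(G)$ isometrically. Interpolation then handles intermediate $p$. With these adjustments, the proof of Theorem \ref{Thm=MultiDeLeeuwRestriction} together with the key Lemma \ref{Lem=AuxConverge} transfer line by line, yielding the stated inequality.
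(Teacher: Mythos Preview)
Your proposal is correct and follows essentially the same approach as the paper's proof: part (i) via lattice approximation plus the trivial discrete-subgroup restriction, and part (ii) by rerunning the proof of Theorem~\ref{Thm=MultiDeLeeuwRestriction} with the countable subgroup $\Gamma_F$ generated by the frequency supports, using \cite[Theorem~8.7]{CPPR} for the $\SAIN$ condition and \cite[Lemma~6.1]{CPPR} for the $p=\infty$ endpoint.

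One phrasing point worth tightening: your ``final ingredient'' that the $L_p$-norm of $x\in\mathbb{C}[G_{\mathrm{disc}}]$ inside $L_p(\widehat{G_{\mathrm{disc}}})$ matches its norm inside $L_p(\widehat{G})$ is not literally well-posed for $p<\infty$, since a finite sum $\sum x_s\lambda_G(s)$ does not lie in $L_p(\widehat{G})$ when $G$ is non-discrete. What you actually need (and what the paper uses) is only the $p=\infty$ case from \cite[Lemma~6.1]{CPPR}, which feeds into the interpolation inside Proposition~\ref{Prop=AlmostIsometry1}; the asymptotic embeddings $x\mapsto h_V^{1/p}xh_V^{1/p}$ then recover the correct $L_p(\widehat{\Gamma_F})=L_p(\widehat{G_{\mathrm{disc}}})$-norm via Proposition~\ref{Prop=AlmostIsometrySymmetric}. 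Your parenthetical ``so that the asymptotic embeddings actually encode the right quantity'' suggests you already see this; just state it that way.
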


The proof of the periodization theorem is a direct modification of the linear case (Theorem D (iii) and (iv) in \cite{CPPR}).

\begin{theorem}
Let $G$ be a locally compact group and let $H$ be a closed normal subgroup of $G$. Let $m_q \in C_b((G/H)^{\times n})$ and let $m_\pi \in C_b( G^n)$ denote its $H$-periodization $m_\pi(g_1,\ldots,g_n)=m(g_1H,g_2H,\ldots,g_nH)$. Then, the following inequalities hold for $1<p,p_i<\infty$, $p^{-1}=\sum_{i=1}^n p_i^{-1}$.

\begin{itemize}
    \item [(i)] If $G$ is abelian,
    \[ \Vert T_{m_\pi}: L_{p_1}(\widehat{G})\times \ldots \times L_{p_n}(\widehat{G}) \to L_p(\widehat{G})\Vert \leq \Vert T_{m_q}: L_{p_1}(\widehat{G/H})\times \ldots \times L_{p_n}(\widehat{G/H})\to L_p(\widehat{G/H}) \Vert
    \]
    \item [(ii)] If $H$ is compact,
     \[ \Vert T_{m_q}: L_{p_1}(\widehat{G/H})\times \ldots \times L_{p_n}(\widehat{G/H})\to L_p(\widehat{G/H}) \Vert \leq \Vert T_{m_\pi}: L_{p_1}(\widehat{G})\times \ldots \times L_{p_n}(\widehat{G}) \to L_p(\widehat{G})\Vert
    \]
\end{itemize}
\end{theorem}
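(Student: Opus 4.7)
The plan is to mirror the proof of the linear periodization theorem in \cite[Theorem D (iii), (iv)]{CPPR} with the obvious multilinear adaptations. Both parts will reduce to transferring the multiplier through a natural map between $L_p(\widehat{G})$ and $L_p(\widehat{G/H})$, and the bulk of the work will be a short algebraic verification on a dense subalgebra, followed by a density argument. Since the two hypotheses (abelian vs.\ $H$ compact) give rise to different natural transference maps, the two parts will be treated separately.

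For part (ii), where $H$ is compact normal, I would use the projection $p_H := \mu(H)^{-1}\lambda(1_H) \in \mathcal{L}(G)$. Compactness of $H$ ensures that $p_H$ is a well-defined projection, and normality ensures that the corner $p_H\mathcal{L}(G)p_H$ is canonically $\ast$-isomorphic to $\mathcal{L}(G/H)$ (with the trace rescaled by $\mu(H)$). This yields, for every $1 \leq p < \infty$, an isometric embedding $\iota_p: L_p(\widehat{G/H}) \hookrightarrow L_p(\widehat{G})$ defined on $\mathbb{C}[G/H]$ by a suitably normalized multiplication by $p_H$. The core step will be to verify the intertwining identity
\[
T_{m_\pi}\bigl(\iota_{p_1}(x_1), \ldots, \iota_{p_n}(x_n)\bigr) \;=\; \iota_p\bigl(T_{m_q}(x_1, \ldots, x_n)\bigr)
\]
on $\mathbb{C}[G/H]^{\times n}$, using that $m_\pi$ is constant on each $H$-coset, together with Fubini to absorb the integrals over $H$ into factors of $p_H$. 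Taking norms and invoking the isometric property of $\iota_{p_i}$ and $\iota_p$ then gives the desired bound.

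For part (i), where $G$ is abelian, the plan is to invoke Pontryagin duality. In this setting $L_p(\widehat{G})$ agrees with the classical $L_p$-space on the dual group, and for $m \in C_b(G)$ the multilinear multiplier $T_m$ is a multilinear convolution with the distribution $\widehat{m}$. The $H$-periodicity of $m_\pi$ translates into the statement that $\widehat{m}_\pi$ is supported on the annihilator $H^\perp \subseteq \widehat{G}$, which is precisely $\widehat{G/H}$, and $\widehat{m}_\pi$ coincides with $\widehat{m}_q$ under this identification. The statement then becomes a multilinear de Leeuw restriction for abelian groups: the same convolution operator, viewed between $L_p$-spaces on $\widehat{G}$, is dominated by its restriction between $L_p$-spaces on $H^\perp$. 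This can be established by a direct multilinear adaptation of the classical abelian argument from \cite{CPPR}, by evaluating the convolution on tensor-product test functions and using Plancherel together with duality.

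The main obstacle I expect is the algebraic verification in part (ii): one must carefully track the normalizations $\mu(H)^{1/p_i}$ arising from the embeddings $\iota_{p_i}$, and verify that they combine, via $1/p = \sum_i 1/p_i$, into precisely the normalization $\mu(H)^{1/p}$ on the output side. This compatibility is exactly what renders the bound multiplicative in the $x_i$, and it has to be checked by a direct computation against the identity $m_\pi = m_q \circ (\pi \times \cdots \times \pi)$, where $\pi : G \to G/H$ is the quotient map.
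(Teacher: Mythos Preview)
Your proposal is correct and follows essentially the same route as the paper. For part (ii) the paper uses exactly your projection (written as $\Pi = \int_H \lambda(h)\,d\mu_H(h)$), observes that normality of $H$ makes $\Pi$ \emph{central} so that the corner is simply $\mathcal{L}(G)\Pi$, and then verifies the same intertwining identity $\pi_p \circ T_{m_q} = T_{m_\pi}\circ(\pi_{p_1}\times\cdots\times\pi_{p_n})$; your normalization worry disappears once $\mu_H$ is normalized to a probability measure. For part (i) the paper does not give a proof at all but simply cites the abelian multilinear result from \cite{rodriguez2013homomorphism} (and \cite{saeki1970translation} in the linear case), so your Pontryagin-duality sketch is a legitimate alternative route.
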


\begin{proof}
(i) was proved in the linear case by Saeki \cite{saeki1970translation} and in the multilinear case by \cite{rodriguez2013homomorphism}. The proof of (ii) remains essentially unchanged from the linear case proved in \cite[Section 7]{CPPR}.
As in \cite[Section 7]{CPPR} we note that the operator defined by
\[
\Pi = \int \lambda(h) d\mu_H(h) \in \mathcal{L}(H) \subset \mathcal{L}(G)
\]
is a  central projection of $\mathcal{L}(G)$ such that $\lambda(s) \Pi = \Pi = \Pi \lambda(s)$ for all $s \in H$. Moreover, this lets us define a normal $\ast$-homomorphism
\[
\pi: \mathcal{L}(G/H) \to \mathcal{L}(G)\Pi: \lambda(gH) \mapsto \lambda(g) \Pi.
\]
 $\pi$ preserves the Plancherel weight on $\mathcal{L}(G/H)$, so it induces an isometry $\pi_p: L_p(\widehat{G/H})\to L_p(\widehat{G})\Pi$ for $1\leq p\leq \infty$. Since $\Pi$ is a central projection in $\mathcal{L}(G)$, a similar computation as in \cite{CPPR} now shows us that
\[
\begin{split}
  & \pi_p \circ T_{m_q}(x_1,\ldots,x_n)  \\
   & =\int_{(G/H)^n} m_q(g_1H,g_2H,\ldots,g_nH) \widehat{x_1}(g_1H)\ldots \widehat{x_n}(g_nH) \lambda(g_1\ldots g_n) \Pi  d\mu_{G/H}(g_1H)\ldots d\mu_{G/H}(g_nH)  \\
   & = \int_{(G/H)^n} m_q(g_1H,g_2H,\ldots,g_nH) \widehat{x_1}(g_1H)\ldots \widehat{x_n}(g_nH) \int_H \lambda(g_1\ldots g_n h) d\mu_H(h) d\mu_{(G/H)^n} \\
   & = \int_{G^n} m_\pi(g_1,\ldots,g_n) \widehat{x_1}(g_1H)\ldots \widehat{x_n}(g_nH) \lambda(g_1\ldots g_n)  d\mu_{G}(g_1)\ldots d\mu_{G}(g_n) \Pi \\
 &=   T_{m_\pi}(\pi_{p_1}(x_1),\ldots,\pi_{p_n}(x_n))
\end{split}
\]
for $x_i =\lambda(\widehat{x_i}) \in \lambda(C_c(G/H)^{\ast2})$. This concludes (ii) since $\pi_p$ is an isometry for all $1\leq p\leq \infty$.
\end{proof}

\section{Example: Multilinear multipliers on the Heisenberg group}\label{Sect=Heisenberg}

We provide an example of multilinear multipliers on a non-abelian group. Let
\[
H = \left\{
\left(
\begin{array}{ccc}
1 & x & z \\
0 & 1 & y \\
0 & 0 & 1
\end{array}
\right)
\mid x,y,z \in \mathbb{R}
 \right\}
\]
be the $(2+1)$-dimensional Heisenberg group.
We shall see $H$ as $\mathbb{R}^3$ with the group law
\[
(x,y,z).(x',y',z')=(x+x',y+y',z+z'+ xy').
\]
  $H$ is unimodular and its Haar measure is just the usual Lebesgue measure of $\mathbb{R}^3$.
There is an action $\mathbb{R} \curvearrowright \mathbb{R}^2$ by $x \cdot (y,z) = (y, z + xy)$. Then $H = \mathbb{R}^2 \rtimes \mathbb{R}$. Recall our definition of the completely bounded norm from Remark \ref{Rmk=CBCaseThmC}.

\begin{proposition}\label{Prop=Transference}
Let $\Lambda$ and $\Gamma$ be countable amenable groups.  Let $\Lambda \curvearrowright \Gamma$ be an action by group automorphisms.
Let $m \in \ell_\infty(\Lambda \times \Lambda)$ and set $M(\gamma, s, \mu, t) = m(s,t), s,t \in \Lambda, \gamma, \mu \in \Gamma$. We have for $1 \leq p, p_1, p_2 < \infty$ with $p^{-1} = p_1^{-1} + p_2^{-1}$,
\begin{equation}\label{Eqn=CBmap}
\Vert T_M: L_{p_1}(\widehat{\Gamma \rtimes \Lambda}) \times L_{p_2}(\widehat{\Gamma \rtimes \Lambda}) \rightarrow  L_{p}(\widehat{\Gamma \rtimes \Lambda}) \Vert  \leq
\Vert T_m: L_{p_1}(\widehat{\Lambda}   ) \times L_{p_2}(\widehat{\Lambda}   )     \rightarrow L_{p}(\widehat{\Lambda}   )  \Vert_{\mathrm{cb}}.
\end{equation}
\end{proposition}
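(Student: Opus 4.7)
The plan is to realize $T_M$ on $\mathcal{L}(\Gamma \rtimes \Lambda)$ as the restriction, under an $L_p$-isometric embedding, of a vector-valued amplification $T_m^N$ of $T_m$ (in the sense of Remark \ref{Rmk=CBCaseThmC}) whose norm is controlled by $\Vert T_m \Vert_{\mathrm{cb}}$ thanks to amenability. The crucial observation is that the quotient map $\pi : \Gamma \rtimes \Lambda \to \Lambda$, $(\gamma, s) \mapsto s$, is a group homomorphism, so $g \mapsto \lambda(g) \otimes \lambda_\Lambda(\pi(g))$ is a unitary representation of $\Gamma \rtimes \Lambda$, and hence determines a $\ast$-homomorphism
\[
\Pi : \mathcal{L}(\Gamma \rtimes \Lambda) \to \mathcal{L}(\Gamma \rtimes \Lambda) \bar\otimes \mathcal{L}(\Lambda), \qquad \Pi(\lambda(g)) = \lambda(g) \otimes \lambda_\Lambda(\pi(g)).
\]
Since $(\tau \otimes \tau_\Lambda)(\lambda(g) \otimes \lambda_\Lambda(\pi(g))) = \delta_{g=e}\cdot \delta_{\pi(g)=e} = \delta_{g=e}$ (as $g=e$ forces $\pi(g)=e$), the map $\Pi$ is trace-preserving, and therefore extends to isometric embeddings $\Pi_p : L_p(\widehat{\Gamma \rtimes \Lambda}) \hookrightarrow L_p(\mathcal{L}(\Gamma \rtimes \Lambda) \bar\otimes \mathcal{L}(\Lambda))$ for every $1 \leq p \leq \infty$.

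Next, I would verify the intertwining
\[
\Pi \bigl( T_M(x_1, x_2) \bigr) \;=\; T_m^N \bigl( \Pi(x_1), \Pi(x_2) \bigr), \qquad N := \mathcal{L}(\Gamma \rtimes \Lambda),
\]
by expanding both sides in the Fourier basis of $\Gamma \rtimes \Lambda$: grouping the Fourier coefficients of $\Pi(x_i)$ according to the value of $\pi$ turns the $T_m^N$ on the right into a sum over $(s_1, s_2)$ with coefficient $m(s_1,s_2)\cdot\lambda(g_1)\lambda(g_2)\otimes \lambda_\Lambda(s_1 s_2)$, which matches $\Pi(T_M(\cdot,\cdot))$ term-by-term using $M(g_1,g_2) = m(\pi(g_1),\pi(g_2))$ and $\pi(g_1 g_2) = \pi(g_1)\pi(g_2)$. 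Combined with the $L_p$-isometry of $\Pi$ this yields $\Vert T_M(x_1, x_2)\Vert_p \leq \Vert T_m^N \Vert \cdot \Vert x_1 \Vert_{p_1} \Vert x_2 \Vert_{p_2}$ on group-algebra elements, and by density on all of $L_{p_1} \times L_{p_2}$.

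Since $\Gamma$ and $\Lambda$ are countable amenable, $\Gamma \rtimes \Lambda$ is amenable and hence $N = \mathcal{L}(\Gamma \rtimes \Lambda)$ is a finite hyperfinite von Neumann algebra. The main (and only non-formal) obstacle that remains is to show $\Vert T_m^N \Vert \leq \Vert T_m \Vert_{\mathrm{cb}}$, which I would handle as follows. The trace-preserving inclusion $N \hookrightarrow N \bar\otimes \mathcal{R}$ (via $n \mapsto n \otimes 1$) gives $\Vert T_m^N \Vert \leq \Vert T_m^{N \bar\otimes \mathcal{R}} \Vert$, and the central decomposition of the finite hyperfinite algebra $N \bar\otimes \mathcal{R}$ over its centre $Z(N)$ writes it as a direct integral of copies of $\mathcal{R}$, each fibre being the unique hyperfinite $\mathrm{II}_1$-factor (matrix fibres $M_k$ absorb $\mathcal{R}$ into $\mathcal{R}$, and type-$\mathrm{II}_1$ hyperfinite fibres are already $\mathcal{R}$). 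Since the amplified multiplier acts fibrewise, $\Vert T_m^{N \bar\otimes \mathcal{R}} \Vert = \Vert T_m^{\mathcal{R}} \Vert = \Vert T_m \Vert_{\mathrm{cb}}$, closing the argument.
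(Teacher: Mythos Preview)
Your argument is correct and takes a genuinely different, more direct route than the paper. The paper follows the bilinear transference machinery of \cite{gonzalez2018crossed}: it introduces an auxiliary bilinear Herz--Schur multiplier $S_M$ on $L_{p_1}(\widehat{\Gamma})\otimes\mathcal{S}_{p_1}\times L_{p_2}(\widehat{\Gamma})\otimes\mathcal{S}_{p_2}$, shows $\|S_M\|\leq\|T_m\|_{\mathrm{cb}}$ via a unitary conjugation, and then relates $T_M$ to $S_M$ using F{\o}lner sets for $\Lambda$ together with an ultrafilter limit. Your approach bypasses the Schur-multiplier intermediary entirely by exploiting the group homomorphism $\pi:\Gamma\rtimes\Lambda\to\Lambda$: the Fell-absorption map $\Pi(\lambda(g))=\lambda(g)\otimes\lambda_\Lambda(\pi(g))$ directly intertwines $T_M$ with the $N$-amplification of $T_m$, and amenability enters only once, to control $\|T_m^N\|$ by $\|T_m^{\mathcal{R}}\|$. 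One simplification of your last step: rather than the direct-integral decomposition, you can cite (as the paper itself does for $\mathcal{L}(\Gamma)\subseteq\mathcal{R}$) that any separable finite hyperfinite tracial von Neumann algebra embeds trace-preservingly into $\mathcal{R}$, yielding $\|T_m^N\|\leq\|T_m^{\mathcal{R}}\|$ immediately. The paper's route has the merit of making the bilinear Fourier--Schur transference explicit, which is of independent interest; your route is shorter and isolates clearly that the result rests on the single structural fact that $M$ factors through $\pi\times\pi$, and in fact applies verbatim whenever $M=m\circ(\pi\times\pi)$ for any surjective homomorphism $\pi$ from a discrete amenable group onto $\Lambda$.
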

\begin{proof}
The proof follows closely \cite{gonzalez2018crossed} with the difference that we need a bilinear version of transference which requires amenability in the intertwining properties of \cite[Theorem 2.2]{gonzalez2018crossed}.   For $F \subseteq \Lambda$ finite let $P_F$ be the orthogonal projection onto the linear span of the delta functions $\delta_s, s \in F$. Set $P_s = P_{\{ s \}}$.  Let $\cS_p = L_p( B(\ell_2(\Lambda)) )$ where the $L_p$-space is taken with respect to the trace on  $B(\ell_2(\Lambda))$.

Consider the bilinear Hertz-Schur multiplier:
\[
 S_{M }: L_{p_1}( \widehat{\Gamma} )  \otimes \mathcal{S}_{p_1}   \times  L_{p_2}( \widehat{\Gamma} )  \otimes \mathcal{S}_{p_2}      \rightarrow
L_{p}( \widehat{ \Gamma } )  \otimes \mathcal{S}_{p},
\]
determined on finite rank operators $(y_{1,s,t})_{s,t\in \Lambda}$ and $(y_{2,s,t})_{s,t\in \Lambda}$ by
\[
x_1 \otimes (y_{1, s,t})_{s,t\in \Lambda} \times x_2 \otimes (y_{2, s,t})_{s,t \in \Lambda} \mapsto  x_1 x_2 \otimes ( \sum_{r \in \Lambda} m(sr^{-1},rt^{-1}) y_{1, s, r} y_{2, r, t}  )_{s,t}.
\]
 Set the unitary $U =  \sum_{s \in \Lambda}   P_s \otimes \lambda(s) \in B(\ell_2(\Lambda))\otimes L_{\infty}(\widehat{\Lambda})$. Then set the trace preserving $\ast$-homomorphism,
\[
\begin{split}
\pi:  L_\infty(\widehat{\Gamma}) \otimes B(\ell_2( \Lambda)) & \rightarrow L_\infty(\widehat{\Gamma}) \otimes B(\ell_2( \Lambda)) \otimes L_\infty( \widehat{\Lambda } ), \\
  x & \mapsto (1 \otimes  U) (x \otimes 1 ) (1  \otimes U)^\ast.
\end{split}
\]
Then $\pi$ determines an isometry
\[
\pi_p: L_p(\widehat{\Gamma}) \otimes \mathcal{S}_p  \rightarrow L_p(\widehat{\Gamma}) \otimes \mathcal{S}_p \otimes L_p(\widehat{\Lambda}) \simeq   L_p(\widehat{\Lambda};
L_p(\widehat{\Gamma}) \otimes \mathcal{S}_p ).
\]
As described in Remark \ref{Rmk=CBCaseThmA}, $L_p(\widehat{\Gamma}) \otimes \mathcal{S}_p$ here refers to the tensor product equipped with the $L_p$ norm given by the tensor products of the weights on $\Gamma$ and $\mathbb{N}$.
 Since $\Gamma$ is amenable we have a trace preserving embedding $\mathcal{L}(\Gamma) \subseteq \mathcal{R}$ by Connes' theorem \cite{Connes} where $\mathcal{R}$ is the hyperfinite II$_1$ factor.
Also $\mathcal{R} \otimes M_k(\mathbb{C})$ embeds into $\mathcal{R} \otimes \mathcal{R} \simeq \mathcal{R}$ in a trace preserving way.
Therefore - approximating $\cS_p$ with $L_p(M_k(\mathbb{C}))$  - since $T_m$ is completely bounded we may extend $T_m$ to a vector valued map
\[
T_m^{\mathrm{vec}}:  L_{p_1}(\widehat{\Lambda};  L_{p_1}(\widehat{\Gamma}) \otimes \mathcal{S}_{p_1}  ) \times   L_{p_2}(\widehat{\Lambda} ; L_{p_2}(\widehat{\Gamma}) \otimes \mathcal{S}_{p_2}   )     \rightarrow  L_{p}(\widehat{\Lambda} ; L_{p}(\widehat{\Gamma}) \otimes \mathcal{S}_{p}   );
\]
and we have
\[
 T_{m  }^{\mathrm{vec}} \circ  (\pi_{p_1} \times \pi_{p_2}) =   \pi_p \circ S_{M }.
\]
Therefore $S_{M}$ is bounded  by the norm of the right hand side of \eqref{Eqn=CBmap}.

We now use the map $S_{M}$ to prove the statement. The von Neumann algebra $L_\infty(\widehat{\Gamma \rtimes \Lambda})$ is isomorphic to $L_\infty(\widehat{\Gamma})  \rtimes \Lambda$ where $\lambda_{\Gamma \rtimes \Lambda}(s,t), s \in \Gamma, t \in \Lambda$ is identified with $\sum_{r \in \Lambda}  \lambda_{\Gamma}( r^{-1} \cdot s)   \otimes e_{ r, t^{-1} r } \in B(\ell_2(\Gamma) \otimes \ell_2(\Lambda))$.
Under this identification
\[
\lambda_{\Gamma \rtimes \Lambda}(s,t) (1\otimes P_F) \lambda_{\Gamma \rtimes \Lambda}(s,t)^\ast = (1 \otimes P_{t F}) \in B(\ell_2(\Gamma) \otimes \ell_2(\Lambda)).
\]
 Let $(F_\alpha)_{\alpha \in \mathbb{N}}$ be the F\o{}lner sequence for $\Lambda$.
By  \cite[Lemma 2.1]{gonzalez2018crossed}, with $x_\alpha = \vert F_\alpha \vert^{-\frac{1}{2}} P_{F_\alpha}$, there exist contractions
\[
j_{p, \alpha }:  L_p(  L_\infty(\widehat{\Gamma}) \rtimes \Lambda ) \rightarrow  L_{p}( \widehat{\Gamma} )  \otimes \mathcal{S}_{p}:
x \mapsto \vert F_\alpha \vert^{-\frac{1}{p}} ( 1 \otimes P_{F_\alpha}) x  (1 \otimes P_{F_\alpha}).
\]
 We claim further that for $x,y,z \in \mathbb{C}[\Gamma \rtimes \Lambda]$ and any non-principal ultrafilter $\omega$ on $\mathbb{N}$,
\begin{equation} \label{Eqn=UltraIntertwiner}
\langle \prod_{\alpha, \omega} S_{M }  \circ ( j_{p_1,\alpha } \times j_{p_2,\alpha})(x,y), \prod_{\alpha, \omega} j_{p',\alpha}(z) \rangle_{p, p'} =  \langle  T_{M}(x,y), z \rangle_{p, p'}.
\end{equation}
Indeed, by linearity we may assume that $x = \lambda_{\Gamma \rtimes \Lambda}(  s_1,t_1 ), y = \lambda_{\Gamma \rtimes \Lambda}(  s_2,t_2 ), z =\lambda_{\Gamma \rtimes \Lambda}(  s_3,t_3 ), s_i \in \Gamma, t_i \in \Lambda$.  We have
\[
\begin{split}
 & S_{M }(   j_{p_1,\alpha }(x),j_{p_2,\alpha }( y ) )\\
  = & \vert F_\alpha\vert^{- \frac{1}{p}}S_{M }(    ( 1 \otimes P_{F_\alpha}) \lambda_{\Gamma \rtimes \Lambda}(  s_1,t_1 ) (1 \otimes P_{F_\alpha})  , ( 1 \otimes P_{F_\alpha}) \lambda_{\Gamma \rtimes \Lambda}(  s_2,t_2 ) (1 \otimes P_{F_\alpha}) ) \\
    = &\vert F_\alpha\vert^{- \frac{1}{p}}S_{M }(    ( 1 \otimes P_{F_\alpha \cap t_1 F_\alpha}) \lambda_{\Gamma \rtimes \Lambda}(  s_1,t_1 )    ,  \lambda_{\Gamma \rtimes \Lambda}(  s_2,t_2 )  (1 \otimes P_{F_\alpha \cap t_2^{-1} F_\alpha}) ) \\
= & \vert F_\alpha\vert^{- \frac{1}{p}}    ( 1 \otimes P_{F_\alpha \cap t_1 F_\alpha}) T_M( x    ,  y ) (1 \otimes P_{F_\alpha \cap t_2^{-1} F_\alpha}).
\end{split}
\]
So that
\[
\begin{split}
 & \langle  \prod_{\alpha, \omega} S_{M }   \circ ( j_{p_1,\alpha } \times j_{p_2,\alpha})(x,y), j_{p',\alpha}(z) \rangle_{p, p'} \\
= & \lim_\alpha \vert F_\alpha\vert^{-1} (\tau \otimes {\rm Tr}) \left(   ( 1 \otimes P_{F_\alpha \cap t_1 F_\alpha}) T_M( x    ,  y ) (1 \otimes P_{F_\alpha \cap t_2^{-1} F_\alpha}) z^\ast (1 \otimes P_{F_\alpha}) \right) \\
= &\lim_\alpha \vert F_\alpha\vert^{-1} (\tau \otimes {\rm Tr}) \left(   ( 1 \otimes P_{F_\alpha \cap t_1 F_\alpha}) T_M( x    ,  y ) z^\ast (1 \otimes P_{F_\alpha \cap t_3^{-1} F_\alpha \cap (t_2 t_3)^{-1} F_\alpha})  \right) \\
= &
\left\{
\begin{array}{ll}
\lim_\alpha M((s_1, t_1), (s_2, t_2))   \frac{ \vert  F_\alpha \cap t_1 F_\alpha \cap t_3^{-1} F_\alpha \cap (t_2 t_3)^{-1} F_\alpha \vert }{\vert F_\alpha \vert} & \textrm{ if } (s_1, t_1) (s_2, t_2) (s_3, t_3)^{-1} = (e,e), \\
  0 & \textrm{ otherwise}.
\end{array}
\right.
\end{split}
\]
By the F\o{}lner condition the limit of the fraction is 1 and so this expression equals  $\langle  T_{M}(x,y), z \rangle_{p, p'}$.
Since $j_{p,\alpha}$ are contractions (see \cite{gonzalez2018crossed}), this shows (for $p = 1$ with Kaplansky's density theorem) that the norm of $T_M$ is bounded above by the norm of $S_M$, which in turn is bounded by $\Vert T_m^{\mathrm{vec}} \Vert_{\mathrm{cb}}$.
\end{proof}

\begin{theorem}\label{Thm=Heisenberg}
Let $m \in C_b( \mathbb{R} \times \mathbb{R})$. Let $M \in C_b( H \times H)$ be defined by
\[
M((x,y,z),(x',y',z'))
= m(x,x').
\]
We have for $1 \leq p, p_1, p_2 < \infty$ with $p^{-1} = p_1^{-1} + p_2^{-1}$,
\begin{equation}\label{Eqn=CBmap2}
\Vert T_M: L_{p_1}(\widehat{H}) \times L_{p_2}(\widehat{H}) \rightarrow  L_{p}(\widehat{H}) \Vert  \leq
\Vert T_m: L_{p_1}(\widehat{ \mathbb{R} }   ) \times L_{p_2}(\widehat{ \mathbb{R} }   )     \rightarrow L_{p}(\widehat{ \mathbb{R} }   )  \Vert_{\mathrm{cb}}.
\end{equation}
\end{theorem}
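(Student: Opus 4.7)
The plan is to reduce Theorem \ref{Thm=Heisenberg} to Proposition \ref{Prop=Transference} by approximating $H$ with a sequence of discrete subgroups that themselves split as semidirect products in a way compatible with the special form of $M$. The three ingredients will be: (i) the fact that $H \in \ADS$ with lattices inheriting the decomposition $H = \mathbb{R}^2 \rtimes \mathbb{R}$, (ii) Proposition \ref{Prop=Transference} applied to the discrete amenable pieces, and (iii) the cb-version of Theorem C (Remark \ref{Rmk=CBCaseThmC}) to pass back from the restricted multiplier on the lattice to the original symbol on $\mathbb{R}$.

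First I would exploit the one-parameter family of Heisenberg dilations $D_\alpha(x,y,z) = (\alpha x, \alpha y, \alpha^2 z)$, which are group automorphisms of $H$. Setting
\[
\Gamma_n := D_{1/n}(\mathbb{Z}^3) = \tfrac{1}{n}\mathbb{Z} \times \tfrac{1}{n}\mathbb{Z} \times \tfrac{1}{n^2}\mathbb{Z},
\]
one checks that $\Gamma_n$ is a discrete cocompact subgroup of $H$ (closure under the Heisenberg product uses $xy' \in \tfrac{1}{n^2}\mathbb{Z}$ for $x, y' \in \tfrac{1}{n}\mathbb{Z}$), with fundamental domain $[0, 1/n) \times [0, 1/n) \times [0, 1/n^2)$ shrinking to the identity, so $H \in \ADS$ with approximating lattices $(\Gamma_n)_{n \geq 1}$.

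Next I would observe that each $\Gamma_n$ itself decomposes as $\Gamma_n = \Gamma_n' \rtimes \Lambda_n'$, where $\Lambda_n' := \tfrac{1}{n}\mathbb{Z}$ is the $x$-coordinate and $\Gamma_n' := \tfrac{1}{n}\mathbb{Z} \times \tfrac{1}{n^2}\mathbb{Z}$ consists of the $(y,z)$-coordinates, with $\Lambda_n'$ acting on $\Gamma_n'$ by $x \cdot (y,z) = (y, z + xy)$. Both factors are countable abelian and hence amenable. The restriction $M|_{\Gamma_n \times \Gamma_n}$ depends only on the $\Lambda_n'$-coordinates, equalling $m|_{\Lambda_n' \times \Lambda_n'}(x,x')$, so Proposition \ref{Prop=Transference} applies and gives
\[
\Vert T_{M|_{\Gamma_n \times \Gamma_n}} \Vert \leq \Vert T_{m|_{\Lambda_n' \times \Lambda_n'}}: L_{p_1}(\widehat{\Lambda_n'}) \times L_{p_2}(\widehat{\Lambda_n'}) \rightarrow L_p(\widehat{\Lambda_n'}) \Vert_{\mathrm{cb}}.
\]

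Finally, since $\Lambda_n'$ is a discrete amenable subgroup of the abelian group $\mathbb{R}$, we have $\mathbb{R} \in \SAIN_{\Lambda_n'}$ by Remark \ref{Rmk=SAIN-Delta}, and the cb-version of Theorem C (Remark \ref{Rmk=CBCaseThmC}) yields $\Vert T_{m|_{\Lambda_n' \times \Lambda_n'}} \Vert_{\mathrm{cb}} \leq \Vert T_m \Vert_{\mathrm{cb}}$. Combining this with the lattice approximation theorem (Theorem \ref{thm:LatticeApproximation}) produces the chain
\[
\Vert T_M \Vert \leq \sup_{n \geq 1} \Vert T_{M|_{\Gamma_n \times \Gamma_n}} \Vert \leq \sup_{n \geq 1} \Vert T_{m|_{\Lambda_n' \times \Lambda_n'}} \Vert_{\mathrm{cb}} \leq \Vert T_m \Vert_{\mathrm{cb}},
\]
which is the desired inequality. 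The main subtlety, rather than a true obstacle, is the choice of scaling in the central coordinate: one must use the non-isotropic Heisenberg dilation in which $z$ scales as $\alpha^2$, both to ensure that $\Gamma_n$ is genuinely a subgroup and to make the splitting $\Gamma_n = \Gamma_n' \rtimes \Lambda_n'$ match the hypotheses of Proposition \ref{Prop=Transference}; an isotropic scaling would not yield a subgroup.
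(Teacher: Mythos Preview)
Your proof is correct and follows essentially the same approach as the paper's own proof: the paper uses the identical lattices $H_j = j^{-1}\mathbb{Z}\times j^{-1}\mathbb{Z}\times j^{-2}\mathbb{Z}$ with the same semidirect splitting $\Gamma_j\rtimes\Lambda_j$, and then chains together Theorem~\ref{thm:LatticeApproximation}, Proposition~\ref{Prop=Transference}, and the cb-version of Theorem~\ref{Thm=MultiDeLeeuwRestriction} exactly as you do. Your additional remark explaining the necessity of the anisotropic Heisenberg dilation is a helpful clarification not spelled out in the paper.
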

\begin{proof}
Let
\[
H_j = j^{-1} \mathbb{Z} \times j^{-1}  \mathbb{Z} \times j^{-2}  \mathbb{Z}, \qquad  j \in \mathbb{N},
\]
 viewed as a cocompact discrete subgroup of $H$ with $\cup_{j \in \mathbb{N}} H_j$ dense in $H$. Set $\Gamma_j =   j^{-1} \mathbb{Z} \times j^{-2} \mathbb{Z}$  and $\Lambda_j =  j^{-1}  \mathbb{Z}$. The action $\mathbb{R} \curvearrowright \mathbb{R}^2$ described above restricts to an action $\Lambda_j \curvearrowright \Gamma_j$ and $H_j = \Gamma_j \rtimes \Lambda_j$. We now have by Theorem \ref{thm:LatticeApproximation}, Proposition \ref{Prop=Transference} and Theorem \ref{Thm=MultiDeLeeuwRestriction} (see Remark \ref{Rmk=CBCaseThmC}),
  \[
 \begin{split}
 \Vert T_M: L_{p_1}(\widehat{H}) \times L_{p_2}(\widehat{H}) \rightarrow  L_{p}(\widehat{H}) \Vert
\leq & \sup_{j \in \mathbb{N}}
\Vert T_M: L_{p_1}(\widehat{H_j}) \times L_{p_2}(\widehat{H_j}) \rightarrow  L_{p}(\widehat{H_j}) \Vert \\
\leq & \sup_j \Vert T_{m|\Lambda_j\times \Lambda_j}: L_{p_1}(\widehat{\Lambda_j}) \times L_{p_2}(\widehat{\Lambda_j}) \rightarrow  L_{p}(\widehat{\Lambda_j}) \Vert_{\mathrm{cb}} \\
=& \Vert T_m: L_{p_1}(\widehat{\mathbb{R}}) \times L_{p_2}(\widehat{\mathbb{R}}) \rightarrow  L_{p}(\widehat{\mathbb{R}}) \Vert_{\mathrm{cb}}.
 \end{split}
 \]
   \end{proof}

\begin{remark}
We note that the hypotheses of Theorem \ref{Thm=Heisenberg} are satisfied with finite norms in \eqref{Eqn=CBmap2} if $m$ obeys a suitable H\"ormander-Mikhlin type condition, see \cite{DiPlinio1}, \cite{DiPlinio2}.
\end{remark}

\begin{remark}
The methods in this section in fact work for  more general semi-direct products of the form $H = G \rtimes \mathbb{R}^n$ for an action by automorphisms of $\mathbb{R}^n$ on a locally compact amenable group $G$ provided that $G$ is ADS and the approximating groups $\Gamma_j$ of $G$ are invariant under the action of $\frac{1}{j} \mathbb{Z}^n \subseteq \mathbb{R}^n$.
\end{remark}

\section{\texorpdfstring{Theorem B: Lower bounds on $\delta$ for reductive Lie groups}{Theorem B: Lower bounds on delta for reductive Lie groups}}\label{Sect=LowerBound}

In the following sections \ref{Sect=LowerBound} through \ref{Sect=KeyLemma}, we proceed with the proof of Theorem B.
Let $G$ be a real reductive Lie group with Lie algebra $\fg$, Cartan involution $\theta$, maximal compact subgroup $K$, and
invariant bilinear form $B$, cf.~\cite[\S VII.2]{Knapp1996}.
The inner product
\begin{align*}
B_\theta(x,y) := -B(x,\theta y)
\end{align*}
on $\fg$ endows $\End(\fg)$ with the operator norm $A \mapsto \|A\|$.
For $\rho \geq 1$, we denote by
\begin{equation}\label{Eqn=AdjointBall}
B_{\rho}^{G} := \big\{ g \in G\,;\, \|\Ad_{g}\| \leq \rho \big\}
\end{equation}
the preimage under the adjoint representation $\Ad \colon G \rightarrow \End(\fg)$ of the closed ball of radius $\rho$
around the origin.
The aim of this section is to prove the following lower bound on
$\delta_{B_{\rho}^{G}}$ in terms of the radius.
\begin{theorem}[Theorem B]\label{thm:DeltaForReductive}
If $d$ is the maximal dimension of a nilpotent orbit of $G$ in $\fg$, then
\begin{equation}\label{eq:DeltaForReductive}
\delta_{B_{\rho}^{G}} \geq \rho^{-d/2}.
\end{equation}
\end{theorem}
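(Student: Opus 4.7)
The plan is first to reduce to the connected adjoint group $\Ad(G)\subseteq\GL(\fg)$, which is semisimple. Since $\delta_F$ depends on $F$ only through the adjoint representation, and $B_{\rho}^{G}$ is itself defined through $\|\Ad_{g}\|$, the quantity $\delta_{B_{\rho}^{G}}$ is unchanged on passing to $\Ad(G)$. We may therefore assume $G$ is semisimple and treat $\exp\colon\fg\to G$ as a local diffeomorphism near $0$ that transports neighbourhood bases from $\fg$ to $G$.

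Next I would construct a symmetric neighbourhood basis of $0\in\fg$ of the form
\[
V^{\fg}_{\varepsilon,R} \;=\; \bigl\{\,X\in\fg\,:\,\|X\|\leq R,\ |p_{j}(X)|\leq\varepsilon \text{ for all }j\,\bigr\},
\]
where $p_{1},\ldots,p_{k}$ are homogeneous generators of the algebra of $\Ad(G)$-invariant polynomials on $\fg$ with no constant term. By Kostant's theorem these invariants cut out the nilpotent cone, so
\[
\bigcap_{\varepsilon>0} V^{\fg}_{\varepsilon,R} \;=\; \cN\cap\overline{B_{R}(0)}
\]
as announced. Since both the norm ball and $\cN$ are symmetric under $X\mapsto -X$, the family $\{V^{\fg}_{\varepsilon,R}\}$ is a symmetric neighbourhood basis of $0$.

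The key geometric point is that every $p_{j}$ is $\Ad(G)$-invariant, while for $g\in B_{\rho}^{G}$ one has $\Ad_{g}(\overline{B_{R}(0)})\supseteq\overline{B_{R/\rho}(0)}$, using the symmetry $\|\Ad_{g}\|=\|\Ad_{g^{-1}}\|$ on the adjoint group (which follows from the Cartan decomposition and the $\pm$-symmetry of the restricted root system). Hence for every $g\in B_{\rho}^{G}$,
\[
\Ad_{g}\bigl(V^{\fg}_{\varepsilon,R}\bigr) \;\supseteq\; V^{\fg}_{\varepsilon,R/\rho},
\]
and intersecting over any finite $F\subseteq B_{\rho}^{G}$ preserves this inclusion. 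Transporting through $\exp$, whose Jacobian approaches $1$ as $R\searrow 0$, this yields
\[
\delta_{B_{\rho}^{G}}\!\bigl(\exp V^{\fg}_{\varepsilon,R}\bigr) \;\geq\; \frac{\mu(\exp V^{\fg}_{\varepsilon,R/\rho})}{\mu(\exp V^{\fg}_{\varepsilon,R})},
\]
so Theorem~B reduces to the scaling limit
\[
\lim_{R\searrow 0}\lim_{\varepsilon\searrow 0}\frac{\mu(\exp V^{\fg}_{\varepsilon,R/\rho})}{\mu(\exp V^{\fg}_{\varepsilon,R})} \;=\; \rho^{-d/2}
\]
announced in the introduction.

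The main obstacle, to be carried out in Sections~\ref{Sect=Measures} and~\ref{Sect=KeyLemma}, is this scaling limit. As $\varepsilon\searrow 0$, the Lebesgue measure of $V^{\fg}_{\varepsilon,R}$, suitably renormalised, converges as a distribution to an $\Ad(G)$-invariant measure supported on $\cN\cap\overline{B_{R}(0)}$. By Harish-Chandra's regularity theorem for orbital integrals, combined with the Barbasch--Vogan asymptotic formula for nilpotent orbital integrals, this limit measure decomposes as a positive combination of canonical invariant measures on the nilpotent orbits; the dominant contribution comes from the top-dimensional orbits $\cO_{X}$ of dimension $d$, whose union is dense in $\cN$. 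On such an orbit the invariant measure is the Liouville form $\frac{1}{(d/2)!}\wedge^{d/2}\omega^{\KKS}$, and since $\omega^{\KKS}$ is linear in $X$, its $(d/2)$-fold wedge transforms as $\lambda^{d/2}$ under the dilation $X\mapsto\lambda X$. With $\lambda=\rho^{-1}$ this produces the factor $\rho^{-d/2}$, while orbits of strictly smaller dimension contribute smaller scaling exponents and drop out of the leading order. Making this distributional decomposition precise, and verifying that the outer $R\searrow 0$ limit eliminates the $\exp$-Jacobian and any subleading remainders, is the technical core of the proof and motivates the measure-theoretic preparations of the next two sections.
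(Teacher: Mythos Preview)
Your proposal follows essentially the same strategy as the paper: reduce to the semisimple adjoint group, construct an $\Ad_G$-invariant neighbourhood basis of $0$ in $\fg$ whose intersection is the nilpotent cone, use the inclusion $\Ad_{g}(V_{\varepsilon,R})\supseteq V_{\varepsilon,R/\rho}$ for $g\in B_\rho^G$, transfer between Haar and Lebesgue measure via the Jacobian of $\exp$, and then establish the scaling limit by means of Harish-Chandra's integration formula, the Barbasch--Vogan limit for orbital integrals, and the homogeneity of the Liouville form on maximal nilpotent orbits.

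The one substantive difference is your choice of neighbourhood basis. You take $V^{\fg}_{\varepsilon,R}=\{\,\|X\|\leq R,\ |p_j(X)|\leq\varepsilon\,\}$ cut out by the homogeneous invariants, whereas the paper uses $V^{\fg}_{\varepsilon,R}=\Ad_G(B_\varepsilon)\cap B_R$. Both are $\Ad_G$-invariant with $\bigcap_\varepsilon V_{\varepsilon,R}=\cN\cap B_R$, so either yields the same reduction to the key lemma. The paper's choice has one technical advantage you should be aware of: its trace on a Cartan subalgebra scales linearly, $V_\varepsilon^{\fh}=\varepsilon\cdot V_1^{\fh}$, which makes the change of variables $h\mapsto\varepsilon h$ in the orbital-integral computation (Lemma~\ref{lem:BigCalculation}) completely clean. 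With your basis the invariants $p_j$ have different degrees $d_j$, so $\{|p_j(h)|\leq\varepsilon\}$ is not a dilate of a fixed set; the scaling analysis still goes through, but you would need an extra rescaling (e.g.\ replacing $\varepsilon$ by $\varepsilon^{d_j}$ in the $j$-th inequality, or a more careful asymptotic argument) to extract a clean power of $\varepsilon$. This is not a gap, just a point where the paper's choice buys a simpler computation.

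A minor remark: you control the $\exp$-Jacobian by letting $R\searrow 0$, which is perfectly valid since $V_{\varepsilon,R}\subseteq B_R$. The paper instead shows (Proposition~\ref{Prop1}) that the Jacobian is uniformly close to $1$ on the $\Ad_G$-invariant set $\Ad_G(B_\varepsilon)$ as $\varepsilon\searrow 0$, which is a slightly stronger statement but not needed for the argument as you have organised it.
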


\begin{remark}\label{Rmk:ConeInterpretation}
As
the adjoint orbit $O_{X}$ of a nilpotent element $X\in \fg$ is a symplectic manifold, the coefficient $d/2$ in \eqref{eq:DeltaForReductive} is an integer.
Since $T_{X}O_{X} = \fg/\fg_{X}$ is the quotient of $\fg$ by the centralizer $\fg_{X}$ of $X$,
the maximal dimension $d$ can be expressed as
\begin{equation}\label{eq:omgekeerd}
d = \dim(\fg) - \min_{X\in \cN} \dim(\fg_X),
\end{equation}
where $\cN \subseteq \fg$ is the nilpotent cone of $\fg$.
In particular, $d=0$ if $\fg$ is compact or abelian, $d = \dim(\fg) - \rk(\fg)$ if
$\fg$ is split (or quasisplit \cite[Thm 5.1]{rothschild1972orbits}), and
$d = 2(\dim_{\C}(\fg) - \rk_{\C}(\fg))$ if $\fg$ is complex.
In particular, $d = n(n-1)$ for $\mathrm{SL}(n,\R)$ and $\mathrm{GL}(n,\R)$, and
$d = 2n(n-1)$ for $\mathrm{SL}(n,\C)$ and $\mathrm{GL}(n,\C)$.
\end{remark}

Let $\fg = \fk \oplus \fp$ be the Cartan decomposition of $\fg$, let $\fa$ be a maximal abelian subspace of $\fp$, and let $A$ be the corresponding analytic subgroup. If $g = k_1 a k_2$ is the $KAK$-decomposition of $g\in G$ (\cite[\S VII.3]{Knapp1996}), then $\|\Ad_{g}\| = \|\Ad_{a}\|$
since both $B$ and $\theta$ are invariant under $\Ad(K)$. Let 
$\fg = \fg_0  \oplus \bigoplus_{\lambda \in \Sigma}\fg_{\lambda}$ be the restricted root space decomposition,
where the sum runs over the set
$\Sigma \subseteq \fa^*$ of restricted roots.
Since $A$ is simply connected, the set $B_{\rho}^{G}$ can be equivalently described as
\begin{equation}\label{eq:GroupBallsPolygon}
B_{\rho}^{G} = K \rho^{\mathcal{P}} K := K\exp\big(\log(\rho) \mathcal{P}\big) K,
\end{equation}
where $\mathcal{P}\subseteq \fa$ is the polygon $\mathcal{P} = \{h \in \fa \,;\, \alpha(h)\leq 1 \; \forall\,  \alpha \in \Sigma\}$.
From this description, the following result easily follows.
\begin{proposition}\label{prop:BallInGInvariant}
The sets $B_{\rho}^{G}$ are invariant under inversion, and under left and right multiplication by $K$. Furthermore,
$\bigcup_{\rho > 1} B_{\rho}^{G} = G$ and $\bigcap_{\rho>1} B^{G}_{\rho} = K$.
\end{proposition}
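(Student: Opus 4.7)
The plan is to deduce all four statements directly from the polytope presentation \eqref{eq:GroupBallsPolygon} that precedes the Proposition, invoking only the $KAK$-decomposition and the symmetry $\Sigma = -\Sigma$ of the restricted root system. The key preliminary fact I will use is that each $\Ad_k$, $k \in K$, is orthogonal with respect to the inner product $B_\theta$: indeed both $B$ and $\theta$ commute with $\Ad_k$ (the latter because $K$ is the fixed-point subgroup of the global Cartan involution), so $B_\theta$ is $\Ad(K)$-invariant and therefore $\|\Ad_k\| = 1$ for every $k \in K$.

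From this the $K$-bi-invariance of $B_\rho^{G}$ is immediate: $\|\Ad_{k_1 g k_2}\| \leq \|\Ad_{k_1}\|\|\Ad_g\|\|\Ad_{k_2}\| = \|\Ad_g\|$, and the opposite inequality follows by inserting $g = k_1^{-1}(k_1 g k_2)k_2^{-1}$. For invariance under inversion I will write $g = k_1 a k_2$ and $g^{-1} = k_2^{-1} a^{-1} k_1^{-1}$; by $K$-bi-invariance the question reduces to showing $a \in \rho^{\mathcal{P}} \Rightarrow a^{-1} \in \rho^{\mathcal{P}}$, i.e.\ that $\mathcal{P} = -\mathcal{P}$. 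This is immediate from $\Sigma = -\Sigma$: if $\alpha(h) \leq 1$ for every $\alpha \in \Sigma$, then applying this to $-\alpha \in \Sigma$ gives $\alpha(-h) \leq 1$ as well.

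The equality $\bigcup_{\rho > 1} B_\rho^{G} = G$ is immediate from continuity of $\Ad \colon G \to \End(\fg)$, which ensures that every $g$ has finite $\|\Ad_g\|$. For $\bigcap_{\rho > 1} B_\rho^{G} = K$, the inclusion $K \subseteq \bigcap_\rho B_\rho^{G}$ is the same observation $\|\Ad_k\| = 1$. For the reverse inclusion, any $g$ in the intersection satisfies $\|\Ad_g\| \leq 1$, and the $KAK$-decomposition reduces this to $\|\Ad_a\| \leq 1$ with $a = \exp(H)$, $H \in \fa$. Since the restricted root spaces are pairwise $B_\theta$-orthogonal and $\Ad_a$ acts on $\fg_\alpha$ by the eigenvalue $e^{\alpha(H)}$, one has $\|\Ad_a\| = \max_{\alpha \in \Sigma \cup \{0\}} e^{\alpha(H)} \geq 1$, the lower bound coming from the eigenvalue $1$ on $\fg_0$. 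Combining with $\|\Ad_a\| \leq 1$ gives $\alpha(H) \leq 0$ for all $\alpha \in \Sigma$, and symmetry $\Sigma = -\Sigma$ then forces $\alpha(H) = 0$ for every restricted root, hence $H = 0$. Therefore $a = e$ and $g = k_1 k_2 \in K$.

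I do not anticipate a genuine obstacle: once the polytope description \eqref{eq:GroupBallsPolygon} is in hand, all four claims are packaged in the two elementary facts that $\Ad_k$ is $B_\theta$-orthogonal and that $\Sigma = -\Sigma$, both of which are immediate from the setup declared at the start of this section.
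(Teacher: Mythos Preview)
Your proof is correct and follows essentially the same line as the paper's: $K$-bi-invariance from $\|\Ad_k\|=1$ (the paper just points to the polytope description \eqref{eq:GroupBallsPolygon}), inversion-invariance from $\Sigma=-\Sigma$, the union from continuity of $\Ad$, and the intersection by reducing to $a=\exp(H)\in A$ and showing $\|\Ad_a\|=1$ forces $H=0$. You spell out the eigenvalue computation $\|\Ad_{\exp H}\|=\max_{\alpha\in\Sigma\cup\{0\}}e^{\alpha(H)}$ that the paper leaves implicit in its assertion ``$\|\Ad_{\exp(\pm h)}\|=1$ for $h\in\fa$ if and only if $h=0$'', but the approach is identical.
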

\begin{proof}
Invariance under left and right multiplication by $K$
is clear from \eqref{eq:GroupBallsPolygon}, and invariance under inversion follows from
the fact that $\Sigma = -\Sigma$. The formula for the union is obvious, and the formula for the intersection follows
from $\bigcap_{\rho>1} B_{\rho}^{G} = B^{G}_{1}$, and the fact that $\|\Ad_{\exp(\pm h)}\| = \|\exp(\ad_{\pm h})\| = 1$
for $h\in \fa$ if and only if $h=0$.
\end{proof}

\subsection{A neighbourhood basis}
The reductive Lie algebra $\fg$ decomposes as the direct sum ${\fg = \fg_0 \oplus \fz}$ of
the maximal semisimple ideal $\fg_0 = [\fg, \fg]$ and the centre $\fz$.  The former admits the Cartan decomposition
$\fg_0 = \fk_0 \oplus \fp_0$, with $\fk_0 = \fk \cap \fg_0$ and $\fp_0 = \fp \cap \fg_0$.

Let $B_{r}^{\fz}\subseteq \fz$ and $B_{r}^{\fg_0} \subseteq \fg_0$
be the open balls of radius $r$ with respect to the inner product $B_{\theta}$ in $\fz$ and $\fg_0$, respectively.
Both are $K$-invariant, and the global Cartan decomposition $G = K\exp(\fp)$ implies that
 $B_{r}^{\fz}\subseteq \fz$ is invariant under all of $G$.

We are interested in the neighbourhood basis
\begin{equation}\label{eq:V}
V^{\fg}_{\epsilon, R, r} :=
\big(\Ad_{G}(B_{\varepsilon}^{\fg_0}) \cap B_{R}^{\fg_{0}} \big)
\times
B^{\fz}_{r} ,
\end{equation}
obtained by intersecting the $G$-invariant neighbourhoods
\[
U_{\varepsilon} :=
\Ad_{G}(B_{\varepsilon}^{\fg_0}) \times \fz
\]
with the bounded sets $B^{\fg_0}_{R} \times B^{\fz}_{r}$.
It will be convenient to write $V^{\fg}_{\epsilon, R, r} =  V^{\fg_0}_{\varepsilon, R} \times B^{\fz}_{r}$,
where $V^{\fg_{0}}_{\varepsilon, R}$ is the bounded open subset of $\fg_0$ defined by
\begin{equation}
V^{\fg_0}_{\varepsilon, R} := \Ad_{G}(B_{\varepsilon}^{\fg_0}) \cap B_{R}^{\fg_{0}}.
\end{equation}

\begin{remark}[$K$-invariance]\label{Rk:KInvariance}
Since $B_{\varepsilon}^{\fg_0}$ is $K$-invariant, the global Cartan decomposition
$G = K\exp(\fp)$
yields
$\Ad_{G}(B_{\varepsilon}^{\fg_0}) = \Ad_{\exp(\fp)}(B_{\varepsilon}^{\fg_0})$.
Further, since $\fz$ acts trivially on $\fg_0$, we have
$\Ad_{G}(B_{\varepsilon}^{\fg_0}) = \Ad_{\exp(\fp_0)}(B_{\varepsilon}^{\fg_0})$ for $\fp_0 = \fp \cap \fg_0$.
It follows that $V^{\fg}_{\epsilon, R, r} = V^{\fg_0}_{\varepsilon, R} \times B^{\fz}_{r}$ is the product of
the $K$-invariant set $V^{\fg_0}_{\varepsilon, R} \subseteq \fg_0$ that depends only on the restriction of $B_{\theta}$ to $\fg_0$, and the $G$-invariant
set $B^{\fz}_{r} \subseteq \fz$ that depends only on the restriction of $B_{\theta}$ to $\fz$.
In particular the sets $V^{\fg}_{\epsilon, R, r}\subseteq \fg$ are $K$-invariant, and they
depend only on the triple $(\fg, \theta, B)$, not on the Lie group $G$.
\end{remark}

The lower bound \eqref{eq:DeltaForReductive} will be established by calculating
\begin{equation}
\delta^{0}_{B_{\rho}^{G}} := \limsup_{(\epsilon, R, r)\rightarrow 0}
\frac
{
	\mu\Big( \bigcap_{g\in B_{\rho}^{G}} \Ad_{g^{-1}}\exp(V^{\fg}_{\epsilon, R, r})\Big)
}
{
	\mu(\exp(V^{\fg}_{\epsilon, R, r}) )
},
\end{equation}
where $\mu$ is a Haar measure on $G$.
Since $V^{\fg}_{\epsilon, R, r} = - V^{\fg}_{\epsilon, R, r}$,
the sets $\exp(V^{\fg}_{\epsilon, R, r})$ constitute a symmetric neighbourhood basis of the identity. It follows that
$\delta_{B^{G}_{\rho}} \geq \delta^{0}_{B^{G}_{\rho}}$, so in order
to establish~\eqref{eq:DeltaForReductive}, it suffices to prove that~$\delta^{0}_{B^{G}_{\rho}} \geq \rho^{-d/2}$.

%
%

\subsection{Relation between Haar measure and Lebesgue measure.}
The first step is to reformulate this in terms of the Lebesgue measure on the Lie algebra $\fg$.

Let $\Vol_{G}$ be a left invariant volume form on $G$, so that integrating against $\Vol_{G}$ corresponds to a left Haar measure
$\mu$ on $G$.
Let $\Vol_{\fg}$ be a constant volume form on~$\fg$, corresponding to a Lebesgue measure $\Lebesgue$ on $\fg$.
We normalize these volume forms in such a way that $\Vol_{\fg}$ agrees with $\exp^*\Vol_{G}$ at the origin in~$\fg$.
Then $\exp^*\Vol_{G} = \nu \Vol_{\fg}$, where the density $\nu$ of $\exp^*\Vol_{G}$ with respect to the Lebesgue measure
satisfies $\nu(0)=1$. We show that $\nu$
can be chosen arbitrarily close to $1$ on $U_{\varepsilon} \subseteq \fg$ for small $\varepsilon$.
\begin{proposition}\label{Prop1}
The density is given by $\nu(x) = \Det(\Phi_{x})$, where
${\Phi \colon \fg \rightarrow \mathrm{End}(\fg)}$ is the left logarithmic derivative of the exponential map.
The function $\nu \colon \fg \rightarrow \R$ is smooth, $G$-invariant, and equal to $1$ on $\fz \subseteq \fg$.
Moreover, there exists a constant $c_{\fg_0}>0$ (depending only on $\fg_0$) such that
for $\varepsilon$ sufficiently small,
$\|\nu - 1\|_{\infty} \leq c_{\fg_{0}}\, \varepsilon$ uniformly on $U_{\varepsilon}$.
\end{proposition}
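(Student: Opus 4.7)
The plan is to compute $\nu$ directly from the standard expression for the differential of $\exp$, deduce invariance from unimodularity of $G$, and reduce the uniform estimate to a Taylor expansion on the semisimple factor $\fg_0$. First, I will invoke the identity
\[
d\exp_x = dL_{\exp(x)} \circ \Phi(x), \qquad \Phi(x) = \sum_{n \geq 0} \frac{(-\ad_x)^n}{(n+1)!},
\]
which is the standard formula for the left logarithmic derivative of $\exp$. Pulling $\Vol_{G}$ back via $\exp$ and transporting $\Vol_{G}|_{\exp(x)}$ to $\Vol_{G}|_e = \Vol_{\fg}|_0$ by left invariance (legitimate by the chosen normalization) yields
\[
(\exp^*\Vol_{G})|_x(v_1,\ldots,v_n) = \Vol_{G}|_e(\Phi(x)v_1,\ldots,\Phi(x)v_n) = \Det(\Phi(x))\,\Vol_{\fg}|_x(v_1,\ldots,v_n).
\]
Hence $\nu(x) = \Det(\Phi(x))$, and smoothness is immediate from smoothness of $\Phi$ and of $\Det$ on $\End(\fg)$.

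Next, for the $G$-invariance of $\nu$, I will use that real reductive groups are unimodular, so $\Vol_{G}$ is bi-invariant and in particular invariant under every conjugation $C_g$. Combined with the intertwining identity $\exp \circ \Ad_g = C_g \circ \exp$, this gives $\Ad_g^* (\exp^*\Vol_{G}) = \exp^* (C_g^* \Vol_{G}) = \exp^*\Vol_{G}$, whence $\nu \circ \Ad_g = \nu$. When $x \in \fz$, we have $\ad_x = 0$, so $\Phi(x) = \id_{\fg}$ and $\nu(x) = 1$.

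For the uniform estimate on $U_\varepsilon$, the key observation is that $\Phi(x)$ depends on $x$ only through $\ad_x$. Decomposing $x = x_0 + z$ with $x_0 \in \fg_0$ and $z \in \fz$, centrality of $\fz$ gives $\ad_x = \ad_{x_0}$, so $\nu(x_0 + z) = \Det(\Phi(x_0))$ is independent of $z$. Using the $\Ad_G$-invariance just proved, every point of $U_\varepsilon = \Ad_G(B^{\fg_0}_\varepsilon)\times \fz$ shares its $\nu$-value with some point of $B^{\fg_0}_\varepsilon \times \fz$, so it suffices to bound $|\Det(\Phi(x_0)) - 1|$ uniformly in $\|x_0\| < \varepsilon$. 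Since $\Det \circ \Phi \colon \fg_0 \to \R$ is smooth and takes the value $1$ at the origin, a first-order Taylor expansion gives $|\Det(\Phi(x_0)) - 1| \leq c_{\fg_0}\,\|x_0\|$ for $\varepsilon$ sufficiently small, with a constant $c_{\fg_0}$ depending only on the restriction of $B_\theta$ and of the Lie bracket to $\fg_0$. No step presents a serious obstacle; the only mildly delicate point is recognizing that $\Ad_G$-invariance together with $\fz$-independence reduces the global estimate on $U_\varepsilon$ to a local Taylor estimate on the small ball $B^{\fg_0}_\varepsilon$.
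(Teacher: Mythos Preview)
Your proof is correct and follows essentially the same route as the paper: compute $\nu=\Det\Phi$ from left invariance, establish $\Ad_G$-invariance, observe $\nu$ depends only on the $\fg_0$-component, and then reduce the uniform bound on $U_\varepsilon$ to a first-order Taylor estimate on $B^{\fg_0}_\varepsilon$. The only minor difference is that the paper derives $G$-invariance directly from the conjugation identity $\Phi_{\Ad_g x}=\Ad_g\,\Phi_x\,\Ad_g^{-1}$ (which gives $\Det\Phi_{\Ad_g x}=\Det\Phi_x$ without appealing to unimodularity), whereas your argument via bi-invariance of $\Vol_G$ tacitly uses $\det(\Ad_g)=1$ rather than just $|\det(\Ad_g)|=1$.
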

\begin{proof}
Let $\Phi \colon \fg \rightarrow \mathrm{End}(\fg)$ be the left logarithmic derivative of the exponential map,
defined by $\Phi_{x}(y) := (D_{\exp(x)}L_{\exp(x)^{-1}}) \circ (D_{x}\exp)(y)$, where $L_{g} \colon G \rightarrow G$
denotes left multiplication by $g$.
Then for all $x, y_1, \ldots, y_n \in \fg$,
\begin{eqnarray*}
(\exp^*\Vol_{G})_{x}(y_1, \ldots, y_n)
&=&
(\Vol_{G})_{\exp(x)}(D_{x}\exp(y_1), \ldots, D_{x}\exp(y_n) )\\
&=&
(\Vol_{G})_{\exp(x)}(D_{\one}L_{\exp(x)} \Phi_{x}(y_1), \ldots, D_{\one}L_{\exp(x)} \Phi_{x}(y_n))\\
&=&
(\Vol_{G})_{\one}(\Phi_{x}(y_1), \ldots, \Phi_{x}(y_n))\\
&=&
\mathrm{det}(\Phi_{x}) (\Vol_{\fg})_{0}(y_1, \ldots, y_n),
\end{eqnarray*}
where the last two steps use that $\Vol_{G}$ is left invariant, and
that $\exp^*\Vol_{G}$ agrees with $\Vol_{\fg}$ at the origin.
It follows that $\nu(x) = \mathrm{det}(\Phi_{x})$. Since $\exp \colon \fg \rightarrow G$ is equivariant
with respect to the adjoint action on $\fg$ and the conjugate action on $G$, its logarithmic derivative
satisfies $\Phi_{\Ad_{g}(x)} = \Ad_{g} \circ \,\Phi_{x} \circ \Ad_{g^{-1}}$.
In particular, $\nu(x) = \Det(\Phi_x)$ is invariant under the adjoint action.

In fact, the left logarithmic derivative $\Phi_{x}\colon \fg \rightarrow \fg$ of
the exponential map is explicitly given (cf. \cite[Thm.~2.1.4]{Faraut2008}) by the convergent power series
\begin{equation}\label{PowerSeriesDLogExp}
\Phi_{x} = \frac{\mathrm{Id} - \exp(-\mathrm{ad}_{x})}{\mathrm{ad}_{x}} =  \mathrm{Id} - \frac{1}{2!}\ad_{x} + \frac{1}{3!}(\ad_{x})^2 + \ldots
\end{equation}
Note that the determinant of the real linear map $\Phi_{x} \colon \fg \rightarrow \fg$
is equal to the determinant over $\C$ of the complexification $\Phi^{\C}_{x} \colon \fg_{\C} \rightarrow \fg_{\C}$.
In terms of the Jordan--Chevalley decomposition
$\ad_{x} = \ad_{x_s} + \ad_{x_{n}}$ into a semisimple and a nilpotent element of $\fg_{\C}$, we thus find
\[\nu(x) = \left|\Det_{\C}\left(\frac{\mathrm{Id} - \exp(-\mathrm{ad}_{x_{s}})}{\mathrm{ad}_{x_{s}}}\right)\right|
= \prod_{i=1}^{\mathrm{dim{\fg}}}\left|\frac{1-e^{-\mu_{i}}}{\mu_i}\right|,
\]
where $\mu_i$ are the eigenvalues of $\ad_{x}$ as a complex linear transformation of $\fg_{\C}$.

In particular, $\nu(x)$ depends only on the second factor in $\fg = \fz \oplus \fg_0$, and we can write
$\nu(x) = \nu_0(\ad_{x})$ for a smooth function $\nu_0 \colon \fg_0 \rightarrow \R$ with $\nu_0(0)=1$.
It follows that for
any $c_{\fg_0} > \|\nabla\nu_0(0)\|$, there exists an $\varepsilon_0>0$ such that $\|\nu_0 - 1\| \leq c_{\fg_0}\varepsilon$
uniformly on $B^{\fg_0}_{\varepsilon}$ for all $\varepsilon < \varepsilon_0$.
The uniform estimate for $\nu$ on $U_{\varepsilon}$ now follows from $G$-invariance.
\end{proof}

For $r$ and $R$ smaller than the injectivity radius of the exponential ${\exp \colon \fg \rightarrow G}$, we can therefore relate the Haar measure of $\exp(V^{\fg}_{\varepsilon, R, r})$ to the Lebesgue measure
of $V^{\fg}_{\varepsilon, R, r}$,
\begin{equation}\textstyle
(1-c_{\fg_{0}}\,\varepsilon) \Lebesgue(V^{\fg}_{\varepsilon, R, r})  < \mu\big(\exp(V^{\fg}_{\varepsilon, R, r})\big) < (1+c_{\fg_{0}}\,\varepsilon) \Lebesgue(V^{\fg}_{\varepsilon, R, r}).
\end{equation}
Since $\exp$ is equivariant under the adjoint action on $\fg$ and $G$, this allows us to express $\delta^{0}_{B^{G}_{\rho}}$ in terms of the Lebesgue measure on $\fg$,
\begin{equation}\label{eq:DeltaOnLieAlgebra}
\delta^{0}_{B^{G}_{\rho}} =
\limsup_{(\varepsilon, R, r) \rightarrow 0}
\frac
{
	\Lebesgue\Big( \bigcap_{g\in B_{\rho}^{G}} \Ad_{g^{-1}}(V^{\fg}_{\varepsilon, R, r})\Big)
}
{
	\Lebesgue(V^{\fg}_{\varepsilon, R, r})
}.
\end{equation}
\begin{proposition}\label{prop:reductiontosimple}
The number $\delta^{0}_{B^{G}_{\rho}}$ depends on $G$ only through the maximal semisimple ideal $\fg_0 = [\fg, \fg]$,
and the restriction of $B_{\theta}$ to $\fg_0$.
\end{proposition}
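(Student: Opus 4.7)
The plan is to reduce the formula in~\eqref{eq:DeltaOnLieAlgebra} to a ratio of Lebesgue measures on $\fg_0$, with the centre $\fz$ contributing only factors that cancel between numerator and denominator.

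Since $V^{\fg}_{\varepsilon, R, r} = V^{\fg_0}_{\varepsilon, R} \times B^{\fz}_r$, and since $\Ad_{g^{-1}}$ preserves the $B_\theta$-orthogonal decomposition $\fg = \fg_0 \oplus \fz$ and acts trivially on $\fz$, we have
\[
\Ad_{g^{-1}}\bigl(V^{\fg}_{\varepsilon, R, r}\bigr) = \Ad_{g^{-1}}\bigl(V^{\fg_0}_{\varepsilon, R}\bigr) \times B^{\fz}_r.
\]
Intersecting over $g \in B_\rho^G$ preserves this product form, and since the Lebesgue measure on $\fg$ splits as $\Lebesgue_{\fg_0} \otimes \Lebesgue_{\fz}$, the common factor $\Lebesgue_{\fz}(B^{\fz}_r)$ cancels. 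The parameter $r$ drops out entirely, yielding
\[
\delta^{0}_{B_\rho^G} = \limsup_{(\varepsilon, R)\to 0} \frac{\Lebesgue_{\fg_0}\bigl(\bigcap_{g\in B_\rho^G} \Ad_{g^{-1}}(V^{\fg_0}_{\varepsilon, R})\bigr)}{\Lebesgue_{\fg_0}\bigl(V^{\fg_0}_{\varepsilon, R}\bigr)}.
\]

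Next I would check that the data on the right are intrinsic to $(\fg_0, B_\theta|_{\fg_0})$. By Remark~\ref{Rk:KInvariance}, the sets $V^{\fg_0}_{\varepsilon, R}$ depend on $G$ only through $(\fg_0, \theta|_{\fg_0}, B|_{\fg_0})$, and this datum is equivalent to $(\fg_0, B_\theta|_{\fg_0})$ since $\fk_0$ and $\fp_0$ are the $\pm 1$-eigenspaces of $\theta$ and hence recoverable from the positive-definite inner product $B_\theta|_{\fg_0}$. For the ball $B_\rho^G$, the fact that $\Ad_g$ is block-diagonal with identity on $\fz$, together with $\fz \perp \fg_0$, yields $\|\Ad_g\| = \max(\|\Ad_g|_{\fg_0}\|, 1) = \|\Ad_g|_{\fg_0}\|$, the last equality following from $|\det(\Ad_g|_{\fg_0})| = 1$. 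Hence membership in $B_\rho^G$ depends only on $\Ad_g|_{\fg_0}$, and by the Cartan decomposition $G = K\exp(\fp)$ together with $\Ad_{\exp(\fp\cap\fz)}|_{\fg_0} = \mathrm{Id}$, the image $\Ad(G)|_{\fg_0}$ equals $\Ad(K\exp(\fp_0))|_{\fg_0}$.

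The main obstacle lies in this last identification: one has to argue that $\Ad(G)|_{\fg_0}$ is canonically determined by $\fg_0$ alone, rather than depending on the global structure of $G$ (for instance via outer automorphisms carried by non-identity components). This is where one invokes the standing hypothesis that $G$ is a real reductive Lie group in the sense of Knapp: then $\Ad(G) \subseteq \Ad(G_\C)$ lies in the connected complex adjoint group, so $\Ad(G)|_{\fg_0}$ coincides with the connected adjoint group $\mathrm{Int}(\fg_0)^{0}$, which is intrinsic to $\fg_0$. Together with the preceding steps, every ingredient in the reduced formula for $\delta^{0}_{B_\rho^G}$ is determined by $(\fg_0, B_\theta|_{\fg_0})$, giving the claim.
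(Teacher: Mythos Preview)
Your reduction to $\fg_0$ is correct and follows the same outline as the paper: cancel the $\fz$-factor, note that $\|\Ad_g\| = \|\Ad_g|_{\fg_0}\|$, and recognise that the remaining issue is whether the family of transformations of $\fg_0$ appearing in the intersection is intrinsic to $(\fg_0, B_\theta|_{\fg_0})$.

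However, your resolution of this last point has a genuine gap. The claim that Knapp's axiom $\Ad(G) \subseteq \mathrm{Int}(\fg_\C)$ forces $\Ad(G)|_{\fg_0}$ to equal the connected real adjoint group $\mathrm{Int}(\fg_0)$ is false. Take $G = \mathrm{GL}(2,\R)$, which is reductive in Knapp's sense and is explicitly covered by the paper (Remark~\ref{Rmk:ConeInterpretation}). Here $\fg_0 = \mathfrak{sl}(2,\R)$ and $\Ad(G)|_{\fg_0} \cong \mathrm{PGL}(2,\R)$, which has two connected components; the non-identity component is realised inside the connected complex group $\mathrm{PSL}(2,\C)$ via conjugation by $\mathrm{diag}(i,-i)$, so Knapp's axiom is satisfied. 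In general, the real points of $\mathrm{Int}((\fg_0)_\C)$ preserving $\fg_0$ can strictly contain $\mathrm{Int}(\fg_0)$, so your argument does not pin down $\Ad(G)|_{\fg_0}$.

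The paper sidesteps this entirely by exploiting the $K$-invariance of $V^{\fg_0}_{\varepsilon, R}$ (Remark~\ref{Rk:KInvariance}) more directly than you do. Writing $g = k\exp(p_0 + p_{\fz})$ via the global Cartan decomposition, one has $\Ad_{g^{-1}}(V^{\fg_0}_{\varepsilon, R}) = \Ad_{\exp(-p_0)}(V^{\fg_0}_{\varepsilon, R})$, and the membership $g \in B_\rho^G$ reduces to $\|\Ad^{\fg_0}_{\exp(p_0)}\| \leq \rho$. Thus the intersection over $B_\rho^G$ becomes an intersection indexed by $p_0 \in \fp_0$, which is manifestly determined by $(\fg_0, \theta|_{\fg_0})$, with no need to identify $\Ad(G)|_{\fg_0}$ as a group. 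You already invoked Remark~\ref{Rk:KInvariance} for the sets $V^{\fg_0}_{\varepsilon, R}$; the missing move is to apply the same $K$-invariance to discard the $K$-factor in the Cartan decomposition of the group elements over which you intersect.
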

\begin{proof}
By Remark~\ref{Rk:KInvariance}
the sets $V^{\fg}_{\varepsilon, R, r}$ are $K$-invariant.
If $g = k\exp(p)$ is the global Cartan decomposition of $g\in G = K\exp(\fp)$,
we thus have
\[
\Ad_{g^{-1}}(V^{\fg}_{\varepsilon, R, r}) =  \Ad_{\exp(-p)}(V^{\fg}_{\varepsilon, R, r}).
\]
Further, since $\fz$ acts trivially on $\fg$, the decomposition
$p = p_{0} +  p_{\fz}$ of $p\in \fp$ with respect to $\fp = \fp_0 \oplus (\fz\cap \fp)$ yields
\[
\Ad_{g^{-1}}(V^{\fg}_{\varepsilon, R, r})  = \Ad_{\exp(-p_0)}(V^{\fg_0}_{\varepsilon, R}) \times B^{\fz}_{r}.
\]
Since $B^{G}_{\rho}$ is $K$-invariant (Proposition~\ref{prop:BallInGInvariant}), we have $k\exp(p) \in B^{G}_{\rho}$
if and only if $\exp(p) \in B^{G}_{\rho}$, which is the case if and only if $\exp(p_0) \in B^{G}_{\rho}$.
But since $\Ad_{\exp(p_0)}$ acts by the identity on second factor of $\fg = \fg_0 \oplus \fz$,
we have $\|\Ad^{\fg}_{\exp(p_0)}\| \leq \rho$ for the adjoint action on $\fg$ if and only
$\|\Ad^{\fg_0}_{\exp(p_0)}\| \leq \rho$ for the adjoint action on $\fg_0$.
If $G_0$ denotes the connected adjoint group of the semisimple Lie algebra $\fg_0$, we thus have
\begin{eqnarray*}
\frac
{
	\Lebesgue^{\fg}\Big( \bigcap_{g\in B_{\rho}^{G}} \Ad_{g^{-1}}(V^{\fg}_{\varepsilon, R, r})\Big)
}
{
	\Lebesgue^{\fg}(V^{\fg}_{\varepsilon, R, r})
}
&=&
\frac
{
	\Lebesgue^{\fg}\Big( \bigcap_{g_0\in B_{\rho}^{G_0}} \Ad_{g_0^{-1}}(V^{\fg_0}_{\varepsilon, R}) \times B^{\fz}_{r} \Big)
}
{
	\Lebesgue^{\fg}(V^{\fg_0}_{\varepsilon, R} \times B^{\fz}_{r} )
}\\
&=&
\frac
{
	\Lebesgue^{\fg_0}\Big(\bigcap_{g_0\in B_{\rho}^{G_0}} \Ad_{g_0^{-1}}(V^{\fg_0}_{\varepsilon, R})\Big)
}
{
	\Lebesgue^{\fg_0}(V^{\fg_0}_{\varepsilon, R})
},
\end{eqnarray*}
where $\Lebesgue^{\fg}$ and $\Lebesgue^{\fg_0}$ denote the Lebesgue measure on $\fg$ and $\fg_0$, respectively.
\end{proof}
\subsection{Proof of Theorem~\ref{thm:DeltaForReductive}}

By Proposition~\ref{prop:reductiontosimple}, it suffices to prove Theorem~\ref{thm:DeltaForReductive}
for the case where $G$ is the adjoint group of the semisimple Lie algebra $\fg_0$.
The proof hinges on the following lemma.
\begin{lemma}[Key Lemma]\label{KeyLemma}
Let $G$ be a connected, real reductive Lie group with semisimple Lie algebra $\fg$. Then
for all $R>0$ and all $\rho >1$, we have
\begin{align*}
\lim_{\epsilon \to 0} \frac{\Lebesgue(V^{\fg}_{\epsilon, \rho R})}{\Lebesgue(V^{\fg}_{\epsilon,R}) } = \rho^{d/2}.
\end{align*}
\end{lemma}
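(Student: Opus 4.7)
\emph{Plan.} The approach exploits the $1$-homogeneity of the $G$-invariant function $f(Y) := \inf_{g \in G} \|\Ad_g Y\|$, in terms of which $V^{\fg}_{\epsilon, R} = \{Y \in \fg : f(Y) \leq \epsilon,\ \|Y\| \leq R\}$. Since $f(\rho Y) = \rho f(Y)$, one has the set identity $\rho V^{\fg}_{\epsilon, R} = V^{\fg}_{\rho \epsilon, \rho R}$, hence $\Lebesgue(V^{\fg}_{\epsilon, \rho R}) = \rho^{n}\, \Lebesgue(V^{\fg}_{\epsilon/\rho, R})$ with $n = \dim \fg$. The Key Lemma therefore reduces to establishing the asymptotic
\[
\Lebesgue(V^{\fg}_{\epsilon, R}) \sim C(R)\, \epsilon^{n-d/2} \qquad (\epsilon \to 0),
\]
with the coefficient $C$ satisfying $C(\rho R) = \rho^{d/2}\, C(R)$.

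To extract this asymptotic I would use that $\bigcap_{\epsilon > 0} V^{\fg}_{\epsilon, R} = \cN \cap B^{\fg}_R$, where $\cN$ is the nilpotent cone, together with density in $\cN$ of the top-dimensional nilpotent orbits. Apply the Weyl integration formula: for each $G$-conjugacy class of Cartan subalgebras $\fh$, the contribution to $\Lebesgue(V^{\fg}_{\epsilon, R})$ is
\[
\frac{1}{|W(\fh)|}\int_{\fh^{\mathrm{reg}},\, \|H\| \leq \epsilon} |D_{\fh}(H)|\, \nu_H(\mathbf{1}_{B^{\fg}_R})\, dH,
\]
where $D_\fh$ is the Weyl discriminant and $\nu_H$ denotes the orbital integral over the semisimple orbit $\cO_H = \Ad_G H$; the constraint $f(Y) \leq \epsilon$ reduces to $\|H\| \leq \epsilon$ via the Kempf-Ness characterisation of the minimum-norm representative of the closed semisimple orbit. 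Invoking the Harish-Chandra and Barbasch-Vogan limit formulas, the rescaled orbital integrals $|D_\fh(H)|^{1/2}\, \nu_H$ converge weakly as $H \to 0$ in $\fh^{\mathrm{reg}}$ to a positive multiple of the Liouville measure on a specific nilpotent orbit $\cO_\fh$, with nonzero limit precisely when $\cO_\fh$ has the maximal dimension $d$. Substituting and performing the residual $H$-integration yields
\[
\Lebesgue(V^{\fg}_{\epsilon, R}) \sim \epsilon^{n-d/2}\, \sum_{\fh} c_\fh\, \mu^{\mathrm{Liouville}}_{\cO_\fh}(B^{\fg}_R),
\]
which is the required factorisation with $C(R) = \sum_\fh c_\fh\, \mu^{\mathrm{Liouville}}_{\cO_\fh}(B^{\fg}_R)$.

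Finally, the dilation $\phi_\rho(Y) := \rho Y$ preserves each top nilpotent orbit $\cO$ (top orbits are stable under scaling via an $\mathfrak{sl}_2$-triple), and directly from $\omega^{\KKS}_Y(\ad_A Y, \ad_B Y) = B(Y, [A, B])$ one obtains $\phi_\rho^{*}\, \omega^{\KKS} = \rho\, \omega^{\KKS}$. The Liouville form $\tfrac{1}{(d/2)!}\wedge^{d/2}\omega^{\KKS}$ thus scales by $\rho^{d/2}$, giving $\mu^{\mathrm{Liouville}}_{\cO}(B^{\fg}_{\rho R}) = \rho^{d/2}\, \mu^{\mathrm{Liouville}}_{\cO}(B^{\fg}_R)$ and hence $C(\rho R) = \rho^{d/2}\, C(R)$, which concludes the argument. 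The main obstacle is the rigorous application of the Harish-Chandra–Barbasch-Vogan limit formulas to the non-smooth test function $\mathbf{1}_{B^{\fg}_R}$, requiring smooth approximation and uniform estimates on orbital integrals near $\cN$, together with the identification that only Cartans whose limit orbit $\cO_\fh$ attains the maximal dimension $d$ contribute to the leading asymptotic.
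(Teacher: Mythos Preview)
Your proposal follows essentially the same strategy as the paper: decompose $\Lebesgue(V^{\fg}_{\epsilon,R})$ via the Harish--Chandra integration formula over conjugacy classes of Cartan subalgebras, apply the Barbasch--Vogan limit formulas for orbital integrals as the semisimple parameter degenerates to the nilpotent cone, and then use the $R^{d/2}$ homogeneity of the Liouville measure on top-dimensional nilpotent orbits; the paper also handles the passage from smooth to indicator test functions exactly as you anticipate (by monotone approximation, their Corollary~9.11). Your opening homogeneity observation $\rho\,V^{\fg}_{\epsilon,R}=V^{\fg}_{\rho\epsilon,\rho R}$ is a clean framing the paper does not make explicit.

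Two points to tighten. First, the limiting nilpotent data depends on the open Weyl \emph{chamber} $C\subset\fh_{\reg}$, not merely on the Cartan $\fh$, and the limit is in general a \emph{sum} over the maximal-dimensional nilpotent orbits in $\mathcal{N}_h$ (with Slodowy-slice coefficients), not a single orbit $\cO_{\fh}$. Second, the normalisation ``$|D_{\fh}(H)|^{1/2}\,\nu_H$ converges'' is not correct for arbitrary real reductive $\fg$: in the paper's conventions the KKS-normalised orbital distribution satisfies $\epsilon^{-m(C)}\mathcal{D}_{O_{\epsilon h}}\to\sum_{O_X}c_X\,\mathcal{D}_{O_X}$ with $m(C)=\tfrac12(\dim O_h-\max_{O_X\subset\mathcal{N}_h}\dim O_X)$, and $m(C)=0$ only in the (quasi)split case. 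With the correct power the leading exponent of $\Lebesgue(V^{\fg}_{\epsilon,R})$ is indeed $m(C)+\dim\fh+|\Delta^+|=n-d/2$, matching your claim; just be sure to carry the exponent $m(C)$ rather than absorb it into $|D_{\fh}|^{1/2}$.
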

The proof of this lemma requires a rather detailed discussion of limits of orbital integrals, and will be deferred to Section~\ref{sec:MainTheorem}.
Assuming Lemma~\ref{KeyLemma}, the proof of Theorem~\ref{thm:DeltaForReductive} is quite straightforward.
\begin{proof}[Proof of Theorem~\ref{thm:DeltaForReductive}, assuming Lemma~\ref{KeyLemma}]
In view of \eqref{eq:omgekeerd},
the maximal dimension $d$ of the nilpotent orbits is the same in $\fg$ and $\fg_0$.
By Proposition~\ref{prop:reductiontosimple},
we may therefore assume without loss of generality that $G$ is a connected, real reductive Lie group with semisimple Lie algebra $\fg$.

Since ${\|\Ad_{g}\| \leq \rho}$, we have
$\Ad_{g^{-1}}(B^{\fg}_{R}) \supseteq B^{\fg}_{R/\rho}$.
Since $\Ad_{G}(B^{\fg}_{\varepsilon})$ is $\Ad_{G}$-invariant, we find for
$V_{\varepsilon, R} = \Ad_{G}(B^{\fg}_{\varepsilon}) \cap B^{\fg}_{R}$ that
\[\Ad_{g^{-1}}(V_{\varepsilon,R}) = \Ad_{G}(B^{\fg}_{\varepsilon}) \cap \Ad_{g^{-1}}B^{\fg}_{R} \supseteq V_{\varepsilon,R/\rho}.
\]
From \eqref{eq:DeltaOnLieAlgebra} (without $r$ because $\fz = \{0\}$) we thus find
\begin{align*}
\delta^0_{B^{G}_{\rho}}
=
\limsup_{\varepsilon,R\rightarrow 0}
\frac
{
	\Lebesgue\Big( \bigcap_{g\in B_{\rho}^{G}} \Ad_{g^{-1}}(V^{\fg}_{\varepsilon,R})\Big)
}
{
	\Lebesgue(V^{\fg}_{\varepsilon, R})
}
&\geq
\limsup_{\varepsilon,R\rightarrow 0}  \frac{\Lebesgue(V^{\fg}_{\varepsilon,R/\rho})}{\Lebesgue(V^{\fg}_{\varepsilon,R})}
\\&
\geq
\lim_{R \to 0} \lim_{\epsilon \to 0}
\frac{\Lebesgue(V^{\fg}_{\varepsilon,R/\rho})}{\Lebesgue(V^{\fg}_{\varepsilon,R})}
=
\rho^{-d/2}.
\end{align*}
\end{proof}

The strategy to prove Lemma~\ref{KeyLemma} is as follows.
Since the closure of an adjoint orbit $O_{x}$ through $x\in \fg$ contains $0$ if and only if
$x$ is nilpotent, the set $\bigcap_{\varepsilon > 0} V_{\varepsilon, R}$ is the intersection of the nilpotent cone $\cN$
with the unit ball $B^{\fg}_{R}(0)$. The union of the nilpotent orbits $O_{X}$ of maximal dimension is a dense open subset of
the nilpotent cone.
Using results of Harish-Chandra and Barbasch--Vogan on limiting orbit integrals,
we will show that as $\varepsilon$ approaches~$0$, the volume of $V_{\varepsilon, R}$ with respect to the Lebesgue measure on $\fg$ scales with $R$ in the same way as the Liouville volume of the symplectic manifold $O_{X} \cap B^{\fg}_{R}(0)$.
Since the Kostant-Kirillov-Souriau symplectic form $\omega^{\KKS}_{O_{X}}$ on the cone $O_{X}$ scales as $R$ under dilation,
the corresponding Liouville volume form $\Vol^{\KKS}_{O_X}$
scales as $R^{d/2}$, yielding the factor $\rho^{d/2}$ in Lemma~\ref{KeyLemma}.

\section{Limits of orbital measures}\label{Sect=Measures}
\label{Sec:LimitsOfOrbits}
In the remainder of this section, we focus on the proof of Lemma~\ref{KeyLemma}.
From now on, we assume that the Lie algebra~$\frakg$ is semisimple, and that the invariant bilinear form~$B$ is the Killing form~$\kappa$.
We will generally denote generic elements of~$\frakg$ by~$x,y,z$, elements of a Cartan subalgebra~$\frakh \subset \frakg$ by~$h$, and elements of the nilpotent cone~$\mathcal{N} \subset \frakg$ by~$X,Y,Z$.
For a subset $A\subseteq \fg$, we denote the centralizer and the normalizer in $\fg$ by
$Z_\frakg(A)$ and $N_\frakg(A)$ respectively.
On the group level, we similarly define
\[
Z_G(A) := \{g \in G : \Ad_g y = y \quad \forall y \in A\}, \quad
N_G(A) := \{g \in G : \Ad_g A \subset A \}.
\]

\subsection{Regular elements of Lie algebras}
We recall the notion of \emph{regularity} in~$\frakg$. For~$x \in \frakg$ consider the characteristic polynomial
\begin{align*}
\det(\ad x  - t) =: \sum_{k \geq 0} a_k(x) t^k, \quad t \in \mathbb{R}.
\end{align*}
Let~$k(x)$ be the minimal index so that~$a_{k(x)}(x) \neq 0$. If~$k(x) = \min_{y \in \frakg} k(y)$, we say~$x$ is \emph{regular}. If~$A \subset \frakg$ is any subset of~$\fg$, we define~$A_\reg$ to be the set of regular elements in~$A$.
If~$\frakh$ is a Cartan subalgebra (CSA) and~$C$ is a connected component of~$\frakh_\reg$, we call~$C$ an \emph{open Weyl chamber} of~$\frakh$.
\begin{remark}
Our notion of regularity is different from a common definition where~$x$ is regular if its centralizer has minimal dimension among the centralizers of all~$x' \in \frakg$. Our current definition is used, for example, in \cite{bourbaki2008lie}.
\end{remark}
We will use the following standard properties of regular elements:
\begin{lemma}
Let~$\frakg$ be a real Lie algebra.
\label{lem:PropertiesRegularElements}
\begin{itemize}
\item[i)] Regular elements in~$\frakg$ lie in a unique CSA given by their centralizer.
\item[ii)] If a single element in an adjoint orbit~$O_x \subset \frakg$ is regular, then all elements are.
\item[iii)] The set of regular elements is dense and open in~$\frakg$, and its complement in~$\frakg$ is a Lebesgue null set.
\end{itemize}
\end{lemma}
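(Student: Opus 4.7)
The plan is to treat the three parts using the standard Fitting decomposition together with elementary properties of the characteristic polynomial. For $x \in \fg$ let $\fg^0(x) := \{y \in \fg : (\ad x)^n y = 0 \text{ for some } n\}$ be the Fitting null component of $\ad x$. Since $a_k(x)$ is the coefficient of $t^k$ in $\det(\ad x - t)$, the integer $k(x)$ equals the algebraic multiplicity of $0$ as an eigenvalue of $\ad x$, i.e.\ $k(x) = \dim \fg^0(x)$. Set $\ell := \min_{y\in\fg} k(y)$, so that $x$ is regular iff $\dim \fg^0(x) = \ell$.

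For (iii), the function $a_\ell \colon \fg \to \R$ is polynomial (in linear coordinates on $\fg$), and by the definition of $\ell$ it is not identically zero. Hence $\fg_{\reg} = \{x \in \fg : a_\ell(x) \neq 0\}$ is the complement of a proper real-algebraic subvariety of $\fg$, which is open, dense, and of Lebesgue measure zero. For (ii), the identity $\ad(\Ad_g x) = \Ad_g \circ \ad x \circ \Ad_g^{-1}$ shows that $\ad(\Ad_g x)$ and $\ad x$ have the same characteristic polynomial, so $k(\Ad_g x) = k(x)$, and regularity is an invariant of the adjoint orbit.

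For (i), one invokes the classical fact (see e.g.\ \cite[Ch.~VII]{bourbaki2008lie}) that $\fg^0(x)$ is always a Lie subalgebra of $\fg$, and that for regular $x$ this subalgebra is nilpotent, self-normalizing, and equals the centralizer $\fz_\fg(x)$; by definition this means $\fg^0(x)$ is a Cartan subalgebra. Since $x \in \fg^0(x)$, every regular element lies in a Cartan subalgebra, namely its own centralizer. Uniqueness follows from the fact that any CSA $\fh$ containing $x$ must commute with $x$, hence $\fh \subseteq \fz_\fg(x) = \fg^0(x)$, and since both are CSAs (in particular of the same dimension $\ell = \rk\fg$) they coincide.

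The main obstacle is part (i): the assertion that $\fz_\fg(x) = \fg^0(x)$ is a Cartan subalgebra when $x$ is regular. The nilpotence and self-normalizing properties of $\fg^0(x)$ follow from an elementary Fitting argument applied to $\ad x$, but showing that $\ad x$ acts invertibly on every non-zero generalized eigenspace of $\ad y$ for $y \in \fg^0(x)$, which forces $\ad x$ and hence $\ad y$ to preserve the Fitting decomposition and ultimately gives the coincidence of centralizer and Fitting null component, is the delicate step. Once this is in place, the three statements follow immediately from the polynomial characterisation of~$\ell$.
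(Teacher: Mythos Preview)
Your arguments for (ii) and (iii) are correct and essentially identical to the paper's (modulo a small wording slip in (iii): it is the \emph{complement} of $\fg_{\reg}$, not $\fg_{\reg}$ itself, that has Lebesgue measure zero).

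Part (i), however, has a genuine gap. You assert that for regular $x$ the Fitting null component $\fg^0(x)$ equals the centralizer $\fz_\fg(x)$, citing Bourbaki Ch.~VII. Bourbaki's theorem there only gives that $\fg^0(x)$ is a Cartan subalgebra; the equality $\fg^0(x)=\fz_\fg(x)$ is \emph{false} for general real Lie algebras. For instance, in the Heisenberg algebra every element is regular (all $\ad x$ are nilpotent, so $k(x)=\dim\fg$ throughout), the unique CSA is $\fg$ itself, yet the centralizer of a non-central element is a proper $2$-dimensional subalgebra which is not self-normalizing. Thus the statement of (i) as written already requires the standing assumption of the section that $\fg$ is semisimple, and your attempted Fitting/eigenspace argument in the last paragraph cannot succeed in the generality you suggest.

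The missing step, which the paper supplies by citing \cite[Ch.~VIII.4, Cor.~2]{bourbaki2008lie}, is that in a \emph{semisimple} Lie algebra a regular element is automatically semisimple: once $\fg^0(x)$ is known to be a CSA, all its elements (including $x$) are semisimple because CSAs in semisimple Lie algebras are toral. Then $\ad x$ is diagonalizable, so $\ker(\ad x)=\fg^0(x)$, i.e.\ $\fz_\fg(x)=\fg^0(x)$. Once you insert this semisimplicity input, your uniqueness argument via dimension works as written.
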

\begin{proof}
\textit{i)} The semisimplicity statement is \cite[Chapter VIII.4, Cor 2]{bourbaki2008lie}, and the statement about the CSA is  \cite[Thm VII.3.1]{bourbaki2008lie}.
\\
\textit{ii)} This follows since the characteristic polynomial is invariant under the adjoint action.
\\
\textit{iii)} From \cite[Chapter VIII.2]{bourbaki2008lie} we know that the set of regular elements is Zariski-open, which implies that it is dense and open in the standard topology.
The non-regular elements, as a complement of a Zariski-open set, are intersections of closed submanifolds of lower dimension, hence they constitute a Lebesgue null set by Sard's Theorem.
\end{proof}
\subsection{Measures and Distributions on Orbits}
Let~$x\in \fg$, and let~$O_x$ be the adjoint orbit through~$x\in \frakg$.
Since~$O_x$ can be identified with a coadjoint orbit via the invariant bilinear form, it comes equipped with a canonical symplectic form. The \emph{Kostant-Kirillov-Souriau (KKS) form}~$\omega^{\KKS}_{O_x} \in \Omega^2(O_{x})$ is given by
\begin{align}\label{eq:defKKS}
\left( \omega^{\KKS}_{O_x} \right)_{x'}(\ad_{x'}y, \ad_{x'}z) = \kappa(x', [y,z]), \quad \forall x' \in O_{x},\; y,z \in \frakg.
\end{align}
This induces a volume form called the Liouville form
\begin{align}\label{eq:defKKSVolume}
\Vol_{O_x} := \frac{1}{k!} (\wedge^k \omega^{\KKS}_{O_x}),
\end{align}
where~$k = \dim O_x /2$.
By \cite[Thm. 2]{rao1972orbital} the assignment of Borel sets~$A \subset \frakg$ to
\begin{align*}
\mu_{O_x}(A) := \int_{O_x \cap A} \Vol_{O_x}
\end{align*}
defines a Radon measure~$\mu_{O_x}$ on~$\frakg$. In particular, it is finite on compact subsets of~$\frakg$.
On nilpotent orbits, these measures are homogeneous:
\begin{lemma}
\label{lem:HomogeneityOfNilpotentVolumes}
Let~$A \subset \frakg$ be a Borel set, let~$X \in \frakg$ a nilpotent element, and let~$k = \dim O_X /2$. Then for all $\rho>0$, we have
\begin{align*}
\mu_{O_{X}}(\rho \cdot A) =
\rho^{k}
\mu_{O_{X}}(A).
\end{align*}
\end{lemma}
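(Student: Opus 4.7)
The plan is to exploit the fact that nilpotent orbits are cones invariant under positive dilation, and then to compute how the KKS form, and hence the Liouville volume form, scales under dilation $s_\rho \colon \fg \to \fg$, $x \mapsto \rho x$. The homogeneity of the measure will then follow by a change of variables.

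First I would establish that $\rho \cdot O_X = O_X$ for every $\rho > 0$. By the Jacobson--Morozov theorem, the nilpotent element $X$ can be embedded in an $\fsl_2$-triple $(H, X, Y)$ with $[H, X] = 2X$. Then $\Ad_{\exp(tH)} X = e^{2t} X$, so $\rho X \in O_X$ for every $\rho > 0$. For an arbitrary $x' = \Ad_g X \in O_X$ we then have $\rho x' = \Ad_g(\rho X) \in O_X$, so the restriction $s_\rho|_{O_X} \colon O_X \to O_X$ is a diffeomorphism.

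Next I would show that $s_\rho^* \omega^{\KKS}_{O_X} = \rho \, \omega^{\KKS}_{O_X}$. Since tangent vectors to $O_X$ at $x'$ have the form $\ad_{x'} y$, and $D s_\rho (\ad_{x'} y) = \rho \ad_{x'} y = \ad_{\rho x'} y$, the defining formula \eqref{eq:defKKS} yields
\begin{align*}
(s_\rho^* \omega^{\KKS}_{O_X})_{x'}(\ad_{x'} y, \ad_{x'} z)
&= \omega^{\KKS}_{\rho x'}(\ad_{\rho x'} y, \ad_{\rho x'} z) \\
&= \kappa(\rho x', [y,z]) \\
&= \rho\, (\omega^{\KKS}_{O_X})_{x'}(\ad_{x'} y, \ad_{x'} z).
\end{align*}
Taking the $k$-th exterior power and using \eqref{eq:defKKSVolume} gives $s_\rho^* \Vol_{O_X} = \rho^k \Vol_{O_X}$. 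Note that since $\rho > 0$, the factor $\rho^k$ is positive, so orientations are preserved and the identity passes to the positive measure $\mu_{O_X}$ without sign issues.

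Finally, applying the standard change of variables along the diffeomorphism $s_\rho \colon O_X \to O_X$, and using $s_\rho(O_X \cap A) = O_X \cap \rho A$ (which rests on the first step), I would conclude
\begin{align*}
\mu_{O_X}(\rho \cdot A) = \int_{O_X \cap \rho A} \Vol_{O_X} = \int_{O_X \cap A} s_\rho^* \Vol_{O_X} = \rho^k \mu_{O_X}(A).
\end{align*}
The only nontrivial ingredient is the dilation invariance $\rho \cdot O_X = O_X$, for which Jacobson--Morozov is the natural tool; everything else is a direct calculation from the definition of the KKS form and a change of variables.
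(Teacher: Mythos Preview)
Your proof is correct and follows essentially the same route as the paper: Jacobson--Morozov gives dilation invariance of $O_X$, the KKS form scales linearly under dilation so the Liouville form scales as $\rho^k$, and a change of variables finishes. You give a bit more detail on the $\fsl_2$-triple argument and the orientation point, but the structure is identical.
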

\begin{proof}
By the Jacobson-Morozov theorem, the nilpotent orbit~$O_X$ is a \emph{cone}, i.e.~$\rho \cdot O_X = O_X$ for all~$\rho>0$. Thus we have
\begin{align*}
(\rho \cdot A) \cap O_X = \rho \cdot (A \cap O_X).
\end{align*}
Denote by~$m_\rho : O_X \to O_X$ the multiplication by~$\rho$. By definition of the KKS form we have for all~$X' \in O_X$ and~$y,z \in \frakg$:
\begin{align*}
(m_\rho^* \omega^{\KKS}_{O_X})_{X'} (\ad_{X'} y, \ad_{X'} z)
=
(\omega^{\KKS}_{O_X} )_{\rho X'} (\ad_{\rho  X'} y, \ad_{\rho  X'} z)
=
\kappa(\rho X', [y,z])
=
\rho \cdot \kappa( X', [y,z]),
\end{align*}
hence
\begin{align*}
m_\rho^* \omega^{\KKS}_{O_X}
=
\rho \cdot \omega^{\KKS}_{O_X}.
\end{align*}
From \eqref{eq:defKKSVolume}
we then find
\begin{align*}
m_\rho^* \Vol_{O_X}
=
\rho^{k} \cdot \Vol_{O_X},
\end{align*}
so that
\begin{align*}
\mu_{O_X} (\rho \cdot A) =
\int_{m_\rho (A \cap O_X)}
\Vol_{O_X}
=
\int_{A \cap O_X}
m_\rho^* \Vol_{O_X}
= \rho^{k} \mu_{O_X}(A)
\end{align*}
as required.
\end{proof}
The measure~$\mu_{O_x}$ on the adjoint orbit~$O_{x}$ through~$x\in \fg$ yields the
distribution~$\mathcal{D}_{O_x}$ on~$\fg$ defined by
\begin{align*}
\mathcal{D}_{O_x} : C_c^\infty(\frakg) \to \mathbb{R}, \quad  \mathcal{D}_{O_x} (f) := \frac{1}{(2 \pi)^{2k}} \int_{O_x} f|_{O_x} \Vol_{O_x},
\end{align*}
again with~$k := \mathrm{dim}(O_x)/2$.
The~$2 \pi$-normalization factor ensures that the orbital distributions~$\mathcal{D}_{O_x}$ coincide with the ones in \cite{harris2012tempered}, where this normalization occurs in the volume form~$\Vol_{O_x}$.
%
Let~$\frakh \subset \frakg$ be any~$\theta$-invariant Cartan subalgebra, and~$H := Z_G(\frakh)$ the associated Cartan subgroup.
Since~$G$ and~$H$ are unimodular, we can fix a~$G$-invariant volume form~$\Vol_{G/H}$ on the quotient~$G/H$. Note that~$\Vol_{G/H}$ is unique up to a nonzero scalar.
Let~$h \in \frakg$ be any element with centralizer~$H$. Then the orbit map~$g \mapsto \Ad_{g}(h)$ descends to a diffeomorphism~$\iota \colon G/H \xrightarrow{\sim} O_{h}$, and the pullback~$\iota^*\Vol_{O_{h}}$ of the KKS volume form on~$O_{h}$ defines yet another invariant volume form on~$G/H$.
The two invariant volume forms agree up to a scalar which depends only on~$h$, 
\begin{equation}
\label{eq:TransformationOfVolumeFormOnG/H}
    \iota^*\Vol_{O_{h}} = \pi(h) \Vol_{G/H},
\end{equation}
yielding a function~$\pi : \frakh \to \mathbb{R}$.
\begin{proposition}
\label{prop:RadonNikodymDifferentOrbits}
In the above setting, there is some~$c > 0$ depending only on the choice of~$\Vol_{G/H}$, so that for all~$h \in \frakh$,
\[
|\pi(h)| = c \cdot
\prod_{\alpha \in \Delta_{+}}|\alpha(h)|.
\]
Here,~$\Delta^+ \subset \Delta(\frakg_\C, \frakh_\C)$ is a choice of positive roots for the complexified Lie algebra~$\frakg_\C$ with respect to the CSA~$\frakh_\C$.
\end{proposition}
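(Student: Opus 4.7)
The plan is to exploit the $G$-equivariance of the orbit map to reduce the problem to a computation at a single point, and then evaluate the resulting Pfaffian using the root space decomposition of $\frakg_\C$. Note that $\iota\colon G/H \to O_h$ is $G$-equivariant, since $\iota(kgH) = \Ad_{k}\iota(gH)$. Combined with the $\Ad_G$-invariance of the KKS volume form on $O_h$, this shows that $\iota^{*}\Vol_{O_h}$ is a $G$-invariant volume form on $G/H$. Since $\Vol_{G/H}$ is also $G$-invariant, the ratio $\pi(h)$ in~\eqref{eq:TransformationOfVolumeFormOnG/H} is a $G$-invariant function on the transitive $G$-space $G/H$, hence constant; it therefore suffices to compute it at the base point $eH$.

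At $eH$, the tangent space of $G/H$ identifies with $\frakg/\frakh$, and the differential of $\iota$ sends $y + \frakh$ to $[y,h] \in T_h O_h$. Combining this with~\eqref{eq:defKKS}, one finds that the pullback of $\omega^{\KKS}_{O_h}$ is the skew-symmetric bilinear form $\omega(y_1, y_2) = \kappa(h, [y_1, y_2])$ on $\frakg/\frakh$. Writing $2k = \dim O_h = |\Delta|$, the Liouville form at $eH$ is $\tfrac{1}{k!}\omega^{\wedge k}$, so evaluating on a basis gives a Pfaffian, and $|\pi(h)| = |\mathrm{Pf}_\R(\omega)|$ up to a fixed positive factor determined by $\Vol_{G/H}|_{eH}$.

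To compute $\mathrm{Pf}_\R(\omega)$, I would pass to the complexification, with root space decomposition $\frakg_\C/\frakh_\C = \bigoplus_{\alpha\in\Delta}(\frakg_\C)_\alpha$. Choosing root vectors $E_\alpha$ normalized by $\kappa(E_\alpha, E_{-\alpha}) = 1$, the relation $[E_\alpha, E_{-\alpha}] = t_\alpha$ with $\alpha = \kappa(t_\alpha, \cdot)$ gives $\omega(E_\alpha, E_{-\alpha}) = \alpha(h)$, while $\omega(E_\alpha, E_\beta) = 0$ whenever $\alpha + \beta \neq 0$, since $[E_\alpha,E_\beta]\in(\frakg_\C)_{\alpha+\beta}$ is $\kappa$-orthogonal to $\frakh_\C$. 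Hence the matrix of the complex extension of $\omega$ in the basis $(E_\alpha)_{\alpha \in \Delta}$ is block diagonal with $2 \times 2$ blocks $\bigl(\begin{smallmatrix} 0 & \alpha(h) \\ -\alpha(h) & 0 \end{smallmatrix}\bigr)$ indexed by $\alpha \in \Delta_+$, giving $\mathrm{Pf}_\C(\omega) = \prod_{\alpha \in \Delta_+}\alpha(h)$.

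The main subtlety will be translating this complex Pfaffian into the real one for our form on $\frakg/\frakh$. Fixing any real basis $(e_j)$ of $\frakg/\frakh$ and writing $E_\alpha = \sum_j C_{j\alpha}\, e_j$ for an invertible complex matrix $C$, the transformation law $\mathrm{Pf}(C^{T} M C) = \det(C)\,\mathrm{Pf}(M)$ yields $|\mathrm{Pf}_\R(\omega)| = |\det(C)|^{-1}\prod_{\alpha \in \Delta_+}|\alpha(h)|$. Since $|\det(C)|$ depends only on the fixed data $\frakh \subset \frakg$ and on the chosen root vectors, it combines with the constant from $\Vol_{G/H}|_{eH}$ into a single $c > 0$ depending only on the choice of $\Vol_{G/H}$, completing the proof.
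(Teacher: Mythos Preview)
Your proof is correct and follows essentially the same strategy as the paper: reduce to the base point $eH$ by $G$-invariance, identify the pulled-back Liouville form as a Pfaffian, and evaluate it via the root space decomposition of $\frakg_\C/\frakh_\C$. The only cosmetic difference is that the paper rewrites $\omega(y_1,y_2)=\kappa_\theta(y_1,\theta\circ\ad_h y_2)$ to convert the Pfaffian of the bilinear form into the Pfaffian of the skew operator $\theta\circ\ad_h$ and then invokes $\mathrm{Pf}^2=\det$, whereas you compute $\mathrm{Pf}(\omega)$ directly in the root-vector basis and transport back via the change-of-basis law; both routes lead to $\prod_{\alpha\in\Delta_+}|\alpha(h)|$ up to a fixed positive constant.
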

\begin{proof}
It suffices to consider the volume forms~$\iota^* \Vol_{O_h}$ and~$\Vol_{G/H}$ at a single point of~$G/H$, hence we restrict to~$T_{[e]} (G/H) \cong \frakg/\frakh$.
We identify~$\frakg/\frakh \cong \frakh^\perp$, where~$\frakh^\perp$ denotes the orthogonal complement of~$\frakh \subset \frakg$ with respect to the inner product~$\kappa_\theta$.
There is some scalar~$ \tilde{c} \neq 0$ so that~$\tilde{c} \cdot (\Vol_{G/H})_{[e]} = \Vol_{\frakh^\perp}$, where~$\Vol_{\frakh^\perp}$ is the volume form on~$\frakh^\perp$ associated to the inner product~$\kappa_\theta$ and some choice of orientation on~$\frakh^\perp$.
Consider the map~$\theta \circ \ad_{h} \colon \fg \rightarrow \fg$ for~$h \in \frakh$.
Since it preserves~$\fh$ and
is skew-symmetric with respect to the inner product~$\kappa_\theta$,
it restricts to a skew-symmetric endomorphism of~$\fh^{\perp}$.
%
The pullback of the~$\KKS$ form at~$[e]\in G/H$
is given, for~$x, y \in \frakh^\perp$, by
\begin{align*}
\iota^*\omega^{\KKS}_{O_{h}}(x,y) =
 \kappa(h,[x,y]) =
 \kappa_{\theta}(x,\theta \circ \ad_{h}(y)).
\end{align*}
Recall that if~$(V,B)$ is an oriented inner product space of even dimension~$2k$, then the Pfaffian of a skew-symmetric linear map
$A \colon V \rightarrow V$ is defined by
\begin{align*}
\frac{1}{k!} \left(\wedge^k \omega_{A}\right)
=
\mathrm{Pf}(A)\Vol,
\end{align*}
where~$\Vol \in \wedge^{2k}V^*$ is the volume form associated to the inner product~$B$ on the oriented vector space~$V$, and~$\omega_{A}(v,w) = B(v, Aw)$ is the 2-form associated to~$A$ with respect to the inner product~$B$.
With~$V = \fh^{\perp}$ and~$A = \theta \circ \ad_{h}$, this yields
\begin{align*}
\iota^* \Vol_{O_h}
=
\frac{1}{k!}\wedge^k(\iota^*\omega^{\KKS}_{O_{h}})_{[e]}
=
\mathrm{Pf}
\left( \left(\theta \circ \ad_h \right) \att_{\frakh^\perp} \right) \Vol_{\frakh^\perp}.
\end{align*}
Recall that the Pfaffian is related to the determinant by
\begin{align*}
\mathrm{Pf}
\left( \left(\theta \circ \ad_h \right) \att_{\frakh^\perp} \right)^2 =
\det
\left(\left(\theta \circ \ad_h \right) \att_{\frakh^\perp} \right)
=
\pm
\det \left(\ad_h \att_{\frakh^\perp} \right).
\end{align*}
Since the determinant is the product of the eigenvalues over~$\C$, we can determine~$|\pi(h)|$ from the eigenvalues of~$\ad_{h}$
on the complexification~$(\frakh^\perp)_\C \cong \frakg_\C /\frakh_\C$. In view of the root space decomposition~$\frakg_\C /\frakh_\C \cong \bigoplus_{\alpha\in \Delta_{+}}\frakg_{\alpha} \oplus \frakg_{-\alpha}$, the eigenvalues of the complex linear map~$\ad_{h}$ are~$\pm\alpha(h)$. This proves the statement with~$c = |\tilde{c}|$.
 \end{proof}
\subsection{Slodowy Slices and Pointwise Orbital Limits}
Let~$\mathcal{N} \subset \frakg$ be the nilpotent cone. For any nonzero $X\in \cN$, there exists an~$\shl_2$-triple
$\{X,Y,H\}$ containing~$X$ as the nilpositive element by the Jacobson-Morosov Theorem. We denote by
\begin{align*}
S_X := X + Z_\frakg(Y)
\end{align*}
the corresponding \emph{Slodowy slice} through~$X$,
cf.~\cite[Section 7.4]{slodowysimple}.
It is transversal to the orbit~$O_X$ due to the decomposition
\begin{align}
\label{eq:DecompositionIntoSlodowy}
\frakg = \ad_X \frakg \oplus Z_\frakg(Y),
\end{align}
cf.~\cite[Chapter VIII.2]{bourbaki2008lie}. It is indeed transversal to \emph{all} orbits~$O_x$ with~$x \in S_X$, and in particular, the set~$G \cdot S_X$ is an open neighbourhood of the orbit~$O_X$, cf.~\cite[Section 7.4]{slodowysimple}.
We recall from \cite[Chapter 2]{harris2012tempered} the construction of a canonical measure~$m_{x, X}$ on the intersection~$S_X \cap O_x$:
we can consider the composition
\begin{align*}
\ad_x \frakg \hookrightarrow \frakg \to \ad_X \frakg ,
\end{align*}
where the first map is the natural embedding and the second map the projection of the direct sum \eqref{eq:DecompositionIntoSlodowy} onto the first direct summand. Since the Slodowy slice intersects~$O_x$ transversally, the composition of these two maps is surjective.
Using~$T_x O_x \cong \ad_x \frakg$ and~$T_X O_X \cong \Ad_X \frakg$, this surjective map induces the following exact sequence:
\begin{align*}
0 \to T_x(O_x \cap S_X) \to T_x O_x \to T_X O_X \to 0.
\end{align*}
We obtain a canonical volume form on~$O_x \cap S_X$ as the quotient of the KKS volume forms on~$O_x$ and~$O_X$, which in turn gives rise to the measure~$m_{x, X}$.
In \cite[Chapter 2]{harris2012tempered}, the following limit of orbits is defined for all~$x \in \frakg$:
\begin{align}
\label{eq:DefinitionOfNh}
\mathcal{N}_x := \mathcal{N} \cap \overline{ \bigcup_{\epsilon > 0} O_{ \epsilon x} }.
\end{align}
One may think of this set as the limit of the orbits~$O_{\epsilon x}$ as~$\epsilon$ approches zero, hence as the orbits approach the nilpotent cone. Let us first show that every nilpotent orbit arises, in this sense, as a limit of regular orbits:
\begin{lemma}
\label{lem:AllNilpotentOrbitsShowUpSomewhere}
Every nilpotent orbit~$O_X$ lies in the set~$\mathcal{N}_x$ for some regular~$x \in \frakg$.
\end{lemma}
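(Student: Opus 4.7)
The plan is to handle the trivial case $X=0$ by choosing any regular $x$, since then $0 \in \overline{\bigcup_{\epsilon>0} O_{\epsilon x}}$ because $\epsilon x \to 0$. So assume $X \neq 0$ and apply the Jacobson-Morozov theorem to extend $X$ to an $\shl_2$-triple $\{X,Y,H\}$ with $[H,X]=2X$, $[H,Y]=-2Y$, $[X,Y]=H$. Form the Slodowy slice $S_X = X + Z_\frakg(Y)$.

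The first substep is to find a regular element $x \in S_X$. Since $S_X$ is transversal to every orbit it meets, $G \cdot S_X$ is an open neighbourhood of $O_X$ in $\frakg$. By Lemma~\ref{lem:PropertiesRegularElements}(iii), $\frakg_\reg$ is open and dense, so $G \cdot S_X \cap \frakg_\reg$ is a nonempty open set. Pick $\Ad_g(x) \in G \cdot S_X \cap \frakg_\reg$ with $x \in S_X$. By Lemma~\ref{lem:PropertiesRegularElements}(ii), regularity is constant on adjoint orbits, so $x$ itself is regular.

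The second substep uses the contracting cocharacter $\lambda(t) := \exp(tH)$ to produce $X$ as a limit. Decompose $\frakg = \bigoplus_n \frakg_n$ into $\ad H$-weight spaces. Standard $\shl_2$-representation theory identifies $Z_\frakg(Y)$ as the space of lowest weight vectors, and since $Y$ has weight $-2$, every lowest weight vector has weight $\leq 0$. Thus $Z_\frakg(Y) \subseteq \bigoplus_{n \leq 0} \frakg_n$. Writing $x = X + \sum_{n \leq 0} v_n$ with $v_n \in \frakg_n \cap Z_\frakg(Y)$, we compute
\begin{equation*}
\Ad_{\lambda(t)}(e^{-2t} x) = X + \sum_{n \leq 0} e^{(n-2)t} v_n \xrightarrow[t \to \infty]{} X,
\end{equation*}
since $n - 2 < 0$ for all $n \leq 0$. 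Setting $\epsilon_t := e^{-2t} > 0$, this exhibits $X$ as a limit of elements of $O_{\epsilon_t x}$, so $X \in \overline{\bigcup_{\epsilon > 0} O_{\epsilon x}}$. Combined with $X \in \mathcal{N}$, this gives $X \in \mathcal{N}_x$.

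Finally, observe that $\bigcup_{\epsilon > 0} O_{\epsilon x}$ is $\Ad_G$-invariant as a union of orbits, so its closure is $\Ad_G$-invariant by continuity of the $G$-action, and $\mathcal{N}$ is $\Ad_G$-invariant. Therefore $\mathcal{N}_x$ is $\Ad_G$-invariant, and $X \in \mathcal{N}_x$ implies $O_X \subseteq \mathcal{N}_x$. The main obstacle is the first substep: ensuring a regular element exists in the affine slice $S_X$, which is resolved cleanly via openness of $G \cdot S_X$ combined with density of $\frakg_\reg$.
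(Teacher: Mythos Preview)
Your proof is correct and follows essentially the same approach as the paper: both use the Slodowy slice $S_X$ to find a regular element, decompose $Z_\frakg(Y)$ into nonpositive $\ad_H$-weight spaces, and then use the contracting one-parameter subgroup generated by $H$ to exhibit $X$ as a limit of rescaled orbits. Your parametrization $t\to\infty$ with $\epsilon_t = e^{-2t}$ is equivalent to the paper's $t\to 0$ with $g_t = \exp(-\tfrac{1}{2}\log(t)H)$, and your explicit treatment of the case $X=0$ and the $\Ad_G$-invariance of $\mathcal{N}_x$ are minor additions that the paper leaves implicit.
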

\begin{proof}
The idea of this proof is essentially due to \cite[p.48]{barbasch1980local}.
If~$X$ is nilpotent, choose an~$\shl_2$-triple~$\{X,Y,H\}$ with~$X$ as the nilpositive and~$H$ as the semisimple element.
Consider the associated Slodowy slice~$S_X = X + Z_\frakg(Y)$. Since~$G \cdot S_X$ is an open neighbourhood of~$O_X$,
and since the set of regular elements is dense in~$\fg$, there exists a regular element
$x \in G \cdot S_X$. In other words, there exist~$g \in G$ and~$V \in Z_\frakg(Y)$ with
\begin{align*}
\Ad_g x = X + V.
\end{align*}
The centralizer~$Z_\frakg(Y)$ is stable under~$\ad_H$, hence we can decompose it into the eigenspaces of~$\ad_H$ on this space. Due to the structure theory of finite-dimensional~$\shl_2$-modules (cf. \cite[Sec VIII.2]{bourbaki2008lie}), the eigenvalues~$\lambda$ on these eigenspaces are all nonpositive:
\begin{align*}
Z_\frakg(Y) =
 \bigoplus_{\lambda \leq 0} (Z_\frakg(Y))_\lambda,
 \quad
V =
\sum_{\lambda \leq 0} V_\lambda.
\end{align*}
Consider then the element~$g_t := \exp \left( -\tfrac{1}{2}  \log(t) H \right) \in G$. Then we have
\begin{align*}
\Ad_{g_t g} t x
&=
t \Ad_{g_t} X + t \Ad_{g_t} V
\\
&=
t \exp(- \log(t)) X
+
\sum_{\lambda \leq 0}
t \exp \left(-\frac{\lambda}{2} \log(t) \right) V_\lambda
\\
&=
X + \sum_{\lambda \leq 0} t^{1 - \lambda/2} V_\lambda
\stackrel{t \to 0}{\to} X.
\end{align*} But this means that ~$X \in \mathcal{N} \cap \overline{\bigcup_{t > 0} \Ad_G (tx)} = \mathcal{N}_x$.
\end{proof}
\begin{remark}
\label{rmk:FresseStudiesTheSameThing}
A closer inspection of such orbital limits is given in \cite{fresse2021approximation}. Their definition of~$\mathcal{N}_x$ coincides with the one given here by \cite[Remark 3.6]{fresse2021approximation}.
\end{remark}
We will need the following asymptotic expression for the orbital integrals. It is proven in \cite[Cor 2.3]{harris2012tempered},
relying on \cite[Theorem 3.2]{barbasch1980local}.
\begin{definition}\label{def:definitionofm}
For~$x \in \frakg$, we define
\begin{align}
\label{eq:NumberM-DimensionOfOrbit}
m(x) := \min_{O_X \subset \mathcal{N}_x} {\textstyle\frac{1}{2}} \left( \dim O_x -  \dim O_X \right),
\end{align}
where the minimum is taken over all adjoint orbits~$O_{X}$ contained in~$\mathcal{N}_x$.
\end{definition}
\begin{theorem}
\label{thm:HarrisLimitOfOrbits}
Let~$\frakg$ be a real, reductive Lie algebra, and let~$x \in \frakg$.
Then for all~$f \in C_c^\infty(\frakg)$ we have
\begin{align}
\label{eq:HarrisLimitFormula}
\lim_{\epsilon \to 0}
\epsilon^{-m(x)}
\mathcal{D}_{O_{\epsilon x}}(f)
=
\sum_{O_X}
\Vol(S_X \cap O_h)
\mathcal{D}_{O_{X}}(f),
\end{align}
where the sum is taken over the set of nilpotent orbits~$O_X \subset \mathcal{N}_x$ which are of maximal dimension among all orbits contained in~$\mathcal{N}_x$, and the volume of~$S_X \cap O_x$ is calculated with respect to the measure~$m_{x,X}$.
\end{theorem}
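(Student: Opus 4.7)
The plan is to follow the strategy of Harish-Chandra, Barbasch--Vogan, and Harris: localize $\mathcal{D}_{O_{\epsilon x}}(f)$ near each maximal-dimensional nilpotent orbit in $\mathcal{N}_x$, use the Slodowy slice as a transversal to decompose the orbital integral, and then extract the $\epsilon^{m(x)}$ scaling from the conical contraction of the slice intersections.

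First I would localize. Since $O_{\epsilon x}$ accumulates as $\epsilon \searrow 0$ only on $\mathcal{N}_x$ (compare the argument in Lemma \ref{lem:AllNilpotentOrbitsShowUpSomewhere}), the asymptotic behaviour of $\mathcal{D}_{O_{\epsilon x}}(f)$ is determined by the restriction of $f$ to a neighbourhood of $\mathcal{N}_x$. Using a partition of unity subordinate to the $G$-invariant tubular neighbourhoods $G\cdot U_X$ of the various nilpotent orbits, I may reduce to the case where $\supp(f) \subseteq G \cdot U_X$ for a single $O_X \subseteq \mathcal{N}_x$ with $U_X \subseteq S_X = X + Z_\frakg(Y)$ a small open subset. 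Orbits $O_{X'} \subseteq \mathcal{N}_x$ of non-maximal dimension give contributions that, by the very definition of $m(x)$, scale with an exponent strictly greater than $m(x)$ and hence vanish after normalisation by $\epsilon^{-m(x)}$.

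Next I would use transversality to decompose the integral. By the direct-sum decomposition \eqref{eq:DecompositionIntoSlodowy} the slice $S_X$ is transversal to every nearby orbit, so near $O_X$ the orbit $O_{\epsilon x}$ fibres locally as $G \cdot (S_X \cap O_{\epsilon x})$ modulo stabilisers. By the construction of $m_{\epsilon x, X}$ recalled in the paragraph preceding the theorem, the KKS volume form on $O_{\epsilon x}$ factors, up to an error vanishing as $\epsilon \to 0$, as the KKS form on $O_X$ pulled back from the base times the slice measure $m_{\epsilon x, X}$ on the fibres. This reduces the problem to understanding the weak limit of the rescaled slice measures $\epsilon^{-m(x)} m_{\epsilon x, X}$ on $S_X$.

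Finally I would extract the scaling. Applying the one-parameter subgroup $g_t = \exp(-\tfrac{1}{2}\log(t) H)$ from the proof of Lemma \ref{lem:AllNilpotentOrbitsShowUpSomewhere}, the directions in $O_{\epsilon x}$ transversal to $O_X$ contract like strictly positive powers of $\epsilon$; counting dimensions, the resulting Jacobian is $\epsilon^{(\dim O_{\epsilon x} - \dim O_X)/2} = \epsilon^{m(x)}$, as required. Consequently $\epsilon^{-m(x)} m_{\epsilon x, X}$ converges weakly on $S_X$, and its limiting total mass on compact test subsets of $S_X$ is precisely $\Vol(S_X \cap O_x)$; this convergence is exactly the content of \cite[Theorem 3.2]{barbasch1980local}, which I would cite rather than redo. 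Summing the resulting limits over the finitely many maximal-dimensional orbits $O_X \subseteq \mathcal{N}_x$ yields the formula. The main obstacle is precisely this Barbasch--Vogan convergence step, which requires delicate harmonic-analytic estimates along Slodowy transversals (matching the quotient of KKS forms with the slice measure and controlling the error terms uniformly as $\epsilon \to 0$), and is the heart of the whole argument.
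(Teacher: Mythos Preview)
The paper does not prove this theorem at all: it is stated as a quotation from the literature, with the sentence ``It is proven in \cite[Cor 2.3]{harris2012tempered}, relying on \cite[Theorem 3.2]{barbasch1980local}'' immediately preceding the statement. So there is no proof in the paper to compare against; the authors simply import the result.

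Your proposal is a reasonable high-level outline of the Barbasch--Vogan/Harris strategy, and you correctly identify the hard step (the weak convergence of the rescaled slice measures) and correctly defer it to \cite{barbasch1980local}. In that sense your proposal and the paper end up at the same place: both rely on the cited literature for the actual work. If anything, your sketch goes further than the paper does, since the paper gives no argument whatsoever. The one caveat is that your outline remains a sketch: the partition-of-unity localisation into $G$-invariant tubular neighbourhoods of nilpotent orbits, and the claim that the KKS form factors ``up to an error vanishing as $\epsilon\to 0$'' as base times fibre, both require substantial justification (this is precisely the analytic content of Harris's Corollary 2.3 and Barbasch--Vogan's Theorem 3.2), so as a self-contained proof your proposal is incomplete --- but no more so than the paper itself, which does not attempt one.
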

\begin{remark}
\label{rmk:NumberMIsConstantAlongConnectedComponent}
If~$C$ is an open Weyl chamber of some CSA~$\frakh$, one actually has~$\mathcal{N}_h = \mathcal{N}_{h'}$ for all~$h,h' \in C$ by \cite[Cor 2.4]{harris2012tempered} (originally attributed to \cite{barbasch1980local}).
In particular, the number~$m$ in Theorem~\ref{thm:HarrisLimitOfOrbits} is equal for all~$h$ in one such open Weyl chamber. In this case we also write~$m(C)$ for the~$m$ associated to any~$h \in C$.
\end{remark}
\subsection{Uniform Orbital Limits}
Using results of Harish-Chandra and Varadarajan, we will show that the convergence in Theorem~\ref{thm:HarrisLimitOfOrbits}
is uniform on certain subsets of~$\frakh$.
For an open subset~$C\subseteq V$ of a vector space~$V$, we denote by~$C^{k}(\overline{C}, \R)$ the space
of functions~$u \colon C \rightarrow \R$ that are~$k$ times continuously differentiable on~$C$, and whose derivatives of order at most~$k$ extend continuously to the closure.
The following result follows from \cite[Thm 3]{harish1957fourier} (see also \cite[Thm I.3.23]{varadarajan2006harmonic}).
\begin{theorem}
\label{thm:VaradarajanContinuouslyExtensible}
Let~$\frakh \subset \frakg$ be a Cartan subalgebra,~$C \subset \frakh$ an open Weyl chamber,
and let~$f \in C^\infty_c(\frakg, \R)$. Then the function~$u_f : C \to \mathbb{R}$ defined by
\begin{align*}
u_f(h) := \mathcal{D}_{O_h}(f)
\end{align*}
is in~$C^{\infty}(\overline{C},\R)$.
\end{theorem}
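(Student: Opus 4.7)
The plan is to identify $u_f$ with Harish-Chandra's normalised orbital integral and invoke his classical smoothness theorem. By Proposition~\ref{prop:RadonNikodymDifferentOrbits} and the orbit diffeomorphism $\iota \colon G/H \xrightarrow{\sim} O_h$, for every $h \in C$ we have
\[
u_f(h) = \mathcal{D}_{O_h}(f) = \frac{|\pi(h)|}{(2\pi)^{2k}} \int_{G/H} f(\Ad_g h)\, d\Vol_{G/H}(g),
\]
with $k = \dim(O_h)/2$ constant on $C$. Since each root $\alpha \in \Delta_{+}$ has constant sign on $\overline{C}$, the factor $|\pi(h)| = c\prod_{\alpha\in\Delta_{+}}|\alpha(h)|$ agrees on $\overline{C}$ with a fixed polynomial in $h$ and is in particular smooth there.

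Within $C$ itself, smoothness is immediate: every $h \in C$ is regular, so $Z_\frakg(h) = \frakh$ uniformly; the compactness of $\supp(f)$ keeps $g \mapsto f(\Ad_g h)$ supported on a fixed compactum of $G/H$ as $h$ varies over any compact subset of $C$; and differentiation under the integral sign gives $u_f \in C^\infty(C, \R)$. The real content of the theorem is the smooth extension across the walls of $C$: on those walls the parametrisation $G/H \twoheadrightarrow O_h$ ceases to be a diffeomorphism, so derivatives of the bare orbital integral $I_f(h) := \int_{G/H} f(\Ad_g h)\,d\Vol_{G/H}(g)$ generally develop singularities, while $|\pi|$ and all its derivatives vanish. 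The hard part, and the actual obstacle, is to show that these two features conspire so that the product $(2\pi)^{2k} u_f = |\pi|\cdot I_f$ extends smoothly to $\overline{C}$.

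This cancellation is precisely Harish-Chandra's theorem on smoothness of the normalised orbital integral, namely \cite[Theorem~3]{harish1957fourier} in the group setting, or \cite[Theorem~I.3.23]{varadarajan2006harmonic} in the Lie algebra formulation we need here; its application gives the theorem. To outline its proof I would use Harish-Chandra's descent method: fix a singular $h_0 \in \partial C$, let $\frakl := Z_\frakg(h_0)$ be the corresponding Levi subalgebra in which $h_0$ is central, and use a transverse slice through $h_0$ to reduce the analysis in a neighbourhood of $h_0$ to an orbital integral on the semisimple quotient $[\frakl,\frakl]$, which has strictly smaller rank. Combined with the Weyl integration formula and the fact that $\frakg$-invariant differential operators become, after conjugation by $\pi$, constant-coefficient operators on $\frakh$ that commute with the formation of $F_f$, an induction on $\rk(\frakg)$ yields the required $C^\infty$ extension of $F_f = (2\pi)^{2k}u_f$ to $\overline{C}$.
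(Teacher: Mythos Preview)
Your proposal is correct and takes essentially the same approach as the paper: the paper does not give a self-contained argument but simply states that the result follows from \cite[Thm~3]{harish1957fourier} (see also \cite[Thm~I.3.23]{varadarajan2006harmonic}), exactly the references you invoke. Your added explanation of the identification $u_f = (2\pi)^{-2k}\pi(h)\int_{G/H} f(\Ad_g h)\,d\Vol_{G/H}$ via Proposition~\ref{prop:RadonNikodymDifferentOrbits}, and your sketch of Harish-Chandra's descent, go somewhat beyond what the paper records but are consistent with it.
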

\begin{remark}
In fact,~$u_{f}$ extends to a Schwartz function on the connected component of the non-zero sets
of the singular imaginary roots.
\end{remark}
\begin{remark}
The definition of the invariant integral in \cite{harish1957fourier} is, up to a positive scalar, equivalent to ours by Proposition~\ref{prop:RadonNikodymDifferentOrbits}.
\end{remark}
\begin{proposition}
Let~$C$ be an open cone in~$V$, and let
$u \in C^{m+1}(\overline{C}, \R)$ with~${D^{m-1}u(0) = 0}$.
Then for any compact~$K\subseteq \overline{C}$,
\[
    \lim_{\varepsilon \rightarrow 0^{+}} \varepsilon^{-m}u(\varepsilon v) = \frac{1}{m!}\partial^{m}_{v}u(0)
\]
uniformly for~$v\in K$.
\end{proposition}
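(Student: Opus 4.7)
The plan is to apply Taylor's theorem to $u$ along the ray $t\mapsto tv$, which is contained in $\overline{C}$ by the cone property. Reading the hypothesis $D^{m-1}u(0)=0$ as the vanishing of the full $(m-1)$-jet at the origin, i.e.\ $u(0)=Du(0)=\cdots=D^{m-1}u(0)=0$, the integral form of the remainder yields
\[
u(\varepsilon v) = \frac{\varepsilon^{m}}{m!}\partial_{v}^{m}u(0) + \frac{\varepsilon^{m+1}}{m!}\int_{0}^{1}(1-t)^{m}\,\partial_{v}^{m+1}u(t\varepsilon v)\,dt
\]
for all $v\in\overline{C}$ and all $\varepsilon\ge 0$ small enough that $\varepsilon v$ lies in a fixed bounded neighbourhood of $0$ inside $\overline{C}$. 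Dividing by $\varepsilon^{m}$ produces
\[
\varepsilon^{-m}u(\varepsilon v)-\frac{1}{m!}\partial_{v}^{m}u(0)=\frac{\varepsilon}{m!}\int_{0}^{1}(1-t)^{m}\,\partial_{v}^{m+1}u(t\varepsilon v)\,dt,
\]
so it remains only to bound the right-hand side uniformly for $v\in K$.

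This is where the compactness hypothesis enters. Fix any $\varepsilon_{0}>0$ and consider the set $K_{\varepsilon_{0}}:=\{s v : s\in[0,\varepsilon_{0}],\,v\in K\}$, which is a compact subset of $\overline{C}$ by the cone property and the compactness of $K$. Because $u\in C^{m+1}(\overline{C},\R)$, the $(m+1)$-st derivative extends continuously to $\overline{C}$, hence is bounded on $K_{\varepsilon_{0}}$; combined with the bound $\|v\|^{m+1}\le (\sup_{w\in K}\|w\|)^{m+1}<\infty$, this produces a constant $M>0$ such that
\[
\bigl|\partial_{v}^{m+1}u(t\varepsilon v)\bigr|\le M\qquad \text{for all }(t,\varepsilon,v)\in[0,1]\times[0,\varepsilon_{0}]\times K.
\]
Plugging this into the displayed remainder estimate gives
\[
\Bigl|\varepsilon^{-m}u(\varepsilon v)-\tfrac{1}{m!}\partial_{v}^{m}u(0)\Bigr|\le \frac{M}{(m+1)!}\,\varepsilon,
\]
which tends to $0$ uniformly for $v\in K$ as $\varepsilon\searrow 0$, concluding the proof.

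The only subtle point is the interpretation of the hypothesis: for the conclusion to hold, the entire $(m-1)$-jet of $u$ at the origin must vanish, not merely the single derivative of order $m-1$, since otherwise lower-order Taylor terms would dominate $\varepsilon^{-m}u(\varepsilon v)$ as $\varepsilon\searrow 0$. Modulo this reading, the argument is a routine Taylor expansion; the cone property serves only to ensure that the line segment from $0$ to $\varepsilon v$ stays inside $\overline{C}$, where the assumption $u\in C^{m+1}(\overline{C},\R)$ can be used to control the remainder uniformly.
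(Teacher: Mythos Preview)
Your proof is correct and follows essentially the same approach as the paper's: reduce to a one-variable Taylor expansion along the ray $t\mapsto tv$ and control the order-$(m+1)$ remainder uniformly over $v\in K$ by compactness. The only cosmetic difference is that you use the integral form of the remainder while the paper uses the Lagrange form; your construction of the compact set $K_{\varepsilon_0}=\{sv:s\in[0,\varepsilon_0],\,v\in K\}$ is in fact slightly more careful than the paper's appeal to boundedness on $K\times K$.
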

\begin{proof}
Let~$p(t):=u(tv)$. Then since~$p \in C^{m+1}([0,1],\R)$, Taylor's Theorem yields
$p(\varepsilon) = \frac{1}{m!}\varepsilon^m\partial_{v}^mu(0) + R$, where~$R = \frac{1}{(m+1)!}\varepsilon^{m+1}\partial_{v}^{m+1}u(\theta v)$
for some~$\theta \in [0,\varepsilon]$. Since~$(v,w) \mapsto  \partial_{v}^{m+1}u(w)$
is uniformly bounded on~$K \times K$, the result follows.
\end{proof}
Fix again an open Weyl chamber~$C$ of some Cartan subalgebra~$\frakh \subset \frakg$. From Theorem~\ref{thm:HarrisLimitOfOrbits}, Theorem~\ref{thm:VaradarajanContinuouslyExtensible}, and the fact that integrals over compact domains commute with uniform limits, it immediately follows that
\begin{align*}
\lim_{\epsilon \to 0}
\int_{K \cap C}
\epsilon^{-m} \mathcal{D}_{O_{\epsilon h}}(f)
w(h)
d \Lebesgue_\frakh(h)
=
\sum_{O_X}
\int_{K \cap C}
\Vol(S_X \cap O_h)
\mathcal{D}_{O_{X}}(f)
w(h)
d \Lebesgue_\frakh(h),
\end{align*}
for all compact~$K \subset \frakh$, all~$f \in C^\infty_c(\frakg)$ and all continuous functions~$w \colon \fh \rightarrow \R$. In particular, the integral on the right-hand side is well-defined.
A similar statement holds with~$\mathcal{D}_{O_X}(f)$ replaced by~$\mu_{O_X}(B_{R}(0))$.
\begin{corollary}
\label{cor:LimitCommutesWithIntegral}
Let~$\frakh \subset \frakg$ be a Cartan subalgebra,~$K \subset \frakh$ a compact subset, and~$C \subset \frakh$ an open Weyl chamber. Let~$w \colon \fh \rightarrow \R$ be a continuous function, and let~$R > 0$. Then
\begin{equation*}
\begin{aligned}
\lim_{\epsilon \to 0}
\int_{K \cap C}
\epsilon^{-m}
&
\mu_{O_{\epsilon h}}(B_R(0))
w(h)
d \Lebesgue_\frakh(h)
\\
&=
\sum_{O_X}
\int_{K \cap C}
 \Vol(S_X \cap O_h)
 \mu_{O_{X}}(B_R(0))
 w(h)
 d \Lebesgue_\frakh(h),
 \end{aligned}
\end{equation*}
with the sum over the~$O_X$ as in Theorem \ref{thm:HarrisLimitOfOrbits}, and the right hand side is integrable.
\end{corollary}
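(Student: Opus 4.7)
I would first prove the corollary for smooth compactly supported test functions in place of $1_{B_R(0)}$, and then pass to the indicator by a monotone sandwich. The crux is to upgrade Theorem \ref{thm:HarrisLimitOfOrbits} from pointwise convergence on $C$ to uniform convergence on $K\cap C$, which is what allows one to commute limit and integral against $w$.

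For $f\in C_c^\infty(\fg)$, Theorem \ref{thm:VaradarajanContinuouslyExtensible} gives $u_f(h)=\mathcal{D}_{O_h}(f)\in C^\infty(\overline{C})$. Because the pointwise limit $\lim_{\epsilon\to 0}\epsilon^{-m}u_f(\epsilon h)$ exists and is finite on $C$ by Theorem \ref{thm:HarrisLimitOfOrbits}, the Taylor expansion of $u_f$ at the origin in the direction $h$ must vanish to order $m-1$, and the Taylor proposition stated directly above the corollary yields
\[
  \epsilon^{-m}u_f(\epsilon h) \longrightarrow \tfrac{1}{m!}\partial_h^m u_f(0) = \sum_{O_X}\Vol(S_X\cap O_h)\mathcal{D}_{O_X}(f)
\]
uniformly on compact subsets of $\overline{C}$. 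Since $K\cap C$ is relatively compact in $\overline{C}$ and $w$ is continuous, uniform convergence justifies exchanging limit and integration against $w$, establishing the smooth version and also showing that the right-hand side is continuous (hence bounded and integrable) in $h$.

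To pass from smooth $f$ to $f=1_{B_R(0)}$, pick smooth bumps $f_n^-\le 1_{B_R(0)}\le f_n^+\le f_1^+$ with $\supp f_n^+\subseteq B_{R+1/n}(0)$ and $f_n^-=1$ on $B_{R-1/n}(0)$. Writing $w=w^+-w^-$ and exploiting positivity of the orbital measures, the integral $\int_{K\cap C}\epsilon^{-m}\mu_{O_{\epsilon h}}(B_R(0))w^{\pm}(h)\,d\Lebesgue_\fh(h)$ is sandwiched, up to the factor $(2\pi)^{\dim O_{\epsilon h}}$ (constant along $C$ since $\dim O_h$ is constant on the open Weyl chamber), between the corresponding integrals for $f_n^{\pm}$. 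Passing $\epsilon\to 0$ via the smooth case, then $n\to\infty$, closes the sandwich: for each of the finitely many relevant nilpotent orbits $O_X$, Lemma \ref{lem:HomogeneityOfNilpotentVolumes} gives $\mu_{O_X}(B_R(0))=C_X R^{\dim O_X/2}$, which is continuous in $R$, so $\mu_{O_X}(\partial B_R(0))=0$ and hence $\mathcal{D}_{O_X}(f_n^{\pm})\to \mathcal{D}_{O_X}(1_{B_R(0)})$ by dominated convergence on $O_X$. The $h$-integral then passes by dominated convergence in $n$ with the $n$-independent integrable majorant $\sum_{O_X}\Vol(S_X\cap O_h)\mathcal{D}_{O_X}(f_1^+)|w^{\pm}(h)|$ supplied by the smooth case applied to $f_1^+$; this simultaneously yields the integrability of the stated right-hand side.

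The chief obstacle is the uniform-convergence upgrade, which does not follow from Theorem \ref{thm:HarrisLimitOfOrbits} alone but crucially requires the smooth extension of $u_f$ across the walls of $C$ provided by Theorem \ref{thm:VaradarajanContinuouslyExtensible}. Once this is in place the monotone approximation is routine, the only delicate residual point being $\mu_{O_X}(\partial B_R(0))=0$, which is handled cleanly by the homogeneity lemma rather than by Sard-type arguments on the orbit.
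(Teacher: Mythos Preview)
Your proposal is correct and follows essentially the same two-step strategy as the paper: establish the smooth version via the uniform convergence coming from Theorem~\ref{thm:VaradarajanContinuouslyExtensible} combined with the Taylor proposition, then sandwich the indicator $1_{B_R(0)}$ between smooth approximants and pass to the limit. The only notable difference is that the paper shows $\mu_{O_X}(\partial B_R(0))=0$ by a transversality/Sard argument (the cone $O_X$ meets $\partial B_R(0)$ in a lower-dimensional submanifold), whereas you deduce it from Lemma~\ref{lem:HomogeneityOfNilpotentVolumes} via continuity of $R\mapsto \mu_{O_X}(B_R(0))=R^{\dim O_X/2}\mu_{O_X}(B_1(0))$ and continuity of the Radon measure from above; your route is arguably cleaner here.
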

\begin{proof}
The following proof is essentially taken from \cite[Lem 4.1]{kallenberg2017random}.
Choose sequences of functions~$\{f_n \in C^\infty_c(\frakg) \}_{n \geq 1}, \{g_n \in C^\infty_c(\frakg) \}_{n \geq 1}$ with monotone, pointwise convergence
\begin{align*}
f_n \nearrow 1_{B_R(0)}, \quad g_n \searrow 1_{\overline{B_R(0)}}.
\end{align*}
Let~$k = \dim O_X / 2$ for any of the orbits~$O_X$ in the sum. Since~$w$ can be written as the difference of two nonnegative functions, we may assume without loss of generality that
$w$ is nonnegative.
Then, for all~$n \in \mathbb{N}$, we have
\begin{equation}
\label{eq:InequalityForIndicators}
\begin{aligned}
(2 \pi)^{2k}
\sum_{O_X}
 &\int_{K \cap C}
 \Vol(S_X \cap O_h)
 \mathcal{D}_{O_{X}}(f_n)
 w(h)
 d \Lebesgue_\frakh(h)
 \\
 \leq
 \liminf_{\epsilon \to 0}
  &\int_{K \cap C}
 \epsilon^{-m}
 \mu_{O_{\epsilon h}}(B_R(0))
 w(h)
 d \Lebesgue_\frakh(h)
 \\
 \leq
  \limsup_{\epsilon \to 0}
  &\int_{K \cap C}
  \epsilon^{-m}
 \mu_{O_{\epsilon h}}(\overline{B_R}(0))
 w(h)
 d \Lebesgue_\frakh(h)
 \\
 \leq
(2 \pi)^{2k}
\sum_{O_X}
 &\int_{K \cap C}
 \Vol(S_X \cap O_h)
 \mathcal{D}_{O_{X}}(g_n)
 w(h)
 d \Lebesgue_\frakh(h).
 \end{aligned}
\end{equation}
Note that~$\mu_{O_X}(B_R(0)) = \mu_{O_X}(\overline{B_R(0)})$ for all nilpotent elements~$X$.
Indeed, since~$O_X$ is a cone, the boundary~$\partial B_{R}(0)$ intersects~$O_{X}$ transversally, and
the intersection~$O_{X} \cap \partial B_{R}(0) \subset O_{X}$
is either empty (when~$X = 0$) or a submanifold of codimension at least one (when~$X \neq 0$). Its Liouville measure is thus zero by Sard's Theorem.
Finally, the statement follows by taking the monotone limits~$n \to \infty$ in \eqref{eq:InequalityForIndicators}.
\end{proof}
\section{Proof of the key Lemma \ref{KeyLemma} concluding Theorem B}\label{Sect=KeyLemma}
\label{sec:MainTheorem}
In all that follows, we will fix a maximal set of mutually nonconjugate,~$\theta$-stable CSAs~$\frakh_1,\dots,\frakh_n$ of~$\frakg$, with associated Cartan subgroups~$H_1, \dots, H_n \subset G$ defined by~$H_i := Z_G(\frakh_i)$. We denote by
$W_j := N_G(\frakh_j)/H_j$ the Weyl group associated to~$\fh_j$.
We will need the following Lie algebraic version of the well-known Harish-Chandra integral formula \cite[Lemma 41]{harish1965invariant},
see e.g.~\cite[Part I, Section 3, Lemma 2]{varadarajan2006harmonic}.
\begin{lemma}
\label{lem:HarishChandraFormula}
Let~$\frakg$ be a reductive Lie algebra with connected adjoint group~$G$, and let~$f \in L^1(\frakg, \Lebesgue_\frakg)$ be an integrable function on~$\frakg$.
Then, for~$j = 1,\dots,n$, there are~$G$-invariant volume forms~$\Vol_{G/H_j}$ on~$G/H_j$, so that
\begin{align*}
\int_{\frakg} f(x) d\Lebesgue_\frakg(x) =
\sum_{j=1}^n
\frac{1}{|W_j|}
\int_{\frakh_j}
\left(
\int_{G/H_j} f(\Ad_g h) \Vol_{G/H_j}([g]) \right)
|\pi_{j}(h)|^2 d \Lebesgue_{\frakh_j}(h).
\end{align*}
Here,~$|\pi_j|$ is a product of positive roots of~$(\frakg_\C,(\frakh_j)_\C)$ as in Proposition~\ref{prop:RadonNikodymDifferentOrbits}.
\end{lemma}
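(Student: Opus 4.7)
The plan is to follow the classical proof of the Weyl integration formula, adapted to the Lie algebra setting. First, I would use Lemma~\ref{lem:PropertiesRegularElements} to replace integration over $\fg$ by integration over the regular set $\fg_\reg$, whose complement has Lebesgue measure zero. Since every regular element lies in a unique Cartan subalgebra, and every CSA of $\fg$ is $G$-conjugate to exactly one of $\fh_1, \ldots, \fh_n$, the regular set decomposes as a disjoint union
\begin{equation*}
\fg_\reg = \bigsqcup_{j=1}^n \Ad_G(\fh_{j,\reg}),
\end{equation*}
and it suffices to handle each piece separately.

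For fixed $j$, I would analyse the smooth map
\begin{equation*}
\phi_j \colon G/H_j \times \fh_{j,\reg} \to \Ad_G(\fh_{j,\reg}), \quad (gH_j, h) \mapsto \Ad_g h,
\end{equation*}
which is well-defined since $H_j = Z_G(\fh_j)$ centralizes $\fh_j$. Two regular elements of $\fh_j$ are $G$-conjugate if and only if they are $W_j$-conjugate, so $\phi_j$ is a $|W_j|$-fold covering onto its image. Applying the change of variables theorem, with an as-yet-unspecified $G$-invariant volume form $\Vol_{G/H_j}$ on $G/H_j$, yields
\begin{equation*}
\int_{\Ad_G(\fh_{j,\reg})} f \, d\Lebesgue_\fg = \frac{1}{|W_j|} \int_{\fh_{j,\reg}} \int_{G/H_j} f(\Ad_g h) \, |J_{\phi_j}(gH_j, h)| \, \Vol_{G/H_j}([g]) \, d\Lebesgue_{\fh_j}(h).
\end{equation*}
Summing over $j$ reduces the lemma to identifying the Jacobian $|J_{\phi_j}|$, up to a positive constant that can be absorbed into the normalization of $\Vol_{G/H_j}$, with $|\pi_j(h)|^2$.

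To verify this, by $G$-invariance it suffices to compute the Jacobian at $(eH_j, h)$. Identifying $T_{eH_j}(G/H_j)$ with $\fg/\fh_j \cong \fh_j^\perp$ via the Killing-theta inner product, the differential $D\phi_j(eH_j, h)$ sends $(X \bmod \fh_j, Y)$ to $[X,h] + Y = -\ad_h(X) + Y$. Hence, up to a positive constant determined by the choice of $\Vol_{G/H_j}$, we have $|J_{\phi_j}(eH_j,h)| = |{\det}(\ad_h|_{\fh_j^\perp})|$. Complexifying and decomposing $(\fg_\C)/(\fh_j)_\C \cong \bigoplus_{\alpha \in \Delta} \fg_\alpha$ into root spaces of $(\fg_\C, (\fh_j)_\C)$, we find
\begin{equation*}
\det\bigl(\ad_h|_{\fh_j^\perp}\bigr) = \prod_{\alpha \in \Delta} \alpha(h) = (-1)^{|\Delta_+|} \prod_{\alpha \in \Delta_+} \alpha(h)^2,
\end{equation*}
whose absolute value is $c \cdot |\pi_j(h)|^2$ for a positive constant $c$ depending only on $\Vol_{G/H_j}$. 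Rescaling $\Vol_{G/H_j}$ to absorb this constant (this is precisely the freedom acknowledged in the statement) yields the desired identity.

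The main subtleties are bookkeeping the scalar normalizations for the various $\Vol_{G/H_j}$, and the care required because some of the $\fh_j$ may be non-split so that individual roots $\alpha(h)$ are complex-valued; however, the product $\prod_{\alpha \in \Delta_+} \alpha(h)^2$ is $\theta$-invariant and takes real values up to sign, so taking absolute values is unambiguous. Since the result is classical, I would simply cite \cite[Part I, \S 3, Lemma 2]{varadarajan2006harmonic} or \cite[Lemma 41]{harish1965invariant} after indicating this outline.
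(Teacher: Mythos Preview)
Your proposal is correct and in fact goes beyond what the paper does: the paper does not prove this lemma at all, but simply states it and cites \cite[Lemma 41]{harish1965invariant} and \cite[Part I, Section 3, Lemma 2]{varadarajan2006harmonic}. Your outline is the standard argument behind those references, so citing them as you suggest at the end is already sufficient.
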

\begin{lemma}
Let~$\frakh \subset \frakg$ be a CSA, and define
\begin{align*}
V_\epsilon^{\frakh} := \Ad_G B_\epsilon(0) \cap \frakh.
\end{align*}
Then for all~$\epsilon, R > 0$ we have
\begin{align}
\label{eq:RepresentIntegrationAsDisjointUnion}
(V_{\epsilon, R})_\reg = \bigsqcup_{j=1}^n \{\Ad_g h : h \in (V_\epsilon^{\frakh_j})_\reg, \, \Ad_{g}(h) \in B_R(0) \}.
\end{align}
\end{lemma}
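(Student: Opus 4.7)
The plan is to invoke Lemma~\ref{lem:PropertiesRegularElements}, which states that every regular element of $\fg$ lies in a \emph{unique} Cartan subalgebra (namely its centralizer), and that regularity is preserved under the adjoint action. Combined with the fact that $\fh_1,\dots,\fh_n$ is a complete set of representatives of the finitely many $G$-conjugacy classes of $\theta$-stable CSAs of $\fg$, and that every CSA is $G$-conjugate to a $\theta$-stable one, this will be sufficient to establish the claimed decomposition.

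For the inclusion from right to left: given $h \in (V_\epsilon^{\fh_j})_\reg$, by definition there exist $g_0 \in G$ and $y \in B_\epsilon(0)$ with $h = \Ad_{g_0}(y)$. Then $\Ad_{g}(h) = \Ad_{gg_0}(y) \in \Ad_G(B_\epsilon(0))$, and together with the hypothesis $\Ad_{g}(h) \in B_R(0)$ we obtain $\Ad_{g}(h) \in V_{\epsilon,R}$. Regularity of $\Ad_{g}(h)$ follows from regularity of $h$.

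For the inclusion from left to right: given $x \in (V_{\epsilon,R})_\reg$, write $x = \Ad_{g_0}(y)$ with $y \in B_\epsilon(0)$. Since $x$ is regular, so is $y$, and hence $y$ lies in a unique CSA $\fh' \subseteq \fg$. By the choice of the $\fh_j$, there exist some index $j$ and some $g_1 \in G$ with $\fh' = \Ad_{g_1}(\fh_j)$. Setting $h := \Ad_{g_1^{-1}}(y) \in \fh_j$ yields a regular element with $h \in \Ad_G(B_\epsilon(0)) \cap \fh_j = V_\epsilon^{\fh_j}$ and $\Ad_{g_0 g_1}(h) = x \in B_R(0)$, as needed.

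Finally, for disjointness, suppose $x := \Ad_g(h) = \Ad_{g'}(h')$ with $h \in (V_\epsilon^{\fh_j})_\reg$ and $h' \in (V_\epsilon^{\fh_k})_\reg$. Then $x$ is regular, so it lies in a unique CSA; but $x$ belongs to both $\Ad_g(\fh_j)$ and $\Ad_{g'}(\fh_k)$, whence $\Ad_g(\fh_j) = \Ad_{g'}(\fh_k)$, so $\fh_j$ and $\fh_k$ are $G$-conjugate. By mutual nonconjugacy of the chosen representatives we conclude $j=k$. The only delicate point is keeping track of the unique-CSA principle while transporting along several adjoint actions; no serious obstacle is anticipated.
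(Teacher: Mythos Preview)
Your proof is correct and follows essentially the same approach as the paper: both argue the two inclusions and disjointness via the unique-CSA property of regular elements (Lemma~\ref{lem:PropertiesRegularElements}) together with the fact that $\fh_1,\dots,\fh_n$ exhaust the $G$-conjugacy classes of CSAs. Your version is slightly more explicit in noting that the CSA containing a regular $y$ need not itself be $\theta$-stable but is $G$-conjugate to one of the $\fh_j$; the paper leaves this implicit.
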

\begin{proof}
To see that the right hand side is indeed a disjoint union, suppose that~$\Ad_g h = \Ad_{g'} h'$ for regular elements~$h\in \frakh_i$,~$h' \in \frakh_{j}$ and~$g,g' \in G$. Since~$h$ and~$h'$ are regular,~$\frakh_i = Z_\frakg(h)$ and~$\frakh_j = Z_\frakg(h')$. Since
$h$ is conjugate to~$h'$,~$\frakh_{i}$ is conjugate to~$\frakh_{j}$.
As the various CSA's are mutually nonconjugate, we conclude that~$\frakh_{i} = \frakh_{j}$.
\\
$\subset$:
Let~$\Ad_g x \in (V_{\epsilon, R})_\reg$ with~$g \in G$ and~$x \in B_\epsilon(0)$. Since~$x$ is regular, it lies in a unique CSA, and is conjugate to an element~$h \in \frakh_j$ for some~$j$, i.e.
\begin{align*}
\exists g' \in G : \Ad_{g'} x = h \in \frakh_j.
\end{align*}
By Lemma~\ref{lem:PropertiesRegularElements}, the orbit of a regular element consists only of regular elements. Hence~$h \in (V_\epsilon^{\frakh_j})_\reg$ and~$\Ad_{g (g')^{-1}} h = \Ad_g x \in B_R(0)$. Hence~$\Ad_g x$ lies in the right hand side.
\\
$\supset$: Let~$\Ad_g h$ lie in the right hand side. Because~$h \in \Ad_G B_\epsilon(0)$, we can write~$h = \Ad_{g'} x$ for some~$g' \in G$ and~$x \in B_\epsilon(0)$. But then~$\Ad_g h = \Ad_{g g'} x \in \Ad_G B_\epsilon(0)$, and since~$h$ was regular, so is every element in its orbit, and we have~$\Ad_g h \in (V_{\epsilon, R})_\reg$.
\end{proof}
\begin{lemma}
\label{lem:VepsIsCompact}
Let~$\frakh$ be a~$\theta$-invariant CSA.
For all~$\epsilon > 0$, the sets~$V_\epsilon^{\frakh} = \Ad_G B_\epsilon(0) \cap \frakh$ are bounded.
\end{lemma}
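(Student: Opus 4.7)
The strategy is to exploit the fact that any $h \in V_\varepsilon^{\fh}$ is $\Ad$-conjugate to some $x \in B_\varepsilon(0)$, so that $\ad_h$ and $\ad_x$ are similar operators on $\fg_\C$ with identical spectra, and then translate this spectral information back into a norm bound via root data on $\fh$.

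First, since $\ad \colon \fg \to \End(\fg)$ is a linear map between finite-dimensional normed spaces, there is a constant $C_1 > 0$ depending only on $\fg$ and the inner product $B_\theta$ such that the operator norm satisfies $\|\ad_y\| \leq C_1 \|y\|_{B_\theta}$ for all $y \in \fg$. Given $h = \Ad_g x$ with $g \in G$ and $x \in B_\varepsilon(0)$, conjugation yields $\ad_h = \Ad_g \circ \ad_x \circ \Ad_{g^{-1}}$, so $\ad_h$ and $\ad_x$ have identical spectra over $\C$, and in particular
\[
\mathrm{spr}(\ad_h) \;=\; \mathrm{spr}(\ad_x) \;\leq\; \|\ad_x\| \;\leq\; C_1 \varepsilon.
\]

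Next, I would invoke the root space decomposition of $\fg_\C$ relative to $\fh_\C$. Since $\fh$ is a $\theta$-stable CSA of the semisimple Lie algebra $\fg$, its complexification $\fh_\C$ is a Cartan subalgebra of $\fg_\C$, and the eigenvalues of $\ad_h$ on $\fg_\C$ are $0$ (with multiplicity $\dim \fh$) together with the values $\alpha(h)$ for $\alpha \in \Delta(\fg_\C, \fh_\C)$. Combining with the previous step,
\[
\max_{\alpha \in \Delta(\fg_\C, \fh_\C)} |\alpha(h)| \;\leq\; C_1 \varepsilon.
\]

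Finally, I would use that the roots $\Delta(\fg_\C, \fh_\C)$ span $\fh_\C^*$ over $\C$, a standard consequence of the semisimplicity of $\fg_\C$. Regarding each root as an $\R$-linear functional $\fh \to \C$, the real and imaginary parts together span $\fh^*$ as an $\R$-vector space, so the assignment $h \mapsto \max_\alpha |\alpha(h)|$ defines a norm on the finite-dimensional space $\fh$, hence is equivalent to $\|\cdot\|_{B_\theta}$. This yields a constant $C_2 > 0$ with $\|h\|_{B_\theta} \leq C_1 C_2 \varepsilon$ for every $h \in V_\varepsilon^{\fh}$, which gives boundedness. The only point requiring mild care is the span statement for the roots when $\fh$ is an arbitrary $\theta$-stable CSA rather than the split or compact extreme, but since the argument passes through $\fh_\C$ this holds uniformly, with constants depending only on $\fh$ and not on $\varepsilon$.
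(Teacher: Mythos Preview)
Your argument is correct and follows the same core idea as the paper: bound the spectral radius of $\ad_h$ via conjugation-invariance of the spectrum, then translate this back into a bound on $h$. The one technical difference lies in that last translation step. The paper observes that $\theta$-invariance of $\fh$ forces $\ad_h^* = -\ad_{\theta h}$ with $[h,\theta h]=0$, so $\ad_h$ is normal and hence $\|\ad_h\| = \mathrm{spr}(\ad_h)$; boundedness of $h$ then follows from injectivity of $\ad$. You instead identify the spectrum explicitly via the root system and use that the roots span $\fh_\C^*$ to build a norm directly. Your route is slightly more hands-on but has the minor advantage that it does not actually rely on $\theta$-invariance of $\fh$ (any Cartan subalgebra complexifies to one of $\fg_\C$), whereas the paper's normality argument does.
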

\begin{proof}
With respect to the inner product~$\kappa_{\theta} \colon \frakg \times \frakg \rightarrow \R$, we have
\begin{align*}
\kappa_{\theta} (\ad_x y, z) = - \kappa ([x,y], \theta z)
&=
\kappa(y, [x,\theta z])
\\&=
\kappa(y, \theta([\theta x ,z])) = - \kappa_{\theta} (y, \ad_{\theta x} z).
\end{align*}
It follows that~$\ad_{h}^{*} = -\ad_{\theta h}$. Since~$[h, \theta h] = 0$,  the operator~$\ad_{h} \colon \frakg \rightarrow \frakg$ is normal, and its operator norm~$\|\ad_{h}\|$ with respect to the inner product~$\kappa_{\theta}$ satisfies
\[
\|\ad_{h}\| = \max_{\lambda \in \mathrm{Spec}(\ad_{h})}|\lambda| \,.
\]
Let~$g\in G$ such that~$\Ad_{g}(h) \in B_{\varepsilon}(0)$. Since~$\ad_{\Ad_{g}h}$ is conjugate to~$\ad_{h}$, it is a normal operator with the same eigenvalues, so in particular
$\|\ad_{h}\| = \|\ad_{\Ad_{g}h}\|$. So since the operator norm is bounded on~$B_{\varepsilon}(0)$, it is bounded on
$\Ad_{G}(B_{\varepsilon}(0))\cap \frakh$ as well, and the latter is a bounded subset of~$\frakh$.
\end{proof}
Recall that~$V_{\epsilon, R}$ differs from~$(V_{\epsilon, R})_\reg$ only in a Lebesgue null set.
 Also, we can write every~$ (\frakh_j)_\reg$ as the union of its open Weyl chambers~$C_{j,r}$.
Hence, we can use Lemma~\ref{lem:HarishChandraFormula} and the decomposition \eqref{eq:RepresentIntegrationAsDisjointUnion} to find
\begin{equation}
\begin{aligned}
\label{eq:IntegralDecomposition}
&\Lebesgue(V_{\epsilon, R}) =
\sum_{j=1}^n
\frac{1}{|W_j|}
\int_{(V_\epsilon^{\frakh_j})_\reg}
\left(
\int_{
\{[g] \in G/H_j : \| \Ad_g h\| \leq R \}
}
\Vol_{G/H_j}
\right)
|\pi_j(h)|^2
d \Lebesgue_{\frakh_j}(h)
\\&=
\sum_{j=1}^n
\sum_{C_{j,r} \subset \frakh_j}
\frac{1}{|W_j|}
\int_{V_\epsilon^{\frakh_j} \cap C_{j,r}}
\left(
\int_{
\{[g] \in G/H_j : \| \Ad_g h\| \leq R \}
}
\Vol_{G/H_j}
\right)
|\pi_j(h)|^2
d \Lebesgue_{\frakh_j}(h).
\end{aligned}
\end{equation}
To simplify, we will fix a single CSA~$\frakh_j$ and a single open Weyl chamber~$C_{j,r}$, and suppress the indices:
\begin{align}
\label{eq:NotationLeftOutIndex}
H := H_j, \quad
\pi := \pi_j,\quad
\frakh := \frakh_j, \quad
\Vol_{G/H} := \Vol_{G/H_j}, \quad
C := C_{j,r}.
\end{align}
For now, let us look at the single summand of \eqref{eq:IntegralDecomposition} corresponding to~$\frakh$ and~$C$.
\begin{lemma}
\label{lem:BigCalculation}
Fix the notation as in \eqref{eq:NotationLeftOutIndex}, let~$m := m(h)$ as in Definition~\ref{def:definitionofm} for an arbitrary~$h \in C$, and~$R > 0$ arbitrary. Then there is some~$c \neq 0$ which does not depend on~$R$ so that
\begin{align*}
\lim_{\epsilon \to 0^+}
\epsilon^{-m- \dim \frakh - |\Delta^+|}
&\int_{V_\epsilon^{\frakh} \cap C}
\left(
\int_{
\{[g] \in G/H : \| \Ad_g h\| \leq R \}
}
\Vol_{G/H}
\right)
|\pi(h)|^2
d \Lebesgue_{\frakh}(h)
\\
&=
c
\sum_{O_X}
\int_{V_1^{\frakh} \cap C}
\Vol(S_{X} \cap O_h)
\mu_{O_{X}}(B_{R}(0)) \pi(h)
d \Lebesgue_{\frakh}(h),
\end{align*}
where the sum is taken over all nilpotent orbits $O_X$ contained in $\mathcal{N}_h$ for an arbitrary $h \in C$, cf. Remark~\ref{rmk:NumberMIsConstantAlongConnectedComponent}.
\end{lemma}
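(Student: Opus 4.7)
The plan is to rewrite the inner integral using \eqref{eq:TransformationOfVolumeFormOnG/H}, then rescale to the fixed ``shell'' $V_1^{\frakh} \cap C$ via homogeneity, and finally invoke Corollary~\ref{cor:LimitCommutesWithIntegral} as a black box. Once the rewriting is done, the lemma is essentially a restatement of that corollary with a specific choice of weight and compact set.

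For the first step, fix a regular $h \in C$. The orbit map descends to a diffeomorphism $\iota\colon G/H \xrightarrow{\sim} O_h$ with $\iota^*\Vol_{O_h} = \pi(h)\Vol_{G/H}$, so pulling back under $\iota$ yields
\[
\mu_{O_h}\bigl(B_R(0)\bigr) = \int_{O_h \cap B_R(0)} \Vol_{O_h} = \epsilon_C\,\pi(h)\int_{\{[g]\in G/H \,:\, \|\Ad_g h\|\leq R\}} \Vol_{G/H},
\]
where $\epsilon_C \in \{\pm 1\}$ is a global sign determined by whether $\iota$ is orientation-preserving; it is constant on $C$ because $\pi$ is nonvanishing there. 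Since $|\pi(h)|^2/\pi(h) = \pi(h)$, multiplying by $|\pi(h)|^2$ and rearranging gives
\[
\Bigl(\int_{\{[g] \,:\, \|\Ad_g h\|\leq R\}} \Vol_{G/H}\Bigr)|\pi(h)|^2 = \epsilon_C\, \pi(h)\,\mu_{O_h}\bigl(B_R(0)\bigr).
\]

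For the rescaling, I would use that $\Ad_g$ is linear, so $V_\epsilon^{\frakh} = \epsilon V_1^{\frakh}$, and that $C$ is a cone, so $V_\epsilon^{\frakh} \cap C = \epsilon\,(V_1^{\frakh} \cap C)$. Substituting $h = \epsilon h'$ produces a factor $\epsilon^{\dim \frakh}$ from $d\Lebesgue_{\frakh}$, and since $\pi$ is a homogeneous polynomial of degree $|\Delta^+|$ (by Proposition~\ref{prop:RadonNikodymDifferentOrbits}), a further factor $\epsilon^{|\Delta^+|}$ from $\pi(\epsilon h') = \epsilon^{|\Delta^+|}\pi(h')$. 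Multiplying through by $\epsilon^{-m-\dim\frakh-|\Delta^+|}$ therefore reduces the left-hand side of the claim to
\[
\epsilon_C\int_{V_1^{\frakh} \cap C} \pi(h')\,\epsilon^{-m}\,\mu_{O_{\epsilon h'}}\bigl(B_R(0)\bigr)\, d\Lebesgue_{\frakh}(h').
\]

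For the final step, I would apply Corollary~\ref{cor:LimitCommutesWithIntegral} with the compact set $K := \overline{V_1^{\frakh}}$, which is compact by Lemma~\ref{lem:VepsIsCompact}, and with the continuous weight $w := \pi$. The corollary yields exactly the right-hand side of the lemma with $c = \epsilon_C \neq 0$, manifestly independent of $R$. The only genuinely delicate point is the sign/orientation bookkeeping when relating $\Vol_{G/H}$ to $\Vol_{O_h}$; the remainder is homogeneity of $\pi$ and $d\Lebesgue_{\frakh}$ together with the Harish-Chandra--Varadarajan--Barbasch--Vogan uniform convergence already packaged into the corollary.
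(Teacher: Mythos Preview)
Your proof is correct and follows essentially the same route as the paper: convert the inner integral via \eqref{eq:TransformationOfVolumeFormOnG/H} to $\mu_{O_h}(B_R(0))\pi(h)$ times a constant, rescale $h \mapsto \epsilon h'$ using $V_\epsilon^{\frakh} = \epsilon V_1^{\frakh}$ together with the homogeneities of $d\Lebesgue_{\frakh}$ and $\pi$, and then invoke Corollary~\ref{cor:LimitCommutesWithIntegral} with the compact $\overline{V_1^{\frakh}}$ (Lemma~\ref{lem:VepsIsCompact}) and weight $w=\pi$. The only cosmetic difference is that you make the orientation sign $\epsilon_C$ explicit, whereas the paper simply absorbs it into the unspecified nonzero constant $c$.
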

\begin{proof}
Recall the notation~$\mu_{O_{h}}$ for the measure defined in Section~\ref{Sec:LimitsOfOrbits}, and that~$\pi(h)$ was defined in Equation \eqref{eq:TransformationOfVolumeFormOnG/H} as the volume density function of the orbit-stabilizer-diffeomorphism~$G/H \to O_h$ with respect to a fixed invariant volume form on~$G/H$ and the KKS volume form on~$O_h$.
Using this property of~$\pi(h)$, we find that there exists some~$c \neq 0$ depending only on the choice of invariant measure~$\Vol_{G/H}$ on~$G/H$ with:
\begin{align*}
\int_{V_\epsilon^{\frakh} \cap C}
\Bigg(
\int_{
\{[g] \in G/H : \| \Ad_g h\| \leq R \}
}
&\Vol_{G/H}
\Bigg)
|\pi(h)|^2 d \Lebesgue_{\frakh}(h)
\\
&=
c
\cdot
\int_{V_\epsilon^{\frakh} \cap C}
\left(
\int_{B_R(0) \cap O_h } \Vol_{O_h} \right)
\pi(h)
d \Lebesgue_{\frakh}(h)
\\
&=
c
\cdot
\int_{V_\epsilon^{\frakh} \cap C}
\mu_{O_{h}}(B_R(0))
\pi(h)
d \Lebesgue_{\frakh}(h)
\\
&=
\epsilon^{\dim \frakh + |\Delta^+|} c
\cdot
\int_{V_1^{\frakh} \cap C}
\mu_{O_{ \epsilon h}}(B_R(0))
\pi(h)
d \Lebesgue_{\frakh}(h).
\end{align*}
In the last step, we used that~$V_{\varepsilon}^{\fh} = \epsilon V_{1}^{\fh}$, that
$d\Lebesgue_{\fh}(\varepsilon h) = \varepsilon^{\dim \fh} d\Lebesgue_{\fh}(h)$, and that~$\pi(\varepsilon h) = \varepsilon^{|\Delta_+|}\pi(h)$.
By Corollary~\ref{cor:LimitCommutesWithIntegral} and compactness of~$V_1^{\frakh} \cap C$ (Lemma~\ref{lem:VepsIsCompact}), we have:
\begin{align*}
\lim_{\epsilon \to 0}
\int_{V_1^{\frakh} \cap C}
\epsilon^{-m}
\mu_{O_{\epsilon h}}
&(B_{R}(0))
\pi(h)  d \Lebesgue_{\frakh}(h)
\\
&=
 \sum_{O_X}
\int_{V_1^{\frakh} \cap C}
\Vol(S_{X} \cap O_h)
\mu_{O_{X}}(B_{R}(0))
\pi(h)
d \Lebesgue_{\frakh}(h).
\end{align*}
This shows the statement.
\end{proof}
Finally, we use this to prove the key lemma:
\begin{proof}[Proof of key Lemma \ref{KeyLemma}]
Let~$O_X \subset \frakg$ be a nilpotent orbit of dimension~$d$.
By Lemma~\ref{lem:AllNilpotentOrbitsShowUpSomewhere} and Remark~\ref{rmk:NumberMIsConstantAlongConnectedComponent}, there is some CSA~$\frakh \subset \frakg$ and some open Weyl chamber~$C \subset \frakh_\reg$ so that~$O_X \subset \mathcal{N}_h$ for all~$h \in C$. Since~$O_X$ is of maximal dimension, the number~$m := m(x)$ from Definition~\ref{def:definitionofm}
is minimal among~$m(x')$ for all~$x'\in \fg_{\reg}$.
Then, by \eqref{eq:IntegralDecomposition} and Lemma~\ref{lem:BigCalculation}, there are numbers~$c_{j,r} \neq 0$,  independent of~$R$, such that
\begin{align*}
\lim_{\epsilon \to 0^+}
&\epsilon^{-m - \dim \frakh - |\Delta^+|}
\Lebesgue(V_{\epsilon, R})
\\
&=
\sum_{j=1}^n
\sum_{C_{j,r} \subset \frakh_j}
\sum_{O_X}
\frac{1}{|W_j|}
c_{j,r} \cdot
\int_{V_1^{\frakh} \cap C}
\Vol(S_{X} \cap O_h)
\mu_{O_{X}}(B_{R}(0))
\pi_j(h)
d \Lebesgue_{\frakh_j}(h),
\end{align*}
where the sum over the~$O_X$ is carried out over all nilpotent orbits~$O_X \subset \mathcal{N}_h$ of dimension~$d$.
Note that this sum is independent of the choice of~$h \in C_{j,r}$ for fixed~$j$ and~$r$, cf.~Remark~\ref{rmk:NumberMIsConstantAlongConnectedComponent}.
\\
All summands in the above sum are positive as they arise from volumes of subsets of $\frakg$, and thus the total sum is nonzero. Lastly, the sum is homogeneous of degree~$d/2$ in~$R$ by Lemma~\ref{lem:HomogeneityOfNilpotentVolumes}. Hence we may consider their quotient and conclude the proof:
\begin{align*}
\lim_{\epsilon \to 0^+}
\frac
{\Lebesgue(V_{\epsilon,\rho R})}
{\Lebesgue(V_{\epsilon, R})}
=
\frac
{\lim_{\epsilon \to 0^+}  \epsilon^{-m - \dim \frakh - |\Delta^+|} \Lebesgue(V_{\epsilon,\rho R})}
{\lim_{\epsilon \to 0^+}  \epsilon^{-m - \dim \frakh - |\Delta^+|} \Lebesgue(V_{\epsilon, R})}
= \rho^{d/2}.
\end{align*}
\end{proof}


\noindent {\it Conflict of interest and data availability statement.} On behalf of all authors, the corresponding author states that there is no conflict of interest. Data sharing is not applicable to this article as no datasets were generated or analysed during the current study.

\end{document}